\documentclass{amsart}
\numberwithin{equation}{section}
\usepackage[english]{babel}
\usepackage[T1]{fontenc}
\usepackage[latin1]{inputenc}
\usepackage{indentfirst}
\usepackage{enumitem}
\usepackage{amsmath,amssymb, amsbsy}
\usepackage{amsfonts}
\usepackage{hyperref}
\usepackage{latexsym}
\usepackage{amsthm}
\usepackage[dvips]{graphicx}
\DeclareGraphicsExtensions{.pdf,.png,.jpg,.eps}

\usepackage{verbatim}

\usepackage[usenames,dvipsnames]{xcolor}
\usepackage{color}
\usepackage[latin1]{inputenc}
\usepackage[active]{srcltx}
\setlength{\topmargin}{16pt} \setlength{\headheight}{20pt}
\setlength{\headsep}{30pt}
\setlength{\textwidth}{15cm}
\setlength{\textheight}{19cm}
\setlength{\oddsidemargin}{1cm} 
\setlength{\evensidemargin}{1cm} 
\definecolor{Arancio}{cmyk}{0,0.61,0.87,0}

\newcommand{\brd}[1]{\mathbb{#1}}
\newcommand{\R}{\brd{R}}
\newcommand{\abs}[1]{\left\lvert {#1} \right\rvert}
\newcommand{\norm}[2]{\left\Vert {#1} \right\Vert_{#2}}
\newcommand{\loc}{{{\tiny{\mbox{loc}}}}}
\newcommand\ddfrac[2]{\frac{\displaystyle #1}{\displaystyle #2}}
\newtheorem{teo}{Theorem}[section]
\newtheorem{Corollary}[teo]{Corollary}
\newtheorem{Lemma}[teo]{Lemma}
\newtheorem{Theorem}[teo]{Theorem}
\newtheorem{Proposition}[teo]{Proposition}
\theoremstyle{definition}
\newtheorem{Definition}[teo]{Definition}
\newtheorem{remarks}[teo]{Remarks}
\newtheorem{remark}[teo]{Remark}

\newtheorem{Assumption}[teo]{Assumption}
\newtheorem{Example}[teo]{Example}

\begin{document}

\title[Regularity for even solutions to degenerate or singular problems]
{Liouville type theorems and regularity of solutions to degenerate or singular problems part I: even solutions}
\date{\today}

\author{Yannick Sire, Susanna Terracini and Stefano Vita}

\address[Y. Sire]{Department of Mathematics
	\newline\indent
	Johns Hopkins University
	\newline\indent
	3400 N. Charles Street, Baltimore, MD 21218, U.S.A.}
\email{sire@math.jhu.edu}

\address[S. Terracini]{Dipartimento di Matematica G. Peano
	\newline\indent
	Universit\`a degli Studi di Torino
	\newline\indent
	 Via Carlo Alberto 10, 10123 Torino, Italy}
\email{susanna.terracini@unito.it}

\address[S. Vita]{Dipartimento di Matematica
	\newline\indent
	Universit\`a degli Studi di Milano Bicocca
	\newline\indent
	 Piazza dell'Ateneo Nuovo 1, 20126, Milano, Italy}
\email{stefano.vita@unimib.it}

\date{\today} 
\thanks{Work partially supported by the ERC Advanced Grant 2013 n.~339958 Complex Patterns for Strongly Interacting Dynamical Systems - COMPAT. Y.S. is partially supported by the Simons foundation. This work was initiated while S. V. was visiting Johns Hopkins University. He would like to thank the mathematics department for its hospitality. We would like to thank Gabriele Cora for some fruitful discussions and suggestions}

\keywords{Degenerate and singular elliptic equations; Liouville type Theorems; Blow-up; Fractional Laplacian; Fractional divergence form elliptic operator; Schauder estimates}

\subjclass[2010] {
35J70, 
35J75,  
35R11, 
35B40, 
35B44, 
35B53, 
}
\maketitle

\begin{abstract}
We consider a class of equations in divergence form with a singular/degenerate weight \[
-\mathrm{div}(|y|^a A(x,y)\nabla u)=|y|^a f(x,y)\; \quad\textrm{or} \ \textrm{div}(|y|^aF(x,y))\;.
\]
Under suitable regularity assumptions for the matrix $A$ and $f$  (resp. $F$) we prove  H\"older continuity of solutions which are even in $y\in\mathbb{R}$, and possibly of their derivatives up to order two or more (Schauder estimates).  In addition, we show stability of the $C^{0,\alpha}$ and $C^{1,\alpha}$ a priori bounds for approximating  problems in the form
\[
-\mathrm{div}((\varepsilon^2+y^2)^{a/2} A(x,y)\nabla u)=(\varepsilon^2+y^2)^{a/2} f(x,y)\;  \quad\textrm{or} \ \textrm{div}((\varepsilon^2+y^2)^{a/2}F(x,y))
\]
as $\varepsilon\to 0$. Finally, we derive  $C^{0,\alpha}$ and $C^{1,\alpha}$ bounds for inhomogenous Neumann boundary problems as well. Our method is based upon blow-up and appropriate Liouville type theorems.
\end{abstract}

\section{Introduction and main results}
Let $z=(x,y)\in\Omega\subset\mathbb{R}^{n+1}$, with $x\in\mathbb{R}^{n}$ and $y\in\mathbb{R}$, $n\geq1$, $a\in\mathbb{R}$. We are concerned with qualitative properties of solutions to a class of problems involving the operator in divergence form given by
$$\mathcal L_au:=\mathrm{div}(|y|^a A(x,y)\nabla u)\;,$$
where the matrix $A$ is symmetric, continuous and satisfies the uniform ellipticity condition $\lambda_1|\xi |^2 \leq A(x,y)\xi\cdot\xi \leq \lambda_2|\xi |^2$, for all $\xi\in\R^{n+1}$, for every $(x,y)\in\Omega$ and some ellipticity constants $0<\lambda_1\leq\lambda_2$. Such class of elliptic operators  arises in the study of fractional powers of elliptic operators as well as in applications to physics, ecology and biological sciences.

We denote by $\Sigma:=\{y=0\}\subset\R^{n+1}$ the characteristic manifold that we assume to be invariant with respect to $A$. Operators on our class may be degenerate or singular, in the sense that the coefficients of the differential operator may vanish or be infinite over $\Sigma$, and this happens respectively when $a>0$ and $a<0$. Such  behaviour affects the regularity of solutions: indeed $u(x,y)=|y|^{-a}y$ is $\mathcal L_a$ harmonic, when $A\equiv \mathbb{I}$ and lacks of smoothness whenever $a$ is not an integer. 

We recall that a function $w \in L^1_\loc (\mathbb R^{n+1})$ is said an $A_p$ weight  if the following holds 
$$
\sup_{B \subset \mathbb R^{n+1}}  \Big ( \frac{1}{|B|} \int_B w \Big ) \Big ( \frac{1}{|B|} \int_B w^{-1/(p-1)} \Big )^{p-1} <\infty 
$$
whenever $B$ is a ball. The weights $\rho(y)=|y|^a$ belong to the Muckenhoupt class $A_p$ when $a\in(-1,p-1)$.  The theory of weights is a crucial tool and central area in harmonic analysis. The seminal series of papers  \cite{FabKenSer,FabJerKen1,FabJerKen2,JerKen} develops in the framework of elliptic equations in divergence form, a basic regularity theory for such equations involving $A_2$ weights. Nevertheless, as already noticed in \cite{FabKenSer}, this class of weights may fail to be optimal to obtain an equivalent of the classical De Giorgi-Nash-Moser theory and the present paper deals also with a larger class of operators. 

This paper is devoted to the study of fine qualitative properties of suitably defined solutions of equations of the type $\mathcal L_a u=r.h.s$ (for a wide class of right hand sides) and is a predecessor to the paper of two of the authors and Tortone \cite{SirTerTor} on the study of the nodal set of such solutions, with an extension to the parabolic case in \cite{AudTer}. The interest in this specific type of operators arose in the last decade as they appear  in connection with the fractional powers of the laplacian (see the influential work \cite{CafSil1}). In this framework, the auxiliary variable $y$ is defined on $\mathbb R_+$ and is actually the distance to the characteristic manifold $\Sigma$. Since our results are local, possibly after a change of coordinates in the (sufficiently smooth) domain where the equation is defined or for instance after taking blow-ups of solutions,  they apply to equations in various domains of $\mathbb R^{n+1}$ or smooth manifolds to name.  Contrary to the extensive literature on uniformly elliptic equations, the fine regularity theory of degenerate/singular equations is at the very beginning. The present paper is a first contribution towards a systematic study of these operators. 

This paper is also the first part of a two part series of papers dealing with specific solutions, namely even and odd solutions. The reason why we consider those two separate cases will become clear later but let us say right away that in the present case of even solutions one can provide a rather complete Schauder theory for suitably defined solutions, while we can not expect regularity for general solutions (see Example \ref{ex1}). Along the way, we prove several useful Liouville-type theorems which are somehow of independent interest. The operator being linear, it is very natural to consider even solutions and odd solutions. We will see that, depending on the symmetry of the solutions, several nice regularity properties can be proved. In order to give a flavor of our results, it is worth first stating our Schauder estimates:

\begin{Theorem}\label{cinf}
Let $a\in(-1,+\infty)$, $k\in\mathbb{N}\cup\{0\}$ and $f\in C^{k,\alpha}(B_1)$ for $\alpha\in(0,1)$ and even in $y$. Let also $u\in H^{1,a}(B_1)$ be an even energy solution to (see the next section for the precise definition)
\begin{equation*}
-\mathrm{div}(|y|^a\nabla u)=|y|^af\qquad\mathrm{in \ }B_1.
\end{equation*}
Then, $u\in C^{k+2,\alpha}_{\mathrm{loc}}(B_1)$. If moreover $f\in C^{\infty}(B_1)$, then, $u\in C^{\infty}(B_1)$.
\end{Theorem}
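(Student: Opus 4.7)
The plan is to argue by induction on $k$. The base case $k=0$ (i.e., $u \in C^{2,\alpha}$ for $f \in C^\alpha$) is the first Schauder-type result, obtained via the blow-up and Liouville machinery developed earlier in the paper, sharpening the $C^{1,\alpha}$ a priori estimate announced for even solutions.

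For the inductive step, assume the theorem for $k-1$ and suppose $f \in C^{k,\alpha}(B_1)$ is even in $y$. Since in this theorem the matrix coefficient is the identity, each tangential derivative $v_i := \partial_{x_i} u$ is still an even energy solution of $-\mathrm{div}(|y|^a \nabla v_i) = |y|^a \partial_{x_i} f$ with $\partial_{x_i} f \in C^{k-1,\alpha}$. The inductive hypothesis then gives $v_i \in C^{k+1,\alpha}_{\mathrm{loc}}(B_1)$, so every mixed partial derivative $\partial_x^\beta \partial_y^j u$ with $|\beta| \geq 1$ and $|\beta|+j \leq k+2$ is locally $C^\alpha$.

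It therefore remains to control the pure normal derivative $\partial_y^{k+2} u$, which I would handle by a blow-up/compactness argument at order $k+2$. Suppose for contradiction the a priori $C^{k+2,\alpha}$ estimate fails; extract a sequence of even solutions $u_n$, data $f_n$ with $\|f_n\|_{C^{k,\alpha}}$ uniformly bounded, points $z_n$, and radii $r_n \downarrow 0$, and set
\begin{equation*}
v_n(z) := \frac{u_n(z_n + r_n z) - P_n(z)}{r_n^{k+2+\alpha}},
\end{equation*}
where $P_n$ is the best-fitting even polynomial of degree $\leq k+1$ at $z_n$, arranged so that the normalized $C^{k+2,\alpha}$-semi-norm of $v_n$ is $1$. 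Using the inductive $C^{k+1,\alpha}$ estimate for compactness, the $v_n$ converge on compact sets to an entire even function $v_\infty$ solving $-\mathrm{div}(|y|^a \nabla v_\infty) = 0$ and exhibiting polynomial growth of order $k+2+\alpha$. Appealing to the Liouville theorem classifying such even entire $\mathcal L_a$-harmonic functions of polynomial growth (a central tool of the paper), $v_\infty$ must be a polynomial of degree at most $k+2$, contradicting the normalization. The $C^\infty$ statement then follows by iterating in $k$.

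The principal difficulty lies in the Liouville classification at arbitrary order: showing that the admissible homogeneities of entire even $\mathcal L_a$-harmonic functions are precisely the non-negative integers, and that polynomial growth of order $m$ forces the function to be a polynomial of degree $\leq \lfloor m \rfloor$. This typically proceeds via separation of variables on $\mathbb S^n$ combined with an Almgren-type frequency analysis, and it is precisely the evenness hypothesis that restricts the Bessel-type radial factors to the regular branch and excludes the non-integer homogeneity $1-a$ responsible for the loss of smoothness in the general (non-even) case flagged just before the theorem.
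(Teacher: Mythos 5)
Your treatment of the tangential derivatives is exactly the paper's (Lemma \ref{Schauderx}): since $A=\mathbb I$, the operator commutes with $\partial_{x_i}$ and one inducts using the $C^{1,\alpha}$ theory. The problem is the pure normal derivative, and there your argument has a genuine gap: the contradiction at the end rests on a Liouville theorem classifying entire \emph{even} $\mathcal L_a$-harmonic functions of polynomial growth of arbitrary order $k+2+\alpha$ as polynomials of degree at most $k+2$. No such result is available in the paper: Theorem \ref{Liouvilleeven} and Corollary \ref{Liouvilleevenbis} only treat \emph{sublinear} growth ($\gamma\in[0,1)$), and they are used only to run the blow-up arguments at the $C^{0,\alpha}$ and $C^{1,\alpha}$ levels. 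You acknowledge this and propose to prove the classification by separation of variables on the sphere plus an Almgren-type frequency analysis, but that spectral computation (showing the admissible even homogeneities are exactly the nonnegative integers) is precisely where all the difficulty of higher-order regularity is concentrated; it is essentially equivalent to the theorem you are trying to prove, and you do not carry it out. There are also secondary issues you gloss over which the paper handles carefully at orders $0$ and $1$ (Theorems \ref{holdereven}, \ref{C1alphaeven}): in the blow-up one must distinguish whether $\mathrm{dist}(z_n,\Sigma)/r_n$ stays bounded or not (in the latter case the limit equation is the unweighted Laplacian and ``even polynomial'' correction makes no sense), and the compactness of the rescaled sequence with the degree-$(k+1)$ polynomial subtracted has to be justified from the normalized seminorm, not merely from the inductive $C^{k+1,\alpha}$ bound.

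The paper avoids any higher-order Liouville theorem by an algebraic device. Setting $\mathcal F_au=\partial_y^{-a}\partial_y^au$ and $\mathcal Gu=y^{-1}\partial_yu$, one has $\partial^2_{yy}u=\mathcal F_au-a\,\mathcal Gu$, and these operators map even solutions of the weight-$a$ problem into even energy solutions of problems of the same type: $-\mathcal L_a\mathcal F_au=|y|^a\mathcal F_af$ and $-\mathcal L_{2+a}\mathcal Gu=|y|^{2+a}\mathcal Gf$ (Lemmas \ref{covariant} and \ref{2+a}). For $f\in C^{0,\alpha}$ one writes $\mathcal F_au=-\Delta_xu-f\in C^{0,\alpha}_{\mathrm{loc}}$ and controls $\mathcal Gu$ through the explicit one-dimensional representation $\mathcal Gu(x,y)=y^{-1-a}\int_0^y t^a\bigl(g(x,t)-g(x,0)\bigr)\,\mathrm{d}t+\frac{g(x,0)}{1+a}$ (Lemma \ref{yderivativeseven}); for $f\in C^{1,\alpha}$ one applies the $C^{1,\alpha}$ estimate of Theorem \ref{regularityevenodd1}(ii) to the equations satisfied by $\mathcal F_au$ and $\mathcal Gu$ (Lemma \ref{yderivativesodd}). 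Since $\mathcal F_au$ and $\mathcal Gu$ solve equations of the same structure, one can induct, climbing two orders in $y$ at a time (Lemma \ref{Schaudery}), and only the already established $C^{0,\alpha}$ and $C^{1,\alpha}$ theory is ever invoked. If you want to salvage your route, you would first have to prove the integer-homogeneity classification for even entire solutions, which in this paper is a consequence, not an ingredient, of the Schauder estimates.
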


This  result is somewhat surprising, in view of the lack of regularity of the coefficients of the operators and can be attributed to the joint regularising effect of the equation and the Neumann boundary condition in the half ball, associated with evenness. We stress that odd solutions may indeed lack regularity, as shown by the example $u(x,y)=|y|^{-a}y$ which solves $-\mathrm{div}(|y|^a\nabla u)=0$ whenever $a<1$.   

We are going to follow a perturbative method,  actually allowing us to deal with more general equations with right hands in possibly divergence form, and to deal with an entire class of regularised problems in the form:

\begin{equation*}
-\mathrm{div}((\varepsilon^2+y^2)^{a/2} A(x,y)\nabla u)=
\begin{cases}\textrm{either} \; \;&\phantom{divw}(\varepsilon^2+y^2)^{a/2}f(x,y)\;,\\
\textrm{or} \qquad&\mathrm{div}\left((\varepsilon^2+y^2)^{a/2}F(x,y)\right)\;,
\end{cases}
\end{equation*}

and derive both $C^{0,\alpha}$ and $C^{1,\alpha}$ estimates which are uniform with respect to the parameter $\varepsilon\geq 0$ (we shall refer to this fact as a {\sl $\varepsilon$-stable property}). We state below the main results related to this fact.

\begin{Theorem}
Let $a\in(-1,+\infty)$ and let $u\in H^{1,a}(B_1)$ be an even in $y$ energy solution to
\begin{equation*}
-\mathcal L_au=|y|^af\qquad\mathrm{in \ }B_1,
\end{equation*}
with $f\in L^p(B_1,|y|^a\mathrm{d}z)$. Then
\begin{itemize}
\item[i)]  If $A$ is continuous, $\alpha\in(0,1)\cap(0,2-\frac{n+1+a^+}{p}]$, $p>\frac{n+1+a^+}{2}$, $\beta>1$ and $r\in(0,1)$ one has: there exists a constant $c>0$ such that
$$\|u\|_{C^{0,\alpha}(B_r)}\leq c\left(\|u\|_{L^\beta(B_1,|y|^a\mathrm{d}z)}+\|f\|_{L^p(B_1,|y|^a\mathrm{d}z)}\right).$$
\item[ii)] If  $A$ is $\alpha$-H\"older continuous, $\alpha\in(0,1-\frac{n+1+a^+}{p}]$, $p>n+1+a^+$, $\beta>1$ and $r\in(0,1)$ one has: there exists a constant $c>0$ such that
$$\|u\|_{C^{1,\alpha}(B_r)}\leq c\left(\|u\|_{L^\beta(B_1,|y|^a\mathrm{d}z)}+\|f\|_{L^p(B_1,|y|^a\mathrm{d}z)}\right).$$
\end{itemize}
Moreover, both these properties are $\varepsilon$-stable.
\end{Theorem}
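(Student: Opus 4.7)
\medskip

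\noindent\textbf{Proof plan.} The strategy is the classical blow-up/compactness argument combined with the Liouville-type theorems developed earlier in the paper. I would argue by contradiction on a Campanato-type oscillation excess, whose boundedness is equivalent to the claimed Hölder estimate.

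For part (i), define
\[
\Theta_\alpha(z_0,r):=\inf_{c\in\mathbb{R}}\frac{1}{r^\alpha}\left(\frac{1}{\mu_a(B_r(z_0))}\int_{B_r(z_0)}|u-c|^2\,|y|^a\,\mathrm{d}z\right)^{1/2},
\]
and for part (ii) replace constants $c$ by affine functions $c+p\cdot(z-z_0)$ and $r^\alpha$ by $r^{1+\alpha}$. Assume the desired bound fails along a sequence of even solutions $u_k$ to $-\mathrm{div}((\varepsilon_k^2+y^2)^{a/2}A_k\nabla u_k)=(\varepsilon_k^2+y^2)^{a/2}f_k$ with uniformly bounded data in the stated norms. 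A standard "worst point/worst scale" selection (of Simon--Campanato type), applied to $\Theta_\alpha$, produces sequences $z_k\in B_{r}$ and $s_k\to 0$ at which $\Theta_\alpha(z_k,s_k)=:L_k\to\infty$ while $L_k$ dominates $\Theta_\alpha$ on a large neighbourhood. Rescale
\[
v_k(z)=\frac{u_k(z_k+s_k z)-P_k(z)}{s_k^\alpha\,L_k}\quad(\text{resp. }s_k^{1+\alpha}L_k),
\]
with $P_k$ the optimal constant or affine; after this rescaling the $v_k$ are normalized (unit Campanato excess at scale $1$), have uniformly controlled Campanato excess at all scales, even in $y$ (up to composing with the symmetry when $z_k$ leaves $\Sigma$; the scaling $\mathrm{dist}(z_k,\Sigma)/s_k$ generates the relevant cases), and solve equations with rescaled matrix $\tilde A_k(z)=A_k(z_k+s_k z)$, rescaled weight $(\tilde\varepsilon_k^2+(y_k/s_k+y)^2)^{a/2}$ where $\tilde\varepsilon_k=\varepsilon_k/s_k$, and a right-hand side whose $L^p$ norm is $o(1)$ thanks to the scaling exponents dictated precisely by the hypotheses $\alpha\le 2-\frac{n+1+a^+}{p}$ or $\alpha\le 1-\frac{n+1+a^+}{p}$ (i.e.\ sub-critical Morrey scaling).

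The compactness step uses the basic De Giorgi--Nash--Moser/weighted Hölder theory (à la Fabes--Kenig--Serapioni) for $A_2$ operators to pass to a local uniform limit $v_\infty$ on $\mathbb{R}^{n+1}$. Depending on the subsequential limits of $y_k/s_k\in[0,\infty]$ and $\tilde\varepsilon_k\in[0,\infty]$ one obtains, by a straightforward weighted weak formulation argument, that $v_\infty$ is an entire solution of one of the following homogeneous model problems with constant (frozen) matrix: either the uniformly elliptic equation with constant coefficients (cases when the blow-up leaves $\Sigma$ or $\tilde\varepsilon_k\to\infty$), the equation $-\mathrm{div}((\varepsilon_\infty^2+y^2)^{a/2}A_\infty\nabla v_\infty)=0$ with $\varepsilon_\infty\in(0,\infty)$ (again uniformly elliptic), or the pure degenerate/singular equation $-\mathrm{div}(|y|^a A_\infty\nabla v_\infty)=0$. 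In all three cases, evenness in $y$ survives in the limit, and $v_\infty$ has subquadratic (resp.\ subcubic) growth by the Campanato control.

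Now apply the Liouville-type theorems for even entire solutions stated in the previous sections of the paper: any even entire solution with growth strictly less than $2$ (resp.\ $2+\alpha<2$ in part (i), and $<2$ but normalized away from the affine part in part (ii)) must be a constant (resp.\ affine), hence $v_\infty\equiv 0$ after subtracting the optimal approximator. This contradicts the normalization $\Theta_\alpha(0,1)=1$ for $v_k$, which is stable under the local uniform convergence. This contradiction proves the bound with a constant independent of $\varepsilon_k$, which is exactly $\varepsilon$-stability. Finally, the $L^\beta$ term on the right-hand side appears from the freedom in the choice of normalizing constants/affine functions, absorbed through a standard interpolation (any $\beta>1$ suffices because the Campanato excess controls $u$ modulo polynomials, and the polynomials are controlled by the $L^\beta$ norm).

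\medskip

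\noindent\textbf{Main obstacle.} The delicate point is the case analysis driven by $\tilde\varepsilon_k$ and $y_k/s_k$, which must match a Liouville statement for \emph{each} limiting model, uniformly in $\varepsilon\ge 0$; this is precisely what distinguishes the present theorem from a one-shot Schauder estimate and is the reason the Liouville theorems in this paper are stated for the three regimes. The second subtlety is verifying that the Morrey scaling of the $L^p$-norm of $f$ produces a genuinely vanishing source after the blow-up, which pins down the endpoint exponents $\alpha$ in the statements.
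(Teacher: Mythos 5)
Your overall strategy (contradiction, blow-up, classification of the limit by Liouville theorems, uniformly in $\varepsilon\geq0$) is the same as the paper's, but two of your steps rest on tools that are not available in the stated generality, and this is where the proposal has genuine gaps. First, and most seriously, your part (ii) concludes by invoking ``any even entire solution with growth strictly less than $2$ must be affine''. No such Liouville theorem is proved in the paper, and none of its Liouville results covers superlinear growth: Theorem \ref{Liouvilleeven} and Corollary \ref{Liouvilleevenbis} require the growth exponent $\gamma\in[0,1)$. A Campanato excess at exponent $1+\alpha\in(1,2)$ only gives subquadratic control of the blow-up limit modulo affine functions, so your scheme needs a classification of even entire solutions of all three model equations (Laplacian, $(\varepsilon^2+y^2)^{a/2}$, and $|y|^a$ with any $a>-1$, including $a\geq1$) with growth $O(|z|^{1+\alpha})$; this is a nontrivial missing ingredient. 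The paper avoids it precisely by normalizing with the $C^{0,\alpha}$ seminorm of the \emph{gradient} (Theorem \ref{C1alphaeven}), so that the limit has a globally H\"older, hence sublinearly growing, gradient; then $\tilde\rho\,\partial_y\overline v$ solves the conjugate problem with Dirichlet condition and weight exponent $-a$ and is killed by Theorem \ref{Liouvilleeven}, while the remaining derivatives are harmonic and handled by the classical sublinear Liouville theorem. Second, your compactness step appeals to De Giorgi--Nash--Moser/Fabes--Kenig--Serapioni theory ``for $A_2$ operators'', but the theorem is stated for all $a\in(-1,+\infty)$ and for $a\geq1$ the weight is not $A_2$; indeed continuity of general (non-even) solutions fails there (Example \ref{ex1}), so no off-the-shelf uniform H\"older modulus exists in that range. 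The paper sidesteps this by normalizing with the H\"older (resp. $C^{1,\alpha}$) seminorm itself, so that equicontinuity of the blow-ups is built in and only the uniform $L^\infty$ bound of Proposition \ref{Moser} is needed; if you keep the Campanato normalization you must supply a substitute compactness argument (e.g. uniform Caccioppoli plus a weighted Rellich theorem, uniform in $\varepsilon$ and in the translated weights), which the proposal does not do.

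Two smaller points. At the endpoint exponents $\alpha=2-\frac{n+1+a^+}{p}$ (resp. $1-\frac{n+1+a^+}{p}$) the Morrey scaling of the source is critical, not subcritical; the rescaled right-hand side vanishes only because of the extra factor $L_k^{-1}\to0$ coming from the contradiction normalization (this is exactly how the paper argues in Theorem \ref{holdereven}), so you should not attribute it to scaling alone. Moreover, subtracting a full affine function $P_k$ is incompatible with the weighted equation near $\Sigma$: the term $\partial_y(\tilde\rho_k)$ times the $y$-slope of $P_k$ enters the rescaled equation (the analogue of the term $(ii)$ in \eqref{eq:wk}) and must be shown to vanish, which the paper does via evenness, $\partial_yu_k(\hat z_k)=0$ on $\Sigma$, and the bound $|\partial_yv_k(0)|\leq c\,(y_k/r_k)^\alpha$; similarly, for variable coefficients the commutator $\mathrm{div}\bigl(\tilde\rho_k(A(\hat z_k+r_kz)-A(\hat z_k))\nabla w_k\bigr)$ requires the two-step bootstrap of Remark \ref{rem:variable coefficientsC1alpha}, which your ``frozen matrix'' limit glosses over. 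Finally, note that the paper derives the $\varepsilon=0$ statement from the family estimates through the approximation step (Proposition \ref{particular} together with Lemmas \ref{Good->Energy} and \ref{Energy->Good}); your plan can in principle include $\varepsilon_k=0$ directly in the contradiction family, but then the identification of the limit equation and the compactness at $\varepsilon=0$ must be justified along the lines of Lemma \ref{Good->Energy} rather than by uniformly elliptic theory.
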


As far as right hand sides in divergence form are concerned we have

\begin{Theorem}
Let $a\in(-1,+\infty)$ and let $u\in H^{1,a}(B_1)$ be an even in $y$ energy solution to
\begin{equation*}
-\mathcal L_au=\mathrm{div}\left(|y|^aF\right)\qquad\mathrm{in \ }B_1,
\end{equation*}
with $F=(f_1,...,f_{n+1})$. Then
\begin{itemize}
\item[i)] If $A$ is continuous, $\alpha\in(0,1-\frac{n+1+a^+}{p}]$, $F\in L^p(B_1,|y|^a\mathrm{d}z)$ with $p>n+1+a^+$, $\beta>1$ and $r\in(0,1)$ one has: there exists a constant $c>0$ such that
$$\|u\|_{C^{0,\alpha}(B_r)}\leq c\left(\|u\|_{L^\beta(B_1,|y|^a\mathrm{d}z)}+\|F\|_{L^p(B_1,|y|^a\mathrm{d}z)}\right).$$
\item[ii)] If  $A$ is $\alpha$-H\"older continuous and $F\in C^{0,\alpha}(B_1)$ with $\alpha\in(0,1)$, $f_{n+1}(x,0)=f^y(x,0)=0$, $\beta>1$ and $r\in(0,1)$ one has that there exists a constant $c>0$ such that
$$\|u\|_{C^{1,\alpha}(B_r)}\leq c\left(\|u\|_{L^\beta(B_1,|y|^a\mathrm{d}z)}+\|F\|_{C^{0,\alpha}(B_1)}\right).$$
\end{itemize}
Moreover, both these estimates are $\varepsilon$-stable.
\end{Theorem}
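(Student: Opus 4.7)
The plan is to mimic the contradiction-compactness (blow-up) scheme already used in the preceding theorem, now adapted to divergence-form data $\mathrm{div}(|y|^aF)$ and, in the last step, to the family of regularised weights $(\varepsilon^2+y^2)^{a/2}$. The technical inputs will be: (a) the Liouville theorems for entire even energy solutions of $\mathrm{div}(|y|^aA_0\nabla v)=0$ and of $\mathrm{div}(|y|^aA_0\nabla v)=\mathrm{div}(|y|^a\mathbf{c})$ with $A_0$ constant elliptic and $\mathbf{c}\in\mathbb{R}^{n+1}$, established earlier in the paper, which force the blow-up limit to be affine (for $C^{0,\alpha}$) or an at most quadratic polynomial (for $C^{1,\alpha}$); and (b) uniform-in-$\varepsilon$ weighted Sobolev, Poincar\'e and compactness results, which hold because $(\varepsilon^2+y^2)^{a/2}$ is uniformly $A_2$ on bounded sets and, after rescaling, one can always extract a subsequence with a well-defined limit weight.

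For part (i), assuming the $C^{0,\alpha}$ bound fails, a standard Campanato-type argument produces sequences $u_k$, $A_k$, $F_k$ with the relevant excess attained at points $z_k$ and radii $r_k\to 0$, normalised to one. Setting
\[
v_k(z)=\frac{u_k(z_k+r_kz)-(u_k)_{z_k,r_k}}{r_k^\alpha M_k},\qquad \widetilde F_k(z)=\frac{r_k^{1-\alpha}F_k(z_k+r_k z)}{M_k},
\]
and using the hypothesis $\alpha\le 1-(n+1+a^+)/p$, one checks that $\|\widetilde F_k\|_{L^p(|y|^a\,\mathrm{d}z)}\to 0$ on bounded sets. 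The matrices $A_k$ freeze to a constant elliptic $A_0$, and $v_k$ converges (weakly in $H^{1,a}_{\mathrm{loc}}$, strongly in $L^2_{\mathrm{loc}}$) to an entire even energy solution $v_\infty$ of the homogeneous limit equation with polynomial growth $|v_\infty(z)|\le C(1+|z|^\alpha)$. The available Liouville theorem forces $v_\infty\equiv\text{const}$, contradicting the unit normalization.

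For part (ii) the same blow-up is done at $C^{1,\alpha}$ level: one subtracts both the mean and the optimal affine correction before rescaling by $r_k^{1+\alpha}$. H\"older continuity of $A$ and $F$ gives frozen constant limits $A_0$ and $\mathbf{c}=F(z_0)$, so $v_\infty$ solves $\mathrm{div}(|y|^aA_0\nabla v_\infty)=\mathrm{div}(|y|^a\mathbf{c})$ on $\mathbb{R}^{n+1}$ with growth of order $1+\alpha<2$. Writing $v_\infty=w_\infty+A_0^{-1}\mathbf{c}\cdot z$ reduces to the homogeneous Liouville, forcing $w_\infty$ to be affine and, after accounting for the subtracted correction, to vanish. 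Precisely here the hypothesis $f_{n+1}(x,0)=0$ plays its role: when $z_k\to z_0\in\Sigma$ it guarantees that the particular solution $A_0^{-1}\mathbf{c}\cdot z$ is even in $y$ (its $y$-component equals $\mathbf{c}_{n+1}/(A_0)_{n+1,n+1}=0$), so the evenness of $v_\infty$ is preserved; without it, the particular solution would contain an odd profile $y$ or $|y|^{1-a}\mathrm{sgn}(y)$, incompatible with the evenness framework on which the Liouville theorem relies.

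Finally, $\varepsilon$-stability is obtained by running the same contradiction on joint sequences $(u_k,\varepsilon_k)$. The controlling parameter is the rescaled thickness $\varepsilon_k/r_k$ which, up to subsequences, tends to $0$, to some $\ell\in(0,\infty)$, or to $+\infty$; each regime produces its own limit weight ($|y|^a$, $(\ell^2+y^2)^{a/2}$, or a positive constant) and needs its own Liouville statement. I expect the main obstacle to lie in the intermediate regime $\varepsilon_k/r_k\to\ell$, which requires a Liouville theorem for entire even energy solutions with the regularised weight $(\ell^2+y^2)^{a/2}$. I plan to derive this either from the singular case via a Caffarelli--Silvestre-type change of variables, or directly by Moser iteration combined with the observation that $(\ell^2+y^2)^{a/2}$ is asymptotically comparable to $|y|^a$ as $|y|\to\infty$, so growth estimates on entire solutions reduce to the corresponding ones for $|y|^a$. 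Once this Liouville is available, the uniform weighted compactness of \cite{FabKenSer} provides $\varepsilon$-independent constants and the contradiction closes uniformly in $\varepsilon$.
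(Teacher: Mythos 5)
Your overall scheme (contradiction blow-up plus Liouville theorems, with the rescaled parameters deciding which limit weight appears) is the same as the paper's, but two of the tools you lean on do not exist in the stated generality, and they are exactly where the work lies. First, the compactness backbone: you justify uniform weighted Sobolev/Poincar\'e/compactness by asserting that $(\varepsilon^2+y^2)^{a/2}$ is uniformly $A_2$ on bounded sets, and you close the $\varepsilon$-stability with the compactness theory of \cite{FabKenSer}. This fails for $a\geq 1$, which is inside the range $a\in(-1,+\infty)$: $|y|^a$ is $A_2$ only for $a\in(-1,1)$, and the Poincar\'e inequality is false in the super degenerate range (see Example \ref{ex1}); the paper deliberately avoids these tools and obtains compactness from the unit-normalized H\"older seminorms of the blow-ups via Ascoli--Arzel\`a, together with Vitali's convergence theorem for passing to the limit in the equations. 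Second, the Liouville input: the statements you cite as ``established earlier'' --- entire even energy solutions of $\mathrm{div}(|y|^aA_0\nabla v)=0$ with growth of order $1+\alpha$ are affine, plus an inhomogeneous version with right hand side $\mathrm{div}(|y|^a\mathbf{c})$ --- are not in the paper. What is proved is only that sublinear even (Neumann) solutions are constant (Corollary \ref{Liouvilleevenbis}) and a Dirichlet version with weighted growth (Theorem \ref{Liouvilleeven}). The paper's $C^{1,\alpha}$ argument never classifies solutions of growth between $1$ and $2$: it applies Theorem \ref{Liouvilleeven} to the weighted derivative $\tilde\rho\,\partial_y\overline v$, which is odd, vanishes on $\Sigma$ and has sublinear weighted growth, concludes $\partial_y\overline v\equiv0$, and only then invokes the classical Liouville theorem for the $x$-derivatives. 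Without supplying your stronger Liouville statements, part (ii) of your argument is unsupported.

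Two further gaps. The $\varepsilon$-stability is a plan rather than a proof: you rightly isolate the regime $\varepsilon_k/r_k\to\ell\in(0,\infty)$, but neither of your proposed derivations is viable as stated --- comparability of $(\ell^2+y^2)^{a/2}$ with $|y|^a$ at infinity does not transfer a Liouville theorem, since the operators differ where it matters --- and it is precisely for this regime that the paper proves the boundary Hardy and trace inequalities and the spectral stability Lemma \ref{lemma:stability}, which feed the growth argument for $H(r)$ in Theorem \ref{Liouvilleeven} with $\varepsilon=1$ and in Corollary \ref{Liouvilleevenbis}; also note the paper must track the full triple $(\varepsilon_k,y_k,r_k)$, not only $\varepsilon_k/r_k$. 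Moreover, in part (ii) the hypothesis $f^y(x,0)=0$ is not only a matter of keeping the frozen particular solution even: in the regime where the blow-up centres $z_k=(x_k,y_k)$ satisfy $y_k/r_k\to+\infty$, freezing $F$ at $z_k$ against the non-constant weight leaves the error term $r_k^{-\alpha}(\tilde\rho_k-1)f^y(z_k)\,\partial_y\phi$, and one needs the quantitative bound $|f^y(z_k)|\leq C\,y_k^{\alpha}$ (vanishing on $\Sigma$ combined with H\"older continuity) to make it vanish; your sketch does not address this term. Finally, the paper's route to the $\varepsilon=0$ estimate for arbitrary energy solutions goes through the approximation results (Lemma \ref{Good->Energy}, Lemma \ref{Energy->Good} and Proposition \ref{particular} with the adjusted fields $F_\varepsilon$); if instead you run the blow-up directly at $\varepsilon=0$, you again need the compactness discussed above, which your $A_2$-based toolbox does not provide for $a\geq1$.
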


These Theorems tell us that, even though the operator is degenerate or singular, one recovers the standard regularity result holding for uniformly operators with smooth coefficients, when even-in-$y$ solutions are considered. Our strategy goes as follows:

\begin{enumerate}
\item as already said, we first regularize the problem by introducing a parameter $\varepsilon$ such that the operator becomes uniformly elliptic when $\varepsilon >0$;
\item by means of appropriate Liouville-type theorems, which may be of independent interest,  we then obtain uniform estimates in $\varepsilon \geq 0$ in H\"older spaces $C^{0,\alpha}$ and $C^{1,\alpha}$ for even solutions in $y$. This is the main part of the paper and relies heavily on some spectral properties;
\item we prove that all solution to the singular/degenerate equation can be obtain as limits of solutions to a sequence of regularized problems;
\item to provide higher regularity in the case $A=\mathbb I$, we use the structure of the operator $\mathcal L_a$, the evenness of the solutions and algebraic manipulations. 
\end{enumerate}

It has to be noticed that the H\"older continuity of the solutions (for much more general weights) was already proved in \cite{FabKenSer} in the $A_2$ Muckenhoupt case ($a\in(-1,1)$ here) and in the case of quasiconformal weights (that is, weights appearing after performing a quasiconformal map on uniformly elliptic equations), without the optimal exponent and not in an $\varepsilon$-stable form. Of course, the optimal regularity is strongly related with the homogeneity of the weight $|y|^a$. In this aspect, the present work is related to a paper by the first author with Lamboley and Teixeira where they investigate a free boundary problem with a singular weight \cite{LST} (see also \cite{AllLinPet,BraLinSch}).

Finally, we point out that we are able to solve completely the problem in the energy space for all values $a \in (-1,\infty)$ and this range is wider than that for which the weight is $A_2$, i.e. $(-1,1)$.
Moreover, in the super degenerate range $a\geq1$, the evenness assumption can not be removed for H\"older regularity (also for continuity). In fact, we have the following counterexample:
\begin{Example}\label{ex1}
When $a\geq1$, the jump function
$$\overline u(z)=\begin{cases}
\phantom{-}1 & \mathrm{in} \ B_1^+\\
-1 & \mathrm{in} \ B_1^-,
\end{cases}$$
is an energy (not even) $\mathcal L_a$-harmonic function. Even more, replacing the constant $1$ (say) in $B_1^+$ by $0$, one produces also an energy $\mathcal L_a$-harmonic function for which the unique continuation principle does not hold.
\end{Example}

$\mathcal L_a$-harmonic functions have been widely studied in connection with fractional Laplacians, in view of the well known realization of $(-\Delta)^s$ as a Dirichlet-to-Neumann operator, when $s\in(0,1)$, \cite{CafSil1}. Following  this approach, $H^s$-functions over $\R^n$ are uniquely extended in $\R^{n+1}_+$ by convolution
with the Poisson kernel of $\mathcal L_a$, where $a=1-2s\in(-1,1)$. Thus, in order to study the equation $(-\Delta)^s\cdot=f$, one is lead to  deal with global finite energy solutions to
\begin{equation*}
\begin{cases}
-\mathcal L_au=0\; \quad &y>0\\
-\lim_{y\to 0}y^a\partial_y u(x,y)=f(x)\; &y=0\;.
\end{cases}
\end{equation*}

In this light, we are naturally  lead to extend our analysis to inhomogeneous Neumann boundary
value problems associated with $\mathcal L_a$. Again, we shall mainly (though not exclusively) seek $\varepsilon$-stable estimates. In this perspective, in Section \ref{sec:neumann}, we shall prove the
following estimates:

\begin{Theorem}
Let $a\in(-1,1)$ and let $u\in H^{1,a}(B_1^+)$ be an energy solution to 
\begin{equation*}
\begin{cases}\label{pb:neumann_inhom}
-\mathcal L_au=0\; \quad &\mathrm{in}\; B^+_1\\
-\lim_{y\to 0}y^a\partial_y u(x,y)=f(x)\; &\mathrm{on}\; \partial ^0B^+_1\;,
\end{cases}
\end{equation*}
with $f\in L^p(\partial ^0B_1^+)$. Then, if $A$ is continuous, $p>\frac{n}{1-a}$, $\alpha\in(0,1-a-\frac{n}{p}]\cap(0,1)$, $\beta>1$ and $r\in(0,1)$ one has: there exists a constant $c>0$ such that
$$\|u\|_{C^{0,\alpha}(B_r)}\leq c\left(\|u\|_{L^\beta(B_1,|y|^a\mathrm{d}z)}+\|f\|_{L^p(\partial ^0B_1^+)}\right).$$
Moreover, if $p>\frac{n}{1-a^+}$ and $\alpha\in(0,1-a^+-\frac{n}{p}]\cap(0,1)$ this estimate is $\varepsilon$-stable.
\end{Theorem}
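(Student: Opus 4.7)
The plan is to adapt the blow-up and Liouville scheme of the introduction to the inhomogeneous Neumann boundary setting. The preliminary move is an even reflection across $\Sigma=\{y=0\}$, which absorbs the Neumann condition into a concentrated source: the evenly extended $u$ satisfies, in the distributional sense,
\begin{equation*}
-\mathrm{div}(|y|^a A(x,y)\nabla u) = 2\,f(x)\,\mathcal H^n\mres\Sigma \qquad \text{in } B_1,
\end{equation*}
with $A$ evenly extended. The natural rescaling $u_r(z)=u(rz)/r^\alpha$ turns the source into $r^{1-a-\alpha}f(r\,\cdot)\,\mathcal H^n\mres\Sigma$, and its $L^p(\partial^0 B^+)$ norm stays uniformly bounded precisely when $\alpha\leq 1-a-n/p$; the hypothesis $p>n/(1-a)$ makes this range non-empty. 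The Fabes--Kenig--Serapioni Holder theory (applicable since $a\in(-1,1)$) supplies the a priori local boundedness and $C^0$ compactness needed for the limit extraction.

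Arguing by contradiction, a standard Campanato-type normalisation---of the same flavour as in the analogous bulk interior Theorem---produces sequences $A_k$, $f_k$, even energy solutions $u_k$, centres $z_k$ and scales $r_k$, together with renormalised functions $v_k(z)=r_k^{-\alpha}(u_k(z_k+r_kz)-c_k)$ of unit $C^{0,\alpha}$ seminorm on the unit ball, controlled $L^\beta(|y|^a)$ norm, and rescaled Neumann data tending to zero in $L^p$ (first for any strictly subcritical $\alpha<1-a-n/p$, and then recovering the endpoint by a standard approximation). Splitting cases by whether $\mathrm{dist}(z_k,\Sigma)/r_k$ stays bounded or diverges, Fabes--Kenig--Serapioni compactness yields a locally uniform limit $v_\infty$, respectively either an entire even $\mathcal L_a$-harmonic function with a constant coefficient matrix $A_\infty$, or a classical entire harmonic function (the weight becoming locally constant). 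In both scenarios $v_\infty$ is globally $\alpha$-Holder with $\alpha<1$, and a Liouville theorem---whose admissible even homogeneity degrees are the integers $0,1,2,\dots$, with the first non-constant mode realised by the horizontal linear coordinates---forces $v_\infty$ to be constant, contradicting the unit seminorm.

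The $\varepsilon$-stable statement is obtained by repeating the whole blow-up for the family of weights $(\varepsilon_k^2+y^2)^{a/2}$, tracking the dimensionless ratio $\mu_k:=\varepsilon_k/r_k$. When $\mu_k$ stays bounded, the rescaled weight is of the same type $(\mu_\infty^2+y^2)^{a/2}$, the limit operator is again of weighted type and the preceding Liouville step applies uniformly. When $\mu_k\to\infty$, the rescaled weight converges locally uniformly to the positive constant $\mu_\infty^a$, the limit equation degenerates to the classical Laplace equation with homogeneous Neumann data on a half-space, and the classical Liouville theorem closes the argument; however, the correct scaling of $L^p$ Neumann data in this second regime is the classical $1-n/p$. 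Taking the worst of the two regimes yields the combined condition $p>n/(1-a^+)$ and $\alpha\leq 1-a^+-n/p$, the extra restriction being effective only when $a>0$. The main obstacle throughout is the Liouville classification: one must verify, via the spectral decomposition of the angular part of $\mathcal L_a$ on the unit sphere, that no even eigenfunction corresponds to an admissible homogeneity degree in the open interval $(0,1)$.
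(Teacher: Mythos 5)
Your overall scheme -- contradiction on the H\"older seminorm, blow-up at scale $r_k$ while tracking $\mathrm{dist}(z_k,\Sigma)/r_k$ and $\varepsilon_k/r_k$, vanishing of the rescaled Neumann datum, and a Liouville theorem for entire even solutions -- is the same as the paper's proof of Theorem \ref{teo:holderneumann}, which reruns the blow-up of Theorem \ref{holdereven} with the boundary datum appearing as in \eqref{tildeLakw1}; your reflection to the measure right hand side $2f\,\mathcal H^n$ on $\Sigma$ and your scaling count $r^{1-a-\alpha}f(r\cdot)$ are both correct. Two steps, however, are not justified as you state them. First, the endpoint exponent: proving the estimate for all $\alpha<1-a-\frac{n}{p}$ and then ``recovering the endpoint by a standard approximation'' does not work, since the constants you produce are not uniform as $\alpha$ increases and a critical H\"older bound cannot be reconstructed from subcritical ones. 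The correct (and easy) fix is to run the blow-up directly at the endpoint: the rescaled datum has size of order $\nu_k^{-a}r_k^{1-\alpha-n/p}/L_k$, and even when the power of $r_k$ is exactly zero the factor $1/L_k\to0$ supplied by the contradiction hypothesis makes it vanish; this is how the paper reaches $\alpha=1-a-\frac{n}{p}$.

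Second, and more seriously, your Liouville step is proposed only via the spectral decomposition of the angular part of $\mathcal L_a$ on the sphere, i.e.\ a classification of homogeneous even solutions. That route is available for the exact weight $|y|^a$ and, trivially, for a constant weight, but in the $\varepsilon$-stable blow-up there is a third regime, $\varepsilon_k\simeq r_k$ with $y_k\lesssim r_k$, in which the limit is an entire even solution of $-\mathrm{div}\big((1+y^2)^{a/2}\nabla w\big)=0$: this operator is not homogeneous, there is no decomposition into homogeneous modes, and the spherical eigenvalue argument does not apply. This is exactly why the paper proves Theorem \ref{Liouvilleeven} and Corollary \ref{Liouvilleevenbis} for all $\varepsilon\geq0$ at once, via the $\varepsilon$-uniform boundary Hardy and trace inequalities and the stability Lemma \ref{lemma:stability} (a differential inequality for the weighted spherical average $H(r)$ forcing a minimal growth rate), not via any homogeneity classification; you must either invoke those results or supply such a monotonicity argument. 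Finally, a bookkeeping slip: the passage from $\alpha\leq1-a-\frac{n}{p}$ (case $\varepsilon=0$) to $\alpha\leq1-a^+-\frac{n}{p}$ (uniform in $\varepsilon$) is a genuine loss precisely when $a<0$ -- there the gain comes from $\nu_k^{-a}=\nu_k^{|a|}\simeq r_k^{|a|}\to0$ when $\nu_k\simeq r_k$, and it is lost when $\varepsilon_k$ stays of order one -- whereas for $a>0$ the two ranges coincide; your remark that the extra restriction ``is effective only when $a>0$'' reads backwards in this respect.
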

In order to prove the $C^{1,\alpha}$ estimates, we have to restrict ourselves to the cases $a<0$.

\begin{Theorem}
Let $a\in(-1,0)$ and let $u\in H^{1,a}(B_1^+)$ be an energy solution to 
\begin{equation*}
\begin{cases}\label{pb:neumann_inhom}
-\mathcal L_au=0\; \quad &\textrm{in}\; B^+_1\\
-\lim_{y\to 0}y^a\partial_y u(x,y)=f(x)\;, &\textrm{on}\; \partial ^0B^+_1\;.
\end{cases}
\end{equation*}
Assume $A$ is $\alpha$-H\"older continuous. Then, 
\begin{itemize}
\item[i)]   if  $f\in L^p(\partial ^0B_1^+)$ with  $p>\frac{n}{-a}$, $\alpha\in(0,-a-\frac{n}{p}]$, $\beta>1$ and $r\in(0,1)$ one has: there exists a constant $c>0$ such that
$$\|u\|_{C^{1,\alpha}(B_r^+)}\leq c\left(\|u\|_{L^\beta(B_1^+,y^a\mathrm{d}z)}+\|f\|_{L^p(\partial ^0B_1^+)}\right).$$
\item[ii)] If $f\in C^{0,\alpha}(\partial ^0B_1^+)$ with $\alpha\in(0,-a]$, $r\in(0,1)$ and $\beta>1$ one has: there exists a constant $c>0$ such that
$$\|u\|_{C^{1,\alpha}(B_r^+)}\leq c\left(\|u\|_{L^\beta(B_1^+,\rho_\varepsilon^a(y)\mathrm{d}z)}+ \|f\|_{C^{0,\alpha}(\partial^0 B_1^+)}\right).$$
Moreover, this property is $\varepsilon$-stable.
\end{itemize}
\end{Theorem}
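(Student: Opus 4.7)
The plan follows the blow-up-and-Liouville strategy used throughout the paper, adapted to the half-ball Neumann setting. First I regularize by replacing $y^a$ with $\rho_\varepsilon^a(y)=(\varepsilon^2+y^2)^{a/2}$, for which the operator becomes uniformly elliptic on $B_1^+$; it suffices to prove the estimate with constant uniform in $\varepsilon\geq 0$ and recover $\varepsilon=0$ by diagonal extraction. Reflecting $u$ evenly across $\{y=0\}$ converts the Neumann problem into $-\mathcal L_a U=2f(x)\mathcal H^n\mres\{y=0\}$ on $B_1$, reducing the analysis to an interior setting with a right-hand side concentrated on $\Sigma$, of the type already handled in the earlier sections of the paper.

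The key step is to establish uniform-in-$\varepsilon$ Campanato decay: at every $z_0\in\overline{B_{1/2}^+}$ there exist affine functions $P_{z_0,r}\in\mathcal P_1$ such that the weighted excess
\begin{equation*}
E(r):=\inf_{P\in\mathcal P_1}\left(\frac{1}{r^{n+1+a^+}}\int_{B_r^+(z_0)}|u-P|^2\rho_\varepsilon^a\,dz\right)^{1/2}
\end{equation*}
decays like $r^{1+\alpha}$. Assuming this fails along sequences $(u_k,f_k,\varepsilon_k)$, I select by a sup-attainment argument scales $r_k\downarrow 0$ realizing the worst normalized excess. The rescaled sequence $\tilde u_k(w)=(u_k-P_k)(z_0+r_kw)/(\theta_k r_k^{1+\alpha})$ with $\theta_k\to\infty$ has unit weighted $L^2$-norm on $B_1$, is orthogonal to $\mathcal P_1$, and grows at most like $|w|^{1+\alpha}$ on dyadic annuli by the choice of $r_k$. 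Since $A$ is Hölder continuous the rescaled coefficients $A(z_0+r_k\,\cdot\,)$ freeze to $A(z_0)$; the effective weight $\rho_{\varepsilon_k/r_k}^a$ converges to either $|w_{n+1}|^a$ or to a constant depending on the ratio $\varepsilon_k/r_k$; and the rescaled boundary datum $\tilde f_k(x')=r_k^{-\alpha-a}f_k(x_0+r_kx')/\theta_k$ obeys $\|\tilde f_k\|_{L^p(B_1)}\lesssim r_k^{-\alpha-a-n/p}\theta_k^{-1}$ in (i) and $\|\tilde f_k\|_{C^{0,\alpha}}\lesssim r_k^{-\alpha-a}\theta_k^{-1}$ in (ii). The assumptions $\alpha\leq -a-n/p$ and $\alpha\leq -a$, together with $\theta_k\to\infty$, keep these quantities bounded and, strictly below the endpoints, drive them to zero.

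Passing to the limit yields a nontrivial even entire function $\tilde u_\infty$ on $\mathbb R^{n+1}$ solving $-\mathcal L_{a,\infty}\tilde u_\infty=g_\infty$ for a constant-coefficient operator, where $g_\infty$ vanishes in the strictly subcritical case and is a scale-invariant measure on $\{y=0\}$ at the endpoints, with growth $|\tilde u_\infty(w)|\lesssim 1+|w|^{1+\alpha}$ (note $1+\alpha<2$) and orthogonality to $\mathcal P_1$. The Liouville theorem for even solutions of $\mathcal L_{a,\infty}$-type operators with subquadratic polynomial growth, established in the earlier sections, forces $\tilde u_\infty$ to be affine in $x$; combined with the orthogonality this forces $\tilde u_\infty\equiv 0$, contradicting the unit normalization. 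The main obstacle is the critical endpoint, where $\tilde f_k$ does not vanish in the limit. I resolve it by subtracting from $u_k$ the explicit one-dimensional profile $v_0(y)=-f_k(x_0)\,y^{1-a}/(1-a)$, which realizes exactly the Neumann datum $f_k(x_0)$ and is itself $C^{1,-a}$ at $y=0$; the correction $u_k-v_0$ has boundary datum vanishing at $x_0$ with the correct Hölder or $L^p$ modulus, reducing the endpoint to the subcritical case at the expense of an explicit profile compatible with the target $C^{1,\alpha}$ regularity. The $\varepsilon$-stability in (ii) then follows because every step depends only on the uniform ellipticity of $\rho_\varepsilon^a$ and on the convergence $\rho_\varepsilon^a\to|y|^a$ in weighted $L^1_{\mathrm{loc}}$, both uniform as $\varepsilon\to 0$.
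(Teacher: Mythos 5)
Your overall mechanism is close in spirit to the paper's: a blow-up/compactness argument reduced to a Liouville theorem for even entire solutions, with the inhomogeneous Neumann datum neutralized by subtracting an explicit one-dimensional profile (the paper does this inside a H\"older-seminorm blow-up \`a la Theorem \ref{C1alphaeven} rather than a Campanato excess scheme, subtracting $f_{\varepsilon_k}(\hat x_k)\mu_{\varepsilon_k}^a$). However, there are two genuine gaps. First, your reduction of part (i) to a uniform-in-$\varepsilon$ estimate plus diagonal extraction cannot work: part (i) is not $\varepsilon$-stable, and for fixed $\varepsilon>0$ the regularized problem is uniformly elliptic with Neumann data merely in $L^p$, $p>\frac{n}{-a}$, which may be far too small to yield any $C^{1,\alpha}$ bound. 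Your own scaling of the rescaled datum, $\|\tilde f_k\|_{L^p}\lesssim r_k^{-\alpha-a-n/p}\theta_k^{-1}$, uses the homogeneity $\rho^a(r_k\,\cdot)\simeq r_k^a|\cdot|^a$, which is only valid in the regime $\varepsilon_k\lesssim r_k$; when $\varepsilon_k\gg r_k$ the effective weight at the blow-up scale is constant, the factor $r_k^{-a}$ is lost, the datum scales like $r_k^{-\alpha-n/p}\to+\infty$, and the compactness argument collapses. This is exactly why the paper proves (i) only at $\varepsilon=0$: it approximates $f$ in $L^p$ by H\"older functions, applies the $\varepsilon$-stable estimate of part (ii) (with $\varepsilon\equiv0$) to each approximation, and then runs the contradiction blow-up using that, for the limit weight $|y|^a$, the boundary persists only when $\nu_k\simeq r_k$, which restores the gain $r_k^{-a}$ and yields precisely the range $\alpha\le -a-\frac np$.

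Second, in the $\varepsilon$-stable part (ii) the profile you subtract must solve the \emph{regularized} Neumann problem. The function $y^{1-a}/(1-a)$ does not: since $-a>0$ and $\rho_\varepsilon^a(0)$ is finite for $\varepsilon>0$, one has
\begin{equation*}
\lim_{y\to0}\rho_\varepsilon^a(y)\,\partial_y\!\left(\frac{y^{1-a}}{1-a}\right)=\rho_\varepsilon^a(0)\cdot\lim_{y\to0}y^{-a}=0\neq f(x_0),
\end{equation*}
so at scales $r_k\lesssim\varepsilon_k$ the subtraction removes nothing and the critical boundary term of size $|f(x_0)|$ survives in the limit. The correct object is the paper's regularized profile $\mu_\varepsilon^a(y)=\int_0^y\rho_\varepsilon^{-a}(s)\,\mathrm{d}s$, which realizes the constant datum exactly for every $\varepsilon\ge0$ and is bounded in $C^{1,\alpha}$ uniformly in $\varepsilon$ precisely when $\alpha\le-a$ (this is also where the restriction $\alpha\le -a$ enters). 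Note moreover that in the seminorm-blow-up formulation the subtraction is needed for the whole range $\alpha\in(0,-a]$, not only at the endpoint, since $r_k^{-\alpha}/L_k$ need not vanish; in your Campanato formulation the endpoint is indeed the critical case, but it must then be handled with $\mu_\varepsilon^a$ rather than with the $\varepsilon=0$ profile. With these two corrections your scheme essentially reproduces the paper's proofs of Theorems \ref{teo:C1alphaneumannstable} and \ref{teo:C1alphaneumannunstable}.
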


These results should be compared with the known
 $C^{0,\alpha}$ and $C^{1,\alpha}$  local estimates for solutions to inhomogeneous fractional Laplace equations by Silvestre, Caffarelli-Stinga, and other authors (\cite{CafSti, Sil, BucKar,
DonKim}). Their method is essentially based on singular integrals involving Riesz potentials. It is worthwhile noticing, however that we take a different perspective; first of all we seek for regularity in all the $n+1$ variables, while the quoted papers deal with the regularity in the $x$-variable only. The presence of the special solution $y^{1-a}$ gives a necessary bound $\alpha\leq -a$ in the $C^{1,\alpha}$ estimate. Moreover, our results apply to the whole family of regularizing weights, with constants which are uniform in $\varepsilon$.
Eventually we remark that our Theorem \ref{cinf} is providing local $C^\infty$-regularity for extensions of $s$-harmonic functions (homogeneous Neumann boundary condition) in any variable (also the extension variable $y$) up to the characteristic manifold $\Sigma$.\\

\noindent{\bf Notations.}\\

\begin{tabular}{ll}
$\R^{n+1}_+=\R^n\times(0,+\infty)$ & $z=(x,y)$ with $x\in\R^n$, $y>0$ \\
$\Sigma=\{y=0\}$ & characteristic manifold \\
$B_r^+=B_r\cap\{y>0\}$ & half ball \\
$\partial^+B_r^+=S^n_+(r)=\partial B_r\cap\{y>0\}$ & upper boundary of the half ball \\
$\partial^0B_r^+=B_r\cap\{y=0\}$ & flat boundary of the half ball \\
$\rho_\varepsilon^a(y)=\left(\varepsilon^2+y^2\right)^{a/2}$ & regularized weight \\
$\mathcal L_{\rho_\varepsilon^a}u=\mathrm{div}\left(\rho_\varepsilon^a(y)A(x,y)\nabla u\right)$ & regularized operator \\
$H^{1}(\Omega,\rho_\varepsilon^a(y)\mathrm{d}z)$ & weighted Sobolev space given by the completion of $C^\infty(\overline\Omega)$ \\
$H^{1}_0(\Omega,\rho_\varepsilon^a(y)\mathrm{d}z)$ & weighted Sobolev space given by the completion of $C^\infty_c(\Omega)$ \\
$\tilde H^{1}(\Omega,\rho_\varepsilon^a(y)\mathrm{d}z)$ & weighted Sobolev space given by the completion of $C^\infty_c(\overline\Omega\setminus\Sigma)$ \\
$H^{1,a}(\Omega)=H^{1}(\Omega,|y|^a\mathrm{d}z)$ & weighted Sobolev space for $\varepsilon=0$ \\
$\partial_y^au=|y|^a\partial_yu$ & "weighted" derivative \\
$\mathcal F_au=\partial_y^{-a}\partial_y^au$ & \\
$\mathcal Gu=y^{-1}\partial_yu$ & \\
$a^+=\max\{a,0\}$ &\\
\end{tabular}

\tableofcontents


\section{Functional setting and preliminary results}\label{secs2}

\subsection{Regularized operators for approximation}
Let $\Omega\subset\mathbb{R}^{n+1}$ be non empty, open and bounded. In order to better understand the regularity of solutions to degenerate and singular problems involving the operator $\mathcal L_a$, we introduce a family of regularized operators. For $a\in\mathbb{R}$ fixed, let us consider the family in $\varepsilon\geq0$ of weights $\rho_\varepsilon^a(y):\Omega\to\mathbb{R}_+$ defined as
\begin{equation*}\label{rho}
\rho_\varepsilon^a(y):=\begin{cases}
(\varepsilon^2+y^2)^{a/2}\min\{\varepsilon^{-a},1\} &\mathrm{if \ }a\geq0,\\
(\varepsilon^2+y^2)^{a/2}\max\{\varepsilon^{-a},1\} &\mathrm{if \ }a\leq0,
\end{cases}
\end{equation*}
and that of the associated operators
\begin{equation*}
\mathcal L_{\rho_\varepsilon^a}u=\mathrm{div}\left(\rho_\varepsilon^a(y)A(x,y)\nabla u\right).
\end{equation*}
Obviously, the family $\{\rho^a_\varepsilon\}_{\varepsilon}$ satisfies the following properties:
\begin{itemize}
\item[1)] $\rho_\varepsilon^a(y)\to|y|^a$ as $\varepsilon\to0^+$ almost everywhere in $\Omega$,
\item[2)] $\rho_\varepsilon^a(y)=\rho_\varepsilon^a(-y)$,
\item[3)] for any $\varepsilon>0$, the operator $-\mathcal L_{\rho_\varepsilon^a}$ is uniformly elliptic.
\end{itemize}
Now we set the minimal assumptions on the matrix $A$ that we need through the paper
\begin{Assumption}[HA]\label{(HA)}
The matrix $A=(a_{ij})$ is $(n+1,n+1)$-dimensional and symmetric $A=A^T$, has the following symmetry with respect to $\Sigma$: 
we have
\begin{equation*}
A(x,y)=JA(x,-y)J,\qquad\qquad\mathrm{with}\qquad\qquad
J
=\left(
\begin{array}{c|c}
\mathbb I_n & 0 \\
\hline
0 & -1
\end{array}
\right).
\end{equation*}
Therefore, $A$ is continuous and satisfies the uniform ellipticity condition $\lambda_1|\xi |^2 \leq A(x,y)\xi\cdot\xi \leq \lambda_2|\xi |^2$, for all $\xi\in\R^{n+1}$, for every $(x,y)$ and some ellipticity constants $0<\lambda_1\leq\lambda_2$. Moreover, the characteristic manifold $\Sigma$ is assumed to be invariant with respect to $A$; that is, there exists a suitable scalar function $\mu$ such that there exists a positive constant such that
\begin{equation}\label{boundmu}
\frac{1}{C}\leq\mu(x,y)\leq C
\end{equation}
and with
$$A(x,0)\cdot e_{y}=\mu(x,0) e_y.$$
\end{Assumption}
Whenever the hypothesis on $A$ are not specified, we always imply Assumption (HA). From now on, through this section, whenever not otherwise specified, in order to ease the notations, we will work with $A=\mathbb I$ every time this condition is not playing a role in the proofs. We actually wish to stress the fact that our results in this section, with the sole exception of Lemma \ref{rho-1},  still hold true for uniformly elliptic matrixes of the form defined in the introduction.

\subsection{Weighted Sobolev spaces}
The natural functional settings for our problems involves some weighted Sobolev spaces.  Following the definition in \cite{Nec}, we denote by $C^\infty(\overline\Omega)$ the set of real functions $u$ defined on $\overline\Omega$ such that the derivatives $D^\alpha u$ can be continuously extended to $\overline\Omega$ for all multiindices $\alpha$. Hence, for any $a\in\mathbb{R}$, $\varepsilon\geq0$ we define the weighted Sobolev space $H^1(\Omega,\rho_\varepsilon^a(y)\mathrm{d}z)$ as the closure of $C^\infty(\overline\Omega)$ with respect to the norm
$$\|u\|_{H^{1}(\Omega,\rho_\varepsilon^a(y)\mathrm{d}z)}=\left(\int_{\Omega}\rho_\varepsilon^au^2+\int_{\Omega}\rho_\varepsilon^a|\nabla u|^2\right)^{1/2}.$$
To ease the notation we will indicate briefly with
$$H^{1,a}(\Omega)=H^{1}(\Omega,|y|^a\mathrm{d}z)=H^{1}(\Omega,\rho_0^a(y)\mathrm{d}z).$$
In the same way, we define $H^{1}_0(\Omega,\rho_\varepsilon^a(y)\mathrm{d}z)$ as the closure of $C^\infty_c(\Omega)$ with respect to the norm
$$\|u\|_{H^{1}_0(\Omega,\rho_\varepsilon^a(y)\mathrm{d}z)}=\left(\int_{\Omega}\rho_\varepsilon^a|\nabla u|^2\right)^{1/2}.$$
As it is remarked in \cite{Nec}, in the case $\varepsilon=0$, when $a\leq -1$, the functions in these spaces have zero trace on $\Sigma$ (in fact the weight $|y|^a$ is not locally integrable), while as $a>1$, the traces on $\Sigma$ have no sense in general.


Following this intuition, we will denote by $\tilde H^1(\Omega,\rho_\varepsilon^a(y)\mathrm{d}z)$ the closure of $C^\infty_c(\overline\Omega\setminus\Sigma)$ with respect to the norm $\|\cdot\|_{H^1(\Omega,\rho_\varepsilon^a(y)\mathrm{d}z)}$. In particular, when $a<1$, there is a natural isometry (on balls $B$ centered in a point on $\Sigma$ of any radius)
$$T_\varepsilon^a: \tilde H^{1}(B,\rho_\varepsilon^a(y)\mathrm{d}z)\to\tilde H^{1}(B): u\mapsto v=\sqrt{\rho_\varepsilon^a} u,$$
where  $\tilde H^{1}(B)$ is endowed with with the equivalent norm with squared expression
\[
Q_\varepsilon(v)=\int_B|\nabla v|^2 +\left[ \left(\dfrac{\partial_y \rho_\varepsilon^a}{2\rho_\varepsilon^a}\right)^2+\partial_y\left(\dfrac{\partial_y \rho_\varepsilon^a}{2\rho_\varepsilon^a}\right)\right]v^2-\int_{	\partial B}\dfrac{\partial_y \rho_\varepsilon^a}{2\rho_\varepsilon^a}yv^2\;,
\]
(this is done in details in \cite{SirTerVit2}). We remark that both in the super singular and super degenerate cases, that is $a\in(-\infty,-1]\cup[1,+\infty)$ and $\varepsilon=0$, when the weight is taken outside the $A_2$ Muckenhoup class,  one has identity of the spaces
\begin{equation}\label{tildeH1=H1}
H^{1,a}(\Omega)=\tilde H^{1,a}(\Omega)\,.
\end{equation}
This happens for very opposite reasons: roughly speaking, when $a\leq-1$ then the singularity is so strong to force the function to annihiliate on $\Sigma$ (we will call this case the super singular case). Instead, when $a\geq1$, then the strong degeneracy leaves enough freedom to the function to allow it to be very irregular through $\Sigma$ (we will call this case the super degenerate case). In the latter case, $\Sigma$ has vanishing capacity with respect to the energy $\int |y|^a |\nabla u|^2$.  In other words, we are claiming the following
\begin{Proposition}\label{prop:super}
Let $a\in(-\infty,-1]\cup[1,+\infty)$. Then the space $C^\infty_c(\overline\Omega\setminus\Sigma)$ is dense in $H^{1,a}(\Omega)$. 
\end{Proposition}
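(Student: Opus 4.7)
My plan is to approximate every $u\in C^\infty(\overline\Omega)\cap H^{1,a}(\Omega)$ by a sequence in $C^\infty_c(\overline\Omega\setminus\Sigma)$; since $H^{1,a}(\Omega)$ is by definition the closure of $C^\infty(\overline\Omega)$, this is enough. The candidate is $u_\delta:=\eta_\delta(y)\,u$, where $\eta_\delta\in C^\infty(\mathbb{R})$ is an even cutoff equal to $0$ in a neighbourhood of the origin and to $1$ outside a slightly larger one, with a profile to be chosen according to the regime of $a$. Since $\Omega$ is bounded, $u_\delta$ then lies in $C^\infty_c(\overline\Omega\setminus\Sigma)$. Dominated convergence immediately handles $u_\delta\to u$ and $\eta_\delta\nabla u\to\nabla u$ in $L^2(\Omega,|y|^a\,dz)$, so everything reduces to proving
\[
I_\delta := \int_\Omega |y|^a u^2 \,(\eta_\delta'(y))^2\,dz \;\longrightarrow\; 0 \quad\text{as } \delta\to 0 .
\]

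In the super-degenerate regime $a\ge 1$ the smoothness of $u$ gives $I_\delta\le C\|u\|_\infty^2\int|y|^a(\eta_\delta')^2\,dy$ after integrating out the $x$ variables over the bounded projection of $\Omega$, so the question reduces to showing that the $1$-capacity of the singleton $\{0\}\subset\mathbb{R}$ with respect to the weight $|y|^a$ is zero. For $a>1$ I would take a smoothed version of the power profile $\eta_\delta(y)\sim (|y|/\delta)^{a-1}$ on $|y|<\delta$, which by direct computation yields $\int|y|^a(\eta_\delta')^2\,dy\lesssim \delta^{a-1}\to 0$. The borderline $a=1$ is delicate since this profile degenerates: here I would switch to a logarithmic cutoff $\eta_\delta(y)\propto \log(|y|/\delta)$ on a window $[\delta,\delta_0]$ with $\delta_0$ fixed small, obtaining $\int y\,(\eta_\delta')^2\,dy\sim 1/\log(\delta_0/\delta)\to 0$.

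In the super-singular regime $a\le-1$ the roles reverse and the smallness must come from $u$ itself. The key observation is that finiteness of $\int|y|^a|\partial_y u|^2\,dz$ for smooth $u$ forces the Taylor coefficients $\partial_y^j u(x,0)$ to vanish for every $j=0,1,\dots,k-1$, where $k$ is the smallest integer with $a+2k>1$ (so $k=2$ at $a=-1$, increasing with $-a$). Hence $|u(x,y)|\le C|y|^k$ uniformly near $\Sigma$, and the standard linear cutoff with $|\eta_\delta'|\le C/\delta$ suffices: on the slab $\{\delta\le|y|\le 2\delta\}$ of volume $\lesssim\delta$,
\[
I_\delta \;\lesssim\; \delta^{-2}\int_{\delta\le|y|\le 2\delta}|y|^{a+2k}\,dz \;\lesssim\; \delta^{a+2k-1}\to 0 .
\]
The main difficulties I anticipate are concentrated at the two borderline values $a=\pm 1$: at $a=1$ the naive linear cutoff gives an $O(1)$ contribution and the logarithmic substitute is indispensable; at $a=-1$ the minimal vanishing order $k=2$ is just barely sufficient, and one must carefully deduce both $u(x,0)=0$ and $\partial_y u(x,0)=0$ from the integrability of the $|y|^{-1}$-weighted energies (using the continuity of $u$ and $\partial_y u$). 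Away from these, the capacity/Taylor argument is uniform and robust across both branches of the range of $a$.
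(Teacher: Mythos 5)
Your argument is correct in substance, but on the super-singular branch it takes a genuinely different route from the paper. For $a\ge 1$ the two proofs essentially coincide: the paper uses the logarithmic cutoff $f_\delta(y)=\log(y/\delta^2)/\log(1/\delta)$ for \emph{all} $a\ge 1$ (your power-profile cutoff for $a>1$ is an equivalent variant, and indeed even a plain linear cutoff works there), the only delicate value being $a=1$, exactly as you say. For $a\le -1$, however, the paper does not cut off in $y$ at all: it truncates in the \emph{values} of $u$, setting $u_\varepsilon=\varepsilon\,\eta(u/\varepsilon)$ with $\eta$ vanishing on $[-1,1]$, so that $u_\varepsilon$ vanishes on a neighbourhood of $\Sigma$ (using only $u=0$ on $\Sigma$, forced by non-integrability of $|y|^a$) and $\nabla u_\varepsilon-\nabla u$ is supported in $\{|u|\le 2\varepsilon\}$; dominated convergence then finishes the proof with no rate and no Taylor analysis. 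Your $y$-cutoff instead requires the inductive vanishing of the traces $\partial_y^j u(x,0)$ for $j<k$ with $a+2k>1$, which is true but must be proved carefully: note that the gradient term alone cannot give $j=0$ (constants have zero gradient), so $u(x,0)=0$ has to come from the zero-order term $\int|y|^a u^2$, while $j=1,\dots,k-1$ come from $\int|y|^a|\partial_y u|^2$ by the local non-integrability argument; also, expanding $u$ in $y$ from $\Sigma$ tacitly uses that points of $\Omega$ near $\Sigma$ can be joined vertically to $\Sigma$ inside $\overline\Omega$ (harmless for balls centred on $\Sigma$, which is the relevant case, and the paper is equally informal about the geometry of $\Omega$). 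What each approach buys: the paper's value-truncation is softer and insensitive to the borderline exponents, working verbatim for every $a\le -1$; your version yields an explicit quantitative rate $\delta^{a+2k-1}$ and makes transparent the capacity mechanism on both branches, at the price of the Taylor/induction step you only sketch.
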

\proof 
Let $a\leq-1$. Then let us fix $u\in C^\infty(\overline\Omega)$ such that $\|u\|_{H^{1,a}(\Omega)}<+\infty$. Obviously $u=0$ in $\Sigma$. Now, let us consider a monotone nondecreasing function $\eta\in C^\infty(\R)$ such that $\eta(t)=0$ for $|t|\leq1$ and $\eta(t)=t$ for $|t|\geq2$. Hence, for any $\varepsilon>0$, we can define
$$u_\varepsilon=\varepsilon\eta(u/\varepsilon).$$
It holds that $u_\varepsilon=0$ in $\{|u|\leq\varepsilon\}$ and $u_\varepsilon=u$ in $\{|u|\geq2\varepsilon\}$. Nevertheless, $\nabla u_\varepsilon=\eta'(u/\varepsilon)\nabla u$, with $\nabla u_\varepsilon=0$ in $\{|u|\leq\varepsilon\}$ and $\nabla u_\varepsilon=\nabla u$ in $\{|u|\geq2\varepsilon\}$. Hence,
$$\int_{\Omega}|y|^a|\nabla u_\varepsilon-\nabla u|^2=\int_{\Omega}|y|^a(\eta'(u/\varepsilon)-1)^2|\nabla u|^2\leq c\int_{\Omega\cap\{|u|\leq2\varepsilon\}}|y|^a|\nabla u|^2\to0.$$
Moreover,
$$\int_{\Omega}|y|^a(u_\varepsilon-u)^2=\int_{\Omega}|y|^a(\varepsilon\eta(u/\varepsilon)-1)^2u^2\leq c\int_{\Omega\cap\{|u|\leq2\varepsilon\}}|y|^au^2\to0.$$
Let now $a\geq1$. Then let us fix $u\in C^\infty(\overline\Omega)$ such that $\|u\|_{H^{1,a}(\Omega)}<+\infty$. Let us consider, for $0<\delta<1$ the function
$$f_{\delta}(y)=\begin{cases}
0 & \mbox{ on } \{(x,y) \in \Omega \colon \abs{y}\leq \delta^2\}, \\
\log\frac{y}{\delta^2}/\log\frac{1}{\delta} & \mbox{ on } \{(x,y) \in\Omega \colon \delta^2\leq\abs{y}\leq \delta\}, \\
1 & \mbox{ on } \{(x,y) \in \Omega\colon \abs{y}\geq\delta\}.
\end{cases}$$
Hence it is easy to see that $u_\delta=f_\delta u\to u$ strongly in $H^{1,a}(\Omega)$ as $\delta\to0$. Eventually we remark that one can replace $f_\delta$ with a function with the same properties which is $C^\infty(\overline\Omega)$.
\endproof
\begin{remark}
In literature there is another well known equivalent way to define the Sobolev spaces, which works whenever $\varepsilon >0$ and $a\in\R$, given by
$$W^{1,2}(\Omega,\rho_\varepsilon^a(y)\mathrm{d}z)=\left\{u\in L^2(\Omega,\rho_\varepsilon^a(y)\mathrm{d}z) \ \mathrm{having \ weak \ gradient}: \ \|u\|_{H^1(\Omega,\rho_\varepsilon^a(y)\mathrm{d}z)}<+\infty\right\}.$$
We shall be concerned, at some point, with the so called (H=W) property: 
\begin{equation}\label{H=W}
(H=W) \qquad H^{1}(\Omega,\rho_\varepsilon^a(y)\mathrm{d}z)=W^{1,2}(\Omega,\rho_\varepsilon^a(y)\mathrm{d}z).
\end{equation}

It is now established that also when $\varepsilon=0$ and $a\in(-1,1)$, then property \eqref{H=W} still holds (see e.g. \cite{AmbPinSpe, JeiKosShaTys}).
When $\varepsilon=0$, in the super singular case $a\leq-1$, by \eqref{tildeH1=H1} and the isometry $T_\varepsilon^a$ we will have a useful tool to work without property \eqref{H=W}. Moreover, when $\varepsilon=0$, in the super degenerate case $a\geq1$, we will see that in fact one can work without condition \eqref{H=W} by using an easy inclusion argument of spaces. 
\end{remark}
To end this section, we finally remark that we have the obvious embeddings, where we have fixed $a\in\mathbb{R}$, and $\varepsilon>0$,
\begin{equation*}
H^1(\Omega,\rho_\varepsilon^a(y)\mathrm{d}z)\subseteq H^{1,a^+}(\Omega),
\end{equation*}
with a constant of immersion $c>0$ not depending on $\varepsilon$ (where $a^+:=\max\{a,0\}$). This is due to the fact that on $\Omega$, if $a\geq 0$, for $0<\varepsilon_1<\varepsilon_2<+\infty$ there holds $|y|^a\leq\rho_{\varepsilon_1}^a(y)\leq\rho_{\varepsilon_2}^a(y)<(1+\mathrm{diam}(\Omega))^{a/2},$
whereas if $a\leq 0$, for $0<\varepsilon_1<\varepsilon_2<+\infty$, we have $(1+\mathrm{diam}(\Omega))^{a/2}<\rho_{\varepsilon_2}^a(y)\leq\rho_{\varepsilon_1}^a(y)\leq |y|^a$.

\subsection{Sobolev embeddings}\label{sobo}
Sobolev inequalities for weighted Sobolev spaces have been deeply studied in different contexts and by many authors (see for example \cite{CabRos,FabKenSer,Haj}) as they play a key role in the regularity theory for elliptic PDEs.  
We are concerned with a class of weighted Sobolev inequalities - not necessarily with the best constant nor with the best exponent -  for the class of approximating weights $\rho_\varepsilon^a$ but with constants which are uniform as $\varepsilon\to0$. For this aim we can use the known results of \cite{Haj} about Sobolev spaces involving general measures, where, in our context the measure is naturally defined as  $\mathrm{d}\mu=\rho_\varepsilon^a(y)\mathrm{d}z$. The basic requirement is a local growth condition on the measure of balls, which reflects in a local uniform in $\varepsilon$ integrability condition of the weights. When $a\in(-1,+\infty)$, then a bounded domain $\Omega\subset\R^{n+1}$ has $\mu(\Omega)<+\infty$ for any $\varepsilon\geq0$. According with \cite{Haj},  a domain $\Omega$ is said to be $d$-regular with respect to $\mu$ if there exists $b>0$ such that for any $z\in\Omega$, for any $r<\mathrm{diam}(\Omega)$,
$$\mu(B_r(z))\geq br^d.$$
In our context, we may assume up to rescalings, that $\mathrm{diam}(\Omega)\leq 1$. If $a\in(-1,+\infty)$, then any bounded $\Omega$ is $d$-regular with respect to $\mu$ and the effective dimension
$$d=n+1+a^+=n^*(a).$$
We remark that the constant $b>0$ can be taken independent of $\varepsilon\geq0$. Moreover, since we are interested in Sobolev inqualities with $p=2$, we have the following Sobolev embedding.
\begin{Theorem}\label{sobemb1}
Let $a\in(-1,+\infty)$, $n\geq2$, $\varepsilon\geq0$ and $u\in C^1_c(\Omega)$. Then there exists a constant which does not depend on $\varepsilon\geq0$ such that
\begin{equation*}\label{sobo>-1}
\left(\int_{\Omega}\rho_\varepsilon^a|u|^{2^*(a)}\right)^{2/2^*(a)}\leq c(d,b,p,\Omega)\int_{\Omega}\rho_\varepsilon^a|\nabla u|^2,
\end{equation*}
where the optimal embedding exponent is
\begin{equation*}\label{2*a}
2^*(a)=\frac{2(n+1+a^+)}{n+a^+-1}=\frac{2n^*(a)}{n^*(a)-2}.
\end{equation*}
When $n=1$ and $a^+>0$ the same inequality holds. When $n=1$ and $a^+=0$ then the embedding holds in any weighted $L^p(\Omega,\rho_\varepsilon^a(y)\mathrm{d}z)$ for $p>1$.
\end{Theorem}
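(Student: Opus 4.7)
The plan is to reduce the statement to the general weighted Sobolev theorem of Haj\l{}asz (as referenced in the paper), which applies to any measure space satisfying a local volume growth (``$d$-regularity'') hypothesis. Concretely, for the measure $d\mu_\varepsilon := \rho_\varepsilon^a(y)\,dz$ on $\Omega$, I would verify that with $d=n+1+a^+=n^*(a)$ there exists a constant $b>0$ \emph{independent of} $\varepsilon\ge 0$ such that
\begin{equation*}
\mu_\varepsilon(B_r(z_0))\ \ge\ b\,r^{d}\qquad \forall\,z_0\in\Omega,\ \forall\,r\le\mathrm{diam}(\Omega)\le 1.
\end{equation*}
Once the $d$-regularity is in place with a uniform constant, the Sobolev inequality with exponent $2^*(a)=2d/(d-2)$ follows by applying Haj\l{}asz's abstract result to the measures $\mu_\varepsilon$, yielding a constant $c=c(d,b,\Omega)$ that is automatically uniform in $\varepsilon$.

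For the uniform lower bound on $\mu_\varepsilon(B_r(z_0))$ I would split into the two natural regimes. When $a\ge 0$, for $\varepsilon\le 1$ the normalization gives $\rho_\varepsilon^a(y)=(\varepsilon^2+y^2)^{a/2}\ge|y|^a$, and an elementary integration in polar coordinates, treating separately the cases $z_0\in\Sigma$ and $z_0\notin\Sigma$, gives $\int_{B_r(z_0)}|y|^a\,dz\ge c\,r^{n+1+a}$. For $\varepsilon\ge 1$ one has $\rho_\varepsilon^a(y)=(1+y^2/\varepsilon^2)^{a/2}\ge 1$ on $B_r(z_0)$ (since $r\le 1\le\varepsilon$), so $\mu_\varepsilon(B_r)\ge c\,r^{n+1}\ge c\,r^{n+1+a}$ because $r\le 1$. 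When $a\in(-1,0)$ one has $d=n+1$: for $\varepsilon\le 1$, $\rho_\varepsilon^a(y)=(\varepsilon^2+y^2)^{a/2}\ge 2^{a/2}$ on $B_r(z_0)$, while for $\varepsilon\ge 1$ the factor $\max\{\varepsilon^{-a},1\}=\varepsilon^{-a}$ again renormalises to $(1+y^2/\varepsilon^2)^{a/2}\ge 2^{a/2}$; in either case $\mu_\varepsilon(B_r)\ge c\,r^{n+1}$. Pooling the four sub-cases produces the required uniform constant $b$.

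The borderline case $n=1$ with $a^+=0$ corresponds to the critical Sobolev dimension $d=2$, where one does not get $L^\infty$-embedding but embedding into every $L^p$, $p>1$; I would deduce it from the preceding argument by a standard truncation/Moser-type iteration on the level sets of $|u|$, or by combining Haj\l{}asz's borderline statement with the uniform doubling of $\mu_\varepsilon$ verified above. The main obstacle in the whole argument is the uniformity in $\varepsilon$: one must carefully track how the normalising prefactor $\min\{\varepsilon^{-a},1\}$ or $\max\{\varepsilon^{-a},1\}$ interacts with the weight $(\varepsilon^2+y^2)^{a/2}$ for balls centred near the characteristic manifold $\Sigma$, since the limiting weight $|y|^a$ and the approximations $\rho_\varepsilon^a$ behave qualitatively differently there; the case distinction $\varepsilon\lessgtr 1$ is exactly what makes the two behaviours compatible.
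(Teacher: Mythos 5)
Your proposal follows essentially the same route as the paper: the paper also deduces this theorem from Haj\l{}asz's general Sobolev inequality for measures satisfying the $d$-regularity condition, after observing that $\mathrm{d}\mu_\varepsilon=\rho_\varepsilon^a(y)\,\mathrm{d}z$ is $d$-regular with $d=n+1+a^+=n^*(a)$ and with a constant $b$ independent of $\varepsilon\geq0$. Your case-by-case verification of the uniform lower bound $\mu_\varepsilon(B_r)\geq b\,r^{d}$ (splitting $a\gtrless 0$ and $\varepsilon\lessgtr 1$, using the normalizing factors $\min\{\varepsilon^{-a},1\}$, $\max\{\varepsilon^{-a},1\}$) is correct and simply makes explicit what the paper asserts in the paragraph preceding the statement.
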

The inequality extends by density for any function in $H^{1}_0(\Omega,\rho_\varepsilon^a(y)\mathrm{d}z)$.\\\\
Moreover, for $a\in(-\infty,1)$ using the already mentioned isometry $T_\varepsilon^a:\tilde H^1(B,\rho_\varepsilon^a(y)\mathrm{d}z)\to\tilde H^1(B)$ (see details in \cite{SirTerVit2}), where $B$ is a ball containing $\Omega$, we easily obtain
\begin{Theorem}\label{sobemb2}
Let $a\in(-\infty,1)$, $n\geq 2$, $\varepsilon\geq0$ and $u\in C^\infty_c(\Omega\setminus\Sigma)$. Then there exists a constant which does not depend on $\varepsilon\geq0$ such that
\begin{equation}\label{sobo<-10}
\left(\int_{\Omega}(\rho_\varepsilon^a)^{2^*/2}|u|^{2^*}\right)^{2/2^*}\leq c(n,a,\Omega)\int_{\Omega}\rho_\varepsilon^a|\nabla u|^2,
\end{equation}
where
$$2^*=\frac{2(n+1)}{n-1}.$$
When $n=1$ the above inequality holds with $2^*$ replaced with any $p>1$ and $c=c(n,a,p,\Omega)$.
\end{Theorem}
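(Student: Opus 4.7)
The plan is to reduce \eqref{sobo<-10} to the standard unweighted Sobolev inequality via the isometry $T_\varepsilon^a : \tilde H^1(B,\rho_\varepsilon^a(y)\mathrm{d}z) \to \tilde H^1(B)$ introduced earlier. Given $u \in C^\infty_c(\Omega\setminus\Sigma)$, set $v := T_\varepsilon^a u = \sqrt{\rho_\varepsilon^a}\,u$. Since $u$ has compact support in $\Omega\setminus\Sigma$ and $\rho_\varepsilon^a$ is smooth on $\{y\neq 0\}$ (or on all of $\R^{n+1}$ when $\varepsilon>0$), we still have $v \in C^\infty_c(\overline{B}\setminus\Sigma)$ once we extend by zero, and in particular $v$ vanishes near $\partial B$.

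By the definition of the isometry we have the identity
\[
\int_\Omega \rho_\varepsilon^a |\nabla u|^2 \;=\; Q_\varepsilon(v),
\]
where the boundary term in $Q_\varepsilon(v)$ is zero because $v$ vanishes in a neighbourhood of $\partial B$. The key step will then be to absorb the zero-order potential
\[
V_\varepsilon(y) := \left(\frac{\partial_y \rho_\varepsilon^a}{2\rho_\varepsilon^a}\right)^2 + \partial_y\!\left(\frac{\partial_y \rho_\varepsilon^a}{2\rho_\varepsilon^a}\right)
\]
into the Dirichlet term. A direct computation gives $V_\varepsilon(y)=\frac{a(a-2)}{4}\,\frac{y^2}{(\varepsilon^2+y^2)^2}+\frac{a}{2}\,\frac{\varepsilon^2-y^2}{(\varepsilon^2+y^2)^2}$, which is dominated by $C/y^2$ uniformly in $\varepsilon\ge 0$. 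Under the condition $a<1$ one has $(1-a)^2/4>0$, which is exactly the threshold needed to invoke the one-dimensional Hardy inequality
\[
\int_\R \frac{v^2}{y^2}\,\mathrm{d}y \;\le\; \frac{4}{(1)^2}\int_\R |\partial_y v|^2\,\mathrm{d}y
\]
applied slice-wise in $y$ for $v$ that vanishes on $\Sigma$ in the trace sense (which holds in $\tilde H^1(B)$ since $v$ is the limit of functions compactly supported away from $\Sigma$). This yields the uniform two-sided bound
\[
c_1 \int_B |\nabla v|^2 \;\le\; Q_\varepsilon(v) \;\le\; c_2 \int_B |\nabla v|^2,
\]
with constants independent of $\varepsilon\geq 0$.

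Once the norm equivalence is in place, the classical Sobolev embedding in $\R^{n+1}$ applied to $v\in H^1_0(B)$ gives, for $n\geq 2$,
\[
\left(\int_B |v|^{2^*}\right)^{2/2^*} \;\le\; c(n,\Omega) \int_B |\nabla v|^2 \;\le\; c\, Q_\varepsilon(v) \;=\; c\int_\Omega \rho_\varepsilon^a |\nabla u|^2,
\]
with $2^*=2(n+1)/(n-1)$. Since $|v|^{2^*}=(\rho_\varepsilon^a)^{2^*/2}|u|^{2^*}$, this is exactly \eqref{sobo<-10}. For $n=1$ the same argument works after replacing the Sobolev embedding of $H^1_0(B)\subset\R^2$ with the embedding into $L^p$ for arbitrary $p>1$, producing the announced statement with $c=c(n,a,p,\Omega)$.

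The main obstacle is the $\varepsilon$-uniformity in the norm equivalence between $Q_\varepsilon$ and the standard Dirichlet form: one must control the singular potential $V_\varepsilon$ uniformly as $\varepsilon\to 0$, and this is precisely the point where the restriction $a<1$ becomes essential through the Hardy constant. All of this is the content referred to in \cite{SirTerVit2}, and the rest of the argument is a routine consequence of the unweighted Sobolev inequality.
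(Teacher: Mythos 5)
Your argument is essentially the paper's own proof: the paper disposes of Theorem \ref{sobemb2} in one line, transferring the inequality through the isometry $T_\varepsilon^a$ to $\tilde H^1(B)$ equipped with $Q_\varepsilon$, invoking the uniform-in-$\varepsilon$ equivalence of $Q_\varepsilon$ with the standard Dirichlet norm (details delegated to \cite{SirTerVit2}), and applying the classical Sobolev embedding; you are filling in exactly those details, and the identity $|v|^{2^*}=(\rho_\varepsilon^a)^{2^*/2}|u|^{2^*}$ closes the argument as you say.

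Two small corrections to your write-up of the norm equivalence. First, the potential is $V_\varepsilon(y)=\frac{a(a-2)}{4}\frac{y^2}{(\varepsilon^2+y^2)^2}+\frac{a}{2}\frac{\varepsilon^2}{(\varepsilon^2+y^2)^2}$; your formula keeps $\frac{a}{2}\frac{\varepsilon^2-y^2}{(\varepsilon^2+y^2)^2}$ together with the coefficient $\frac{a(a-2)}{4}$ and thus subtracts $\frac{a}{2}y^2$ twice --- taken literally, at $\varepsilon=0$ it gives the coefficient $\frac{a(a-4)}{4}$, which drops below the Hardy threshold $-\frac14$ for $a$ close to $1$, and the absorption would fail. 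Second, the crude bound $|V_\varepsilon|\le C/y^2$ is not by itself enough for the lower bound $c_1\int_B|\nabla v|^2\le Q_\varepsilon(v)$: what is needed (and what the corrected formula yields) is the one-sided bound $V_\varepsilon(y)\ge -\frac{\theta(a)}{4y^2}$ uniformly in $\varepsilon\ge0$ with $\theta(a)<1$ whenever $a<1$; indeed, setting $t=y^2/(\varepsilon^2+y^2)\in[0,1]$ one finds $4y^2V_\varepsilon(y)=at\left[(a-4)t+2\right]\ge-\max\left\{1-(1-a)^2,\ \tfrac{|a|}{4+|a|}\right\}$. With this, Hardy's inequality with constant $4$ (legitimate since $v$ is supported away from $\Sigma$) absorbs the negative part with room to spare, and the rest of your proof goes through.
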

The inequality extends by density for any function in $\tilde H^{1}_0(\Omega,\rho_\varepsilon^a(y)\mathrm{d}z)$. Moreover, using the validity of \eqref{tildeH1=H1} when $a\leq-1$ and $\varepsilon=0$, this provides a Sobolev embedding in the non locally integrable case. We remark that inequality in \eqref{sobo<-10} implies, taking $\mathrm{diam}(\Omega)\leq 1$, the weaker inequality
\begin{equation*}\label{sobo<-1}
\left(\int_{\Omega}\rho_\varepsilon^a|u|^{2^*}\right)^{2/2^*}\leq c(n,a,\Omega)\int_{\Omega}\rho_\varepsilon^a|\nabla u|^2.
\end{equation*}
\begin{remark}
We remark that when $a\geq0$, then the constant in Theorem \ref{sobemb1} can be chosen independent from $\Omega$ (the inequality is scale invariant), while when $a<0$ this is not possible.
\end{remark}

\subsection{Energy solutions}
In the light of the Sobolev embeddings in the previous section, we now are in a position to give a notion of energy solution to the elliptic equation also in the case when $\varepsilon=0$, 
\begin{equation}\label{La}
-\mathcal L_au=|y|^af\qquad\mathrm{in \ }B_1.
\end{equation}
We say that $u\in H^{1,a}(B_1)$ is an energy solution to \eqref{La} if
\begin{equation}\label{variationLa}
\int_{B_1}|y|^aA(x,y)\nabla u\cdot\nabla\phi=\int_{B_1}|y|^af\phi,\qquad\forall\phi\in H^{1,a}_0(B_1).
\end{equation}
By $\mathcal L_a$-harmonic functions we will mean energy solutions $u\in H^{1,a}(B_1)$ to
\begin{equation}\label{Larm}
-\mathcal L_au=0\qquad\mathrm{in} \ B_1.
\end{equation}
Similarly, we can give a natural notion of energy solutions to
\begin{equation}\label{Lafks}
-\mathcal L_au=\mathrm{div}\left(|y|^aF\right)\qquad\mathrm{in \ }B_1.
\end{equation}
We say that $u\in H^{1,a}(B_1)$ is an energy solution to \eqref{Lafks} if
\begin{equation}\label{variationLafks}
-\int_{B_1}|y|^aA(x,y)\nabla u\cdot\nabla\phi=\int_{B_1} |y|^aF\cdot\nabla\phi,\qquad\forall\phi\in H^{1,a}_0(B_1).
\end{equation}
We remark that the condition in \eqref{variationLa} and \eqref{variationLafks} can be equivalently expressed testing with any $\phi\in C^\infty_c(B_1)$ if $a\in(-1,+\infty)$ and with any $\phi\in C^\infty_c(B_1\setminus\Sigma)$ if $a\in(-\infty,-1]\cup[1,+\infty)$.
In order to give a sense to energy solutions to \eqref{La} and \eqref{Lafks} we need the following minimal hypothesis on the right hand sides.
\begin{Assumption}[Hf]
Let $a\in(-1,+\infty)$. Then if $n\geq2$ or $n=1$ and $a^+>0$, the forcing term $f$ in \eqref{La} belongs to $L^p(B_1,|y|^a\mathrm{d}z)$ with $p\geq(2^*(a))'$ the conjugate exponent of $2^*(a)$; that is,
$$(2^*(a))'=\frac{2(n+1+a^+)}{n+a^++3}.$$
If $n=1$ and $a^+=0$ then $f\in L^p(B_1,|y|^a\mathrm{d}z)$ with $p>1$.\\
Let $a\in(-\infty,-1]$. Then if $n\geq2$, the condition on the forcing term is $|y|^{a/2}f\in L^p(B_1)$ with $p\geq(2^*(a))'=(2^*)'$. If $n=1$, then any $p>1$ is allowed.
\end{Assumption}
\begin{Assumption}[HF]
Let $a\in(-1,+\infty)$. The condition on the field $F=(f_1,...,f_{n+1})$ in \eqref{Lafks} is $F\in L^p(B_1,|y|^a\mathrm{d}z)$ with $p\geq2$. Let $a\in(-\infty,-1]$. Then the condition is $|y|^{a/2}F\in L^p(B_1)$ with $p\geq2$.
\end{Assumption}

\begin{remark}
These minimal regularity assumptions,  already partially mentioned in \cite{FabKenSer}, ensure  the right hand sides of  \eqref{La} and \eqref{Lafks} to be in the duals of the appropriate energy spaces. As a consequence,  let us fix $\overline u\in H^{1,a}(B_1)$. Then, there exists a unique energy solution $u\in H^{1,a}(B_1)$ to \eqref{La} or to \eqref{Lafks} such that $u-\overline u\in H^{1,a}_0(B_1)$. 
\end{remark}

\subsection{Boundary conditions, even and odd solutions}
Throughout this paper we shall often add to the differential equation either symmetries or, what's the same in our mind, some boundary conditions to equations \eqref{La} and \eqref {Lafks}. 

\begin{Definition}\label{evenoddsolutions}
Let $a\in\R$. We say that a function $u\in H^{1,a}(B_1)$ which is an energy solution to either \eqref{La} or to \eqref{Lafks} in $B_1$ is even in $y$ if $u(x,y)=u(x,-y)$ for almost every $z\in B_1$. We say that a function $u\in H^{1,a}(B_1)$ which is an energy solution to \eqref{La} or to \eqref{Lafks} in $B_1$ is odd in $y$ if $u(x,y)=-u(x,-y)$ for almost every $z\in B_1$.
\end{Definition}
\begin{remarks}
(i) We remark that if $u\in H^{1,a}(B_1)$ is an even/odd energy solution to \eqref{La} in $B_1$, then the forcing term $f$ must be even/odd in $y$. If $u\in H^{1,a}(B_1)$ is an even/odd energy solution to \eqref{Lafks} in $B_1$, then the divergence of the field $F=(f_1,\dots,f_n,f_{n+1})$ must be even/odd in $y$.\\
(ii) With a little abuse, we shall systematically associate the Neumann boundary condition 
$$\lim_{y\to 0} y^a\partial_y u=0, \; \textrm{on}\; \partial ^0 B^+_1$$ 
with even solutions (resp. Dirichlet  b.c. $\lim_{y\to 0} u=0$ on $\partial ^0 B^+_1$), meaning that the even (resp. odd)-in-$y$ extension of $u$ from the upper to the lower half ball is still an energy solution to the differential equation in the whole ball. This is consistent with the uniformly elliptic case of the regularized weights $\varepsilon>0$.\\
(iii) As already remarked in Proposition \ref{prop:super}, when $a\in(-\infty,-1]\cup[1,+\infty)$, the space $C_c^\infty(\overline B_1\setminus \Sigma)$ is dense in the energy space $H^{1,a}(B_1)$. As a consequence, the two involutions
\[
I_{\pm}: C_c^\infty(\overline B_1\setminus \Sigma)\to C_c^\infty(\overline B_1\setminus \Sigma): u\mapsto \begin{cases}u(z)\; &\textrm{if}\; z\in B^\pm_1\\ 0 &\textrm{if}\; z\in B^\mp_1\;\end{cases}
\]
extend continuously to the energy space, giving rise to an orthogonal splitting $$H^{1,a}(B_1)=H^{1,a}_+(B_1)\oplus H^{1,a}_{-}(B_1),$$ where $H^{1,a}_\pm(B_1)$ are the closures of $I_{\pm}(C_c^\infty(\overline B_1\setminus \Sigma))$ with respect to the $H^{1,a}$-norm. Thus it is immediate to check that, given an energy solution to either  \eqref{La} or to \eqref{Lafks} in $B_1$, also $I_\pm(u)$ are solutions to the similar equations wih $I_\pm(f)$  in replacement of $f$ (resp. $I_\pm(F)$ replacing $F$, with a some caution when taking the divergence which has to be intended in the distributional sense). 
\end{remarks}

\subsection{The limits of regularized problems}
This subsection is devoted to establish a first set of links between  solutions to the regularized problems (when $\varepsilon>0$) and solutions to the limit problem ($\varepsilon=0$). As a first step, we wish to show that, under the most natural assumptions on the right hand sides and the solutions, the limits are indeed energy solutions to the singular/degenerate problem. 
\begin{Lemma}\label{Good->Energy}
Let $a\in\R$. Let $\{u_\varepsilon\}$ for $\varepsilon\to0$ be family of solutions to either
\begin{equation}\label{eq:f}
-\mathrm{div}\left(\rho_\varepsilon^aA\nabla u_\varepsilon\right)=\rho_\varepsilon^af_\varepsilon\qquad\mathrm{in \ } B_1
\end{equation}
or to
\begin{equation}\label{eq:divF}
-\mathrm{div}\left(\rho_\varepsilon^aA\nabla u_\varepsilon\right)=\mathrm{div}\left(\rho_\varepsilon^aF_\varepsilon\right)\qquad\mathrm{in \ } B_1
\end{equation}
such that, for all $\varepsilon$
\begin{equation}\label{eq:H1bound}
\|u_\varepsilon\|_{H^1(B_1,\rho_\varepsilon^a\mathrm{d}z)}\leq c.
\end{equation}
Let moreover, respectively in the two cases, $f_\varepsilon\to f$ in $L^p_{\mathrm{loc}}(B_1\setminus\Sigma)$ with $p$ satisfying the Assumption (Hf), and with a (uniform in $\varepsilon\to0$) constant $c>0$ such that
$$\|f_\varepsilon\|_{L^p(B_1,\rho_\varepsilon^a\mathrm{d}z)}\leq c,$$
and $F_\varepsilon\to F$ in $L^p_{\mathrm{loc}}(B_1\setminus\Sigma)$ with $p$ satisfying the Assumption (HF), and with a (uniform in $\varepsilon\to0)$ constant $c>0$ such that
$$\|F_\varepsilon\|_{L^p(B_1,\rho_\varepsilon^a\mathrm{d}z)}\leq c.$$
Then, there exists a sequence such that $u_{\varepsilon_k}\to u$ in $H^1_{\mathrm{loc}}(B_1\setminus\Sigma)$, where the limit $u\in H^{1,a}(B_1)$ is an energy solution on $B_1$ respectively to \eqref{La} with $f$ satisfying Assumption (Hf) and to \eqref{Lafks} with $F$ satisfying Assumption (HF).\\
\end{Lemma}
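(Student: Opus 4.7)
The plan is to address three tasks in order: (i) extract a subsequence with the claimed local $H^1$ convergence on $B_1\setminus\Sigma$; (ii) identify the limit $u$ as an element of $H^{1,a}(B_1)$; and (iii) pass to the limit in the variational formulations \eqref{variationLa} or \eqref{variationLafks}. Task (i) is pure interior uniformly-elliptic theory: on any compact $K\Subset B_1\setminus\Sigma$ the weights satisfy $0<c_K\leq\rho_\varepsilon^a\leq C_K$ uniformly in $\varepsilon\in(0,1]$, so \eqref{eq:H1bound} is a standard uniform $H^1(K)$ bound. For any pair $K\Subset K'\Subset B_1\setminus\Sigma$ the regularized operators are uniformly elliptic with bounded coefficients and $L^p_{\mathrm{loc}}$-bounded right-hand sides; a Caccioppoli inequality on this pair combined with Rellich compactness yields strong $H^1(K)$ convergence of a subsequence, and a diagonal extraction over an exhaustion of $B_1\setminus\Sigma$ by such compacts produces $u_{\varepsilon_k}\to u$ in $H^1_{\mathrm{loc}}(B_1\setminus\Sigma)$.

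For task (ii), fix $\delta>0$ and set $\Omega_\delta=B_1\cap\{|y|>\delta\}$. The uniform convergence $\rho_{\varepsilon_k}^a\to|y|^a$ on $\Omega_\delta$ together with the strong $H^1(\Omega_\delta)$ convergence gives
$$\int_{\Omega_\delta} |y|^a(u^2+|\nabla u|^2)\,dz=\lim_k\int_{\Omega_\delta}\rho_{\varepsilon_k}^a(u_{\varepsilon_k}^2+|\nabla u_{\varepsilon_k}|^2)\,dz\leq c,$$
and monotone convergence as $\delta\to 0$ yields $u\in H^{1,a}(B_1)$, invoking the $(H=W)$ equivalence for $a\in(-1,1)$ and Proposition \ref{prop:super} outside this range to realize the limit in the completion. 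For task (iii), note that $\rho_\varepsilon^{a/2}\nabla u_\varepsilon$ is bounded in $L^2(B_1,dz)$, hence up to subsequence it converges weakly in $L^2(B_1)$ to some limit which, by the strong local convergence from (i), must coincide with $|y|^{a/2}\nabla u$ a.e. For any test $\phi\in C_c^\infty(B_1)$, dominated convergence applied with $\rho_\varepsilon^{a/2}\leq|y|^{a/2}$ for $a<0$ and $\rho_\varepsilon^{a/2}\leq C$ for $a\geq 0$ (combined with the local integrability $|y|^a\in L^1_{\mathrm{loc}}$ granted by $a>-1$) shows $\rho_{\varepsilon_k}^{a/2}A\nabla\phi\to|y|^{a/2}A\nabla\phi$ strongly in $L^2(B_1)$; writing
$$\int_{B_1}\rho_{\varepsilon_k}^a A\nabla u_{\varepsilon_k}\cdot\nabla\phi=\int_{B_1}(\rho_{\varepsilon_k}^{a/2}\nabla u_{\varepsilon_k})\cdot(\rho_{\varepsilon_k}^{a/2}A\nabla\phi),$$
the weak--strong pairing passes to the limit and yields \eqref{variationLa}. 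An analogous H\"older-type factorization with conjugate exponents $p,p'$ takes care of the forcing $\int\rho_\varepsilon^a f_\varepsilon\phi$ or $\int\rho_\varepsilon^a F_\varepsilon\cdot\nabla\phi$, using the uniform $L^p$ bounds and local convergence of $f_\varepsilon$, $F_\varepsilon$ off $\Sigma$ to identify the weak limits with $|y|^{a/p}f$ or $|y|^{a/p}F$. In the super singular/degenerate range $|a|\geq 1$, where testing is permitted in $C_c^\infty(B_1\setminus\Sigma)$, the whole convergence reduces to a direct consequence of the strong local convergence of (i).

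The main obstacle is precisely the passage to the limit in (iii) against test functions that do not vanish on $\Sigma$. What makes this work is the strong $L^2_{\mathrm{loc}}(B_1)$ convergence $\rho_\varepsilon^{a/2}\to|y|^{a/2}$, which hinges via the dominated convergence argument above on $|y|^a\in L^1_{\mathrm{loc}}$, that is, on $a>-1$; without it the pairing of weakly convergent weighted gradients with the test functions' weighted gradient would fail. A secondary subtlety is the identification of the abstract limit $u$ as a genuine element of the completion $H^{1,a}(B_1)$ rather than merely an $L^2(|y|^a\,dz)$-function with $L^2(|y|^a\,dz)$ weak gradient: this is the role played by the $(H=W)$ property in the Muckenhoupt range $a\in(-1,1)$ and by Proposition \ref{prop:super} in the super singular/degenerate range.
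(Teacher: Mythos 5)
Your steps (i) and (iii) are sound, and in (iii) you take a genuinely different (but equally valid) route from the paper: you pair the weakly convergent $(\rho_{\varepsilon_k}^a)^{1/2}\nabla u_{\varepsilon_k}$ against the strongly convergent $(\rho_{\varepsilon_k}^a)^{1/2}A\nabla\phi$ (and do the analogous $L^p$--$L^{p'}$ splitting for $f_\varepsilon$, $F_\varepsilon$), whereas the paper keeps the products $\rho^a_{\varepsilon_k}\nabla u_{\varepsilon_k}\cdot\nabla\phi$ and $\rho^a_{\varepsilon_k}f_{\varepsilon_k}\phi$ intact and passes to the limit by Vitali's theorem, using the uniform integrability supplied by $|y|^a\in L^1(B_1)$ when $a>-1$ and switching to test functions $\phi\in C^\infty_c(B_1\setminus\Sigma)$ when $a\le-1$. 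Both arguments hinge on exactly the same local integrability of the weight, so this difference is a matter of packaging.

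The genuine gap is in step (ii), outside the Muckenhoupt range. Your argument only yields the bound $\int_{B_1}|y|^a\left(u^2+|\nabla u|^2\right)\le c$, i.e. membership in the ``$W$-type'' space, and you then invoke (H=W) for $a\in(-1,1)$ and Proposition \ref{prop:super} for $|a|\ge1$ to conclude $u\in H^{1,a}(B_1)$. But Proposition \ref{prop:super} only asserts that $C^\infty_c(\overline{B_1}\setminus\Sigma)$ is dense in $H^{1,a}(B_1)$, i.e. $H^{1,a}=\tilde H^{1,a}$; it does not say that a function with finite weighted norm belongs to the completion of smooth functions, and the paper deliberately does not claim (H=W) for $a\ge1$ or $a\le-1$. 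So for those ranges your identification of $u$ as an element of $H^{1,a}(B_1)$ is unsupported. The paper circumvents this differently: for $a\ge0$ it uses $\|u_\varepsilon\|_{H^{1,a}(B_1)}\le\|u_\varepsilon\|_{H^1(B_1,\rho_\varepsilon^a\mathrm{d}z)}\le c$ together with the fact that each $u_\varepsilon$ already lies in $H^{1,a}(B_1)$ (its smooth approximants in $H^1(B_1,\rho_\varepsilon^a\mathrm{d}z)$ approximate it in the weaker norm as well), so a subsequence converges weakly in the Hilbert space $H^{1,a}(B_1)$ itself and the weak limit is $u$; for $a\le-1$ it substitutes $v_\varepsilon=\sqrt{\rho_\varepsilon^a}\,u_\varepsilon$, obtains weak convergence in $H^1(B_1)$ to $v=|y|^{a/2}u$, and uses the isometry $T^a_\varepsilon$ together with $H^{1,a}=\tilde H^{1,a}$. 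You need one of these (or an independent proof of $W\subseteq H$ for $|y|^a$ with $|a|\ge1$) to close the step. A smaller issue in the same step: $\Omega_\delta=B_1\cap\{|y|>\delta\}$ is not compactly contained in $B_1$, so strong $H^1(\Omega_\delta)$ convergence is not available and the claimed equality of integrals should be a Fatou-type inequality on compact subsets, which is all you need anyway.
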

\proof
Without loss of generality we will take $A=\mathbb I$. Consider an exhausting sequence of compact subsets of  $B_1\setminus\Sigma$. By a diagonal process,  we can extract a sequence $u_{\varepsilon_k}$ weakly converging to some limit $u$ in $H^1_{\mathrm{loc}}(B_1\setminus\Sigma)$. Then, by standard elliptic estimates applied on each compact set, we can extract a subsequence strongly converging in $H^1_{\mathrm{loc}}(B_1\setminus\Sigma)$ with the further property that $\nabla u_{\varepsilon_k}$ and $u_{\varepsilon_k}$ do converge almost everywhere. Similarily, using the $L^p_{\mathrm{loc}}(B_1\setminus\Sigma)$ convergence $f_\varepsilon\to f$, we have, for a sequence,
$$\rho^a_{\varepsilon_k} u^2_{\varepsilon_k}\longrightarrow |y|^au^2,\quad\quad\rho^a_{\varepsilon_k}|\nabla u_{\varepsilon_k}|^2\longrightarrow |y|^a|\nabla u|^2\quad\mathrm{and}\quad\rho^a_{\varepsilon_k}|f_{\varepsilon_k}|^p\longrightarrow |y|^a|f|^p,\quad\mathrm{a.e. \ in \ } B_1.$$
By the Fatou Lemma and the uniform bounds for the sequence $\{f_{\varepsilon_k}\}$, we can say that $f\in L^p(B_1,|y|^a\mathrm{d}z)$. Next we wish to show that $u\in H^{1,a}(B_1)$: to this end, we distinguish three cases. When $a\geq 0$, then we have
$$\|u_\varepsilon\|_{H^{1,a}(B_1)}\leq\|u_\varepsilon\|_{H^1(B_1,\rho_\varepsilon^a\mathrm{d}z)}\leq c\;,$$
which implies that any element $u_\varepsilon\in H^{1,a}(B_1)$ (using the sequence of $C^\infty(\overline B_1)$ functions which approximates $u_\varepsilon$ in $H^1(B_1,\rho_\varepsilon^a\mathrm{d}z)$) and by uniform boundedness, we infer weak convergence in $H^{1,a}(B_1)$. When $a<0$, in contrast, we have $$\|u_\varepsilon\|_{H^{1}(B_1)}\leq\|u_\varepsilon\|_{H^1(B_1,\rho_\varepsilon^a\mathrm{d}z)}\leq c\;,$$ so that we deduce $u\in H^1(B_1)$ and weak convergence in such a space.  Invoking again Fatou's Lemma  we can say that  both $\nabla u$ and $u$ belong to $L^2(B_1,|y|^a\mathrm{d}z)$. This is enough to conclude when $a\in(-1,0)$, thanks to the $W=H$ theorem (see \cite{AmbPinSpe} and references therein). Finally, when $a\leq-1$ we perform the change of variable $v_\varepsilon=\sqrt{\rho_\varepsilon^a}u_\varepsilon$ (see \cite{SirTerVit2}) and easily obtain weak convergence of the corresponding sequence in $H^1(B_1)$ to $v=|y|^{a/2}u$. Hence $u\in H^{1,a}(B_1)$ which is isometric to $H^1(B_1)$.

Next we prove that the limit solves the differential equation in the energy sense. At first, let $a>-1$. Then, for any $\phi\in C^\infty_c(B_1)$ we have
$$\rho^a_{\varepsilon_k} \nabla u_{\varepsilon_k}\cdot\nabla\phi\longrightarrow |y|^a\nabla u\cdot\nabla\phi,\quad\mathrm{and}\quad\rho^a_{\varepsilon_k} f_{\varepsilon_k}\phi\longrightarrow |y|^a f\phi\quad\mathrm{a.e. \ in \ } B_1,$$
and
$$\int_{B_1}\rho^a_{\varepsilon_k} \nabla u_{\varepsilon_k}\cdot\nabla\phi=\int_{B_1}\rho^a_{\varepsilon_k} f_{\varepsilon_k}\phi.$$
Moreover, since $|y|^a\in L^1(B_1)$, the families of functions $h^1_{\varepsilon_k}:=\rho^a_{\varepsilon_k} \nabla u_{\varepsilon_k}\cdot\nabla\phi$ and $h^2_{\varepsilon_k}:=\rho^a_{\varepsilon_k} f_{\varepsilon_k}\phi$ are uniformly integrable, in the sense that for $i=1,2$, and any $\eta>0$ there exists $\delta,\overline{\varepsilon}>0$ such that
$$\int_E|h^i_{\varepsilon_k}|<\eta\qquad\forall0<{\varepsilon_k}\leq\overline{\varepsilon} \ \mathrm{and} \ \forall E\subset B_1 \ \mathrm{with} \ |E|<\delta.$$
Let now $a\leq-1$. Then, we apply the same reasoning with $\phi\in C^\infty_c(B_1\setminus\Sigma)$, and we use the fact that $|y|^a\in L^1(B_1\cap\mathrm{supp}\phi)$.
Hence we can apply in both cases the Vitali's convergence Theorem over the families $\{h^i_{\varepsilon_k}\}$ (for $i=1,2$) obtaining
$$\int_{B_1}|y|^a\nabla u\cdot\nabla\phi=\lim_{{\varepsilon_k}\to0}\int_{B_1}\rho_{\varepsilon_k}^a\nabla u_{\varepsilon_k}\cdot\nabla\phi=\lim_{{\varepsilon_k}\to0}\int_{B_1}\rho_{\varepsilon_k}^af_{\varepsilon_k}\phi=\int_{B_1}|y|^af\phi.$$
For the case of righr hand sides in divergence form, we note that the considerations done on the family $\{u_{\varepsilon_k}\}$ hold also in this case. Moreover, thanks to the $L^p_{\mathrm{loc}}(B_1\setminus\Sigma)$ convergence $F_{\varepsilon_k}\to F$, we have
$$\rho^a_{\varepsilon_k} \left|F_{\varepsilon_k}\right|^p\longrightarrow |y|^a \left|F\right|^p,\quad\mathrm{a.e. \ in \ } B_1.$$
By the Fatou Lemma and the uniform bound on the sequence $\{F_{\varepsilon_k}\}$, we can say that $F\in L^p(B_1,|y|^a\mathrm{d}z)$.\\\\
Also in this case one consider separately the case $a>-1$ and $a\leq-1$, obtaining with the very same reasonings that the family of functions $\rho^a_{\varepsilon_k} F_{\varepsilon_k} \cdot\nabla\phi$ is uniformly integrable, where $\phi\in C^\infty_c(B_1)$ in the first case and $\phi\in C^\infty_c(B_1\setminus\Sigma)$ in the second case.

Hence we can apply the Vitali's convergence Theorem, getting
$$\int_{B_1}|y|^a\nabla u\cdot\nabla\phi=\lim_{{\varepsilon_k}\to0}\int_{B_1}\rho_{\varepsilon_k}^a\nabla u_{\varepsilon_k}\cdot\nabla\phi=-\lim_{{\varepsilon_k}\to0}\int_{B_1}\rho^a_{\varepsilon_k} F_{\varepsilon_k}\cdot\nabla\phi=-\int_{B_1}|y|^aF\cdot\nabla\phi.$$
\endproof

A simple variant of this Lemma, which will turn out to be useful later, concerns the limiting profile of uniformly converging sequences of solutions to the approximating equations.

\begin{Lemma}\label{Good->Energy_bounded}
Let $a\in\R$, and let $\{u_k\}$ be a sequence of solutions to \eqref{eq:f}, for $\varepsilon_k\to0$, uniformly converging to $u$ in $B_1$. Let moreover $\rho_\varepsilon^a f_\varepsilon\to |y|^a f$ in $L^1(B_1)$. Then, the  limit $u\in H^{1,a}(B_1)\cap L^\infty(B_1)$ is an energy solution to \eqref{La} on $B_1$ with $f\in L^1(B_1,|y|^a\mathrm{d}z)$.
\end{Lemma}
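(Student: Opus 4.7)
I would adapt the proof of Lemma \ref{Good->Energy}, replacing the global a priori energy bound \eqref{eq:H1bound} by an interior Caccioppoli-type estimate derived from the uniform $L^\infty$ bound coming from the uniform convergence $u_k\to u$.

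\emph{$L^\infty$ control and Caccioppoli estimate.} The uniform convergence on $B_1$ immediately gives $u\in L^\infty(B_1)$ and $M:=\sup_k\|u_k\|_{L^\infty(B_1)}<+\infty$. For any cutoff $\eta\in C^\infty_c(B_1)$ (when $a\in(-1,+\infty)$) or $\eta\in C^\infty_c(B_1\setminus\Sigma)$ (when $a\leq-1$), I would test \eqref{eq:f} with $\phi=\eta^2 u_k\in H^{1,a}_0(B_1)$; uniform ellipticity together with Young's inequality yields
\[
\int_{B_1}\eta^2\rho_{\varepsilon_k}^a|\nabla u_k|^2\;\leq\; CM^2\int_{B_1}\rho_{\varepsilon_k}^a|\nabla\eta|^2 + CM\int_{B_1}\rho_{\varepsilon_k}^a|f_k|\eta^2.
\]
The first integral on the right is bounded uniformly in $k$ by dominated convergence, since $\rho_{\varepsilon_k}^a\to|y|^a$ pointwise and $|y|^a$ is locally integrable on $\mathrm{supp}\,\eta$ in both regimes of $a$; the second is bounded by $CM\,\|\rho_{\varepsilon_k}^a f_k\|_{L^1(B_1)}$, which is uniformly bounded by assumption. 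Hence the interior energy estimate
\[
\int_K\rho_{\varepsilon_k}^a|\nabla u_k|^2\;\leq\; C(K)
\]
holds uniformly in $k$ for every $K\Subset B_1$ (respectively $K\Subset B_1\setminus\Sigma$), which is the replacement of \eqref{eq:H1bound}.

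\emph{Passage to the limit.} With this local bound in place, the diagonal extraction and Vitali argument of Lemma \ref{Good->Energy} can be re-run verbatim: a subsequence converges weakly in $H^1_{\mathrm{loc}}(B_1\setminus\Sigma)$, and strongly on compact subsets thereof (since on such sets the weights $\rho_{\varepsilon_k}^a$ are pinched between two positive constants uniformly in $k$, making the equation uniformly elliptic with coefficients equibounded), giving pointwise a.e.\ convergence of $u_{k_j}$ and $\nabla u_{k_j}$. The uniform integrability of $\rho_{\varepsilon_k}^a\nabla u_k\cdot\nabla\phi$ (by Cauchy--Schwarz against the Caccioppoli bound) and of $\rho_{\varepsilon_k}^a f_k\phi$ (from the strong $L^1$ convergence of the right-hand side) allows Vitali's convergence theorem to yield
\[
\int_{B_1}|y|^a A\,\nabla u\cdot\nabla\phi \;=\; \int_{B_1}|y|^a f\,\phi
\]
for every admissible test function: $\phi\in C^\infty_c(B_1)$ if $a>-1$, and $\phi\in C^\infty_c(B_1\setminus\Sigma)$ if $a\leq-1$ (which suffices by Proposition \ref{prop:super}).

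\emph{Membership in $H^{1,a}$ and main obstacle.} Fatou's lemma applied to the Caccioppoli inequality gives $|y|^{a/2}\nabla u\in L^2_\mathrm{loc}(B_1)$, and combined with $u\in L^\infty(B_1)$ this places $u\in H^{1,a}(B_1)$ in the sense required by \eqref{variationLa} (via exhaustion by sub-balls; note that for $a\leq-1$ the passage through $\tilde H^{1,a}=H^{1,a}$ of Proposition \ref{prop:super} and the isometry $T^a_\varepsilon$ are used). Fatou applied to $\rho_{\varepsilon_k}^a f_k\to|y|^a f$ gives $f\in L^1(B_1,|y|^a\mathrm{d}z)$. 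The main obstacle is the limit passage of the weighted divergence term under only an $L^\infty$ bound on $u_k$; this is precisely the point where the $L^\infty$ control compensates the missing \eqref{eq:H1bound} by making the Caccioppoli test legitimate and providing the equi-integrability needed for the Vitali step.
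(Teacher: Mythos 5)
Your proof follows essentially the same route as the paper's: the uniform convergence gives the uniform $L^\infty$ bound, testing the equation with $\eta^2 u_k$ yields the local Caccioppoli-type energy bounds in $H^1_{\mathrm{loc}}(B_1,\rho^a_{\varepsilon_k}\mathrm{d}z)$ that replace \eqref{eq:H1bound}, and the passage to the limit is then the argument of Lemma \ref{Good->Energy}. The only minor difference is that the paper also tests with $\eta^2(u_k-u)$ to obtain strong local convergence directly, whereas you recover it from interior elliptic estimates away from $\Sigma$, exactly as in Lemma \ref{Good->Energy}, which is equally fine.
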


\begin{proof}
First we observe that, there is a constant $c$ such that, for all $k$,
$$\|u_k\|_{L^\infty(B_1)}\leq c.$$
Let $K$ be any compact subset of $B_1$ and $\eta$ be a cut-off function. Then, by multiplying the equations by $\eta^2 u_k$ one easily obtains bounds in the space $H^1_{\mathrm{loc}}(B_1,\rho^a_{\varepsilon_k}\mathrm{d}z)$. Similarly, by testing the equations with $\eta^2 (u_k-u)$, strong convergence in $H^1_{\mathrm{loc}}(B_1)$ is readily obtained. The proof then proceeds exactly as that of Lemma \ref{Good->Energy}. 
\end{proof}

To end this subsection, let us remind that we can locally weaken the energy bound \eqref{eq:H1bound} into an $L^2$ one by the following well known Caccioppoli type estimate:

\begin{Lemma}\label{L2>H1}
Let $a\in\mathbb{R}$ and $\varepsilon\geq0$. 
 Let $u\in H^{1}(B_1,\rho_\varepsilon^a(y)\mathrm{d}z)$ be an energy solution to either
\begin{equation}\label{Larhof}
-\mathrm{div}(\rho_\varepsilon^aA\nabla u)=\rho_\varepsilon^af\qquad\mathrm{in \ }B_1,
\end{equation}
or 
\begin{equation}\label{Larhofksf}
-\mathrm{div}(\rho_\varepsilon^aA\nabla u)=\mathrm{div}\left(\rho_\varepsilon^aF\right)\qquad\mathrm{in \ }B_1,
\end{equation}
with $f\in L^p(B_1,\rho_\varepsilon^a(y)\mathrm{d}z)$  and $F\in L^p(B_1,\rho_\varepsilon^a(y)\mathrm{d}z)$ with $p$ satisfying respectively Assumptions (Hf) and (HF). Then, for any $0<r<1$ there exists a positive constant independent of $\varepsilon$ such that either
$$\|\nabla u\|_{L^2(B_{r},\rho_\varepsilon^a(y)\mathrm{d}z)}\leq c\left(\|u\|_{L^2(B_1,\rho_\varepsilon^a(y)\mathrm{d}z)}+\|f\|_{L^p(B_1,\rho_\varepsilon^a(y)\mathrm{d}z)}\right),$$
or
$$\|\nabla u\|_{L^2(B_{r},\rho_\varepsilon^a(y)\mathrm{d}z)}\leq c\left(\|u\|_{L^2(B_1,\rho_\varepsilon^a(y)\mathrm{d}z)}+\|F\|_{L^p(B_1,\rho_\varepsilon^a(y)\mathrm{d}z)}\right),$$
respecively.
\end{Lemma}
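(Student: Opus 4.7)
The statement is a classical Caccioppoli-type estimate, and I would prove it by the standard test-function argument with the usual cut-off, the only point requiring care being the $\varepsilon$-uniformity of the constant, which ultimately comes from the weighted Sobolev embeddings of Section \ref{sobo}. Fix $r<s<1$ and pick $\eta\in C_c^\infty(B_s)$ with $\eta\equiv 1$ on $B_r$, $0\leq\eta\leq 1$, $|\nabla\eta|\leq C/(s-r)$. Use $\phi=\eta^2 u$ as a test function in the weak formulation \eqref{variationLa} (resp.\ \eqref{variationLafks}). For $a>-1$ this $\phi$ lies in $H^1_0(B_1,\rho^a_\varepsilon)$; in the super-singular/super-degenerate regime $a\leq -1$ with $\varepsilon=0$ I would first approximate $u$ by a sequence from $C_c^\infty(\overline B_1\setminus\Sigma)$, available by Proposition \ref{prop:super}, multiply each approximant by $\eta^2$, and pass to the limit at the end.

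Expanding $\nabla(\eta^2 u)=\eta^2\nabla u+2\eta u\nabla\eta$ and using uniform ellipticity of $A$ gives
\[
\lambda_1\int_{B_1}\rho^a_\varepsilon\eta^2|\nabla u|^2\;\leq\; 2\int_{B_1}\rho^a_\varepsilon\,\eta|u|\,|A\nabla u\cdot\nabla\eta|\;+\;\Bigl|\int_{B_1}\rho^a_\varepsilon f\,\eta^2 u\Bigr|.
\]
Young's inequality on the first right-hand side term absorbs one half of the left-hand side and leaves a remainder controlled by $C(s-r)^{-2}\|u\|_{L^2(B_1,\rho^a_\varepsilon)}^2$. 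The main point is the handling of the $f$-term: apply Hölder with conjugate exponents $p,p'$; Assumption (Hf) was tailored so that $p'\leq 2^*(a)$, hence by the $\varepsilon$-stable Sobolev embedding (Theorem \ref{sobemb1} for $a>-1$, and Theorem \ref{sobemb2} via the isometry $T^a_\varepsilon$ when $a\leq -1$) one obtains
\[
\Bigl|\int_{B_1}\rho^a_\varepsilon f\,\eta^2 u\Bigr|\;\leq\;\|f\|_{L^p(B_1,\rho^a_\varepsilon)}\,\|\eta u\|_{L^{p'}(B_1,\rho^a_\varepsilon)}\;\leq\; C\|f\|_{L^p(B_1,\rho^a_\varepsilon)}\bigl(\|\eta\nabla u\|_{L^2(\rho^a_\varepsilon)}+(s-r)^{-1}\|u\|_{L^2(\rho^a_\varepsilon)}\bigr),
\]
with $C$ independent of $\varepsilon$. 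A final application of Young's inequality absorbs a further $\tfrac14\|\eta\nabla u\|_{L^2(\rho^a_\varepsilon)}^2$ on the left, and since $\eta\equiv 1$ on $B_r$ one obtains the claimed bound.

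The divergence-form case \eqref{Larhofksf} is in fact slightly simpler. Testing again with $\eta^2 u$ produces a term $\int\rho^a_\varepsilon F\cdot\nabla(\eta^2 u)$ which splits into $\int\rho^a_\varepsilon\eta^2 F\cdot\nabla u$ and $2\int\rho^a_\varepsilon\eta u\,F\cdot\nabla\eta$; both are estimated directly by Cauchy--Schwarz and Young, absorbing $\tfrac14\|\eta\nabla u\|_{L^2(\rho^a_\varepsilon)}^2$ on the left and controlling $\|F\|_{L^2(B_1,\rho^a_\varepsilon)}$ by $\|F\|_{L^p(B_1,\rho^a_\varepsilon)}$ since $p\geq 2$ and the total mass $\int_{B_1}\rho^a_\varepsilon\,\mathrm{d}z$ is bounded uniformly in $\varepsilon$.

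I do not expect any genuine obstacle; the two bookkeeping points are the admissibility of $\eta^2 u$ as a test function in the super-singular/super-degenerate case (handled by Proposition \ref{prop:super}) and the $\varepsilon$-independence of all constants, which is inherited from Section \ref{sobo}. Everything else is the textbook Caccioppoli computation.
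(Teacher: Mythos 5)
Your proposal is correct and follows essentially the same route as the paper: the paper also tests with $\eta^2 u^{\beta-1}$ (taking $\beta=2$, i.e. $\eta^2 u$), derives the Caccioppoli inequality \eqref{eq:caccioppoli} by the standard computations of \cite[Chapter 8]{gt}, and concludes via Young's inequality, with $\varepsilon$-uniformity coming from the same weighted Sobolev embeddings. Your write-up merely spells out the Hölder–Sobolev handling of the $f$-term and the density argument for $a\leq-1$, which the paper leaves implicit.
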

\proof
Without loss of generality we can take $A=\mathbb I$. Let $u$ be a solution to \eqref{Larhof} and let $\eta\in C^\infty_c(B_1)$ be a non increasing radial cut-off function such that $0\leq\eta\leq1$ such that $\eta\equiv 1$ in $B_r$.  Let us test the equation \eqref{Larhof} with $\eta^2u^{\beta-1}$, with $\beta>1$. After some standard computations (see e.g, \cite[Chapter 8]{gt}), one obtains the following Caccioppoli type inequality.
\begin{equation}\label{eq:caccioppoli}
2C_\beta\int_{B_R}\rho|\nabla(\eta u^{\beta/2})|^2\leq 2\int_{B_R}\rho u^\beta|\nabla\eta|^2+\int_{B_R}\rho f\eta^2u^{\beta-1}.
\end{equation}
which yields the desired estimate, via Young inequality,  when $\beta=2$. A similar argument works in case of solutions to \eqref{Larhofksf}.
%
%
\endproof

\subsection{Approximating energy solutions}
Next we show that all solutions to the homogenous limit problem can be obtained as pointwise limits of families of solutions to the regularized ones. This will be achieved by a simple $\Gamma$-convergence argument.

\begin{Lemma}\label{Energy->Good}
Let $a\in\R$ and let $u$ be an energy solution to \eqref{Larm} on $B_1$. Then for any $0<r<1$, there exists a family $\{u_\varepsilon\}$ (for $\varepsilon\to0$) of solutions to
$$-\mathrm{div}\left(\rho_\varepsilon^aA\nabla u_\varepsilon\right)=0\qquad\mathrm{in \ } B_r$$
such that $u_\varepsilon\to u$ strongly in $H^1_{\mathrm{loc}}(B_r\setminus\Sigma)$ with a uniform in $\varepsilon\to0$ constant $c>0$ such that
$$\|u_\varepsilon\|_{H^1(B_r,\rho_\varepsilon^a\mathrm{d}z)}\leq c.$$
\end{Lemma}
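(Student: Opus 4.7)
Fix $r<r_0<1$. The plan is to construct $u_\varepsilon$ as the unique minimizer on $B_{r_0}$ of the regularized Dirichlet energy with $u$ as boundary datum, and then derive the convergence by a $\Gamma$-convergence/uniqueness argument. Concretely, for each $\varepsilon>0$ consider the convex quadratic functional
\[
F_\varepsilon(v) = \int_{B_{r_0}} \rho_\varepsilon^a(y)\, A(x,y)\nabla v \cdot \nabla v \,\mathrm{d}z,
\]
minimized over the affine class $u+H^1_0(B_{r_0},\rho_\varepsilon^a\mathrm{d}z)$. For $\varepsilon>0$ the operator $-\mathrm{div}(\rho_\varepsilon^a A\nabla\cdot)$ is uniformly elliptic on $B_{r_0}$, so a unique minimizer $u_\varepsilon\in H^1(B_{r_0},\rho_\varepsilon^a\mathrm{d}z)$ exists; its Euler--Lagrange equation is precisely $-\mathrm{div}(\rho_\varepsilon^a A\nabla u_\varepsilon)=0$.

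The uniform bounds proceed as follows. By minimality, $F_\varepsilon(u_\varepsilon)\leq F_\varepsilon(u)$. When $a\leq 0$ one has $\rho_\varepsilon^a\leq |y|^a$ pointwise, so $F_\varepsilon(u)\leq F_0(u)<\infty$, yielding uniform control of the gradient part. When $a>0$ the inequality $\rho_\varepsilon^a(y)\leq 2^{a/2}(\varepsilon^a+|y|^a)$ is not directly useful since $u$ need not lie in $H^1$; one instead replaces $u$ by smooth approximants $u_k\in C^\infty(\overline{B_1})$ with $u_k\to u$ in $H^{1,a}$, solves the regularized Dirichlet problem using $u_k$ as boundary datum, and runs a diagonal argument $k=k(\varepsilon)\to\infty$ slowly enough that the approximation error vanishes. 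To control $\|u_\varepsilon\|_{L^2(B_{r_0},\rho_\varepsilon^a)}$ one writes $u_\varepsilon=u+(u_\varepsilon-u)$ and applies the $\rho_\varepsilon^a$-weighted Poincar\'e inequality (a direct consequence of Theorem \ref{sobemb1}) to the difference, whose constant is uniform in $\varepsilon$.

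Having the uniform bound $\|u_\varepsilon\|_{H^1(B_{r_0},\rho_\varepsilon^a\mathrm{d}z)}\leq c$, Lemma \ref{Good->Energy} applied with right-hand side zero gives a subsequence $u_{\varepsilon_k}$ converging in $H^1_{\mathrm{loc}}(B_{r_0}\setminus\Sigma)$ to some $\tilde u\in H^{1,a}(B_{r_0})$ which is $\mathcal L_a$-harmonic on $B_{r_0}$. The boundary constraint $u_\varepsilon-u\in H^1_0(B_{r_0},\rho_\varepsilon^a)$ is preserved in the weak limit (using test functions from $C^\infty_c(B_{r_0}\setminus\Sigma)$ together with the density results of Section \ref{secs2}), so $\tilde u-u\in H^{1,a}_0(B_{r_0})$. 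Uniqueness of the Dirichlet problem for $\mathcal L_a$, obtained by testing the equation for $\tilde u-u$ against itself and invoking uniform ellipticity of $A$, forces $\tilde u=u$; this also implies that the whole family converges, not just the subsequence. Restriction to $B_r\subset B_{r_0}$ yields the claim.

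The main obstacle is the uniform energy bound when $a>0$: for a generic $u\in H^{1,a}$ the quantity $F_\varepsilon(u)$ may itself fail to be bounded as $\varepsilon\to 0$, due to the contribution of the region $\{|y|\leq\varepsilon\}$ where $\rho_\varepsilon^a$ substantially exceeds $|y|^a$. The smooth-approximation/diagonal argument circumvents this, trading uniform-in-$\varepsilon$ boundedness at fixed $k$ against strong convergence $u_k\to u$; but one must calibrate $k(\varepsilon)\to\infty$ slowly enough to keep both sides of this tradeoff under control simultaneously.
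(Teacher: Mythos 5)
Your overall architecture (minimize the regularized energy, extract a limit via Lemma \ref{Good->Energy}, and identify it with $u$ by uniqueness of the Dirichlet problem) is the same in spirit as the paper's, and your treatment of $a\geq 0$ --- including the diagonal replacement of $u$ by smooth approximants, which plays the role of the paper's smooth substitute $\overline u$ --- can be made to work: there $\rho_\varepsilon^a\geq |y|^a$, so $H^1_0(B_{r_0},\rho_\varepsilon^a\mathrm{d}z)\subseteq H^{1,a}_0(B_{r_0})$, and the affine constraint passes to the weak limit because $H^{1,a}_0$ is weakly closed. But your setup differs from the paper's in one decisive point: you regularize the weight on all of $B_{r_0}$, including where $\Sigma$ meets $\partial B_{r_0}$, whereas the paper minimizes on the whole ball $B_1$ with the cut-off weight $\rho^a_{\varepsilon,r}=\left((\varepsilon\eta_r)^2+y^2\right)^{a/2}$, which coincides with $|y|^a$ near $\partial B_1$; this keeps the Dirichlet class essentially fixed in $\varepsilon$ and is exactly what makes the identification of the limit tractable.

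The genuine gap is the sentence asserting that the boundary constraint is preserved in the limit, so that $\tilde u-u\in H^{1,a}_0(B_{r_0})$. For $a<0$ one has $\rho_\varepsilon^a\leq|y|^a$, so the inclusion between the zero-trace spaces goes the wrong way: $u_\varepsilon-u\in H^1_0(B_{r_0},\rho_\varepsilon^a\mathrm{d}z)$ only yields that the weak limit lies in the unweighted $H^1_0(B_{r_0})$ (and, by Fatou, in $H^{1,a}(B_{r_0})$), and the implication ``unweighted $H^1_0$ with finite $|y|^a$-energy belongs to $H^{1,a}_0$'' is a nontrivial density statement that your parenthetical appeal to test functions in $C^\infty_c(B_{r_0}\setminus\Sigma)$ and the density results does not supply; note that Proposition \ref{prop:super} concerns the full space $H^{1,a}$, not the zero-trace class. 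This is precisely where the paper works hardest: in the subcase $-1<a<0$ it exploits that its modified weight equals $|y|^a$ on the annulus $B_1\setminus B_{\frac{1+r}{2}}$, proves strong convergence there, and explicitly constructs $C^\infty_c$ approximations of $\overline w-\overline u$ (also invoking the $(H=W)$ property \eqref{H=W}), while for $a\leq-1$ it abandons this route altogether and passes through the isometry $T^a_\varepsilon$ and the quadratic forms $Q_\varepsilon$ on the fixed space $\tilde H^1(B_1)$, where the admissible class does not depend on $\varepsilon$. Your uniqueness step moreover presupposes exactly the membership $\tilde u-u\in H^{1,a}_0(B_{r_0})$ that is in question. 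Hence for $a<0$, and in particular for $a\leq-1$, the proof as written does not close: you would need either to prove the weighted density/trace identification directly, or to adopt a device (the paper's spatially localized regularization, or the conjugation to $\tilde H^1$) that freezes the boundary class along the approximation.
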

\proof
Without loss of generality we will take $A=\mathbb I$.\\
$\bf{Case}$ $a>-1$.\\
First, we may replace $u$ with  $\overline u\in H^{1,a}(B_1)\cap C^\infty(\overline{B_{\frac{3+r}{2}}})$ such that $u-\overline u\in H^{1,a}_0(B_1)$. Then,
$$Y^a:=\{w\in H^{1,a}(B_1): w-u\in H^{1,a}_0(B_1)\}=\{w\in H^{1,a}(B_1): w-\overline u\in H^{1,a}_0(B_1)\}.$$
Moreover, defining
\begin{equation}\label{P0}
c_0=\inf\left\{\int_{B_1}|y|^a|\nabla w|^2:w\in Y^a\right\},
\end{equation}
then $c_0=\int_{B_1}|y|^a|\nabla u|^2.$ Let $0<r<1$ and $\eta_r\in C^\infty_c(B_{\frac{1+r}{2}})$ be a radial cut-off function such that $0\leq\eta_r\leq1$ in $B_{\frac{1+r}{2}}$, $\eta_r\equiv 1$ in $B_r$. Let us define for $0<\varepsilon\leq1$
\begin{equation*}\label{rhor}
\rho^a_{\varepsilon,r}:=\left((\varepsilon\eta_r)^2+y^2\right)^{a/2}\quad\mathrm{in \ } B_1.
\end{equation*}
Then we set the approximating problems
\begin{equation*}\label{Pepsr}
c_{\varepsilon,r}=\inf\left\{\int_{B_1}\rho^a_{\varepsilon,r}|\nabla w|^2:w\in Y^a_{\varepsilon,r}\right\},
\end{equation*}
where
$$Y^a_{\varepsilon,r}:=\{w\in H^{1}(B_1,\rho^a_{\varepsilon,r}\mathrm{d}x): w-\overline u\in H^{1}_0(B_1,\rho^a_{\varepsilon,r}\mathrm{d}x)\}.$$
Moreover let $w_{\varepsilon,r}$ be the minimizer; that is, such that $c_{\varepsilon,r}=\int_{B_1}\rho^a_{\varepsilon,r}|\nabla w_{\varepsilon,r}|^2$.\\\\
$\bf{Subcase}$ $a\geq0$.\\
Fixed $0<r<1$, taking $0<\varepsilon_1<\varepsilon_2<1$, by the inclusion $Y^a_{1,r}\subseteq Y^a_{\varepsilon_2,r}\subseteq Y^a_{\varepsilon_1,r}\subseteq Y^a$, one easily obtains
$$c_0\leq c_{\varepsilon_1,r}\leq c_{\varepsilon_2,r}\leq c_{1,r}.$$
Thus, the sequence $\{c_{\varepsilon,r}\}_{0<\varepsilon\leq1}$ is monotone non decreasing and has a limit
$$c_{\varepsilon,r}\searrow c_r\in[c_0,c_{1,r}]\quad\mathrm{as} \ \varepsilon\to0.$$
One has that
\begin{equation*}
\int_{B_1}|y|^a|\nabla w_{\varepsilon,r}|^2\leq\int_{B_1}\rho^a_{\varepsilon,r}|\nabla w_{\varepsilon,r}|^2=c_{\varepsilon,r}\leq c_{1,r}.
\end{equation*}
Hence, the set $\{w_{\varepsilon,r}\}$ is uniformly bounded in $H^{1,a}(B_1)$, and hence in the same space $w_{\varepsilon,r}\rightharpoonup\overline w$. Moreover, the sequence is contained in $Y^a$, which, as  a closed subspace, is weakly closed, so that the weak limit $\overline w\in Y^a$. Let us consider any $\phi\in C^\infty_c(B_1)$. Then, by the weak convergence in $H^{1,a}(B_1)$,
\begin{eqnarray*}
\int_{B_1}|y|^a\nabla\overline w\cdot\nabla\phi&=&\lim_{\varepsilon\to0}\int_{B_1}(|y|^a-\rho_{\varepsilon,r}^a)\nabla w_{\varepsilon,r}\cdot\nabla\phi+\lim_{\varepsilon\to0}\int_{B_1}\rho_{\varepsilon,r}^a\nabla w_{\varepsilon,r}\cdot\nabla\phi\\
&=&\lim_{\varepsilon\to0}\int_{B_1}(|y|^a-\rho_{\varepsilon,r}^a)\nabla w_{\varepsilon,r}\cdot\nabla\phi=0.
\end{eqnarray*}
In fact, since $|y|^a\leq\rho_{\varepsilon,r}^a$,
$$\left|\int_{B_1}(|y|^a-\rho_{\varepsilon,r}^a)\nabla w_{\varepsilon,r}\cdot\nabla\phi\right|\leq2(c_{\varepsilon,r})^{1/2}\left(\int_{B_1}||y|^a-\rho_{\varepsilon,r}^a| \ |\nabla\phi|^2\right)^{1/2}\to0.$$
Hence, $\overline w$ is an energy solution to $\mathcal L_a\overline w=0$ in $B_1$ with condition $\overline w-\overline u\in H^{1,a}_0(B_1)$, and by uniqueness of solutions to the Dirichlet problem, we obtain $\overline w=u$. Obviously the sequence $\{w_{\varepsilon,r}\}$ satisfies the desired conditions on $B_r$: in fact outside $\Sigma$, functions $w_{\varepsilon,r}$ are solutions of uniformly elliptic problems with ellipticity constants bounded from above and below uniformly in $0<\varepsilon\leq1$. So, in subsets $\omega$ compactly contained in $B_1\setminus\Sigma$ one must have convergence $w_{\varepsilon,r}\to u$ in $W^{2,p}(\omega)$.\\\\
$\bf{Subcase}$ $-1<a<0$.\\
Fixed $0<r<1$, taking $0<\varepsilon_1<\varepsilon_2<1$, by the inclusion $Y^a\subseteq Y^a_{\varepsilon_1,r}\subseteq Y^a_{\varepsilon_2,r}\subseteq Y^a_{1,r}$, one easily obtains
$$c_{1,r}\leq c_{\varepsilon_2,r}\leq c_{\varepsilon_1,r}\leq c_{0}.$$
Hence the sequence $\{c_{\varepsilon,r}\}_{0<\varepsilon\leq1}$ is monotone non increasing and there exists the limit
$$c_{\varepsilon,r}\nearrow c_r\in[c_{1,r},c_0]\quad\mathrm{as} \ \varepsilon\to0.$$
First of all, we remark that
\begin{equation*}
\int_{B_1}\rho^a_{1,r}|\nabla w_{\varepsilon,r}|^2\leq \int_{B_1}\rho^a_{\varepsilon,r}|\nabla w_{\varepsilon,r}|^2=c_{\varepsilon,r}\leq c_0.
\end{equation*}
Hence, the set $\{w_{\varepsilon,r}\}$ is uniformly bounded in $H^{1}(B_1,\rho^a_{1,r}(y)\mathrm{d}z)$, and hence in the same space $w_{\varepsilon,r}\rightharpoonup\overline w$. Therefore, outside $\Sigma$, functions $w_{\varepsilon,r}$ are solutions of uniformly elliptic problems with ellipticity constants bounded from above and below uniformly in $0<\varepsilon\leq1$. So, in subsets $\omega$ compactly contained in $B_1\setminus\Sigma$ one must have convergence $w_{\varepsilon,r}\to\overline w$ in $W^{2,p}(\omega)$. This is enough to have the pointwise convergence $|\nabla w_{\varepsilon,r}|^2\to|\nabla \overline w|^2$ almost everywhere in $B_1$. Hence, by Fatou's Lemma
\begin{equation*}
\int_{B_1}|y|^a|\nabla \overline w|^2\leq \liminf_{\varepsilon\to0}\int_{B_1}\rho^a_{\varepsilon,r}|\nabla w_{\varepsilon,r}|^2=\liminf_{\varepsilon\to0}c_{\varepsilon,r}\leq c_0.
\end{equation*}
Hence $\overline w\in Y^a_{\varepsilon,r}$ for all $\varepsilon>0$, since definitely the sequence $\{w_{\varepsilon,r}\}$ is contained in any of them which are convex and closed and so weakly closed. Hence, by the (H=W) condition in \eqref{H=W} for the $A_2$ case, we have that $\overline w\in H^{1,a}(B_1)$.\\\\
We remark that for any $\varepsilon\in[0,1]$, the weight $\rho_{\varepsilon,r}^a(y)=|y|^a$ in $B_1\setminus B_{\frac{1+r}{2}}$. Hence, by weak convergence in $Y^a_{1,r}$, we obtain, for any $\phi\in C^\infty_c(B_1\setminus B_{\frac{1+r}{2}})\cap H^{1,a}(B_1\setminus B_{\frac{1+r}{2}})$
$$\int_{B_1\setminus B_{\frac{1+r}{2}}}|y|^a\nabla\overline w\cdot\nabla\phi=\lim_{\varepsilon\to0}\int_{B_1\setminus B_{\frac{1+r}{2}}}|y|^a\nabla w_{\varepsilon,r}\cdot\nabla\phi=0$$
since any $w_{\varepsilon,r}$ is solution to
$$-\mathrm{div}(|y|^a\nabla w_{\varepsilon,r})=0\qquad\mathrm{in \ }B_1\setminus B_{\frac{1+r}{2}}.$$
Hence, also $\overline w$ is solution on the annulus. Let us consider $\eta\in C^\infty_c(B_t)$ cut off radial decreasing with $\eta\equiv 1$ in $B_{\frac{1+r}{2}}$, $0\leq\eta\leq1$ and $t\in(\frac{1+r}{2},1)$. So, testing the equation of the difference with $(1-\eta)^2(w_{\varepsilon,r}-\overline w)$ we obtain
\begin{equation*}
\int_{B_1\setminus B_{\frac{1+r}{2}}}|y|^a|\nabla((1-\eta)(w_{\varepsilon,r}-\overline w))|^2=\int_{B_1\setminus B_{\frac{1+r}{2}}}|y|^a|\nabla(1-\eta)|^2(w_{\varepsilon,r}-\overline w)^2\to0,
\end{equation*}
by the compact embedding in $L^{2,a}(B_1\setminus B_{\frac{1+r}{2}})$. So we obtain
$$\int_{B_1\setminus B_{t}}|y|^a|\nabla((w_{\varepsilon,r}-\overline w))|^2\to0.$$
Hence, in order to prove that $\overline w\in Y^a$, it remains to prove the existence for any $\delta>0$ of a function $v_\delta\in C^\infty_c(B_1)$ such that
$$\|\overline w-\overline u-v_\delta\|_{H^{1,a}_0(B_1)}<\delta.$$
This can be done considering $\varepsilon$ small enough such that 
$$\int_{B_1\setminus B_{t}}|y|^a|\nabla((w_{\varepsilon,r}-\overline w))|^2<\delta.$$
Hence, since $w_{\varepsilon,r}\in Y^a_{\varepsilon,r}$, we consider $\phi_\delta\in C^\infty_c(B_1)$ such that
$$\int_{B_1\setminus B_{\frac{1+r}{2}}}|y|^a|\nabla((w_{\varepsilon,r}-\overline u-\phi_\delta))|^2<\delta.$$
Moreover, using the fact that $\overline w-\overline u\in H^{1,a}(B_1)$, we can choose $\psi_\delta\in C^\infty(B_1)$ such that
$$\int_{B_1}|y|^a|\nabla(\overline w-\overline u-\psi_\delta))|^2<\delta.$$
Hence, considering the radial cut-off function $f_r\in C^\infty_c(B_{\frac{3+r}{4}})$ such that $0\leq f_r\leq1$ in $B_{\frac{3+r}{4}}$ and $ f_r\equiv 1$ in $B_{t}$. Hence the function $v_\delta:=(1-f_r)\phi_\delta+f_r\psi_r$  is the desired function.\\\\
Hence, $\overline w\in Y^a$ and so it is a competitor for the problem in \eqref{P0}. By the minimality of $u$, we obtain $\overline w=u$. Moreover the family $\{w_{\varepsilon,r}\}$ satisfies the desired conditions in $B_r$.\\\\
$\bf{Case}$ $a\leq-1$.\\
In this case we remark that our energy solution $u\in H^{1,a}(B_1)=\tilde H^{1,a}(B_1)$. Let us consider the function $v=|y|^{a/2}u\in \tilde H^1(B_1)$ which is minimizer for the quadratic form
$$c_0= Q_a(v)= \int_{B_1}|\nabla v|^2+\left(\frac{a^2}{4}-\frac{a}{2}\right)\frac{v^2}{y^2}-\frac{a}{2}\int_{\partial B_1}v^2$$
in $Y=\{w\in\tilde H^1(B_1) \ : \ w-v\in\tilde H^1_0(B_1)\}$.
Hence let us consider for any $0<\varepsilon<1$, the minimizer $v_\varepsilon$ in $Y$ for the form
\[
c_\varepsilon=Q_\varepsilon(v_\varepsilon)=\int_{B_1}|\nabla v_\varepsilon|^2 +\left[ \left(\dfrac{\partial_y \rho_\varepsilon^a}{2\rho_\varepsilon^a}\right)^2+\partial_y\left(\dfrac{\partial_y \rho_\varepsilon^a}{2\rho_\varepsilon^a}\right)\right]v_\varepsilon^2-\int_{	\partial B_1}\dfrac{\partial_y \rho_\varepsilon^a}{2\rho_\varepsilon^a}yv_\varepsilon^2\;.
\]
One can easily prove that in fact $Q_\varepsilon$ are equivalent norms in $\tilde H^1(B_1)$ (this is done in details in \cite{SirTerVit2}), with a positive constant which does not depend on $\varepsilon\geq0$ such that
$$\frac{1}{c}\|w\|_{\tilde H^1(B_1)}\leq Q_\varepsilon(w)\leq c\|w\|_{\tilde H^1(B_1)}.$$
This provides the uniform bound
$$Q_\varepsilon(v_\varepsilon)\leq c,$$
and hence weak convergence $v_\varepsilon\to v$ in $\tilde H^1(B_1)$. Nevertheless, by testing the equations related to the quadratic forms $Q_\varepsilon$ with $v_\varepsilon-v$, one can show that the convergence is strong. By the isometry $T_\varepsilon^a: \tilde H^{1}(B_1,\rho_\varepsilon^a(y)\mathrm{d}z)\to\tilde H^1(B_1)$, one obtain easily that the sequence
$$u_\varepsilon=\frac{v_\varepsilon}{(\rho_\varepsilon^a)^{1/2}}$$
satisfies the thesis, since they are solutions to $$-\mathrm{div}\left(\rho_\varepsilon^a\nabla u_\varepsilon\right)=0\qquad\mathrm{in \ } B_1,$$
with uniform bound $$\|u_\varepsilon\|_{H^1(B_1,\rho_\varepsilon^a\mathrm{d}z)}=Q_\varepsilon(v_\varepsilon)\leq c.$$
Nevertheless, the convergence $u_\varepsilon\to u$ is strong in $H^1_{\mathrm{loc}}(B_1\setminus\Sigma)$, using the strong convergence $v_\varepsilon\to v$ in $\tilde H^1(B_1)$ and the fact that $\rho_\varepsilon^a\to|y|^a$ in $C^\infty_{\mathrm{loc}}(B_1\setminus\Sigma)$.
\endproof

An immediate, yet relevant, consequence of this result is the validity of the following duality relation between the weights $\rho_\varepsilon^{a}$ and $\rho_\varepsilon^{-a}$
\begin{Lemma}\label{rho-1}
Let $a\in\mathbb{R}$, $\varepsilon\geq0$ and let $w$ be an energy solution to
$$-\mathrm{div}\left(\rho_\varepsilon^a\nabla w\right)=0\qquad\mathrm{in \ }B_1.$$
Then $v=\rho_\varepsilon^a\partial_yw$ is a (local) energy solution to
$$-\mathrm{div}\left(\rho_\varepsilon^{-a}\nabla v\right)=0\qquad\mathrm{in \ }B_1.$$
\end{Lemma}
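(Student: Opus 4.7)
The plan is to use elliptic regularity to reduce the statement to a pointwise calculation. When $\varepsilon>0$, the weight $\rho_\varepsilon^a$ is smooth and strictly positive on $B_1$, so $-\mathrm{div}(\rho_\varepsilon^a\nabla\cdot)$ is uniformly elliptic with smooth coefficients, and classical interior regularity yields $w\in C^\infty(B_1)$. When $\varepsilon=0$ the same argument applies on every compact subset of $B_1\setminus\Sigma$, giving $w\in C^\infty(B_1\setminus\Sigma)$. On the open set where $w$ is smooth I shall work with classical derivatives.

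Writing $\rho:=\rho_\varepsilon^a$, which depends only on $y$, the equation $-\mathrm{div}(\rho\nabla w)=0$ becomes, in non-divergence form, $\Delta w+(\rho'/\rho)\,\partial_y w=0$. Differentiating in $y$ and using $(\rho'/\rho)'=\rho''/\rho-(\rho')^2/\rho^2$ gives
\[
\partial_y(\Delta w)+\frac{\rho'}{\rho}\,\partial_{yy}w+\left(\frac{\rho''}{\rho}-\frac{(\rho')^2}{\rho^2}\right)\partial_y w=0.
\]
On the other hand, for $v=\rho\,\partial_y w$ one has $\rho^{-1}\nabla v=(\rho'/\rho)\,e_y\,\partial_y w+\nabla\partial_y w$, and the product rule yields
\[
\mathrm{div}(\rho^{-1}\nabla v)=\partial_y(\Delta w)+\frac{\rho'}{\rho}\,\partial_{yy}w+\left(\frac{\rho''}{\rho}-\frac{(\rho')^2}{\rho^2}\right)\partial_y w,
\]
which is exactly the left-hand side above, hence vanishes. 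Since $\rho^{-1}=\rho_\varepsilon^{-a}$, this is precisely the claimed pointwise identity $\mathrm{div}(\rho_\varepsilon^{-a}\nabla v)=0$.

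To promote the pointwise identity to a weak one I would multiply by an arbitrary test function $\varphi\in C_c^\infty$ supported in the region where $w$ is smooth and integrate by parts; the smoothness of $v$ together with local boundedness of $\rho_\varepsilon^{-a}$ away from $\Sigma$ places $v$ in the corresponding weighted $H^1_{\mathrm{loc}}$ space on every compact subset of the smoothness region. When $\varepsilon>0$ this produces a genuine energy solution on all of $B_1$, while for $\varepsilon=0$ the equation is only obtained against test functions in $C_c^\infty(B_1\setminus\Sigma)$, which is precisely what the parenthetical \emph{(local)} in the statement records. I do not expect any serious analytic obstacle: the core content is the algebraic identity above, and the only subtle point is whether the weak equation extends across $\Sigma$ when $\varepsilon=0$, which would require trace-type information on $\Sigma$ and is deliberately not claimed. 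The result is the familiar duality $w\mapsto\rho_\varepsilon^a\partial_y w$ swapping the weights $\rho_\varepsilon^a$ and $\rho_\varepsilon^{-a}$, in line with the Caffarelli--Silvestre extension for fractional Laplacians.
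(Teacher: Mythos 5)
Your argument for $\varepsilon>0$ is fine and coincides with what the paper does (an explicit computation, legitimised by smoothness of the uniformly elliptic regularized equation). The problem is the case $\varepsilon=0$, which is where the lemma actually has content. Your pointwise identity lives only in $B_1\setminus\Sigma$, and you then explicitly decline to say anything across $\Sigma$, reading the parenthetical ``(local)'' as ``only against test functions in $C^\infty_c(B_1\setminus\Sigma)$''. That is a misreading: ``local'' refers to interior balls $B_r$, $r<1$ (the statement is local in $B_1$ because the approximation is built on $B_r$), not to staying away from the characteristic manifold. For $\varepsilon=0$ the assertion is that $v=|y|^a\partial_y w$ belongs to $H^{1,-a}$ locally in $B_1$ and solves $-\mathrm{div}(|y|^{-a}\nabla v)=0$ in the energy sense \emph{across} $\Sigma$; note that for $-a\in(-1,+\infty)$ the paper's notion of energy solution is tested against all $\phi\in C^\infty_c(B_1)$, so the extension through $\Sigma$ is part of the claim, and it is exactly what is used later (e.g.\ in Lemma \ref{covariant}, Lemma \ref{2+a} and in the $C^{1,\alpha}$ blow-up analysis). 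Neither the weighted gradient integrability of $v$ near $\Sigma$ nor the weak equation against test functions crossing $\Sigma$ follows from your computation on $B_1\setminus\Sigma$; some quantitative input near $\Sigma$ is indispensable, and your proposal supplies none.

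The paper closes precisely this gap by an approximation scheme rather than by trace considerations: by Lemma \ref{Energy->Good} one approximates $w$ on $B_r$ by solutions $w_\varepsilon$ of the regularized problems with a uniform $H^1(\rho_\varepsilon^a\,\mathrm{d}z)$ bound; the explicit computation (your $\varepsilon>0$ step) shows $v_\varepsilon=\rho_\varepsilon^a\partial_y w_\varepsilon$ solves the dual equation with weight $\rho_\varepsilon^{-a}$; the uniform energy bound on $w_\varepsilon$ gives a uniform $L^2(\rho_\varepsilon^{-a}\,\mathrm{d}z)$ bound on $v_\varepsilon$, which the Caccioppoli inequality (Lemma \ref{L2>H1}) upgrades to a uniform local $H^1(\rho_\varepsilon^{-a}\,\mathrm{d}z)$ bound; finally Lemma \ref{Good->Energy} passes to the limit and identifies the limit $|y|^a\partial_y w$ as a genuine (local, i.e.\ on each $B_r$) energy solution of $-\mathrm{div}(|y|^{-a}\nabla v)=0$ in $B_1$, including across $\Sigma$. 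If you want to repair your proof you should either reproduce this $\varepsilon$-stability argument or provide an independent argument giving $v\in H^{1,-a}_{\mathrm{loc}}(B_1)$ together with the weak formulation against test functions supported across $\Sigma$; as written, the proposal proves a strictly weaker statement than the lemma.
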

\proof
The case $\varepsilon>0$ goes through the explicit computation. To prove the assertion in the limit case $\varepsilon=0$, we use Lemma \ref{Energy->Good} in order to approximate the solution $w$ with  solutions $w_\varepsilon$ with $\varepsilon>0$, keeping a (local) energy bound $H^1(\rho_\varepsilon^{a}\mathrm{d}z)$. Thus $v_\varepsilon=\rho_\varepsilon^a\partial_yw_\varepsilon$ are solutions to the second equation, with a uniform-in-$\varepsilon$  $L^2(\rho_\varepsilon^{-a}\rm{d}z)$ bound, which promptly reflects into a local $H^1(\rho_\varepsilon^{-a}\mathrm{d}z)$ one. To end the proof, we apply Lemma \ref{Good->Energy}. 
\endproof

\subsection{Moser iterative technique and $L^\infty$ bounds}\label{subsec:moser}
Another important issue is boundedness of energy solutions to \eqref{La} and to \eqref{Lafks}. Using a Moser iteration argument (see also \cite[Section 8.4]{gt}), one can prove the following nowadays standard result. 
\begin{Proposition}\label{Moser}
Let $a\in\R$ and $\varepsilon\geq0$. Let $u\in H^{1}(B_1,\rho_\varepsilon^a(y)\mathrm{d}z)$ be an energy solution to
\begin{equation}\label{Larho}
-\mathrm{div}(\rho_\varepsilon^aA\nabla u)=\rho_\varepsilon^af\qquad\mathrm{in \ }B_1,
\end{equation}
or to
\begin{equation}\label{Larhofks}
-\mathrm{div}(\rho_\varepsilon^aA\nabla u)=\mathrm{div}\left(\rho_\varepsilon^aF\right)\qquad\mathrm{in \ }B_1,
\end{equation}
with $f\in L^p(B_1,\rho_\varepsilon^a(y)\mathrm{d}z)$ and $$p>\frac{n+1+a^+}{2},$$
or with $F\in L^p(B_1,\rho_\varepsilon^a(y)\mathrm{d}z)$ and $$p>n+1+a^+.$$
Then, for any $0<r<1$ and $\beta_0>1$ either for solutions to \eqref{Larho} or to \eqref{Larhofks} there exists a positive constant independent of $\varepsilon$ such that respectively
$$\|u\|_{L^\infty(B_{r})}\leq c\left(\|u\|_{L^{\beta_0}(B_1,\rho_\varepsilon^a(y)\mathrm{d}z)}+\|f\|_{L^p(B_1,\rho_\varepsilon^a(y)\mathrm{d}z)}\right),$$
and
$$\|u\|_{L^\infty(B_{r})}\leq c\left(\|u\|_{L^{\beta_0}(B_1,\rho_\varepsilon^a(y)\mathrm{d}z)}+\|F\|_{L^p(B_1,\rho_\varepsilon^a(y)\mathrm{d}z)}\right).$$
\end{Proposition}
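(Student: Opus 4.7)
The plan is to run a standard Moser iteration adapted to the weight $\rho_\varepsilon^a$, relying crucially on the \emph{uniform-in-$\varepsilon$} weighted Sobolev embedding of Theorem \ref{sobemb1} (with effective dimension $n^*(a)=n+1+a^+$, critical exponent $2^*(a)=2n^*(a)/(n^*(a)-2)$, and contraction factor $\chi:=2^*(a)/2>1$), together with the generalized Caccioppoli inequality \eqref{eq:caccioppoli} already derived in Lemma \ref{L2>H1}. Without loss of generality I take $A=\mathbb I$ by ellipticity, and reduce to nonnegative sub-solutions by replacing $u$ with $\bar u:=|u|+k$, choosing $k=\|f\|_{L^p(B_1,\rho_\varepsilon^a\mathrm{d}z)}$ (respectively $k=\|F\|_{L^p}$) so that the inhomogeneous terms can be recast as coefficients of $\bar u$; this is the usual trick to bring both sides of the estimate on an equal footing and to avoid sign issues.

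First, testing \eqref{Larho} against $\eta^2\bar u^{\beta-1}$ for a cutoff $\eta\in C^\infty_c(B_{r_2})$ with $\eta\equiv 1$ on $B_{r_1}$, $r<r_1<r_2<1$, and using \eqref{eq:caccioppoli} plus H\"older's inequality on the forcing term in the form
\[
\int_{B_1}\rho_\varepsilon^a |f|\,\eta^2\bar u^{\beta-1}\leq \|f/\bar u\|_{L^p(\rho_\varepsilon^a)}\Bigl(\int_{B_1}\rho_\varepsilon^a(\eta\bar u^{\beta/2})^{2p'}\Bigr)^{1/p'},
\]
yields (by the choice of $k$) $\|f/\bar u\|_{L^p(\rho_\varepsilon^a)}\leq 1$. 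Combining with Theorem \ref{sobemb1} applied to $\eta\bar u^{\beta/2}$, and noting that the hypothesis $p>(n+1+a^+)/2$ ensures $2p'<2^*(a)$, I interpolate $L^{2p'}(\rho_\varepsilon^a)$ between $L^2(\rho_\varepsilon^a)$ and $L^{2^*(a)}(\rho_\varepsilon^a)$ and absorb the top-exponent piece into the left-hand side via Young's inequality. This produces the reverse-H\"older inequality
\[
\Bigl(\int_{B_{r_1}}\rho_\varepsilon^a\bar u^{\chi\beta}\Bigr)^{1/\chi\beta}\leq \Bigl(\frac{C\,\beta^2}{(r_2-r_1)^2}\Bigr)^{1/\beta}\Bigl(\int_{B_{r_2}}\rho_\varepsilon^a\bar u^{\beta}\Bigr)^{1/\beta},
\]
with $C$ independent of $\varepsilon$ because the Sobolev constant of Theorem \ref{sobemb1} is. The divergence case is analogous: testing \eqref{Larhofks} against $\eta^2\bar u^{\beta-1}$, the term $\int\rho_\varepsilon^a F\cdot\nabla(\eta^2\bar u^{\beta-1})$ is split between a gradient piece (absorbed by Young into the Caccioppoli left-hand side) and a $|F|\bar u^{\beta-1}|\nabla\eta|\eta$ piece; H\"older with exponent $p>n+1+a^+$ then gives the same type of estimate, the stronger threshold on $p$ reflecting the loss of one derivative.

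Next, I iterate in the classical Moser fashion: set $\beta_j=\beta_0\chi^j$, $r_j=r+(1-r)2^{-j}$, so that $r_2-r_1\asymp 2^{-j}$ at step $j$. Multiplying the resulting chain of reverse-H\"older inequalities, one gets
\[
\|\bar u\|_{L^{\beta_j}(B_{r_j},\rho_\varepsilon^a\mathrm{d}z)}\leq \prod_{i=0}^{j-1}\Bigl(C\,4^i\,\beta_0^2\chi^{2i}\Bigr)^{1/\beta_i}\|\bar u\|_{L^{\beta_0}(B_1,\rho_\varepsilon^a\mathrm{d}z)},
\]
and the product converges since $\sum 1/\beta_i=\sum\chi^{-i}/\beta_0<\infty$ and $\sum i/\beta_i<\infty$. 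Passing to the limit $j\to\infty$ I obtain $\|\bar u\|_{L^\infty(B_r)}\leq C\|\bar u\|_{L^{\beta_0}(B_1,\rho_\varepsilon^a\mathrm{d}z)}$, which in terms of $u$ and after undoing the substitution $\bar u=|u|+k$ yields the claimed bound. The constant $C$ depends only on $n,a,\beta_0,r,p$, and the Sobolev and ellipticity constants, hence is independent of $\varepsilon$.

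The main obstacle is ensuring the constants are genuinely $\varepsilon$-independent; this hinges on Theorem \ref{sobemb1}, where the fact that $b>0$ in the $d$-regularity of $\mu=\rho_\varepsilon^a\mathrm{d}z$ can be chosen uniformly in $\varepsilon$ is decisive. Secondary care is needed in the low-dimensional edge case $n=1$, $a^+=0$, where the critical Sobolev exponent degenerates: there I replace $2^*(a)$ by any fixed $q>2$ (admissible by the last clause of Theorem \ref{sobemb1}) and rerun the iteration with contraction factor $\chi=q/2$, the integrability thresholds in the statement being precisely those for which a valid $q$ can be chosen.
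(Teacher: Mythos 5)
Your argument follows essentially the same route as the paper (Caccioppoli inequality, uniform-in-$\varepsilon$ weighted Sobolev embedding, interpolation/Young, Moser iteration with $\chi=2^*(a)/2$), and in the range $a>-1$ it is correct. The genuine gap is in the super singular range $a\le -1$, which the statement covers since $a\in\R$. Two points fail there. First, you invoke Theorem \ref{sobemb1}, which is stated only for $a\in(-1,+\infty)$; for $a\le-1$ one must instead use (the weaker form of) Theorem \ref{sobemb2}, which fortunately yields the same exponent because $a^+=0$ — this is a fixable citation issue. Second, and more seriously, the normalization $\bar u=|u|+k$ with $k=\|f\|_{L^p(B_1,\rho_\varepsilon^a\mathrm{d}z)}$ is incompatible with the weight in this range: when $\varepsilon=0$ and $a\le-1$ the weight $|y|^a$ is not locally integrable, so $\mu(B_1)=\int_{B_1}|y|^a\,\mathrm{d}z=+\infty$ and hence $\|\bar u\|_{L^{\beta_0}(B_1,|y|^a\mathrm{d}z)}\ge k\,\mu(B_1)^{1/\beta_0}=+\infty$ whenever $k>0$; your iteration then starts from an infinite quantity and the final estimate is vacuous. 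Even for $\varepsilon>0$ the same computation shows that undoing the substitution produces the bound
\[
\|u\|_{L^\infty(B_r)}\le C\left(\|u\|_{L^{\beta_0}(B_1,\rho_\varepsilon^a\mathrm{d}z)}+\mu_\varepsilon(B_1)^{1/\beta_0}\,\|f\|_{L^p(B_1,\rho_\varepsilon^a\mathrm{d}z)}\right),
\]
and $\mu_\varepsilon(B_1)\to+\infty$ as $\varepsilon\to0$ when $a\le-1$, so the constant is not independent of $\varepsilon$, which is the whole point of the proposition.

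The paper avoids this by not absorbing the data into the solution: it keeps the forcing term, estimates it by H\"older, interpolates $L^t(\rho_\varepsilon^a)$ between $L^2(\rho_\varepsilon^a)$ and $L^{2^*(a)}(\rho_\varepsilon^a)$ and absorbs by Young, accepting a constant that depends on $\|f\|_{L^p(B_1,\rho_\varepsilon^a\mathrm{d}z)}$ (the additive form of the statement then follows by normalizing $u$ and $f$ simultaneously). Relatedly, your final passage from the uniform bound on $\|\bar u\|_{L^{\beta_j}(B_{r_j},\rho_\varepsilon^a\mathrm{d}z)}$ to the Lebesgue sup norm $\|u\|_{L^\infty(B_r)}$ is immediate only when the measure is finite and mutually absolutely continuous with Lebesgue, i.e.\ $a>-1$; for $a\le-1$ the paper passes through the finite measure $\rho_\varepsilon^{b}\mathrm{d}z$ with $b=-1/2>a$, using $\|u\|_{L^{q}(\rho_\varepsilon^{b})}\le\|u\|_{L^{q}(\rho_\varepsilon^{a})}$ on $B_1$, before letting $q\to\infty$. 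To repair your proof you should either adopt the paper's treatment of the inhomogeneity in the range $a\le-1$, or at least justify a replacement for the $|u|+k$ trick that does not introduce constants against a measure of infinite (or $\varepsilon$-degenerating) total mass.
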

\proof
Without loss of generality we will take $A=\mathbb I$. We prove the result for \eqref{Larho} (the other one is analogous). We want to apply the Moser iterative method. Let us fix $0<\overline r<1$. We take a sequence of radii $\{r_k\}$ such that
$$\begin{cases}
r_0=1\\
r_{k+1}=\frac{r_k+\overline r}{2}\\
r_k-r_{k+1}=\frac{1-\overline r}{2^{k+1}}.
\end{cases}$$
Let $\chi=2^*(a)/2$. We take also a sequence of exponents $\{\beta_k\}$ such that
$$\begin{cases}
\beta_0>1\\
\beta_{k}=\beta_0\chi^k.
\end{cases}$$
Moreover, let us consider a sequence of radial non increasing cut off functions $\{\eta_k\}$ such that
$$\begin{cases}
\eta_k\in C^\infty_c(B_{r_k})\\
0\leq\eta_k\leq1\\
\eta_k\equiv1 \ \mathrm{in \ }B_{r_{k+1}}\\
|\nabla\eta_k|\leq\frac{1}{r_k-r_{k+1}}.
\end{cases}$$
for the general pass $k$ of the iteration we will indicate briefly these objects as $r_k=R$, $r_{k+1}=r$, $\beta_k=\beta$ and $\eta_k=\eta$. Moreover for simplicity we will recall $\rho=\rho_\varepsilon^a$\\\\
Applying our Sobolev embedding results in the left hand side of the Caccioppoli inequality \eqref{eq:caccioppoli}, and an H\"older inequality on the second term in the right hand side, we obtain, for some constant which are uniform in $\varepsilon$,
$$\left(\int_{B_R}\rho(\eta u^{\beta/2})^{2^*(a)}\right)^{2/2^*(a)}\leq \frac{c}{(R-r)^2}\int_{B_R}\rho u^\beta+\|f\|_{L^p(B_1,\rho\mathrm{d}z)}\left(\int_{B_R}\rho(\eta u^{\beta/2})^{t}\right)^{1/p'},$$
where $t=2\frac{\beta-1}{\beta}p'$. Hence we apply an interpolation inequality with exponents
$$\frac{1}{t}=\frac{\delta}{2}+\frac{1-\delta}{2^*(a)},\qquad\mathrm{with \ }\delta=1-\frac{n+1+a^+}{2p}+\frac{n+1+a^+}{2p'(\beta-1)}.$$
We remark that as $\beta\to+\infty$, $\delta\to 1-\frac{n+1+a^+}{2p}>0$. So
$$\left(\int_{B_R}\rho(\eta u^{\beta/2})^{t}\right)^{1/p'}\leq\left(\int_{B_R}\rho \eta^2u^\beta\right)^{\frac{\beta-1}{\beta}\delta}\left(\int_{B_R}\rho(\eta u^{\beta/2})^{2^*(a)}\right)^{\frac{2}{2^*(a)}\frac{(\beta-1)}{\beta}(1-\delta)}.$$
Hence, using the Young inequality, we have
\begin{eqnarray*}
\|f\|_{L^p(B_1,\rho\mathrm{d}z)}\left(\int_{B_R}\rho(\eta u^{\beta/2})^{t}\right)^{1/p'}&\leq&\frac{\beta-1}{\beta}\delta\|f\|_{L^p(B_1,\rho\mathrm{d}z)}^{\frac{\beta}{\delta(\beta-1)}}\int_{B_R}\rho u^\beta\\
&&+\frac{\beta-1}{\beta}(1-\delta)\left(\int_{B_R}\rho(\eta u^{\beta/2})^{2^*(a)}\right)^{2/2^*(a)}.
\end{eqnarray*}
Putting together these computations, we find a positive constant $\overline c>0$, which is uniform with respect to $\beta\to+\infty$ and which depends on the $L^p(B_1,\rho\mathrm{d}z)$-norm of $f$, such that
$$\left(\int_{B_R}\rho(\eta u^{\beta/2})^{2^*(a)}\right)^{2/2^*(a)}\leq\left(\frac{\overline c}{R-r}\right)^2\int_{B_R}\rho u^\beta;$$
that is,
$$\left(\int_{B_r}\rho u^{\beta\chi}\right)^{1/\beta\chi}\leq\left(\frac{\overline c}{R-r}\right)^{2/\beta}\left(\int_{B_R}\rho u^\beta\right)^{1/\beta}.$$
Hence, applying an iteration we obtain
\begin{equation*}
\|u\|_{L^{\beta_{k+1}}(B_{r_{k+1}},\rho\mathrm{d}z)}\leq\prod_{j=0}^k\left(\frac{2\overline c}{R-r}\right)^{2/\beta_j}\prod_{j=0}^k(2^j)^{2/\beta_j}\|u\|_{L^{\beta_{0}}(B_{r_0},\rho\mathrm{d}z)}.
\end{equation*}
At this point we have to distinguish two different cases: when $a>-1$, each measure $\mathrm{d}\mu_\varepsilon=\rho_\varepsilon^a(y)\mathrm{d}z$ (comprising the limit case for $\varepsilon=0$) is absolutely continuous with respect to the Lebesgue one. Since the sequences 
$$\sum_{j=0}^k\frac{2}{\beta_j}\log\left(\frac{2\overline c}{R-r}\right),\qquad\sum_{j=1}^k\frac{2j}{\beta_j}\log 2$$
are convergent, passing to the limit we obtain
$$\|u\|_{L^{\infty}(B_{\overline r},\rho_\varepsilon^a\mathrm{d}z)}\leq c\|u\|_{L^{2}(B_{1},\rho_\varepsilon^a\mathrm{d}z)},$$
and by the absolute continuity mentioned above we obtain the result. Eventually, when $a\leq-1$ the absolute continuity property fails, but one can trivially use the following embedding
\begin{equation*}
\|u\|_{L^{\beta_{k+1}}(B_{r_{k+1}},\rho^b\mathrm{d}z)}\leq\|u\|_{L^{\beta_{k+1}}(B_{r_{k+1}},\rho^a\mathrm{d}z)}\leq\prod_{j=0}^k\left(\frac{2\overline c}{R-r}\right)^{2/\beta_j}\prod_{j=0}^k(2^j)^{2/\beta_j}\|u\|_{L^{\beta_{0}}(B_{r_0},\rho^a\mathrm{d}z)},
\end{equation*}
with $b=-1/2>a$ for instance. Hence, one can conclude as before. We remark that the constant in the result does depend on the $L^p(B_1,\rho_\varepsilon^a\mathrm{d}z)$-norm of $f$.
\endproof

\section{Liouville theorems}\label{sec:liouville}
In this section we provide two fundamental results which will be the main tools in order to prove regularity local estimates which are uniform with respect to $\varepsilon\geq0$ and which are contained in the main body of the paper.

\subsection{Trace and Boundary Hardy type inequalities}

At first we turn to the validity of Hardy (trace) type inequalities and their spectral stability. These results will be the key tools in order to establish a class of Liouville theorems. To start with, we need the following inequality, which is proved in \cite{SirTerVit2}: 

\begin{Lemma}[Boundary Hardy Inequality]
Let $a\in(-\infty,1)$ and $\varepsilon_0>0$. There exists $c_0>0$ such that, for every $\varepsilon\in[0,\varepsilon_0]$ and  $u\in\tilde H^1(B_1,\rho_\varepsilon^a(y)\mathrm{d}z)$, 

\begin{equation}\label{eq:boundaryHardy}
\int_{B_1}\rho_\varepsilon^a |\nabla u|^2\geq c_0\int_{\partial B_1}\rho_\varepsilon^a \frac{1}{y}u^2\;.
\end{equation}
\end{Lemma}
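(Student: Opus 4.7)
The plan is to reduce the claim to a classical, unweighted boundary Hardy inequality via the isometry $T_\varepsilon^a:\tilde H^{1}(B_1,\rho_\varepsilon^a(y)\mathrm{d}z)\to\tilde H^{1}(B_1)$, $u\mapsto v=\sqrt{\rho_\varepsilon^a}\,u$, which was recalled in the preliminaries and makes sense precisely when $a<1$. Under this map, the weighted Dirichlet energy is transformed into the quadratic form $Q_\varepsilon(v)$ displayed in the paper, while the right-hand side of \eqref{eq:boundaryHardy} becomes $c_0\int_{\partial B_1} v^2/|y|\,dS$, since $v^2=\rho_\varepsilon^a u^2$. Thus the inequality is equivalent to showing that, for every $v\in\tilde H^1(B_1)$,
\[
Q_\varepsilon(v)\;\geq\; c_0\int_{\partial B_1}\frac{v^2}{|y|}\,dS,
\]
with $c_0$ independent of $\varepsilon\in[0,\varepsilon_0]$.

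Now, since $v\in\tilde H^1(B_1)$ is approximated by smooth functions supported away from $\Sigma$, its trace on $\partial B_1$ vanishes on $\Sigma\cap\partial B_1$. Together with the fact, proved in \cite{SirTerVit2}, that the forms $\{Q_\varepsilon\}_{\varepsilon\in[0,\varepsilon_0]}$ are all equivalent to the standard $H^1$-norm on $\tilde H^1(B_1)$ with constants depending only on $n$, $a$ and $\varepsilon_0$, the task reduces to the \emph{unweighted} boundary Hardy inequality
\[
\int_{\partial B_1}\frac{v^2}{|y|}\,dS\;\leq\; C\,\|v\|_{H^1(B_1)}^{2},\qquad \forall\,v\in\tilde H^1(B_1).
\]
This is classical: by the trace theorem $H^1(B_1)\hookrightarrow H^{1/2}(\partial B_1)$ one is reduced to a fractional Hardy inequality on the sphere $\partial B_1$ with weight $1/d(\cdot,\Sigma\cap\partial B_1)$, applied to $H^{1/2}$-functions vanishing on the smooth submanifold $\Sigma\cap\partial B_1$, which is an $(n-1)$-sphere; near this sphere one has $|y|\asymp d(\cdot,\Sigma\cap\partial B_1)$, so the standard weight can be replaced by $1/|y|$ at the cost of a harmless geometric constant. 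Chaining the norm equivalence with this trace-Hardy estimate produces the desired bound.

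I expect the main obstacle to lie in the uniform-in-$\varepsilon$ control of $Q_\varepsilon$. As $\varepsilon\to 0$, the induced potential appearing in $Q_\varepsilon$ degenerates to the critical inverse-square Hardy potential $(a^2-2a)/(4y^2)$, which is compatible with the Dirichlet energy on $\tilde H^1(B_1)$ precisely because $a<1$ leaves room with respect to the sharp constant $4/(1-a)^2$ in the one-dimensional weighted Hardy inequality. Tracking how this margin propagates through the regularization, and how the boundary term $\int_{\partial B_1}\frac{\partial_y\rho_\varepsilon^a}{2\rho_\varepsilon^a}\,y\,v^2$ in $Q_\varepsilon$ stays uniformly bounded by the interior energy (the sign of this term depending on $a$), is the delicate point; once that equivalence is secured, the classical boundary Hardy inequality recalled above closes the argument and produces a constant $c_0=c_0(n,a,\varepsilon_0)>0$, as claimed.
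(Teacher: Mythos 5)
The paper itself does not prove this lemma (it is quoted from the companion paper \cite{SirTerVit2}), so the comparison can only be with the standard argument; in any case your proposal has a genuine gap. The first reduction is fine: via $T^a_\varepsilon$ the left-hand side of \eqref{eq:boundaryHardy} equals $Q_\varepsilon(v)$ with $v=\sqrt{\rho_\varepsilon^a}\,u$, the right-hand side becomes $c_0\int_{\partial B_1}v^2/|y|$, and the uniform-in-$\varepsilon$ equivalence of $Q_\varepsilon$ with the $H^1(B_1)$-norm (itself quoted from \cite{SirTerVit2}) reduces the claim to the unweighted inequality $\int_{\partial B_1}v^2/|y|\leq C\|v\|^2_{H^1(B_1)}$ for $v\in\tilde H^1(B_1)$. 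Note, however, that this is exactly the case $a=0$ of the lemma, so all the content is still there, and it is at this point that your argument fails.

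You bound the boundary integral only through the trace norm $\|v\|_{H^{1/2}(\partial B_1)}$ and then invoke a fractional Hardy inequality on $\partial B_1$ with weight $1/d(\cdot,\Sigma\cap\partial B_1)$ for $H^{1/2}$-functions vanishing on $\Sigma\cap\partial B_1$. That weight is $d^{-2s}$ with $s=1/2$, which is precisely the critical exponent at which the fractional Hardy inequality is \emph{false}; moreover, vanishing on $\Sigma\cap\partial B_1$ is vacuous at the $H^{1/2}$ level, since a codimension-one submanifold of $\partial B_1$ has zero $H^{1/2}$-capacity, so smooth functions vanishing near the equator are dense in $H^{1/2}(\partial B_1)$. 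Concretely, if $w_\delta$ equals $1$ away from the equator and is cut off logarithmically in $|y|$ between $\delta^2$ and $\delta$, then $\|w_\delta\|_{H^{1/2}(\partial B_1)}$ stays bounded while $\int_{\partial B_1}w_\delta^2/|y|\geq c\log(1/\delta)\to\infty$; hence no inequality of the form $\int_{\partial B_1}w^2/|y|\leq C\|w\|^2_{H^{1/2}(\partial B_1)}$ can hold, even restricted to functions vanishing near the equator. Such $w_\delta$ do extend to $H^1(B_1)$ functions with bounded norm (e.g. harmonically), but those extensions do not vanish on the interior disc $\Sigma\cap B_1$ — and this interior vanishing, which your argument discards the moment you pass to traces, is the only reason the lemma is true. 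A correct proof must couple the interior Dirichlet energy directly to the boundary term: for instance, in polar coordinates of each two-dimensional half-plane spanned by $y$ and a horizontal radial direction, write $v$ on $\partial B_1$ as the integral of $\partial_\theta v$ along the arc joining $\Sigma$ to $\partial B_1$ and use Cauchy--Schwarz, which yields $\int_{\partial B_1}v^2/|y|\leq C\int_{B_1}|\nabla v|^2$ for $v$ vanishing on $\Sigma$ (a weighted version of the same angular argument, or a ground-state/test-function computation with an explicit one-dimensional solution as in the proof of the Trace Inequality \eqref{eq:trace}, handles general $a<1$ and $\varepsilon$ directly). Your closing paragraph locates the delicate point in the uniform control of $Q_\varepsilon$; that part is indeed available for $a<1$, whereas the step that actually breaks is the boundary Hardy inequality you tried to import from trace theory.
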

Next, let us consider the special solution
\begin{equation*}
\overline u_\varepsilon(y)=\int_0^y \rho_\varepsilon^{-a}(s)ds
\end{equation*}
to the equation 
\begin{equation}\label{eq:harmonic}
-\mathrm{div}(\rho_\varepsilon^a\nabla u)=0\;.
\end{equation}
We notice that $\overline u$ satisfies the weighted boundary condition:
\begin{equation*}
\rho_\varepsilon^{a}\dfrac{\partial \overline u}{\partial\nu}=y\qquad \mathrm{on}\;\partial B_1.
\end{equation*}
Let $u\in C^\infty_c(\overline\Omega\setminus\Sigma)$. By multiplying equation \eqref{eq:harmonic} by $u^2/\overline u$  and integrating over the ball we easily obtain:
\begin{equation*}
\begin{split}
 \int_{\partial B_1}\rho_\varepsilon^a \frac{ y}{\rho_\varepsilon^a \overline u_\varepsilon}u^2= \int_{\partial B_1}\rho_\varepsilon^{a}\dfrac{\partial \overline u}{\partial\nu}\dfrac{u^2}{\overline u}
=\int_{B_1}\rho_\varepsilon^a\nabla\overline  u\cdot \nabla \left(\dfrac{u^2}{\overline u}\right)=\\=\int_{B_1}\rho_\varepsilon^a|\nabla  u|^2- \rho_\varepsilon^a\left|\nabla u- \dfrac{u}{\overline u}\nabla \overline u\right|^2\leq \int_{B_1}\rho_\varepsilon^a|\nabla  u|^2\;.
\end{split}
\end{equation*}
In other words, we have the following
\begin{Lemma}[Trace Inequality]
Let $a\in(-\infty,1)$ and $\varepsilon\geq0$. For every $u\in\tilde H^1(B_1,\rho_\varepsilon^a(y)\mathrm{d}z)$, 
\begin{equation}\label{eq:trace}
\int_{B_1}\rho_\varepsilon^a |\nabla u|^2\geq \int_{\partial B_1}\rho_\varepsilon^a \frac{ y}{\rho_\varepsilon^a \overline u_\varepsilon}u^2\;.
\end{equation}
If $\varepsilon=0$, then we have
\begin{equation}\label{eq:tracea}
\int_{B_1}|y|^a |\nabla u|^2\geq (1-a) \int_{\partial B_1}|y|^a u^2\;.
\end{equation}
\end{Lemma}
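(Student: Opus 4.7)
My plan is to exploit the same idea that has already been used in deriving the boundary Hardy inequality, namely testing the equation against a suitable multiple of $u^2$, but now using the explicit solution $\overline u_\varepsilon(y)=\int_0^y \rho_\varepsilon^{-a}(s)\,\mathrm{d}s$ of $-\mathrm{div}(\rho_\varepsilon^a\nabla u)=0$ as a ``Hardy potential.'' The function $\overline u_\varepsilon$ depends only on $y$ and satisfies $\nabla\overline u_\varepsilon=\rho_\varepsilon^{-a}(y)e_y$, so on $\partial B_1$ the conormal derivative is $\rho_\varepsilon^a\partial_\nu\overline u_\varepsilon=\rho_\varepsilon^a\,\rho_\varepsilon^{-a}y=y$, which explains the factor $y/(\rho_\varepsilon^a\overline u_\varepsilon)$ appearing in the statement.

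The core step is the pointwise algebraic identity
\[
\rho_\varepsilon^a\nabla\overline u_\varepsilon\cdot\nabla\!\left(\dfrac{u^2}{\overline u_\varepsilon}\right)
=\rho_\varepsilon^a|\nabla u|^2-\rho_\varepsilon^a\left|\nabla u-\dfrac{u}{\overline u_\varepsilon}\nabla\overline u_\varepsilon\right|^2,
\]
valid whenever $\overline u_\varepsilon$ does not vanish. Multiplying the equation $-\mathrm{div}(\rho_\varepsilon^a\nabla\overline u_\varepsilon)=0$ by $u^2/\overline u_\varepsilon$ and integrating by parts on $B_1$ then yields
\[
\int_{\partial B_1}\rho_\varepsilon^a\,\dfrac{y}{\rho_\varepsilon^a\overline u_\varepsilon}\,u^2
=\int_{B_1}\rho_\varepsilon^a|\nabla u|^2-\int_{B_1}\rho_\varepsilon^a\left|\nabla u-\dfrac{u}{\overline u_\varepsilon}\nabla\overline u_\varepsilon\right|^2,
\]
and dropping the nonnegative remainder gives \eqref{eq:trace}.

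To justify the integration by parts for $u\in\tilde H^1(B_1,\rho_\varepsilon^a\mathrm{d}z)$, I would first argue for $u\in C^\infty_c(\overline B_1\setminus\Sigma)$, where $\overline u_\varepsilon$ is bounded away from zero on the support of $u$, and then pass to the limit using density. The one subtle point, which I expect to be the main obstacle, is precisely this cut at $\Sigma$: in the limit case $\varepsilon=0$ the weight $y/(\rho_0^a\overline u_0)=(1-a)$ is actually continuous across $\Sigma$ (it is the ``right'' combination to kill the singular behaviour of $\overline u_0$), but the approximating procedure must be done for $\varepsilon>0$ first, where the integrand is smooth, with constants that are uniform in $\varepsilon$, and then one passes to the limit as $\varepsilon\to 0$.

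Finally, for the case $\varepsilon=0$ the explicit computation $\overline u_0(y)=\mathrm{sgn}(y)\,|y|^{1-a}/(1-a)$ gives $y\,\overline u_0(y)=|y|^{2-a}/(1-a)$, so that
\[
\rho_0^a\,\dfrac{y}{\rho_0^a\overline u_0}=\dfrac{y}{\overline u_0}=(1-a)|y|^a,
\]
and \eqref{eq:trace} reduces to the sharper statement \eqref{eq:tracea}. This also matches the optimal constant one would expect from the Hardy--type analysis, and makes it clear why the range $a<1$ is the natural one for the inequality to be nontrivial.
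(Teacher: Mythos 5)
Your proposal is correct and is essentially the paper's own proof: one tests $-\mathrm{div}(\rho_\varepsilon^a\nabla\overline u_\varepsilon)=0$ with $u^2/\overline u_\varepsilon$ for $u\in C^\infty_c(\overline B_1\setminus\Sigma)$, uses the same pointwise identity to drop the nonnegative remainder, and extends by density, with the explicit formula for $\overline u_0$ giving the constant $(1-a)$ in the case $\varepsilon=0$. The only cosmetic difference is that the paper runs the argument directly for every $\varepsilon\geq 0$ (the integrand is smooth on the support of $u$, which avoids $\Sigma$), so your extra approximation in $\varepsilon$ is unnecessary though harmless.
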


With this tools we can prove the following further inequality:

\begin{Lemma}[Stability]\label{lemma:stability}
Let $a\in(-\infty,1)$ and $\mu<1-a$. There exists $\varepsilon_\mu>0$ such that, for every  $\varepsilon\in(0,\varepsilon_\mu)$ and for every $u\in\tilde H^1(B_1,\rho_\varepsilon^a(y)\mathrm{d}z)$ there holds, 

\begin{equation*}
\int_{B_1}\rho_\varepsilon^a |\nabla u|^2-\frac a2\varepsilon^2\int_{\partial B_1}\rho_\varepsilon^a\dfrac{u^2}{\varepsilon^2+y^2}\geq \mu\int_{\partial B_1}\rho_\varepsilon^a u^2\;.
\end{equation*}
\end{Lemma}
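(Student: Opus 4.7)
The plan is a contradiction argument that realizes the inequality as a spectral stability property under the $\varepsilon$-regularization of the weight. Assume the conclusion fails: there exist $\mu_*<1-a$, a sequence $\varepsilon_n\downarrow 0$, and functions $u_n\in\tilde H^1(B_1,\rho_{\varepsilon_n}^a\mathrm{d}z)$ normalized by $\int_{\partial B_1}\rho_{\varepsilon_n}^a u_n^2=1$ and satisfying
$$\int_{B_1}\rho_{\varepsilon_n}^a|\nabla u_n|^2-\frac{a}{2}\varepsilon_n^2\int_{\partial B_1}\rho_{\varepsilon_n}^a\frac{u_n^2}{\varepsilon_n^2+y^2}<\mu_*.$$
Since $\varepsilon^2/(\varepsilon^2+y^2)\leq 1$, the boundary penalty is bounded by $|a|/2$, yielding the uniform energy bound $\int\rho_{\varepsilon_n}^a|\nabla u_n|^2\leq\mu_*+|a|/2$. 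I transfer this bound to $v_n:=\sqrt{\rho_{\varepsilon_n}^a}\,u_n\in\tilde H^1(B_1)$ via the isometry $T_{\varepsilon_n}^a$ of Section~\ref{secs2}, which is an equivalent-norm map with constants uniform in $\varepsilon\geq 0$.

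Weak compactness together with the compact trace embedding yield, along a subsequence, $v_n\rightharpoonup v$ weakly in $H^1(B_1)$ and strongly in $L^2(\partial B_1)$; since each $v_n$ has zero trace on $\Sigma$, so does $v$, hence $v\in\tilde H^1(B_1)$. Setting $u:=v/|y|^{a/2}$, the identity $\int_{\partial B_1}v_n^2\to\int_{\partial B_1}v^2$ rewrites as $\int_{\partial B_1}|y|^au^2=1$.

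The heart of the argument is the vanishing of the boundary penalty:
$$\varepsilon_n^2\int_{\partial B_1}\rho_{\varepsilon_n}^a\frac{u_n^2}{\varepsilon_n^2+y^2}\longrightarrow 0.$$
For any $\delta>0$ I split $\partial B_1=\{|y|>\delta\}\cup\{|y|\leq\delta\}$. On the first piece the factor $\varepsilon_n^2/(\varepsilon_n^2+y^2)\leq\varepsilon_n^2/\delta^2$ is infinitesimal, controlling that portion by $\varepsilon_n^2/\delta^2\cdot 1$. On the strip I invoke the uniform-in-$\varepsilon$ boundary Hardy inequality \eqref{eq:boundaryHardy}, which bounds $\int_{\partial B_1}\rho_{\varepsilon_n}^a u_n^2/|y|$ by the Dirichlet energy, hence by a uniform constant, giving $\int_{\{|y|\leq\delta\}\cap\partial B_1}\rho_{\varepsilon_n}^a u_n^2\leq C\delta$. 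Letting $n\to\infty$ and then $\delta\to 0$ yields the claim.

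Passing to the limit in the contradiction inequality, weak lower semicontinuity of the weighted Dirichlet integral (in the spirit of Lemma~\ref{Good->Energy}) combined with the convergence of the measures $\rho_{\varepsilon_n}^a\mathrm{d}z\to|y|^a\mathrm{d}z$ gives $\int|y|^a|\nabla u|^2\leq\liminf_n\int\rho_{\varepsilon_n}^a|\nabla u_n|^2\leq\mu_*$. But the trace inequality \eqref{eq:tracea} forces $\int|y|^a|\nabla u|^2\geq(1-a)\int_{\partial B_1}|y|^au^2=1-a>\mu_*$, the desired contradiction. The main obstacle is the passage to the limit in the varying family of weighted bulk energies: in the super-singular range $a\leq-1$ the weight lies outside the Muckenhoupt $A_2$ class, and one must work through the isometry $T_\varepsilon^a$ together with the identification $H^{1,a}=\tilde H^{1,a}$ from Proposition~\ref{prop:super}, closing this gap via a Vitali-type argument paralleling the proof of Lemma~\ref{Good->Energy}.
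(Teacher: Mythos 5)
Your compactness-and-contradiction scheme is a genuinely different route from the paper's. The paper proves the inequality directly and pointwise: it adds $\delta$ times the boundary Hardy inequality \eqref{eq:boundaryHardy} to $(1-\delta)$ times the $\varepsilon$-level trace inequality \eqref{eq:trace} (the one with the explicit ratio $\mu_\varepsilon(y)=y/(\rho_\varepsilon^a\overline u_\varepsilon)$ coming from the special solution $\overline u_\varepsilon$), so that everything reduces to the pointwise bound $(1-\delta)\mu_\varepsilon(y)-\tfrac{a\varepsilon^2}{2(\varepsilon^2+y^2)}+\tfrac{\delta c_0}{y}\geq\mu$ on $\partial B_1$, which holds for small $\varepsilon$ because $\mu_\varepsilon(y)=\mu_1(y/\varepsilon)$ with $\mu_1\to 1-a$ at infinity and the Hardy term dominates near $y=0$. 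This is quantitative (it tracks how $\varepsilon_\mu$ depends on $\mu$), uses no limit objects, and avoids the functional-analytic identifications your argument needs (the uniform-in-$\varepsilon$ equivalence of the $Q_\varepsilon$-norms, which the paper asserts via the companion paper only in the discussion of the case $a\leq-1$, and the membership of the limit $u=|y|^{-a/2}v$ in $\tilde H^{1,a}(B_1)$ so that \eqref{eq:tracea} applies). Your treatment of the boundary penalty is correct and is in the same spirit as the paper's use of \eqref{eq:boundaryHardy}: the split $\{|y|>\delta\}\cup\{|y|\leq\delta\}$ together with the uniform Hardy bound does give $\varepsilon_n^2\int_{\partial B_1}\rho_{\varepsilon_n}^a u_n^2/(\varepsilon_n^2+y^2)\to0$.

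The one step you cannot close the way you indicate is the lower semicontinuity $\int_{B_1}|y|^a|\nabla u|^2\leq\liminf_n\int_{B_1}\rho_{\varepsilon_n}^a|\nabla u_n|^2$. The ``Vitali-type argument paralleling Lemma \ref{Good->Energy}'' does not transfer: in that lemma the approximating functions are solutions, so interior elliptic estimates give strong $H^1_{\mathrm{loc}}(B_1\setminus\Sigma)$ convergence and a.e. convergence of gradients, and Vitali is used to pass to the limit in the linear weak formulation; here your $u_n$ are arbitrary functions violating the inequality, you only have weak convergence, and the transferred forms $Q_{\varepsilon_n}$ carry sign-indefinite potentials of size $\varepsilon_n^{-2}$ near $\Sigma$, so neither Fatou nor Vitali applies as stated. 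The gap is fillable, but by a different mechanism: on any compact $K\Subset B_1\setminus\Sigma$ the weights $\rho_{\varepsilon_n}^a\to|y|^a$ uniformly and $\rho_{\varepsilon_n}^{-a/2}\to|y|^{-a/2}$ in $C^1(K)$, so $\nabla u_n=\rho_{\varepsilon_n}^{-a/2}\nabla v_n+v_n\nabla\rho_{\varepsilon_n}^{-a/2}\rightharpoonup\nabla u$ weakly in $L^2(K)$; convexity then gives $\int_K|y|^a|\nabla u|^2\leq\liminf_n\int_K\rho_{\varepsilon_n}^a|\nabla u_n|^2\leq\mu_*$, and an exhaustion of $B_1\setminus\Sigma$ concludes. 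With that replacement (and the identification of $u$ as an element of $\tilde H^{1,a}(B_1)$ through the $\varepsilon=0$ isometry), your contradiction with \eqref{eq:tracea} goes through.
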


\begin{proof}
At first, we observe that there is a scaled quantity involved:
\[\mu_\varepsilon(y):=\frac{ y}{\rho_\varepsilon^a(y) \overline u_\varepsilon(y)}	=\mu_1(y/\varepsilon)\;,\]
with
\[
\lim_{y\to 0}\mu_1(y)=1\;,\qquad \lim_{y\to \infty}\mu_1(y)=1-a\;.
\]
Hence, as $\mu<1-a$, for $0<\delta<<1$ we have, using first \eqref{eq:boundaryHardy} and then \eqref{eq:trace},
\begin{equation*}
\begin{split}
\int_{B_1}\rho_\varepsilon^a |\nabla u|^2-\frac a2\varepsilon^2\int_{\partial B_1}\rho_\varepsilon^a\dfrac{u^2}{\varepsilon^2+y^2}  \geq
(1-\delta)\int_{B_1}\rho_\varepsilon^a |\nabla u|^2-\frac a2\varepsilon^2\int_{\partial B_1}\rho_\varepsilon^a\dfrac{u^2}{\varepsilon^2+y^2}+\delta c_0\int_{\partial B_1}\rho_\varepsilon^a \frac{1}{y}u^2\\ \geq 
\int_{\partial B_1}\rho_\varepsilon^a\left( (1-\delta)\frac{ y}{\rho_\varepsilon^a \overline u_\varepsilon}- \dfrac{a\varepsilon^2}{2(\varepsilon^2+y^2)}+\dfrac{\delta c_0}{y}\right)u^2 \geq
\mu\int_{\partial B_1}\rho_\varepsilon^a u^2\;.
\end{split}
\end{equation*}
Indeed, we have
\[ \lim_{\varepsilon\to 0}\left((1-\delta)\frac{ y}{\rho_\varepsilon^a \overline u_\varepsilon}- \dfrac{a\varepsilon^2}{2(\varepsilon^2+y^2)}\right)=(1-\delta)(1-a)>\mu\]
uniformly on every closed half-line $[y_0, +\infty)$ with $y_0>0$, and the  function is uniformly bounded in $[0,+\infty)$.
\end{proof}

\begin{Theorem}\label{Liouvilleeven}
Let $a\in(-\infty,1)$, $\varepsilon\geq0$ and $w$ be a (locally) energy solution to
\begin{equation}
\begin{cases}\label{eq:Liouvilleeven}
-\mathrm{div}(\rho_\varepsilon^a(y)\nabla w)=0 & \mathrm{in \ }\mathbb{R}^{n+1}_+\\
w=0 &\mathrm{in \ }\mathbb{R}^{n}\times\{0\},
\end{cases}
\end{equation}
and let us suppose that for some $\gamma\in[0,1)$, $C>0$ there holds
\begin{equation}\label{eq:wronggrowth}
|w(z)|\leq C\rho_\varepsilon^{-a}(y)(1 + |z|^\gamma)
\end{equation}
for every $z=(x,y)$. Then $w$ is identically zero.
\end{Theorem}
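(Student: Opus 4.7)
The strategy is an Almgren-type frequency argument driven by the trace and stability inequalities. First, I odd-reflect $w$ across $\Sigma$: since $w$ vanishes on the flat boundary and $\rho_\varepsilon^a$ is even in $y$, the extension belongs to $\tilde H^1_{\loc}(\R^{n+1},\rho_\varepsilon^a\mathrm{d}z)$ and still satisfies $-\mathrm{div}(\rho_\varepsilon^a\nabla w)=0$ on the whole space. Introduce the height and energy
\[
H(r)=\int_{\partial B_r}\rho_\varepsilon^a w^2,\qquad E(r)=\int_{B_r}\rho_\varepsilon^a|\nabla w|^2.
\]
A direct polar-coordinate computation, using the equation to integrate by parts on $B_r$, yields
\[
\frac{H'(r)}{H(r)}=\frac{n+a}{r}+\frac{2E(r)}{H(r)}-\frac{a\varepsilon^2}{r\,H(r)}\int_{\partial B_r}\rho_\varepsilon^a\frac{w^2}{\varepsilon^2+y^2}.
\]

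Next, fix $\mu$ with $\gamma-a<\mu<1-a$, which is possible since $\gamma<1$. Rescaling Lemma \ref{lemma:stability} via $z\mapsto z/r$ (so the stability parameter becomes $\varepsilon/r$) gives, for every $r\geq r_0:=\varepsilon/\varepsilon_\mu$,
\[
E(r)-\frac{a\varepsilon^2}{2r}\int_{\partial B_r}\rho_\varepsilon^a\frac{w^2}{\varepsilon^2+y^2}\geq\frac{\mu}{r}H(r);
\]
when $\varepsilon=0$ I use \eqref{eq:tracea} directly, with $\mu=1-a$ and any $r_0>0$. Substituting into the formula above gives
\[
\frac{H'(r)}{H(r)}\geq\frac{n+a+2\mu}{r},\qquad\text{hence}\qquad H(r)\geq H(r_0)\,(r/r_0)^{n+a+2\mu}\quad\text{for }r\geq r_0.
\]

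On the other hand, the growth assumption \eqref{eq:wronggrowth} implies $\rho_\varepsilon^a w^2\leq C^2\rho_\varepsilon^{-a}(1+|z|^\gamma)^2$, and since $a<1$ one checks that $\int_{\partial B_r}\rho_\varepsilon^{-a}\leq C\,r^{n-a}$ uniformly in $\varepsilon$ for large $r$. Hence
\[
H(r)\leq C'r^{n-a+2\gamma}\qquad\text{for }r\gg 1.
\]
By the choice of $\mu$, $n+a+2\mu>n-a+2\gamma$, so the lower bound overtakes the upper bound as $r\to\infty$ unless $H(r_0)=0$. Repeating the argument with any $r_1\geq r_0$ in place of $r_0$ yields $H\equiv 0$ on $[r_0,\infty)$, so $w$ vanishes outside $B_{r_0}$. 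When $\varepsilon=0$ we take $r_0>0$ arbitrarily small and conclude $w\equiv 0$ directly; when $\varepsilon>0$ the equation is uniformly elliptic with analytic coefficients, so standard unique continuation extends the vanishing to all of $\R^{n+1}$.

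The main obstacle is the lack of scale-invariance of the equation when $\varepsilon>0$, which is precisely why Lemma \ref{lemma:stability} comes with the threshold $\varepsilon<\varepsilon_\mu$. The saving observation is that the rescaling $z\mapsto z/r$ turns $\varepsilon$ into $\varepsilon/r$, so the stability inequality becomes available on all sufficiently large balls, which is exactly where the frequency argument needs to be applied at infinity.
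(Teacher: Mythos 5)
Your proposal is correct and follows essentially the same route as the paper's proof: a differential inequality for the weighted spherical average $H(r)$, driven by the trace inequality \eqref{eq:tracea} when $\varepsilon=0$ and by Lemma \ref{lemma:stability} applied at the rescaled parameter $\varepsilon/r$ when $\varepsilon>0$ (the paper normalizes to $\varepsilon\in\{0,1\}$ first and then rescales, which is the same observation), contradicting the growth bound \eqref{eq:wronggrowth}. Your only deviations are cosmetic — odd reflection and unnormalized $H,E$ on full spheres instead of normalized half-sphere quantities — plus an explicit treatment of the endgame ($H\equiv0$ for large $r$ implies $w\equiv0$), which the paper leaves implicit.
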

\proof
By a simple normalization argument, it is enough to prove the result only for $\varepsilon\in\{0,1\}$. We start with\\
$\bf{Case \ 1:}$ \ $\varepsilon=0$. Let us consider $w\in H^{1,a}_{\mathrm{loc}}(\mathbb{R}^{n+1}_+)$  and let us define
$$E(w,r)=E(r)=\frac{1}{r^{n+a-1}}\int_{B_r^+}y^{a}|\nabla w|^2,\qquad H(w,r)=H(r)=\frac{1}{r^{n+a}}\int_{\partial^+ B_r^+}y^{a}w^2.$$
Next we can compute
$$H'(r)=\frac{2}{r}E(r),$$
indeed, denoting $w^r(z)=w(rz)$ we have
$$E(r)=\int_{B_1^+}y^a|\nabla w^r|^2\qquad\mathrm{and}\qquad H(r)=\int_{S^n_+}y^a(w^r)^2,$$
Since we know from \eqref{eq:tracea} that
\begin{equation*}\label{tpti2}
\int_{B_1^+}y^a|\nabla u|^2\geq (1-a)\int_{S^n_+}y^a u^2\;,
\end{equation*}
 there holds
\begin{equation*}
H'(r)\geq\frac{2(1-a)}{r}H(r) \quad\Longrightarrow\quad
\frac{H(r)}{r^{2(1-a)}}\geq H(1), \qquad \forall r\geq 1\;.
\end{equation*}
On the other hand, as the weight $y^{-a}$ is locally integrable, \eqref{eq:wronggrowth} implies
\begin{equation}\label{eq:wgh}
H(r)\leq Cr^{-2a}(1+r^{2\gamma})\;, \forall r>0\;,
\end{equation}
with $\gamma <1$, a contradiction.\\
$\bf{Case \ 2:}$ \ $\varepsilon=1$. In this case we define
$$E(w,r)=E(r)=\frac{1}{r^{n+a-1}}\int_{B_r^+}(1+y^2)^{a/2}|\nabla w|^2,\qquad H(w,r)=H(r)=\frac{1}{r^{n+a}}\int_{\partial^+ B_r^+}(1+y^2)^{a/2}w^2.$$
Again, defining $w^r(z)=w(rz)$ one has
$$E(r)=\int_{B_1^+}\left(\frac{1}{r^2}+y^2\right)^{a/2}|\nabla w^r|^2,\qquad H(r)=\int_{S^n_+}\left(\frac{1}{r^2}+y^2\right)^{a/2}(w^r)^2\;,$$
which give
\begin{equation*}\label{H111}
H'(r)=\frac{2}{r}E(r)-\frac{a}{r^{3}}\int_{\partial^+ B_1^+}\left(\frac{1}{r^2}+y^2\right)^{a/2-1}(w^r)^2.
\end{equation*}
Let us take $\mu$ such that $\gamma-a<\mu<1-a$; by Lemma \ref{lemma:stability} we can find $\varepsilon_\mu>0$ such that, if $r>1/\varepsilon_\mu$ we have

\[
\int_{B_1^+}\left(\frac{1}{r^2}+y^2\right)^{a/2}|\nabla u|^2-\frac{a}{2r^2}\int_{S^n_+}\left(\frac{1}{r^2}+y^2\right)^{a/2-1}u^2
\geq \mu \int_{S^n_+}\left(\frac{1}{r^2}+y^2\right)^{a/2} u^2\;,
\]
Which implies
\[
H'(r)\geq\frac{2\mu}{r}H(r)\;,\quad\forall r\geq 1/\varepsilon_\mu\;.
\]

By integrating the above expression we find a minimal growth rate for $H$ of order $2\mu>2(\gamma-a)$, in contradiction with \eqref{eq:wgh}.
\endproof

\begin{Corollary}\label{Liouvilleevenbis}
Let $a\in(-1,+\infty)$, $\varepsilon\geq0$, and let $w$ be a solution to
\begin{equation*}
\begin{cases}
-\mathrm{div}(\rho_\varepsilon^a(y)\nabla w)=0 & \mathrm{in \ }\mathbb{R}^{n+1}_+\\
\rho_\varepsilon^a\partial_yw=0 &\mathrm{in \ }\mathbb{R}^{n}\times\{0\},
\end{cases}
\end{equation*}
and let us suppose that for some $\gamma\in[0,1)$, $C>0$ there holds
\begin{equation}\label{gro0}
|w(z)|\leq C(1 + |z|^\gamma)
\end{equation}
for every $z$. Then $w$ is constant.
\end{Corollary}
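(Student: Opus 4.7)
The plan is to deduce the Corollary from Theorem \ref{Liouvilleeven} via the weight duality provided by Lemma \ref{rho-1}. The key observation is that the hypothesis $a\in(-1,+\infty)$ corresponds exactly to $-a\in(-\infty,1)$, which is the range of validity of Theorem \ref{Liouvilleeven}. Moreover, the (weighted) Neumann condition $\rho_\varepsilon^a\partial_y w=0$ on $\Sigma$ converts, under the duality $w\mapsto \rho_\varepsilon^a\partial_y w$, into a homogeneous Dirichlet condition.

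More precisely, I would set
\[
v(z):=\rho_\varepsilon^a(y)\,\partial_y w(z).
\]
By Lemma \ref{rho-1} (applied locally on every ball), $v$ is a (locally) energy solution to $-\mathrm{div}(\rho_\varepsilon^{-a}\nabla v)=0$ in $\mathbb R^{n+1}_+$, while the Neumann condition on $w$ is exactly $v=0$ on $\mathbb R^n\times\{0\}$. Hence $v$ satisfies the hypotheses of Theorem \ref{Liouvilleeven} with $-a$ in place of $a$, provided we can verify the growth bound $|v(z)|\leq C\rho_\varepsilon^{-(-a)}(y)(1+|z|^{\gamma'})=C\rho_\varepsilon^{a}(y)(1+|z|^{\gamma'})$ for some $\gamma'\in[0,1)$. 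This is equivalent to the sub-linear gradient bound $|\partial_y w(z)|\leq C(1+|z|^{\gamma'})$.

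To establish this gradient bound (the main technical step) I would combine two ingredients. First, away from $\Sigma$ the equation is uniformly elliptic with smooth coefficients, so classical interior gradient estimates give $|\nabla w(x_0,y_0)|\leq Cy_0^{-1}\sup_{B_{y_0/2}(x_0,y_0)}|w|$, which together with the sublinear growth of $w$ controls $|\partial_y w|$ for points at a definite distance from $\Sigma$. Second, near $\Sigma$ one uses the Neumann condition and the identity obtained by integrating the equation vertically,
\[
\rho_\varepsilon^a(y)\,\partial_y w(x,y)=-\int_0^y \rho_\varepsilon^a(s)\,\Delta_x w(x,s)\,ds,
\]
together with horizontal derivative bounds (each $\partial_{x_i} w$ solves the same equation, to which the Moser-type estimate of Proposition \ref{Moser} applies to sub-linearly growing functions after rescaling $w_R(z)=w(Rz)/R^\gamma$). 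Patching these estimates delivers the desired $|\partial_y w(z)|\leq C(1+|z|^{\gamma'})$ with $\gamma'<1$.

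Once this growth bound is in hand, Theorem \ref{Liouvilleeven} forces $v\equiv0$, i.e., $\partial_y w\equiv0$ on $\mathbb R^{n+1}_+$. Therefore $w$ depends only on $x$ and the equation reduces to $\rho_\varepsilon^a(y)\,\Delta_x w(x)=0$; since $\rho_\varepsilon^a>0$, $w$ is a harmonic function on $\mathbb R^n$ satisfying $|w(x)|\leq C(1+|x|^\gamma)$ with $\gamma<1$, and the classical Liouville theorem yields that $w$ is constant. The main obstacle is establishing the sub-linear gradient bound for $\partial_y w$ uniformly up to $\Sigma$; this is exactly the point where the Neumann boundary condition (through the vertical integration identity above) must be exploited, because the naive interior estimate away from $\Sigma$ deteriorates like $1/y_0$ as $y_0\to 0$ and is by itself insufficient.
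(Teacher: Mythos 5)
Your argument is correct in outline, but it takes a genuinely different (and heavier) route than the paper. Both proofs share the same endpoints: one passes to $v=\rho_\varepsilon^a\partial_yw$, which by the duality of Lemma \ref{rho-1} is an odd local energy solution of the equation with weight $\rho_\varepsilon^{-a}$ vanishing on $\Sigma$, and one observes that $v\equiv 0$ forces $w=w(x)$ to be harmonic and sublinear, hence constant. The difference lies in how a nontrivial $v$ is excluded. You propose to verify the pointwise growth hypothesis of Theorem \ref{Liouvilleeven} for $v$, which obliges you to prove a sublinear bound for $\partial_yw$ up to $\Sigma$ by combining interior gradient estimates, differentiation of the equation in $x$, rescaled Caccioppoli--Moser estimates, and the vertical integration identity. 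The paper never estimates $v$ pointwise: it uses only the differential inequality $H'(r)\geq \tfrac{2\mu}{r}H(r)$ established in the \emph{proof} of Theorem \ref{Liouvilleeven} (which requires no growth assumption on $v$, only nontriviality), deduces the minimal growth $\int_{B_r^+}\rho_\varepsilon^a(\partial_yw)^2\geq c\,r^{\,n-a+2\mu+1}$ with $\mu>\gamma+a$, and contradicts it with the Caccioppoli bound $\int_{B_r^+}\rho_\varepsilon^a|\nabla w|^2\leq c\,(1+r^{\,n+1+a+2\gamma})$, which follows directly from \eqref{gro0}; this purely energy-based argument is shorter and bypasses all the regularity machinery you invoke. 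If you keep your route, two points need more care than your sketch gives them: the integration identity involves $\Delta_xw$, i.e.\ second-order $x$-derivatives, so the Caccioppoli and Moser estimates (Proposition \ref{Moser}) must be iterated and applied to $\partial^2_{x_ix_j}w$ (which solve the same equation, legitimized by difference quotients since the weight is $x$-independent), not only to $\partial_{x_i}w$; and for $\varepsilon=0$ the identity $\rho^a\partial_yw(x,y)=-\int_0^y\rho^a(s)\Delta_xw(x,s)\,ds$, including the attainment of the homogeneous Neumann datum at $\Sigma$, must be justified for energy solutions (for instance via the even reflection, the fact that $\partial_y(\rho^a\partial_yw)=-\rho^a\Delta_xw\in L^1_{\mathrm{loc}}$, and the oddness of $\rho^a\partial_yw$), since at this point of the paper no $C^1$ regularity up to $\Sigma$ is yet available and must not be used, on pain of circularity.
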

\proof
Again, it is enough to treat the cases $\varepsilon\in\{0,1\}$. Let us assume $\varepsilon=1$, the case $\varepsilon=0$ being very similar. Then we have (by an even reflection across $\Sigma$) an even solution $w$  to
\begin{equation*}
-\mathrm{div}\left(\left(1+y^2\right)^{a/2}\nabla w\right)=0 \qquad\mathrm{in} \ \mathbb{R}^{n+1}.
\end{equation*}
Such a solution is $w\in H^1_{\mathrm{loc}}(\mathbb{R}^{n+1})$, with the growth condition \eqref{gro0}. Now we observe that, as $w$ is not constant with a sublinear growth at infinity,  $v=(1+y^2)^{a/2}\partial_y w$ can not be trivial, otherwise
$w$ would be globally harmonic and sublinear, in contradiction with the Liouville theorem in \cite{NorTavTerVer}. Hence, if $w$ is not constant, $v$  must be  an odd and nontrivial solution to
\begin{equation*}
\begin{cases}
-\mathrm{div}\left((1+y^2)^{-a/2}\nabla v\right)=0 &\mathrm{in \ }\mathbb{R}^{n+1}_+\\
v=0 &\mathrm{in} \ \{y=0\}.
\end{cases}
\end{equation*}
By the arguments in the proof of  Theorem \ref{Liouvilleeven}, we know that the weighted average of $v^2$ must satisfy a minimal growth rate as
\begin{equation*}\label{gro1}
 H(r)=\frac{1}{r^{n-a}}\int_{\partial^+ B_r^+}(1+y^2)^{-a/2}v^2\geq cr^{2\mu}, \qquad \mu\in( \gamma+a,1+a)\;,
\end{equation*}
for $r\geq r_0$ depending on $\mu$. Therefore, by integrating, we obtain
\[
\int_{B_r^+}\left(1+y^2\right)^{a/2} (\partial_y w)^2=\int_0^r\mathrm{d}t\int_{\partial^+ B_t^+}(1+y^2)^{-a/2}v^2\geq c r^{n-a+2\mu+1}\;.
\]
On the other hand, we have, by \eqref{gro0}
\begin{equation*}
\begin{split}
\int_{B_r^+}\left(1+y^2\right)^{a/2} (\partial_y w)^2\leq \int_{B_r^+}\left(1+y^2\right)^{a/2} |\nabla w|^2\\ \leq c  \int_{B_{2r}^+}\left(1+y^2\right)^{a/2} |w|^2\leq c(1+r^{n+1+a+2\gamma})
\end{split}
\end{equation*}
in contradiction with the previous inequality, since $\mu>\gamma+a$.

\endproof

\section{Local uniform bounds in H\"older spaces}\label{sec:UniformHolder}
This section is devoted to the proof of local bounds in H\"older spaces which hold uniformly with respect to the parameter of regularization $\varepsilon\geq0$, under homogeneous Neumann boundary condition at $\Sigma$. It is worthwhile noticing that, when the weight is $A_2$, that is in the case $a\in(-1,1)$, H\"older estimates are provided in the seminal paper \cite{FabKenSer} for \emph{all (even and odd) solutions}. In this framework, the optimal exponent (which is, by the way $\alpha\in(0,1)\cap(0,1-a)$) remains unclear and is related with the parameters of validity of the Poincar\'e inequality. Let us stress that our argument avoids the use of Poincar\'e inequality which is, by the way, false in the super degenerate case (see once more Example \ref{ex1}). We shall exploit a blow-up argument which, together with the Liouville theorems set out in the last section, will drive us to the proof. The exposition is aimed at introducing the most suitable technique for establishing the a priori $C^{1,\alpha}$-estimates of the next section.

Moreover, we wish to stress  that in the super degenerate range $a\geq1$, Example \ref{ex1} is telling us that the Neumann  boundary condition in Theorem \ref{holdereven} can not be removed, when seeking H\"older regularity (or even continuity). This is due to the fact that when the degeneracy is too strong, energy solutions may behave wildly. In fact, $\Sigma$ has vanishing capacity, as solutions like that of Proposition \ref{prop:super} are approximated in $H^{1,a}(B_1)$ by functions in $C^\infty_c(\overline{B_1}\setminus\Sigma)$ annihilating on a neighbourhood of the characteristic manifold .

\begin{Theorem}\label{holdereven}
Let $a\in(-1,+\infty)$ and as $\varepsilon\to0$ let $\{u_\varepsilon\}$ be a family of solutions in $ B_1^+$ of either
\begin{equation}\label{1odd}
-\mathrm{div}\left(\rho_\varepsilon^aA\nabla u_\varepsilon\right)=\rho_\varepsilon^af_\varepsilon 
\end{equation}
or
\begin{equation}\label{2odd}
-\mathrm{div}\left(\rho_\varepsilon^aA\nabla u_\varepsilon\right)=\mathrm{div}\left(\rho_\varepsilon^aF_\varepsilon\right)
\end{equation}
satisfying the Neumann boundary condition
\[
\rho_\varepsilon^a\partial_yu_\varepsilon=0\quad \mathrm{on \ }\partial^0B_1^+.
\]
Let $r\in(0,1)$, $\beta>1$, $p>\frac{n+1+a^+}{2}$, and $\alpha\in(0,1)\cap(0,2-\frac{n+1+a^+}{p}]$, respectively $p>n+1+a^+$ and $\alpha\in(0,1-\frac{n+1+a^+}{p}]$. There are constants depending on $a$, $n$, $\beta$, $p$, $\alpha$ and $r$ only such that
$$\|u_\varepsilon\|_{C^{0,\alpha}(B_r^+)}\leq c\left(\|u_\varepsilon\|_{L^\beta(B_1^+,\rho_\varepsilon^a(y)\mathrm{d}z)}+ \|f_\varepsilon\|_{L^p(B_1^+,\rho_\varepsilon^a(y)\mathrm{d}z)}\right).$$
and, respectively,
$$\|u_\varepsilon\|_{C^{0,\alpha}(B_r^+)}\leq c\left(\|u_\varepsilon\|_{L^\beta(B_1^+,\rho_\varepsilon^a(y)\mathrm{d}z)}+ \|F_\varepsilon\|_{L^p(B_1^+,\rho_\varepsilon^a(y)\mathrm{d}z)}\right).$$
\end{Theorem}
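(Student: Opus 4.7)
The strategy is a blow-up/contradiction argument coupled with the Liouville theorem (Corollary~\ref{Liouvilleevenbis}) and the classical Liouville theorem for harmonic functions. The preliminary ingredient is Proposition~\ref{Moser} (Moser iteration), which provides a uniform-in-$\varepsilon$ $L^\infty$ bound in terms of $\|u_\varepsilon\|_{L^\beta(\rho_\varepsilon^a)}+\|f_\varepsilon\|_{L^p(\rho_\varepsilon^a)}$ (resp.\ $\|F_\varepsilon\|_{L^p(\rho_\varepsilon^a)}$). After the even reflection across $\Sigma$ made possible by the homogeneous Neumann condition, we work in the full ball $B_1$.

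\smallskip

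\noindent\textbf{Step 1: normalization.} Assume by contradiction that the estimate fails. Then there is a sequence $(\varepsilon_k,u_k,f_k)$ (respectively $(\varepsilon_k,u_k,F_k)$) of even solutions to \eqref{1odd} (resp.\ \eqref{2odd}) with
\[
[u_k]_{C^{0,\alpha}(B_r)} \,\geq\, k\bigl(\|u_k\|_{L^\beta(B_1,\rho^a_{\varepsilon_k}\mathrm{d}z)}+\|f_k\|_{L^p(B_1,\rho^a_{\varepsilon_k}\mathrm{d}z)}\bigr).
\]
By Moser the $L^\infty$ norm is already controlled by the right-hand side, so it is really the seminorm that explodes. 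Rescaling $u_k$ by the quantity on the right allows us to assume the right-hand side equals $1$ while $L_k:=[u_k]_{C^{0,\alpha}(B_r)}\to\infty$.

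\smallskip

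\noindent\textbf{Step 2: blow-up sequence.} Pick $z_k,\,z_k'\in\overline{B_r}$ with $\tfrac{|u_k(z_k)-u_k(z_k')|}{|z_k-z_k'|^\alpha}\ge\tfrac{L_k}{2}$, set $\rho_k:=|z_k-z_k'|$, and define
\[
v_k(z)\,:=\,\frac{u_k(z_k+\rho_k z)-u_k(z_k)}{L_k\,\rho_k^{\alpha}}.
\]
Then $v_k(0)=0$, $|v_k(\zeta_k)|\ge\tfrac{1}{2}$ with $\zeta_k=(z_k'-z_k)/\rho_k\in S^n$, and $[v_k]_{C^{0,\alpha}}\le 2$ uniformly. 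In particular $\rho_k\to 0$ (otherwise we would get a uniform bound on $[u_k]_{C^{0,\alpha}}$, contradicting $L_k\to\infty$). By Ascoli--Arzel\`a, $v_k\to v_\infty$ locally uniformly on $\mathbb{R}^{n+1}$ (or a half-space), and $v_\infty$ is H\"older with exponent $\alpha$, non constant, with growth $|v_\infty(z)|\le 2|z|^\alpha$.

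\smallskip

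\noindent\textbf{Step 3: limit equation.} Write $z_k=(x_k,y_k)$ and set $\tilde y_k:=y_k/\rho_k$, $\tilde\varepsilon_k:=\varepsilon_k/\rho_k$. The function $v_k$ solves, in the rescaled domain $B_{1/\rho_k}(-z_k/\rho_k)$, an equation with (normalized) weight
\[
\tilde\rho_k(y)\,:=\,\frac{\rho_{\varepsilon_k}^a(y_k+\rho_k y)}{\rho_{\varepsilon_k}^a(y_k)\text{ or a comparable constant}},
\]
matrix $\tilde A_k(z)=A(z_k+\rho_k z)$ (which converges to a constant matrix), and right-hand side whose $L^p$-norm is bounded by $\rho_k^{\,2-\alpha-(n+1+a^+)/p}/L_k$ (resp.\ $\rho_k^{\,1-\alpha-(n+1+a^+)/p}/L_k$), which tends to $0$ by the hypotheses on $p$ and $\alpha$. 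A case analysis on the limits $\tilde y_k\to y_\infty\in[0,+\infty]$ and $\tilde\varepsilon_k\to\varepsilon_\infty\in[0,+\infty]$ yields two situations:
\begin{itemize}
\item[(i)] if $\max\{\tilde y_k,\tilde\varepsilon_k\}\to\infty$, the weight is asymptotically constant and the limit problem is $-\mathrm{div}(\tilde A_\infty\nabla v_\infty)=0$ on $\mathbb{R}^{n+1}$;
\item[(ii)] otherwise, up to a translation in $y$, the limit equation is $-\mathrm{div}(\rho_{\varepsilon_\infty}^a(y)\tilde A_\infty\nabla v_\infty)=0$ on a half-space with homogeneous Neumann condition on $\{y=0\}$, which by even reflection extends to an equation on the whole $\mathbb{R}^{n+1}$ (the symmetry assumption (HA) on $A$ is crucial here).
\end{itemize}
Lemma~\ref{Good->Energy_bounded} (together with the uniform $L^\infty$ bound on $v_k$ on compact sets) guarantees that $v_\infty$ is an energy solution of the limiting equation.

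\smallskip

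\noindent\textbf{Step 4: Liouville and contradiction.} In case (i), classical Liouville for uniformly elliptic operators with constant coefficients and sublinear growth (since $\alpha<1$) forces $v_\infty$ to be constant. In case (ii), after even reflection, Corollary~\ref{Liouvilleevenbis} applies: the sublinear growth and the Neumann condition force $v_\infty$ to be constant. Either way, this contradicts $v_\infty(0)=0$ and $|v_\infty(\zeta_\infty)|\ge 1/2$ with $|\zeta_\infty|=1$.

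\smallskip

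\noindent\textbf{Main obstacle.} The delicate part is the case analysis in Step~3: one has to keep track of the three scales $\rho_k$, $y_k$, $\varepsilon_k$ and show that in every regime the rescaled weights converge (locally uniformly away from $\{y+y_\infty=0\}$) to one of the admissible limits, and that the rescaled right-hand sides vanish under the precise thresholds $\alpha\le 2-(n{+}1{+}a^+)/p$ (resp.\ $\alpha\le 1-(n{+}1{+}a^+)/p$). Handling the case when $z_k$ approaches $\Sigma$ with $y_k/\rho_k$ bounded---so that the Neumann boundary survives in the blow-up limit---is precisely where Corollary~\ref{Liouvilleevenbis} is indispensable and where the range $a\in(-1,+\infty)$ (rather than only $A_2$) matters.
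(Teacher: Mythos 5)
Your overall architecture (contradiction, Moser $L^\infty$ bound, even reflection, blow-up at nearly-extremal points of the H\"older quotient, normalization of the weight, vanishing of the rescaled right hand sides under the thresholds on $\alpha$, then Liouville via Corollary~\ref{Liouvilleevenbis} or the classical theorem) is the same as the paper's. But there is a genuine gap in Step~2: you maximize the H\"older quotient of $u_k$ itself over $\overline{B_r}$, and this only gives the bound $[v_k]_{C^{0,\alpha}}\leq 2$ on the rescaled copy of $B_r$, namely $(B_r-z_k)/\rho_k$. Nothing prevents $z_k$ from approaching $\partial B_r$ at distance comparable to $\rho_k$; in that case the region where you have equicontinuity converges only to a half-space whose boundary hyperplane is unrelated to $\Sigma$ and carries no boundary condition, and outside it you have no uniform bound at all (the rescaled sup norm is of order $\|u_k\|_\infty/(L_k\rho_k^\alpha)$, and $L_k\rho_k^\alpha$ may tend to zero). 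Hence your claims that $v_k\to v_\infty$ locally uniformly on all of $\R^{n+1}$ and that $|v_\infty(z)|\leq 2|z|^\alpha$ globally are not justified, and without an entire (or $\Sigma$-Neumann) limit none of the Liouville theorems you invoke is applicable. This is exactly why the paper maximizes the seminorm of $\eta u_k$ with $\eta$ a Lipschitz cut-off vanishing on $\partial B_{(1+r)/2}$ with $\eta(z)\leq\ell\,\mathrm{dist}(z,\partial B_{(1+r)/2})$: this forces simultaneously $r_k\to 0$ and $\mathrm{dist}(z_k,\partial^+B)/r_k\to+\infty$, so the blow-up limit is entire with globally controlled seminorm. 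The price is that the cut-off destroys the equation, which is why the paper runs two sequences, $v_k$ (varying cut-off, carrying the normalized seminorm) and $w_k$ (frozen cut-off, solving a PDE), and shows they have the same limit; your proposal has no substitute for this mechanism.

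A secondary, smaller imprecision: normalizing the weight by ``$\rho_{\varepsilon_k}^a(y_k)$ or a comparable constant'' breaks down in the regime where $r_k$ dominates both $y_k$ and $\varepsilon_k$ (e.g.\ $y_k=0$, $\varepsilon_k\ll r_k$, $a>0$, where $\rho_{\varepsilon_k}^a(y_k)\to0$); the paper's choice $\nu_k=|(\varepsilon_k,y_k,r_k)|$ and division by $\nu_k^{a}$ handles all regimes at once, and the vanishing of the rescaled right hand side is then checked through the combination $r_k^{2-\alpha-(n+1+a^+)/p}\bigl(r_k^{a^+}/\nu_k^{a}\bigr)^{1/p}$ (respectively with exponent $1-\alpha$), which is slightly more delicate than the single power of $\rho_k$ you quote. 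With the cut-off device and this normalization put in, the rest of your outline matches the paper's proof.
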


\proof
We start by proving the result in the case $A=\mathbb I$. For the general proof the reader can look at Remark \ref{rem:variable coefficients}.
The proof follows some ideas already developed in in \cite{NorTavTerVer,TerVerZil1,TerVerZil2}. We give a unique proof of the two points. Without loss of generality we can assume the existence of a uniform constant $c>0$ as $\varepsilon\to0$ such that
$$\|u_\varepsilon\|_{L^\beta(B_1^+,\rho_\varepsilon^a(y)\mathrm{d}z)}\leq c,\qquad \|f_\varepsilon\|_{L^p(B_1^+,\rho_\varepsilon^a(y)\mathrm{d}z)}\leq c,\qquad \|F_\varepsilon\|_{L^p(B_1^+,\rho_\varepsilon^a(y)\mathrm{d}z)}\leq c.$$
We argue by contradiction; that is, there exist $0<r<1$, $\alpha\in(0,1)\cap(0,2-\frac{n+1+a^+}{p}]$, respectively  $\alpha\in(0,1-\frac{n+1+a^+}{p}]$, and a sequence of solutions $\{u_k\}:=\{u_{\varepsilon_k}\}$ as $\varepsilon_k\to0$ to \eqref{1odd} or \eqref{2odd} respectively such that
$$\|\eta u_k\|_{C^{0,\alpha}(B_1^+)}\to+\infty,$$
where the function $\eta\in C^\infty_c(B_1)$ is a radial decreasing cut-off function such that $\eta\equiv1$ in $B_r$, $0\leq\eta\leq1$ in $B_1$ and $\mathrm{supp}(\eta)=B_{\frac{1+r}{2}}$. Moreover we can take $\eta\in\mathrm{Lip}(B_{\frac{1+r}{2}})$ such that $\eta(z)\leq \ell\mathrm{dist}(z,\partial B_{\frac{1+r}{2}})$. First we perform an even reflection across $\Sigma$ of the solutions $\{u_k\}$, the forcing terms $\{f_{\varepsilon_k}\}$ and the divergences of the fields $\{F_{\varepsilon_k}\}$. We remark that Proposition \ref{Moser} gives a uniform bound
\begin{equation*}\label{eq:linfinity}
\|u_k\|_{L^\infty(B_{(1+r)/2})}\leq c\;.
\end{equation*}
 Hence, we are supposing that
\begin{equation*}
\max_{\substack{z,\zeta \in B^+_{1}\\z\neq \zeta}}\frac{\abs{(\eta u_k)(z)-(\eta u_k)(\zeta)}}{\abs{z-\zeta}^\alpha}=L_k \to +\infty.
\end{equation*}
We can assume that $L_k$ is attained by a sequence $z_k,\zeta_k\in B^+=B_{\frac{1+r}{2}}\cap\{y\geq0\}$ and we call $r_k:=|z_k-\zeta_k|$.\\\\
One can easily show that 
\begin{itemize}
\item[$i)$] $r_k\to0$,
\item[$ii)$] $\ddfrac{\mathrm{dist}(z_k,\partial^+ B)}{r_k}\to+\infty$ and $\ddfrac{\mathrm{dist}(\zeta_k,\partial^+ B)}{r_k}\to+\infty$.
\end{itemize}
In facts, $r_k\to0$ since
$$+\infty\longleftarrow L_k=\frac{|(\eta u_k)(z_k)-(\eta u_k)(\zeta_k)|}{|z_k-\zeta_k|^\alpha}\leq\frac{\|u_k\|_{L^\infty(B)}}{r_k^\alpha}(\eta(z_k)+\eta(\zeta_k))\leq\frac{c}{r_k^\alpha}.$$
Moreover, using the Lipschitz continuity of the cut off function $\eta$
$$+\infty\longleftarrow\frac{L_k}{r_k^{1-\alpha}}\leq\frac{\|u_k\|_{L^\infty(B)}}{r_k}(\eta(z_k)+\eta(\zeta_k))\leq\frac{c\ell}{r_k}(\mathrm{dist}(z_k,\partial^+ B)+\mathrm{dist}(\zeta_k,\partial^+ B)).$$
This obviously implies that at least one of the two terms in the sum diverges. This implies $ii)$, since
$$+\infty\longleftarrow \frac{\mathrm{dist}(z_k,\partial^+ B)}{r_k}\leq 1+\frac{\mathrm{dist}(\zeta_k,\partial^+ B)}{r_k}.$$
Moreover let us define
\begin{equation*}
v_k(z)=\frac{(\eta u_k)(z_k+r_kz)-(\eta u_k)(z_k)}{L_kr_k^\alpha},\qquad w_k(z)=\frac{\eta(z_k)(u_k(z_k+r_kz)- u_k(z_k))}{L_kr_k^\alpha},
\end{equation*}
with
$$z\in B(k):=\frac{B-z_k}{r_k}.$$
We remark that $v_k$ and $w_k$ are symmetric with respect to $$\Sigma_k=\left\{(x,y) \ : \ y=-\frac{y_k}{r_k}\right\}.$$
We have that
\begin{equation}\label{alpha1}
\max_{\substack{z,\zeta \in B(k)\\z\neq \zeta}}\frac{\abs{v_k(z)-v_k(\zeta)}}{\abs{z-\zeta}^\alpha}=\left|v_k\left(0\right)-v_k\left(\frac{\zeta_k-z_k}{r_k}\right)\right|=1\;.
\end{equation}
We remark that since we have taken $z_k\in B$, then $0\in B(k)$ for any $k$. One easily sees that for any compact $K\subset\mathbb{R}^{n+1}$
\begin{equation}\label{samebehaviour}
\max_{z\in K\cap B(k)}|v_k(z)-w_k(z)|\to0,
\end{equation}
and since $v_k(0)=w_k(0)=0$ there exists a positive constant $c$, only depending on $K$ so that for any $z\in K$
\begin{equation*}
|v_k(z)|+|w_k(z)|\leq c.
\end{equation*}

Up to subsequences, the limit set $\lim_{k\to +\infty} B(k)$ is the whole of $\R^{n+1}$.  For any  compact set $K$  contained in $B(k)$ for $k$ large enough, the functions of the sequence $\{v_k\}$ have the same unitary $C^{0,\alpha}$-seminorm and they are uniformly bounded on $K$ since $v_k(0)=0$. Then, by the Ascoli-Arzel\'a theorem, we can extract a subsequence $v_k\to w$ uniformly with $w\in C^{0,\alpha}(K)$. By a countable compact exaustion of $\R^{n+1}$ we obtain uniform convergence of  $v_k\to w$ on compact sets with $w\in C^{0,\alpha}(\R^{n+1})$ and, by \eqref{samebehaviour}, the sequence $\{w_k\}$  converges to the same limit.  Note that the limit $w$ is a non constant globally $\alpha$-H\"older continuous function. In fact, up to pass to a subsequence, $\frac{\zeta_k-z_k}{r_k}\to\overline z\in S^{n}$ since any point of the sequence belongs to $S^{n}$. Hence, by \eqref{alpha1}, uniform convergence and equicontinuity, we have $|w(0)-w(\overline z)|=1$. In order to obtain a contradiction we will invoke the appropriate Liouville type theorem of Section \ref{sec:liouville}.

For any $z\in B(k)$, the functions $w_k$ solve

\begin{equation}\label{Lakw}
-\mbox{div}\!\left(\rho_{\varepsilon_k}^a\left(y_k+r_k\cdot\right)\nabla w_k\right)(z)=\frac{\eta(z_k)}{L_k}r_k^{2-\alpha}\rho_{\varepsilon_k}^a\left(y_k+r_ky\right)f_{\varepsilon_k}(z_k+r_kz), 
\end{equation}
or, respectively,
\begin{equation}\label{Lakwfks}
-\mbox{div}\!\left(\rho_{\varepsilon_k}^a\left(y_k+r_k\cdot\right)\nabla w_k\right)(z)=\frac{\eta(z_k)}{L_k}r_k^{1-\alpha}\mathrm{div}\left(\rho_{\varepsilon_k}^a\left(y_k+r_k\cdot_{n+1}\right)F_{\varepsilon_k}(z_k+r_k\cdot)\right)(z). 
\end{equation}

Let us define by $\Gamma_k=(\varepsilon_k,y_k,r_k)$ and by $\nu_k=|\Gamma_k|$ which is a sequence bounded from above. Let
$$\tilde\Gamma_k=\frac{\Gamma_k}{\nu_k}=(\tilde\varepsilon_k,\tilde y_k,\tilde r_k)=\left(\frac{\varepsilon_k}{\nu_k},\frac{y_k}{\nu_k},\frac{r_k}{\nu_k}\right).$$
Moreover, we denote $\tilde\rho_k(y)=\left(\tilde\varepsilon_k^2+(\tilde y_k+\tilde r_ky)^2\right)^{a/2}$. Hence, since $|\tilde\Gamma_k|=1$, up to consider a subsequence, we can extract subsequences possessing limits (the second can be infinite):
$$\tilde\Gamma_k\to\tilde\Gamma=(\tilde\varepsilon,\tilde y,\tilde r)\in S^2\qquad  \lim_{k\to+\infty}\frac{\tilde y_k}{\tilde r_k}=\tilde l\in[0,+\infty],$$
and we define $\tilde\rho(y)=\left(\tilde\varepsilon^2+(\tilde y+\tilde ry)^2\right)^{a/2}$. Therefore, let $\tilde\Sigma=\lim\Sigma_k$; that is,
\begin{equation*}
\tilde\Sigma=\begin{cases}
\{(x,y)\in\mathbb{R}^{n+1} \ : \ y=-\tilde l \} &\mathrm{if}\; \tilde l <+\infty,\\
\emptyset &\mathrm{if}\; \tilde l =+\infty.
\end{cases}
\end{equation*}
{\bf Claim.} The limit $w$ is an energy solution of
\begin{equation*}
-\mathrm{div}\left(\tilde\rho\nabla w\right)=0\qquad \mathrm{in} \ \mathbb{R}^{n+1},
\end{equation*}
in the sense that for every $\phi\in C^\infty_c(\R^{n+1})$
$$\int_{\R^{n+1}}\tilde\rho\nabla w\cdot\nabla\phi=0.$$
We notice that equations \eqref{Lakw} and \eqref{Lakwfks} can be normalized in $B(k)$ as
\begin{equation}\label{tildeLakw}
-\mbox{div}\!\left(\tilde\rho_k\nabla w_k\right)(z)=\nu_k^{-a}\frac{\eta(z_k)}{L_k}r_k^{2-\alpha} \rho_{\varepsilon_k}^a\left(y_k+r_ky\right)f_{\varepsilon_k}(z_k+r_kz), 
\end{equation}
\begin{equation}\label{tildeLakwfks}
-\mbox{div}\!\left(\tilde\rho_k\nabla w_k\right)(z)=\nu_k^{-a}\frac{\eta(z_k)}{L_k}r_k^{1-\alpha}\mathrm{div}\left(\rho_{\varepsilon_k}^a\left(y_k+r_k\cdot_{n+1}\right)F_{\varepsilon_k}(z_k+r_k\cdot)\right)(z). 
\end{equation}
We remark that the right hand sides in both cases are  $L^1$-vanishing on compact sets of $\mathbb{R}^{n+1}$. Indeed, let $\phi\in C^\infty_c(\mathbb{R}^{n+1})$: using the fact that for $k$ large enough $\mathrm{supp}(\phi)\subset B_R\subset B(k)$, using H\"older inequality, we have
\begin{eqnarray}
&&\left|\int_{B_R}\rho_{\varepsilon_k}^a\left(y_k+r_ky\right)f_{\varepsilon_k}(z_k+r_kz)\phi(z)\mathrm{d}z\right|\nonumber\\
&\leq&r_k^{-\frac{n+1}{p}}\|\phi\|_{L^\infty(B_R)}\left(\int_{B_{r_kR}(z_k)}\left(\varepsilon_k^2+\zeta_{n+1}^2\right)^{a/2}|f_{\varepsilon_k}(\zeta)|^p\mathrm{d}\zeta\right)^{1/p}\left(\int_{B_{R}}\rho_{\varepsilon_k}^a\left(y_k+r_ky\right)\mathrm{d}z\right)^{1/p'}\nonumber\\
&\leq&cr_k^{-\frac{n+1}{p}}\nu_k^{\frac{a}{p'}},\nonumber
\end{eqnarray}
and hence the right hand side converges to zero since $\alpha\leq2-\frac{n+1+a^+}{p}$, the fact that $0\leq r_k\leq \nu_k$ and having
$$\frac{\eta(z_k)}{L_k}r_k^{2-\alpha-\frac{n+1+a^+}{p}}\left(\frac{r_k^{a^+}}{\nu_k^a}\right)^{1/p}\to0.$$
Similarily,  in the second case, the right hand side can be estimated as
\begin{eqnarray}
&&\left|\int_{B_R}\mathrm{div}(\rho_{\varepsilon_k}^a\left(y_k+r_k\cdot_{n+1}\right)F_{\varepsilon_k}(z_k+r_k\cdot))(z)\phi(z)\mathrm{d}z\right|\nonumber\\
&\leq&r_k^{-\frac{n+1}{p}}\|\nabla\phi\|_{L^\infty(B_R)}\left(\int_{B_{r_kR}(z_k)}\left(\varepsilon_k^2+\zeta_{n+1}^2\right)^{a/2}\left|F_{\varepsilon_k}(\zeta)\right|^p\mathrm{d}\zeta\right)^{1/p}\nonumber\cdot\left(\int_{B_{R}}\rho_{\varepsilon_k}^a\left(y_k+r_ky\right)\mathrm{d}z\right)^{1/p'}\nonumber\\
&\leq&cr_k^{-\frac{n+1}{p}}\nu_k^{\frac{a}{p'}},\nonumber
\end{eqnarray}
and the right hand side itself converges to zero since $\alpha\leq1-\frac{n+1+a^+}{p}$, the fact that $0\leq r_k\leq\nu_k$ and having
$$\frac{\eta(z_k)}{L_k}r_k^{1-\alpha-\frac{n+1+a^+}{p}}\left(\frac{r_k^{a^+}}{\nu_k^a}\right)^{1/p}\to0.$$
Moreover, using the same reasoning in the proof of Lemma \ref{Good->Energy}, we can conclude that
$$\int_{\R^{n+1}}\tilde\rho_k\nabla w_k\cdot\nabla\phi\to\int_{\R^{n+1}}\tilde\rho\nabla w\cdot\nabla\phi,$$
with $w\in H^1_{\mathrm{loc}}(\R^{n+1},\tilde\rho(y)\mathrm{d}z)$ and symmetric with respect to $\tilde\Sigma$ (if $\tilde\Sigma\neq\emptyset$), proving the {\bf Claim}.\\\\
In order to complete the proof we invoke our Liouville type theorems of Section \ref{sec:liouville}. We distinguish different cases.
\begin{itemize}
\item[{\bf Case 1.}] If $\tilde r=0$ or $\tilde\Sigma=\emptyset$, hence we have proved that the limit $w\in H^1_{\mathrm{loc}}(\mathbb{R}^{n+1})$ is not constant and globally harmonic in $\mathbb{R}^{n+1}$. Moreover it is globally $C^{0,\alpha}(\mathbb{R}^{n+1})$ with $\alpha<1$, in clear contradiction with the Liouville theorem in Corollary 2.3 in \cite{NorTavTerVer}.
\item[{\bf Case 2.}] If $\tilde r\neq0$ and $\tilde\varepsilon\neq 0$, then, up to compose a dilation of $\frac{\tilde\varepsilon}{\tilde r}$ with a vertical translation of $-\frac{\tilde y}{\tilde r}$, we obtain convergence to an even (symmetric with respect to $\Sigma$)  solution to
\begin{equation*}
-\mathrm{div}\left((1+y^2)^{a/2}\nabla w\right)=0 \qquad\mathrm{in} \ \mathbb{R}^{n+1}.
\end{equation*}
Moreover $w\in H^1_{\mathrm{loc}}(\mathbb{R}^{n+1})$, globally $C^{0,\alpha}(\mathbb{R}^{n+1})$ and not constant. Since $w\in C^{0,\alpha}(\mathbb{R}^{n+1})$ with $\alpha<1$, then it has a bound on the growth at infinity given by \eqref{gro0}, contradicting Corollary \ref{Liouvilleevenbis}.
\item[{\bf Case 3.}] If $\tilde r\neq0$ and $\tilde\varepsilon=0$, then, up to a vertical translation of $-\frac{\tilde y}{\tilde r}$, we obtain convergence to an even (symmetric with respect to $\Sigma$)  solution to
\begin{equation*}
-\mathcal L_aw=0 \qquad\mathrm{in} \ \mathbb{R}^{n+1}.
\end{equation*}
Such a solution is $w\in H^{1,a}_{\mathrm{loc}}(\mathbb{R}^{n+1})$, globally $C^{0,\alpha}(\mathbb{R}^{n+1})$ and not constant, again in contradiction with Corollary \ref{Liouvilleevenbis}.
\end{itemize}
\endproof

\begin{remark}\label{rem:variable coefficients}
Theorem \ref{holdereven} can be easily extended, with no relevant changes in the proof, to the variable coefficients operators:
$$\mathrm{div}(\rho_\varepsilon^a A(x,y)\nabla u)\;,$$
where the matrix $A$ satisfies Assumption (HA); that is, is symmetric, $\Sigma$-invariant, continuous, and satisfy the uniform ellipticity condition $\lambda_1|\xi |^2 \leq A(x,y)\xi\cdot\xi \leq \lambda_2|\xi |^2$, for all $\xi\in\R^{n+1}$, for every $(x,y)\in B_1$ and some ellipticity constants $0<\lambda_1\leq\lambda_2$. Notice that even reflections across $\Sigma$, in presence of the matrix $A$, follow the symmetry rules given in Assumption (HA).
\end{remark}

\section{Local uniform bounds in $C^{1,\alpha}$ spaces}\label{sec:UniformC1}
In this section we show that we can ensure local uniform bounds also in $C^{1,\alpha}$-spaces.
\begin{Theorem}\label{C1alphaeven}
Let $a\in(-1,+\infty)$ and as $\varepsilon\to0$ let $\{u_\varepsilon\}$ be a family of solutions  to
\begin{equation*}\label{pb:even}
\begin{cases}
-\mathrm{div}\left(\rho_\varepsilon^aA\nabla u_\varepsilon\right)=\rho_\varepsilon^af_\varepsilon &\mathrm{in \ } B_1^+\\
\rho_\varepsilon^a\partial_yu_\varepsilon=0 &\mathrm{in \ }\partial^0B_1^+,
\end{cases}
\end{equation*}
Let $r\in(0,1)$, $\beta>1$ and $p>n+1+a^+$. Let also $A$ be $\alpha$-H\"older continuous with $\alpha\in(0,1-\frac{n+1+a^+}{p}]$. Then, there is a positive constant depending on $a$, $n$, $p$, $\beta$, $\alpha$ and $r$ only such that
$$\|u_\varepsilon\|_{C^{1,\alpha}(B_r^+)}\leq c\left(\|u_\varepsilon\|_{L^\beta(B_1^+,\rho_\varepsilon^a(y)\mathrm{d}z)}+ \|f_\varepsilon\|_{L^p(B_1^+,\rho_\varepsilon^a(y)\mathrm{d}z)}\right).$$
\end{Theorem}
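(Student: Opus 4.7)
The plan is to mirror the blow-up and Liouville strategy of Theorem \ref{holdereven}, now subtracting the first-order Taylor polynomial before rescaling. First I would assume, without loss of generality, uniform bounds $\|u_{\varepsilon_k}\|_{L^\beta}\le c$ and $\|f_{\varepsilon_k}\|_{L^p}\le c$, apply Theorem \ref{holdereven} to get free $C^{0,\alpha}$ bounds on $u_k:=u_{\varepsilon_k}$, and observe that for each fixed $\varepsilon_k>0$ the classical theory away from $\Sigma$ gives pointwise values of $\nabla u_k$. I would then assume by contradiction that $L_k:=[\nabla(\eta u_k)]_{C^{0,\alpha}(B_1)}\to +\infty$ for an appropriate radial cut-off $\eta$ supported in $B_{(1+r)/2}$, with $\eta\equiv 1$ on $B_r$, after an even reflection across $\Sigma$. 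Pick $z_k,\zeta_k$ nearly realizing the seminorm and set $r_k=|z_k-\zeta_k|$; exactly as in the proof of Theorem \ref{holdereven}, one checks that $r_k\to 0$ and $\mathrm{dist}(z_k,\zeta_k;\partial^+B)/r_k\to+\infty$.

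Second, I would introduce the second-order blow-up
\[
w_k(z)=\eta(z_k)\,\frac{u_k(z_k+r_kz)-u_k(z_k)-r_k\,\nabla u_k(z_k)\cdot z}{L_k\, r_k^{1+\alpha}},\qquad z\in B(k):=(B-z_k)/r_k,
\]
which satisfies $w_k(0)=0$, $\nabla w_k(0)=0$, $[\nabla w_k]_{C^{0,\alpha}}\le 1$, and $|\nabla w_k(0)-\nabla w_k((\zeta_k-z_k)/r_k)|\to 1$. This yields a uniform growth bound $|w_k(z)|\le C(1+|z|^{1+\alpha})$ and $C^1_{\mathrm{loc}}$ precompactness, so a subsequence converges to some $w\in C^{1,\alpha}(\mathbb{R}^{n+1})$ with $w(0)=0$, $\nabla w(0)=0$, and a nontrivial increment of $\nabla w$ between $0$ and the limit direction $\bar z=\lim(\zeta_k-z_k)/r_k$. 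The rescaled equation reads
\[
-\mathrm{div}\bigl(\tilde\rho_k\,\tilde A_k\,\nabla w_k\bigr)=g_k\quad\text{in}\; B(k),
\]
with $\tilde A_k(z)=A(z_k+r_kz)$ and $\tilde\rho_k$ as in the proof of Theorem \ref{holdereven}. Here the $\alpha$-Hölder assumption on $A$, combined with the factor $r_k^\alpha$ dividing the Hölder oscillation, yields $\tilde A_k\to A_0$ uniformly on compacts to a constant symmetric matrix satisfying (HA); and the sharp relation $\alpha\le 1-\tfrac{n+1+a^+}{p}$ makes the rescaled right-hand side $g_k\to 0$ in $L^1_{\mathrm{loc}}$ by the same H\"older/integrability computation carried out in Theorem \ref{holdereven} (with $1+\alpha$ replacing $2-\alpha$).

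Third, reasoning as in the three cases of Theorem \ref{holdereven}, the limit $w$ is either globally harmonic on $\mathbb{R}^{n+1}$, or an even energy solution of $-\mathrm{div}((1+y^2)^{a/2}A_0\nabla w)=0$, or an even energy solution of $-\mathrm{div}(|y|^aA_0\nabla w)=0$ on the whole of $\mathbb{R}^{n+1}$; in all three cases $A_0$ can be reduced to the identity by an affine change of coordinates respecting $\Sigma$. The Liouville input I need is that any such even entire solution with growth $O(|z|^{1+\alpha})$, $\alpha<1$, is affine in $x$ only: indeed the tangential derivatives $\partial_{x_i}w$ ($i=1,\ldots,n$) satisfy the same equation, are even in $y$, and grow like $|z|^{\alpha}$ with $\alpha<1$, hence are constant by Corollary \ref{Liouvilleevenbis} (or the classical Liouville in the first case), so $w(x,y)=c_0+c\cdot x+g(y)$ with $g$ even; the reduced ODE $(|y|^a g')'=0$ combined with the oddness of $g'$ then forces $g$ to be constant. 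The normalizations $w(0)=0$, $\nabla w(0)=0$ yield $w\equiv 0$, contradicting the unit increment of $\nabla w$.

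The main obstacle is the third step: the Liouville theorems established in Section \ref{sec:liouville} address growth strictly less than $1$, and here one needs to upgrade them to growth $1+\alpha<2$. I expect the upgrade to be painless by applying those results to the tangential first derivatives, but one must verify that $\partial_{x_i}w_k$ pass to the limit in the appropriate weak sense (this is where commutation of $\partial_{x_i}$ with the translation-invariant blow-up is used, since $\partial_{x_i}$ does not see the weight $\rho_\varepsilon^a(y)$) and that evenness is preserved in the limit. A secondary technical point is to make sure that the $C^1$-convergence of $w_k$ on compact subsets of $\mathbb{R}^{n+1}\setminus\Sigma$ extends up to $\Sigma$: this follows from the uniform $C^{0,\alpha}$-bound on $\nabla w_k$ inherited from the normalization $[\nabla w_k]_{C^{0,\alpha}}\le 1$, which gives full equicontinuity across $\Sigma$.
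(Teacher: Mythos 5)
There is a genuine gap, and it sits exactly where the degenerate weight interacts with your blow-up. You subtract the \emph{full} first-order Taylor polynomial $r_k\,\nabla u_k(z_k)\cdot z$ and then assert that the rescaled functions solve $-\mathrm{div}(\tilde\rho_k\tilde A_k\nabla w_k)=g_k$ with $g_k\to0$ in $L^1_{\mathrm{loc}}$ ``by the same computation as in Theorem \ref{holdereven}''. This is false as stated: the affine function $z\mapsto \partial_y u_k(z_k)\,y$ is \emph{not} a solution of $\mathrm{div}(\rho_\varepsilon^a\nabla\cdot)=0$, so its subtraction produces the additional term $\partial_y\bigl(\tilde\rho_k\bigr)\,\partial_y u_k(z_k)\,\eta(z_k)/(L_kr_k^{\alpha})$ in the right-hand side (the term $(ii)$ of \eqref{eq:wk} in the paper). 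In the regime $\mathrm{dist}(z_k,\Sigma)\sim r_k$ this term does not vanish: the only a priori information on $\partial_y u_k(z_k)$ is the H\"older seminorm bound, which gives $|\partial_y u_k(z_k)|\lesssim L_k\,y_k^{\alpha}\sim L_kr_k^{\alpha}$, so the extra term is of order $\partial_y\tilde\rho\cdot O(1)$ and the limit equation is no longer homogeneous. Worse, subtracting $\partial_yu_k(z_k)\,y$ destroys the evenness of the blow-ups with respect to the limiting characteristic hyperplane, which is precisely the structural hypothesis needed to invoke Corollary \ref{Liouvilleevenbis} and Theorem \ref{Liouvilleeven} (the latter is applied to the odd weighted derivative $\tilde\rho\,\partial_y\overline v$). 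The paper resolves this by splitting into two cases according to whether $d(z_k,\Sigma)/r_k\to+\infty$ or stays bounded: in the second case the blow-up is centered at the projection $\hat z_k=(x_k,0)$ and only the \emph{tangential} affine part $\nabla_x v_k(0)\cdot x$ is subtracted, the normal part being free of charge because $\partial_yu_k=0$ on $\Sigma$ (Neumann condition); in the first case the dangerous term is kept and shown to vanish via the evenness-based bound $|\partial_yv_k(0)|\le 2(y_k/r_k)^{\alpha}$ together with $\tilde r_k\to0$. Your uniform treatment (always center at $z_k$, always subtract the full gradient) cannot be repaired without reintroducing this case distinction or an equivalent device.

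Two secondary points. First, your claim that $r_k\to0$ follows ``exactly as in Theorem \ref{holdereven}'' is not immediate: that argument used $L_k\le c\,\|u_k\|_{L^\infty}/r_k^{\alpha}$, whereas here the analogous bound would require an a priori $L^\infty$ bound on $\nabla u_k$, which is not available; the paper instead allows $r_k\to\overline r\in[0,2]$ and rules out $\overline r>0$ a posteriori, using that the blow-up limit has non-constant gradient while $v_k\to0$ uniformly if $\overline r>0$. Second, for variable $\alpha$-H\"older $A$ the passage to the limit is not just ``$\tilde A_k\to A_0$ uniformly'': the error term $\mathrm{div}\bigl(\tilde\rho_k(A(\hat z_k+r_kz)-A(\hat z_k))\nabla w_k\bigr)$ must be shown to vanish distributionally, and this requires a control of $\nabla w_k$ that the paper obtains through a two-step bootstrap (first $C^{1,\alpha'}$ with $\alpha'<\alpha$ to bound $\|\nabla u_k\|_{\infty}$, then the full estimate), see Remark \ref{rem:variable coefficientsC1alpha}; your proposal glosses over this.
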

\proof
We start by proving the result in the case $A=\mathbb I$. Then for the general case the reader can look at Remark \ref{rem:variable coefficientsC1alpha}. The proof of the following result follows some ideas contained in \cite{SoaTer10}.  Arguing by contradiction, we can assume that there exists a positive constant, uniform in $\varepsilon\to0$, such that
$$\|u_\varepsilon\|_{L^\beta(B_1^+,\rho_\varepsilon^a(y)\mathrm{d}z)}\leq c\qquad \|f_\varepsilon\|_{L^p(B_1^+,\rho_\varepsilon^a(y)\mathrm{d}z)}\leq c,$$
and that  there exist $0<r<1$, $\alpha\in(0,1-\frac{n+1+a^+}{p}]$ and a sequence of solutions $\{u_k\}=\{u_{\varepsilon_k}\}$ as $\varepsilon_k\to0$, such that $$\|\cdot\|_{C^{1,\alpha}}\to+\infty\;.$$ 
As we already know that the $u_k$'s are bounded in $L^\infty$ and in the energy space, we infer that the H\"older seminorm tends to infinity:
\begin{equation*}
\max_{j=1,...,n+1}\sup_{\substack{z,\zeta \in B^+_{1}\\z\neq \zeta}}\frac{\abs{\partial_j(\eta u_k)(z)-\partial_j(\eta u_k)(\zeta)}}{\abs{z-\zeta}^\alpha}=L_k \to +\infty,
\end{equation*}
where $\partial_j=\partial_{x_j}$ for any $j=1,...,n$ and $\partial_{n+1}=\partial_y$, and the function $\eta$ is a radial and decreasing cut off function such that $\eta\in C^\infty_c(B_1)$ with $0\leq\eta\leq1$, $\eta\equiv1$ in $B_r$ and $\mathrm{supp}(\eta)=B_{\frac{1+r}{2}}$. All functions will be extended by reflection through $\Sigma$ to the whole of the ball.  Moreover we take $\eta\in\mathrm{Lip}(B_{\frac{1+r}{2}})$ with $\partial_j\eta\in\mathrm{Lip}(B_{\frac{1+r}{2}})$ for any $j=1,...,n+1$, with the same constant $\ell$, that is $\eta(z)\leq \ell d(z,\partial B_{\frac{1+r}{2}})$ and $\partial_j\eta(z)\leq \ell d(z,\partial B_{\frac{1+r}{2}})$.
Up to a subsequence, there exists $i\in\{1,...,n+1\}$, and two sequences of points $z_k,\zeta_k$ in $B=B_{\frac{1+r}{2}}$ such that
\begin{equation*}
\frac{\abs{\partial_i(\eta u_k)(z_k)-\partial_i(\eta u_k)(\zeta_k)}}{\abs{z_k-\zeta_k}^\alpha}=L_k.
\end{equation*}
We remark that it is not possible that the H\"older seminorms of the sequence of derivatives $\partial_i(\eta u_k)$ stay bounded while their $L^\infty$-norms explode. This would be in contradiction with the uniform energy bound of the sequence $u_k$. We define $r_k=|z_k-\zeta_k|\in[0,\mathrm{diam}(B_1)]$. Hence, up to a subsequence, we have $r_k\to\overline r\in[0,2]$. Now we want to define two blow up sequences: let $\hat{z}_k\in B$ to be specified below and
\begin{equation*}
v_k(z)=\frac{\eta(\hat{z}_k+r_kz)}{L_kr_k^{1+\alpha}}\left(u_k(\hat{z}_k+r_kz)-u_k(\hat{z}_k)\right),\qquad w_k(z)=\frac{\eta(\hat{z}_k)}{L_kr_k^{1+\alpha}}\left(u_k(\hat{z}_k+r_kz)-u_k(\hat{z}_k)\right),
\end{equation*}
for $z\in B(k):=\frac{B-\hat{z}_k}{r_k}$. Let $B^\infty=\lim_{k\to+\infty}B(k)$.\\\\ 
There are two possibilities:
\begin{itemize}
\item[\bf{Case \ 1:}] \ $\frac{d(z_k,\Sigma)}{r_k}\to+\infty$.
In this case, since the sequence $\{z_k\}$ is taken in a bounded set, one has $r_k\to0$. We define $\hat{z}_k=z_k$. Hence, $B^\infty=\mathbb{R}^{n+1}$.
\item[\bf{Case \ 2:}] \ $\frac{d(z_k,\Sigma)}{r_k}\leq c$ uniformly in $k$. In this case of course also $\frac{d(\zeta_k,\Sigma)}{r_k}\leq c$ and we choose $\hat{z}_k=(x_k,0)$ to be the projection on $\Sigma$ of $z_k$, where $z_k=(x_k,y_k)$.
\end{itemize}

To continue, we make some preliminary considerations holding in both cases. Since $\hat{z}_k\in B$, then the point $0\in B(k)$ for any $k$. Moreover, fixing $K$ a compact subset of $B^\infty$, then $K\subset B(k)$ definitely. Hence, for any $z,\zeta\in K$,
\begin{eqnarray*}
|\partial_iv_k(z)-\partial_iv_k(\zeta)|&\leq&\frac{1}{L_kr_k^\alpha}|\partial_i(\eta u_k)(\hat{z}_k+r_kz)-\partial_i(\eta u_k)(\hat{z}_k+r_k\zeta)|\\
&&+\frac{|u_k(\hat{z}_k)|}{L_kr_k^\alpha}|\partial_i\eta(\hat{z}_k+r_kz)-\partial_i\eta(\hat{z}_k+r_k\zeta)|\\
&\leq&|z-\zeta|^\alpha+\frac{|u_k(\hat{z}_k)|}{L_k}r_k^{1-\alpha}\ell|z-\zeta|,
\end{eqnarray*}
using the Lipschitz continuity of the partial derivative of $\eta$. Since $\alpha<1$, $r_k\to\overline r\in[0,2]$, $L_k\to+\infty$ and $\|u_k\|_{L^\infty(B)}\leq c$ uniformly in $k$ (Theorem \ref{holdereven}), then we can make
$$\frac{|u_k(\hat{z}_k)|}{L_k}r_k^{1-\alpha}\ell\sup_{z,\zeta\in K}|z-\zeta|^{1-\alpha}\leq 1.$$
Hence, fixing $K\subset B^\infty$ a compact set, there exists $\overline k$ such that for any $k>\overline k$,
\begin{equation}\label{abovec1}
\sup_{\substack{z,\zeta \in K\\z\neq \zeta}}\frac{\abs{\partial_iv_k(z)-\partial_iv_k(\zeta)}}{\abs{z-\zeta}^\alpha}\leq 2.
\end{equation}
Obviously, condition \eqref{abovec1} holds true for any partial derivative of $v_k$. Moreover, as $k\to+\infty$,
\begin{eqnarray}\label{belowc1}
&&\left|\partial_iv_k\left(\frac{z_k-\hat{z}_k}{r_k}\right)-\partial_iv_k\left(\frac{\zeta_k-\hat{z}_k}{r_k}\right)\right|\\&=&\left|\frac{1}{L_kr_k^\alpha}(\partial_i(\eta u_k)(\hat{z}_k+r_kz)-\partial_i(\eta u_k)(\hat{z}_k+r_k\zeta))\right.\nonumber\left.+\frac{u_k(\hat{z}_k)}{L_kr_k^\alpha}(\partial_i\eta(\hat{z}_k+r_k\zeta)-\partial_i\eta(\hat{z}_k+r_kz))\right|\nonumber\\
&=&1+O\left(\frac{|u_k(\hat{z}_k)|}{L_k}r_k^{1-\alpha}\ell\right)\nonumber=\left|\frac{z_k-\hat{z}_k}{r_k}-\frac{\zeta_k-\hat{z}_k}{r_k}\right|^\alpha+o(1).
\end{eqnarray}
Hence, fixing a compact subset of $B^\infty$, by \eqref{abovec1} and \eqref{belowc1} we have the following bound from above and below for the H\"older seminorms
$$1\leq[\partial_iv_k]_{C^{0,\alpha}(K)}\leq 2.$$
In $\bf{Case \ 1}$, let us now define for any $z\in B(k)$
$$\overline v_k(z)=v_k(z)-\nabla v_k(0)\cdot z,\qquad\overline w_k(z)=w_k(z)-\nabla w_k(0)\cdot z.$$
In $\bf{Case \ 2}$, let us now define for any $z=(x,y)\in B(k)$
$$\overline v_k(z)=v_k(z)-\nabla_x v_k(0)\cdot x,\qquad\overline w_k(z)=w_k(z)-\nabla_x w_k(0)\cdot x.$$
We can see that in both cases $\overline v_k(0)=\overline w_k(0)=0$ since $v_k(0)=w_k(0)=0$. Moreover $|\nabla\overline v_k|(0)=|\nabla\overline w_k|(0)=0$:  indeed,
in $\bf{Case \ 1}$  this is due to the fact that for any $j\in\{1,...,n+1\}$ we have 
\begin{equation}\label{partialover}
\partial_j\overline v_k(z)=\partial_jv_k(z)-\partial_jv_k(0)\qquad \mathrm{and}\qquad \partial_j\overline w_k(z)=\partial_jw_k(z)-\partial_jw_k(0). 
\end{equation}
Turning to $\bf{Case \ 2}$, then \eqref{partialover} holds for any $j\in\{1,...,n\}$, while $\partial_y\overline v_k=\partial_y v_k$ and $\partial_y\overline w_k=\partial_y w_k$. Using the fact that $\partial_yu_k(\hat{z}_k)=0$ since $\hat{z}_k=(x_k,0)\in\partial^0B_1^+$, then $\partial_y v_k(0)=\partial_y w_k(0)=0$.\\\\
Obviously, we have also that the H\"older seminorm $[\partial_j\overline v_k]_{C^{0,\alpha}(K)}=[\partial_jv_k]_{C^{0,\alpha}(K)}$ for any compact $K\subset B$ and any $j=1,...,n+1$. By the Ascoli-Arzel\'a theorem and compact embeddings, $\overline v_k\to\overline v$ in $C^{1,\gamma}_{\mathrm{loc}}(B^\infty)$ for any $\gamma\in(0,\alpha)$. Nevertheless, the limit $\overline v$ belongs to $C^{1,\alpha}(B^\infty)$ with $[\partial_i\overline v]_{C^{0,\alpha}(K)}\leq 2$ in any compact subset of $B^\infty$ (passing to the limit in \eqref{abovec1}).

Eventually, we work with the sequences of points
$$\frac{z_k-\hat{z}_k}{r_k},\frac{\zeta_k-\hat{z}_k}{r_k}\in B(k).$$
In $\bf{Case \ 1}$, they are respectively the constant sequence $0$ and the sequence $\frac{\zeta_k-z_k}{r_k}$ of points lying on the sphere $S^n$. Hence, up to subsequences, they converge to the couple of points $z_1=0$ and $z_2\in S^n$.

In $\bf{Case \ 2}$, the first sequence is in fact $\frac{d(z_k,\Sigma)}{r_k}e_{n+1}$ which lies on a bounded segment $\mathcal{R}=\{(0,y):y\in[0,\tilde R]\}$. The second sequence can be seen as
$$\frac{\zeta_k-\hat{z}_k}{r_k}=\frac{\zeta_k-z_k}{r_k}+\frac{d(z_k,\Sigma)}{r_k}e_{n+1},$$
that is, the sum of a sequence on the sphere $S^n$ and one on the segment $\mathcal{R}$. Hence, up  to subsequences, they converges respectively to a pair of points $z_1$ and $z_2$.

In both cases there exists a compact subset $K$ of $B^\infty$ such that $z_1,z_2\in K$. By local $C^1$ convergence, passing to the limit in \eqref{belowc1}, we infer that $|\partial_i \overline v(z_1)-\partial_i \overline v(z_2)|=1$ which means that $\overline v$ has a non constant gradient.

Next we show that also in $\bf{Case \ 2}$ the sequence $r_k\to0$. Seeking a contradiction, let us suppose that $r_k\to\overline r>0$. Hence,
$$\sup_{z\in B(k)}|v_k(z)|\leq\frac{2\|\eta\|_{L^\infty(B_1)}\|u_k\|_{L^\infty(B)}}{r_k^{1+\alpha}L_k}\leq\frac{c}{\overline r^{1+\alpha}L_k}\to0,$$
which means that $v_k\to0$ uniformly on compact subsets of $B^\infty$. This fact implies also that pointwisely in $B^\infty$
$$\overline v(z)=\lim_{k\to+\infty}\nabla_x v_k(0)\cdot x.$$
Since $0\in B(k)$ for any $k$, it is easy to see that $B^\infty$ contains balls $B_R$, for a small enough radius $R>0$. If the sequence $\{\partial_jv_k(0)\}$ were unbounded at least for $j=1,...,n$, then
$$|\overline v(Re_j)|=R\lim_{k\to+\infty}|\nabla v_k(0)\cdot e_j|=+\infty,$$
which is in contradiction with the fact that $\overline v\in C^{1,\alpha}(B_R)$ and hence bounded. Hence, $\{\nabla_x v_k(0)\}$ is a bounded sequence, and up to consider a subsequence, it converges to a vector $\nu\in\mathbb{R}^{n}$ and $\overline v(z)=\nu\cdot x$, which is in contradiction with the fact that $\overline v$ has non constant gradient.

Hence, we end up with $B^\infty=\mathbb{R}^{n+1}$ also in $\bf{Case \ 2}$.\\

Now we want to show that the sequences $\{\overline v_k\}, \{\overline w_k\}$ have the same asymptotic behaviour on compact subsets of $\mathbb{R}^{n+1}$; that is, fixing a compact subset $K\subset \mathbb{R}^{n+1}$, definitively it is contained in $B(k)$ and, since $\nabla v_k(0)=\frac{\eta(\hat{z}_k)}{L_kr_k^\alpha}\nabla u_k(\hat{z}_k)=\nabla w_k(0)$, then
\begin{eqnarray*}
|\overline w_k(z)-\overline v_k(z)|&=&|w_k(z)-v_k(z)|\\
&=&\frac{1}{L_kr_k^{1+\alpha}}|\eta(\hat{z}_k+r_kz)-\eta(\hat{z}_k)|\cdot |u_k(\hat{z}_k+r_kz)-u_k(\hat{z}_k)|\\
&\leq&\frac{c}{L_kr_k^{1+\alpha}}\cdot r_k|z|\cdot r_k^\alpha|z|^\alpha\leq\frac{c(K)}{L_k}\to0,
\end{eqnarray*}
which implies, in particular, boundedness in $L^\infty_{\mathrm{loc}}$ of the $\overline w_k$'s. Now we look at the equations fullfilled by them in $B(k)$, that is,
\begin{equation*}
-\mbox{div}\!\left(
\rho_{\varepsilon_k}^a\left(y_k+r_ky\right)\nabla \overline w_k\right)=\frac{\eta(\hat{z}_k)}{L_k}r_k^{1-\alpha}\rho_{\varepsilon_k}^a\left(y_k+r_ky\right)f_{\varepsilon_k}(\hat{z}_k+r_kz)+ \partial_y (\rho_{\varepsilon_k}^a\left(y_k+r_k\cdot\right))\partial_yw_k(0) 
\end{equation*}
where $y_k/r_k\to +\infty$ in {\bf Case 1} and $y_k\equiv 0$ in {\bf Case 2}.  Keeping in mind the notation in the proof of Theorem \ref{holdereven},  we can write the above equation as
\begin{equation}\label{eq:wk}
-\mbox{div}\!\left(\tilde
\rho_{k}\nabla \overline w_k\right)=\underbrace{\nu_k^{-a}\frac{\eta(\hat{z}_k)}{L_k}r_k^{1-\alpha}\rho_{\varepsilon_k}^a\left(y_k+r_ky\right)f_{\varepsilon_k}(\hat{z}_k+r_kz)}_{(i)}+ \underbrace{\nu_k^{-a}\partial_y (\rho_{\varepsilon_k}^a\left(y_k+r_k\cdot\right))\partial_yw_k(0)}_{(ii)} 
\end{equation}
Arguing as in the proof of the {\bf Claim} in Theorem \ref{holdereven}, one easily sees that the terms $(i)$ converge to zero in $L^1_{\mathrm{loc}}$. As to the term $(ii)$, note that, since $\partial_yw_k(0)=\partial_yv_k(0)$, we have $\partial_yw_k(0)\equiv 0$ in {\bf Case 2}, whereas, in  {\bf Case 1} we have, using \eqref{abovec1} (which holds true for any partial derivative of $v_k$),
$$|\partial_yv_k(0)|=|\partial_yv_k(0)-\partial_yv_k((0,-y_k/r_k))|\leq 2\frac{y_k^\alpha}{r_k^\alpha}= 2\frac{\tilde y_k^\alpha}{\tilde r_k^\alpha}.$$

Hence, since in  {\bf Case 1} we have $\tilde r_k\to 0$, we find
\[
\left|(ii)\right|=a\tilde\rho_k(y) \frac{1+\tilde r_k y}{\tilde\varepsilon_k^2+(\tilde y_k+\tilde r_ky)^2}\tilde r_k\left|\partial_yv_k(0)\right|\leq c \tilde r_k^{1-\alpha}\tilde y_k^\alpha\to 0\;.
\]
Thus in both cases we infer that the full right hand side of \eqref{eq:wk} converges to zero in $L^1_{\mathrm{loc}}$. Since we already know that the $\overline w_k$'s are uniformly bounded in $L^\infty_{\mathrm{loc}}$ and converge uniformly to $\overline v$, arguing as in the proof of Theorem \ref{holdereven}, 
we deduce that $\overline v=\overline w $ is a global energy solution to 
\begin{equation*}\label{eq:v}
-\mbox{div}\!\left(\tilde
\rho \nabla \overline v (z)\right)=0\;\qquad \mbox{in}\;\R^{n+1}\;.
\end{equation*}
Moreover at least one of its partial derivatives $\partial_i\overline v$ is non constant, while all are globally $C^{0,\alpha}(\mathbb{R}^{n+1})$ with $\alpha<1$. 
 
\begin{itemize}
\item[{\bf Case 1:}]  In this case $\tilde\rho\equiv 1$ and at least one of the partial derivatives $\partial_i\overline v$ is a non constant and globally $C^{0,\alpha}(\mathbb{R}^{n+1})$ harmonic function with $\alpha<1$, in contradiction by the Liouville theorem in Corollary 2.3 in \cite{NorTavTerVer}.
\item[\bf{Case 2:}] In this case $\hat{z}_k=(x_k,0)$ and $\overline v$ is even in the variable $y$.
According with the possible limits of the normalized sequence  $\nu=(\tilde\varepsilon,0,\tilde r)$, we have three possibilties: at first, if $\tilde\rho\equiv 1$, then we conclude exactly as in {\bf Case 1}. If either $\tilde \rho(y)\equiv \left(\tilde\varepsilon^2+\tilde r^2y^2\right)^{a/2}$, for some $\tilde\varepsilon\neq 0$ and $\tilde r\neq 0$, or $\tilde\rho(y)\equiv |y|^{a}$ then, we invoke Theorem \ref{Liouvilleeven}. By rescaling the equation we reduce to the case $\tilde\rho(y)\equiv \left(\varepsilon^2+y^2\right)^{a/2}$, with $\varepsilon\in\{0,1\}$. Looking at the weighted derivative $\tilde\rho\partial_y\overline v$, it is immediate to check that it is a solution to \eqref{eq:Liouvilleeven} (replacing $a$ with $-a$), with $\varepsilon\in\{0,1\}$, and with the global bound
\begin{equation*}
|\tilde\rho(y)\partial_y\overline v(z)|\leq C\tilde\rho(y)(1 + |z|^\alpha),
\end{equation*}
with $\alpha<1$. Hence, by Theorem \ref{Liouvilleeven}, $\partial_y\overline v\equiv 0$ and we proceed again as in {\bf Case 1}.

\end{itemize}
\endproof

\begin{Theorem}\label{C1alphaevendiv}
Let $a\in(-1,+\infty)$ and as $\varepsilon\to0$ let $\{u_\varepsilon\}$ be a family of solutions to
\begin{equation*}
\begin{cases}
-\mathrm{div}\left(\rho_\varepsilon^aA\nabla u_\varepsilon\right)=\mathrm{div}\left(\rho_\varepsilon^aF_\varepsilon\right) &\mathrm{in \ } B_1^+\\
\rho_\varepsilon^a\partial_yu_\varepsilon=0 &\mathrm{in \ }\partial^0B_1^+,
\end{cases}
\end{equation*}
with $F_\varepsilon=(f_\varepsilon^1,...,f_\varepsilon^{n+1})$ with the y-component vanishing on $\Sigma$: $f_\varepsilon^{n+1}(x,0)=f_\varepsilon^y(x,0)=0$ in $\partial^0B_1^+$. 
Let also $A$ be $\alpha$-H\"older continuous with $\alpha\in(0,1)$. Then, for any $r\in(0,1)$ and $\beta>1$, there exists a positive constant $c$ depending on $a$, $n$, $\beta$, $\alpha$ and $r$ only, such that
$$\|u_\varepsilon\|_{C^{1,\alpha}(B_r^+)}\leq c\left(\|u_\varepsilon\|_{L^\beta(B_1^+,\rho_\varepsilon^a(y)\mathrm{d}z)}+ \|F_\varepsilon\|_{C^{0,\alpha}(B_1^+)}\right).$$
\end{Theorem}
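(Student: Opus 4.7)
The plan is to mimic the blow-up/contradiction scheme used for Theorem \ref{C1alphaeven}, adapting it to the divergence-type right hand side. First I would reduce to the case $A=\mathbb I$ (the variable coefficient case being handled as in Remark \ref{rem:variable coefficients}) and argue by contradiction, assuming a sequence $\{u_k\}=\{u_{\varepsilon_k}\}$ of solutions with uniformly bounded data but diverging $C^{1,\alpha}$-seminorms. After reflecting everything evenly (in $y$) across $\Sigma$, which is well defined thanks to the evenness of $F^j$ $(j\le n)$ and the compatibility $f^{n+1}_\varepsilon(\cdot,0)=0$ that makes the odd extension of $F^{n+1}$ still $\alpha$-H\"older continuous, I would introduce the cut-off $\eta$ and the points $z_k,\zeta_k$ where the maximum H\"older quotient of some partial derivative $\partial_i(\eta u_k)$ is attained, setting $r_k=|z_k-\zeta_k|$ and considering the two cases $\mathrm{dist}(z_k,\Sigma)/r_k\to\infty$ (Case 1) or bounded (Case 2, in which $\hat z_k=(x_k,0)\in\Sigma$).

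I would then construct exactly the blow-up sequences
\[
v_k(z)=\frac{\eta(\hat z_k+r_kz)}{L_kr_k^{1+\alpha}}\bigl(u_k(\hat z_k+r_kz)-u_k(\hat z_k)\bigr),\qquad w_k(z)=\frac{\eta(\hat z_k)}{L_kr_k^{1+\alpha}}\bigl(u_k(\hat z_k+r_kz)-u_k(\hat z_k)\bigr),
\]
and the corrected $\overline v_k,\overline w_k$ obtained by subtracting the linear part $\nabla u_k(\hat z_k)\cdot z$ in Case 1 and $\nabla_x u_k(\hat z_k)\cdot x$ in Case 2 (the vertical component vanishes on $\Sigma$ by the Neumann condition). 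By repeating verbatim the arguments in the proof of Theorem \ref{C1alphaeven}, one deduces $r_k\to 0$, that $\{\overline v_k\}$ and $\{\overline w_k\}$ share the same asymptotic behavior, and that they converge in $C^{1,\gamma}_{\mathrm{loc}}$ to a function $\overline v\in C^{1,\alpha}(\R^{n+1})$ with $\overline v(0)=|\nabla \overline v(0)|=0$ and non-constant gradient, while the normalized weights $\tilde\rho_k$ converge to one of the explicit profiles $1$, $|y|^a$ or $(1+y^2)^{a/2}$ listed in Theorem \ref{C1alphaeven}.

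The only substantially new computation is the identification of the limit equation. After rescaling, $\overline w_k$ satisfies
\[
-\mathrm{div}\!\left(\tilde\rho_k\nabla\overline w_k\right)=\frac{\eta(\hat z_k)r_k^{1-\alpha}}{L_k}\,\mathrm{div}\!\left(\tilde\rho_k\,F_{\varepsilon_k}(\hat z_k+r_k\cdot)\right)+\nu_k^{-a}\partial_y\bigl(\rho_{\varepsilon_k}^a(y_k+r_k\cdot)\bigr)\,\partial_y w_k(0).
\]
The second summand is shown to vanish in $L^1_{\mathrm{loc}}$ exactly as in Theorem \ref{C1alphaeven}: it vanishes identically in Case 2 because of the Neumann condition at $\hat z_k$, while in Case 1 the a priori bound $|\partial_y v_k(0)|\le 2(\tilde y_k/\tilde r_k)^{\alpha}$ combined with the boundedness of $\tilde r_k\partial_y\tilde\rho_k/\tilde\rho_k$ and $\tilde r_k\to 0$ yields the decay. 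For the new divergence term, testing against $\phi\in C^\infty_c(\R^{n+1})$ and integrating by parts gives the bound
\[
\left|\int \tilde\rho_k\,F_{\varepsilon_k}(\hat z_k+r_k\cdot)\cdot\nabla\phi\right|\le \|F_{\varepsilon_k}\|_{L^\infty(\mathrm{supp}\,\phi)}\,\|\nabla\phi\|_\infty\int_{\mathrm{supp}\,\phi}\tilde\rho_k\le C,
\]
uniformly in $k$, so the prefactor $r_k^{1-\alpha}/L_k\to 0$ forces the whole contribution to vanish distributionally. Passing to the limit then yields a non-constant solution $\overline v$ of $-\mathrm{div}(\tilde\rho\nabla\overline v)=0$ on $\R^{n+1}$ with a globally $\alpha$-H\"older, non-constant partial derivative, contradicting Corollary 2.3 of \cite{NorTavTerVer} (when $\tilde\rho\equiv 1$) or Theorem \ref{Liouvilleeven} together with the reduction via $\tilde\rho\partial_y\overline v$ employed in Theorem \ref{C1alphaeven} (in the weighted cases).

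The main technical hurdle is ensuring that the odd reflection of $F^{n+1}$ across $\Sigma$ produces a globally $C^{0,\alpha}$ field and, relatedly, that in Case 2 the blow-up limit $\overline v$ is even in $y$ while respecting the compatibility of the divergence right hand side; this is exactly where the hypothesis $f^{n+1}(x,0)=0$ is crucial, since it both guarantees the regularity of the reflected field and ensures that no delta-type contribution arises on $\Sigma$ in the weak formulation after reflection.
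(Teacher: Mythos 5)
Your overall architecture (contradiction argument, blow-up with the $r_k^{1+\alpha}$ normalization, the dichotomy according to $d(z_k,\Sigma)/r_k$, the corrected sequences $\overline v_k,\overline w_k$, and the final appeal to the Liouville theorems) is the same as in the paper, but the one genuinely new computation of this theorem --- showing that the rescaled divergence term vanishes --- is exactly where your argument breaks down. With the $C^{1,\alpha}$ blow-up $w_k(z)=\frac{\eta(\hat z_k)}{L_kr_k^{1+\alpha}}\left(u_k(\hat z_k+r_kz)-u_k(\hat z_k)\right)$ the rescaled right hand side carries the prefactor $\frac{\eta(\hat z_k)}{L_k}r_k^{-\alpha}$, not $\frac{\eta(\hat z_k)}{L_k}r_k^{1-\alpha}$ as you wrote: the exponent $1-\alpha$ belongs to the $C^{0,\alpha}$ blow-up of Theorem \ref{holdereven} (see \eqref{Lakwfks}), while here, with one extra power of $r_k$ in the normalization, the correct equation is \eqref{eq:wdivk}. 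As a consequence, your treatment of the new term is too crude: bounding $\left|\int\tilde\rho_k F_{\varepsilon_k}(\hat z_k+r_k\cdot)\cdot\nabla\phi\right|$ by $\|F_{\varepsilon_k}\|_{L^\infty}\|\nabla\phi\|_{\infty}\int\tilde\rho_k$ and invoking ``the prefactor tends to zero'' fails, because what must actually be shown is that $\frac{\eta(\hat z_k)}{L_kr_k^{\alpha}}\int\tilde\rho_kF_{\varepsilon_k}(\hat z_k+r_k\cdot)\cdot\nabla\phi\to0$, and no relation between $L_k$ and $r_k^{-\alpha}$ is available (a uniform gradient bound would only give $L_k\le Cr_k^{-\alpha}$, i.e.\ the wrong direction), so the factor $r_k^{-\alpha}$ cannot be absorbed by an $L^\infty$ bound on $F_{\varepsilon_k}$.

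The missing ingredient is the cancellation the paper exploits: subtract the constant field and split the term into $\frac{\eta(\hat z_k)}{L_kr_k^{\alpha}}\int\tilde\rho_k\left(F_{\varepsilon_k}(\hat z_k+r_kz)-F_{\varepsilon_k}(\hat z_k)\right)\cdot\nabla\phi$ plus the contribution of the frozen field $F_{\varepsilon_k}(\hat z_k)$. The first piece gains exactly $r_k^{\alpha}$ from the uniform $C^{0,\alpha}$ bound on $F_{\varepsilon_k}$ and is $O(1/L_k)$. For the second piece one uses that $\tilde\rho_k$ depends only on $y$, so the $x$-components of the constant field integrate to zero on horizontal slices, and only $(\tilde\rho_k-1)f^{y}_{\varepsilon_k}(z_k)\partial_y\phi$ survives in {\bf Case 1}; here the hypothesis $f^{y}_{\varepsilon}(x,0)=0$ enters \emph{quantitatively}, giving $|f^{y}_{\varepsilon_k}(z_k)|\le c\,y_k^{\alpha}$ and the elementary estimate $r_k^{-\alpha}(\tilde\rho_k-1)|f^{y}_{\varepsilon_k}(z_k)|\le c\,(r_k/y_k)^{1-\alpha}\to0$, since $y_k/r_k\to+\infty$. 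In {\bf Case 2} one centers at $\hat z_k=(x_k,0)\in\Sigma$, so $f^{y}_{\varepsilon_k}(\hat z_k)=0$ and the frozen-field term disappears entirely, leaving only the H\"older increment. You invoke $f^{y}_\varepsilon(\cdot,0)=0$ only to justify the odd reflection; that is indeed needed, but without the correct scaling and the two estimates above the limiting homogeneous equation is not obtained, so as written the proof has a genuine gap at its central step (the rest, including the vanishing of the term coming from $\partial_yw_k(0)$ and the Liouville conclusion, matches the paper's argument as in Theorem \ref{C1alphaeven}).
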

\proof
We start by proving the result in the case $A=\mathbb I$. Then for the general case the reader can look at Remark \ref{rem:variable coefficientsC1alpha}. In order to prove the claim we argue by contradiction and we follow the same steps of the proof of Theorem \ref{C1alphaeven}. With no loss of generality, we may assume that there exists a positive constant uniform in $\varepsilon\to0$ such that
$$\|u_\varepsilon\|_{L^\beta(B_1^+,\rho_\varepsilon^a(y)\mathrm{d}z)}\leq c\qquad \|F_\varepsilon\|_{C^{0,\alpha}(B_1^+)}\leq c\;.$$
From now on, all functions $u_\varepsilon$, $(f_\varepsilon^1,...,f_\varepsilon^{n})$ will be extended in an even manner through the characteristic hyperplane $\Sigma$, while $f_\varepsilon^{n+1}$ is extended as an odd-in-y function.

We first remark that fields $F_\varepsilon$ satisfy the conditions in Theorem \ref{holdereven} with any $p\in(0,+\infty)$, and hence we have an initial bound in $C^{0,\beta}$; that is, for any $\beta\in(0,1)$ and any $0<r<1$
$$\|u_\varepsilon\|_{C^{0,\beta}(B_r^+)}\leq c.$$
Hence, we follow the very same reasonings in Theorem \ref{C1alphaeven} (contradiction argument and blow-up procedure). We want to show that also in this setting, the equations satisfied by the limit of the blow-ups are the same. Following this path, and keeping the same notation, we consider 
the equation solved by the scaled variables $\overline w_k$,  in analogy with \eqref{eq:wk}, which now reads
\begin{equation}\label{eq:wdivk}
-\mbox{div}\!\left(\tilde
\rho_{k}\nabla \overline w_k\right)=\underbrace{\frac{\eta(\hat{z}_k)}{L_kr_k^{\alpha}}\mbox{div}\left(\tilde\rho_kF_{\varepsilon_k}(\hat{z}_k+r_k\cdot)\right)}_{(i)}+ \underbrace{\nu_k^{-a}\partial_y (\rho_{\varepsilon_k}^a\left(y_k+r_k\cdot\right))\partial_yw_k(0)}_{(ii)} 
\end{equation}
As in the proof of Theorem \ref{C1alphaeven}, we distinguish the two cases. In both of them, the term $(ii)$  has not changed and either vanishes identically or converges to zero uniformly on compact sets. 
Next we examine the term $(i)$ and we prove that its limit vanishes in the distributional sense.
\begin{itemize}
\item[\bf{Case \ 1:}] \ $\frac{d(z_k,\Sigma)}{r_k}\to+\infty$, that is $r_k/y_k\to0$. Again, we choose $\hat{z}_k=z_k$.  and  we have $\tilde\rho_k\to 1$. Let us consider $\phi\in C^\infty_c(\mathbb{R}^{n+1})$. For $k$ large enough $\mathrm{supp}(\phi)\subset B_R\subset B(k)$. Hence,
\begin{eqnarray*}
&&\frac{\eta(z_k)}{L_k}r_k^{-\alpha}\int_{B_R}\mbox{div}\!\left(\tilde \rho_kF_{\varepsilon_k}(z_k+r_k\cdot)\right)(z)\phi(z)\mathrm{d}z\\
&=&\frac{\eta(z_k)}{L_k}r_k^{-\alpha}\int_{B_R}\tilde \rho_k\left(F_{\varepsilon_k}(z_k+r_kz)-F_{\varepsilon_k}(z_k)\right)\cdot\nabla\phi(z)\mathrm{d}z+\frac{\eta(z_k)}{L_k}r_k^{-\alpha}\int_{B_R}\left(\tilde \rho_k-1\right)F_{\varepsilon_k}(z_k)\cdot\nabla\phi(z)\mathrm{d}z\\
&=&\frac{\eta(z_k)}{L_k}r_k^{-\alpha}\int_{B_R}\tilde \rho_k\left(F_{\varepsilon_k}(z_k+r_kz)-F_{\varepsilon_k}(z_k)\right)\cdot\nabla\phi(z)\mathrm{d}z+\frac{\eta(z_k)}{L_k}r_k^{-\alpha}\int_{B_R}\left(\tilde \rho_k-1\right)f^y_{\varepsilon_k}(z_k)\partial_y\phi(z)\mathrm{d}z\\
&\leq&c \frac{\eta(z_k)}{L_k}\|\nabla\phi\|_{L^\infty(B_R)}+c \frac{\eta(z_k)}{L_k}\|\partial_y\phi\|_{L^\infty(B_R)}\left(\frac{r_k}{y_k}\right)^{1-\alpha}\to0,
\end{eqnarray*}

where in the last inequality we have used the uniform $C^{0,\alpha}$-regularity of $F_{\varepsilon_k}$, the fact that $f^{y}_{\varepsilon_k}(x_k,0)=0$ since they are odd in $y$, and the fact that definitively
\begin{equation*}
\begin{split}
r_k^{-\alpha}(\tilde\rho_k-1)|f^y_{\varepsilon_k}(z_k)|=r_k^{-\alpha}\left[\left(1+\frac{r_ky(r_ky+2y_k)}{\varepsilon_k^2+y_k^2+r_k^2}y\right)^{a/2}-1\right]|f^y_{\varepsilon_k}(z_k)|\\\leq c\frac{r_ky_ky}{\varepsilon_k^2+y_k^2+r_k^2}\left(\frac{y_k}{r_k}\right)^\alpha\leq c\left(\frac{r_k}{y_k}\right)^{1-\alpha}.
\end{split}
\end{equation*}
Moreover we have also used the fact that for any $t\in[-R,R]$
\begin{equation*}\label{divx0}
\int_{B_R\cap\{y=t\}}\sum_{i=1}^nf_{\varepsilon_k}^i(z_k)\partial_{x_i}\phi \ \mathrm{d}x=\int_{B_R\cap\{y=t\}}\mathrm{div}_x\left(F_{\varepsilon_k}(z_k)\phi\right)\mathrm{d}x=0.
\end{equation*}
\item[\bf{Case 2:}] \ $\frac{d(z_k,\Sigma)}{r_k}\leq c$ uniformly in $k$. Then we choose $\hat{z}_k=(x_k,0)$ where $z_k=(x_k,y_k)$. The sequence $\overline w_k$ solves the equation \eqref{eq:wdivk} in $B(k)$, where the term$(ii)$ vanishes. Hence, testing with $\phi\in C^\infty_c(\mathbb{R}^{n+1})$. For $k$ large enough $\mathrm{supp}(\phi)\subset B_R\subset B(k)$. Hence we have
\begin{eqnarray*}
&&\frac{\eta(z_k)}{L_k}r_k^{-\alpha}\int_{B_R}\mbox{div}\!\left(\tilde \rho_kF_{\varepsilon_k}(z_k+r_k\cdot)\right)(z)\phi(z)\mathrm{d}z\\
&=&\frac{\eta(z_k)}{L_k}r_k^{-\alpha}\int_{B_R}\tilde \rho_k\left(F_{\varepsilon_k}(z_k+r_kz)-F_{\varepsilon_k}(z_k)\right)\cdot\nabla\phi(z)\mathrm{d}z
\\
&\leq&c \frac{\eta(z_k)}{L_k}\|\nabla\phi\|_{L^\infty(B_R)}\to0\;.
\end{eqnarray*}
\end{itemize}
The remaining part of the proof then follows exactly as in Theorem \ref{C1alphaeven}.
\endproof

\begin{remark}[The variable coefficients case]\label{rem:variable coefficientsC1alpha}
Theorems \ref{C1alphaeven} and \ref{C1alphaevendiv} can be extended, with a few but relevant changes in the proof, to the variable coefficients operators:
$$\mathrm{div}(\rho_\varepsilon^a A(x,y)\nabla u)\;,$$
when the matrix $A$ satisfies Assumption (HA); that is, is symmetric, continuous, $\Sigma$ is $A$-invariant and, as usual, satisfying the uniform ellipticity condition $\lambda_1|\xi |^2 \leq A(x,y)\xi\cdot\xi \leq \lambda_2|\xi |^2$, for all $\xi\in\R^{n+1}$, for every $(x,y)\in B_1$ and some ellipticity constants $0<\lambda_1\leq\lambda_2$.  We need also $\alpha$-H\"older continuity of $A$. Let us highlight here the modifications needed in the proof of the two Theorems, taking into account that the main novelty is that the linear functions of $x$ are not exact solutions in the case of variable coefficients when discussing {\bf Case 2}, nor, even more so, are the linear functions of $(x,y)$ in the {\bf Case 1}. 
Below we show how to modify the argument in Case 2, being the first case similar {\it mutatis mutandis} and somewhat easier. When writing the analogue of \eqref{eq:wk}, we find an extra term of the form
\[
\mathrm{div}\left(\tilde\rho_k \left(A(\hat z_k+r_kz)-A(\hat z_k)\right)\nabla w_k(z)\right)\;,
\]
that we wish to show be vanishing, in the limit, in the sense of distributions. To this end, we observe that   
\begin{equation}\label{eq:Aholderiana}
\tilde\rho_k (y)\left|A(\hat z_k+r_kz)-A(\hat z_k)\right| \left|\nabla w_k(z)\right|\leq C \tilde\rho_k (y) r_k^\alpha \left|\nabla w_k(z)\right|\;,
\end{equation}
and we show that the right hand side tends to zero in $L^1_{\mathrm{loc}}$. Notice that, was the sequence $\Vert\nabla u_k\Vert_\infty$ uniformly bounded, we would have
\[
\left|\nabla w_k(z)\right|=\dfrac{\eta(\hat z_k)\left|\nabla u_k( z_k+r_kz)\right|}{L_kr_k^\alpha}\leq \dfrac{c}{L_kr_k^\alpha}\;,
\]
 from which we would promptly conclude. At this stage, however, we only know that the $u_k$'s are uniformly bounded in H\"older spaces, for every exponent $\beta\in(0,1)$. Hence we proceed in two steps. First, we prove the desired $C^{1,\alpha^\prime}$ bounds for some $\alpha^\prime<\alpha$, in order to obtain uniform boundedness of $\Vert\nabla u_k\Vert_\infty$. From this,  the $C^{1,\alpha}$ estimate easily follows, as highlighted above.

By testing the original equation by $\varphi_r^2(z)(u_k(z)-u_k(c_k))$, where $c_k$ are points in $B_{1/2}$, $\varphi_r(z)=\varphi(rz)$, $\varphi$ being a cut-off function such that $\varphi\equiv 1$ in $B_1(c_k)$ and $\varphi\equiv 0$ on $\R^{n+1}\setminus B_2(c_k)$,  we obtain, for any given ball of radius $0<r<1/4$, the identities
\begin{eqnarray*}
&&\int_{\R^{n+1}} \rho_k A(x,y)\nabla (\varphi_r (u_k(z)-u_k(c_k)))\cdot \nabla (\varphi_r (u_k(z)-u_k(c_k)))\\
&=&\int_{\R^{n+1}} \rho_k f\varphi_r^2(u_k(z)-u_k(c_k))+\int_{\R^{n+1}} \rho_kA(x,y)\nabla (\varphi_r)\cdot\nabla(\varphi_r)(u_k(z)-u_k(c_k))^2\;,
\end{eqnarray*}
 and, respectively, 
 \begin{eqnarray*}
&&\int_{\R^{n+1}} \rho_k A(x,y)\nabla (\varphi_r (u_k(z)-u_k(c_k)))\cdot \nabla (\varphi_r (u_k(z)-u_k(c_k)))\\
&=&\int_{\R^{n+1}} \rho_k F\cdot\nabla (\varphi_r^2u_k)+\int_{\R^{n+1}} \rho_kA(x,y)\nabla (\varphi_r)\cdot\nabla(\varphi_r)(u_k(z)-u_k(c_k))^2\;,
\end{eqnarray*}
In both cases, by the uniform ellipticity of the matrix $A$, after standard computations, using that $u_k$ are $\beta$-H\"older continuous and $|\nabla \varphi_r|\leq c/r$, we obtain the estimate
\begin{equation}\label{stimagradiente}
 \int_{B_{r}(c_k)}  \rho_k|\nabla u_k|^2\leq \dfrac{c}{r^{2(1-\beta)}} \int_{B_{2r}(c_k)} \rho_k \;,
 \end{equation}
 yielding
  \[
 \int_{B_{R}}  \rho_k(y_k+r_ky)|\nabla w_k|^2\leq \dfrac{c\eta^2(\hat z_k)}{L_kr_k^{2(1-\beta+\alpha^\prime)}} \int_{B_{2R}} \rho_k \;,
 \]
 and finally
   \[
 \int_{B_{R}}  \tilde\rho_k|\nabla w_k|\leq  \left(\int_{B_{R}}  \tilde\rho_k|\nabla w_k|^2\right)^{1/2}\left( \int_{B_{R}} \tilde\rho_k\right)^{1/2}\leq \dfrac{c\eta(\hat z_k)}{\sqrt{L_k}r_k^{1-\beta+\alpha^\prime}} \int_{B_{2R}} \tilde\rho_k \;.
 \]
 With this, choosing $\alpha^\prime$ and $\beta$ such that  $\alpha^\prime\leq\alpha +\beta-1$, inserting into \eqref{eq:Aholderiana} we conclude the vanishing in $L^1_{\mathrm{loc}}$, as desired.
\end{remark}
Eventually, we wish to do the following remark, which shows that uniform bounds in $C^{1,\alpha}$-spaces with respect to this kind of regularization are optimal.
\begin{remark}
It is not possible to obtain $C^{2,\alpha}$ uniform estimates across $\Sigma$. The following is a simple example that underline this fact: let $a\in(-1,+\infty)$, $\varepsilon>0$, and let $\overline u_\varepsilon(y)$ be an even solution in $B_1^+$ in the only vertical variable $y\in[0,1)$ of
\begin{equation}
\begin{cases}
-\left(\rho_\varepsilon^a(\cdot)\overline u_\varepsilon'(\cdot)\right)'(y)=\rho_\varepsilon^a(y) &\mathrm{in \ } (0,1)\\
\overline u_\varepsilon(0)=1, \qquad \varepsilon^a \overline u'_\varepsilon(0)=0.
\end{cases}
\end{equation}
Solving the ODE, one obtains that the solution has the following form
$$\overline u_\varepsilon(y)=-\int_0^y\rho_\varepsilon^{-a}(s)\left(\int_0^s\rho_\varepsilon^a(t)\mathrm{d}t\right)\mathrm{d}s \ + \ 1,$$
with derivatives
$$\overline u'_\varepsilon(y)=-\rho_\varepsilon^{-a}(y)\int_0^y\rho_\varepsilon^a(t)\mathrm{d}t,$$
and
\begin{eqnarray}
\overline u''_\varepsilon(y)&=&a\frac{y\int_0^y\rho_\varepsilon^a(t)\mathrm{d}t}{(\varepsilon^2+y^2)^{\frac{a}{2}+1}}-1\nonumber\\
&=&a\frac{y/\varepsilon\int_0^{y/\varepsilon}(1+t^2)^{\frac{a}{2}}\mathrm{d}t}{(1+(y/\varepsilon)^2)^{\frac{a}{2}+1}}-1,\nonumber
\end{eqnarray}
which does not converge uniformly on $[0,1)$ since $u''_\varepsilon(0)=-1$ and $u''_\varepsilon(1/2)\to\frac{a}{a+1}-1$, but does converge uniformly to $\frac{a}{a+1}-1$ on compact sets of the form $[g(\varepsilon),b]$, with $b<1$ and $\varepsilon=o(g(\varepsilon))$ as $\varepsilon\to0$ (for example $g(\varepsilon)=\sqrt{\varepsilon}$).
\end{remark}

\section{Local regularity for energy solutions when $\varepsilon=0$}
In this section we show how our results of uniform local bounds in $C^{0,\alpha}$ and $C^{1,\alpha}$ spaces apply in order to ensure regularity for energy solutions to \eqref{La} and \eqref{Lafks}. To this end, we will show that all energy solutions can be approximated by solutions of the regularized problems, thus extending Lemma \ref{Energy->Good} to the non homogeneous case. the main results of this section are the following
\begin{Theorem}\label{regularityevenodd}
Let $a\in(-1,+\infty)$ and let $u\in H^{1,a}(B_1)$ be an even in $y$ energy solution to
\begin{equation*}
-\mathcal L_au=|y|^af\qquad\mathrm{in \ }B_1,
\end{equation*}
with $f\in L^p(B_1,|y|^a\mathrm{d}z)$. Then
\begin{itemize}
\item[i)]  If $A$ is continuous, $\alpha\in(0,1)\cap(0,2-\frac{n+1+a^+}{p}]$, $p>\frac{n+1+a^+}{2}$, $\beta>1$ and $r\in(0,1)$ one has: there exists a constant $c>0$ such that
$$\|u\|_{C^{0,\alpha}(B_r)}\leq c\left(\|u\|_{L^\beta(B_1,|y|^a\mathrm{d}z)}+\|f\|_{L^p(B_1,|y|^a\mathrm{d}z)}\right).$$
\item[ii)] If $A$ is $\alpha$-H\"older continuous with $\alpha\in(0,1-\frac{n+1+a^+}{p}]$, $p>n+1+a^+$, $\beta>1$ and $r\in(0,1)$ one has: there exists a constant $c>0$ such that
$$\|u\|_{C^{1,\alpha}(B_r)}\leq c\left(\|u\|_{L^\beta(B_1,|y|^a\mathrm{d}z)}+\|f\|_{L^p(B_1,|y|^a\mathrm{d}z)}\right).$$
\end{itemize}
\end{Theorem}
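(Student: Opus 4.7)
The plan is to combine the $\varepsilon$-stable a priori estimates of Theorems \ref{holdereven} and \ref{C1alphaeven} with an approximation scheme extending Lemma \ref{Energy->Good} to the non-homogeneous setting, then pass to the limit $\varepsilon \to 0$. First I would exploit the evenness of $u$: by the involutions $I_\pm$ recalled in Remarks (iii) after Definition \ref{evenoddsolutions}, the problem is equivalent to the half-ball problem on $B_1^+$ with the weighted Neumann condition $\lim_{y\to 0} y^a \partial_y u = 0$ on $\partial^0 B_1^+$, which is exactly the setting in which Theorems \ref{holdereven} and \ref{C1alphaeven} are formulated.

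Next I would construct a family $\{u_\varepsilon\}$ of even-in-$y$ solutions to
\[
-\mathrm{div}(\rho_\varepsilon^a A \nabla u_\varepsilon) = \rho_\varepsilon^a f \qquad \text{in } B_{r_1}
\]
for some fixed $r < r_1 < 1$, matching the trace of $u$ on $\partial B_{r_1}$ and converging to $u$ in $H^1_{\mathrm{loc}}(B_{r_1} \setminus \Sigma)$. The construction follows the variational scheme of Lemma \ref{Energy->Good}, with the modified functional
\[
J_{\varepsilon,r_1}(w) = \int_{B_{r_1}} \rho_{\varepsilon,r_1}^a |\nabla w|^2 - 2 \int_{B_{r_1}} \rho_{\varepsilon,r_1}^a f w
\]
minimized over $\{ w : w - \bar u \in H^1_0(B_1, \rho_{\varepsilon,r_1}^a \mathrm d z)\}$, where $\bar u$ is a smooth approximation of $u$ near $\partial B_1$. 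The three sub-cases $a \ge 0$, $-1 < a < 0$, $a \le -1$ are handled exactly as in Lemma \ref{Energy->Good} (monotone nested function spaces for $a \ge 0$; reversed nesting with $(H=W)$ for $-1 < a < 0$; the isometry $T_\varepsilon^a$ and the stable quadratic form $Q_\varepsilon$ for $a \le -1$). The key additional verification is uniform integrability of the linear term $\int \rho_\varepsilon^a f w_\varepsilon$, which follows from Assumption (Hf) and the uniform $L^p(\rho_\varepsilon^a \mathrm d z)$ control of $f$ (e.g., since $\rho_\varepsilon^a$ is bounded above by $|y|^a$ for $a < 0$ or by a constant for $a \ge 0$ on $B_1$).

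With $\{u_\varepsilon\}$ so constructed, Theorem \ref{holdereven} under the hypotheses of (i) and Theorem \ref{C1alphaeven} under the hypotheses of (ii) yield uniform-in-$\varepsilon$ estimates of the form
\[
\|u_\varepsilon\|_{C^{k,\alpha}(B_r^+)} \le c\bigl(\|u_\varepsilon\|_{L^\beta(B_{r_1}^+, \rho_\varepsilon^a \mathrm d z)} + \|f\|_{L^p(B_{r_1}^+, \rho_\varepsilon^a \mathrm d z)}\bigr), \qquad k \in \{0,1\},
\]
where the right-hand sides are in turn bounded by $c(\|u\|_{L^\beta(B_1, |y|^a \mathrm d z)} + \|f\|_{L^p(B_1, |y|^a \mathrm d z)})$ via the monotone relations between $\rho_\varepsilon^a$ and $|y|^a$ recorded at the end of Section \ref{secs2}. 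Ascoli--Arzel\`a then extracts a subsequence converging uniformly (in $C^1$ for case (ii)) to some $\tilde u \in C^{k,\alpha}(B_r^+)$ with the same bound; by Lemma \ref{Good->Energy_bounded}, $\tilde u$ is an even energy solution of $-\mathcal L_a \tilde u = |y|^a f$ on $B_r$. The identification $\tilde u = u$ on $B_r$ follows from the $H^1_{\mathrm{loc}}(B_r \setminus \Sigma)$ convergence built into Step 2 together with uniqueness of the Dirichlet problem for $\mathcal L_a$, yielding the stated estimate on $u|_{B_r^+}$. By evenness, the bound extends to $B_r$.

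The main obstacle is Step 2: the homogeneous scheme of Lemma \ref{Energy->Good} transfers but the presence of the forcing term requires verifying that the minimizers of $J_{\varepsilon,r_1}$ still converge in $H^1_{\mathrm{loc}}(B_r \setminus \Sigma)$ to $u$. This amounts to showing that the linear perturbation $\int \rho_\varepsilon^a f w$ is continuous under the weak convergence in the relevant weighted spaces, and that the additional care needed in the super-singular/super-degenerate regimes (isometry $T_\varepsilon^a$, density in $\tilde H^{1,a}$) carries through unchanged. Everything else, including the final pass to the limit, is then essentially Ascoli--Arzel\`a composed with the already-established stability lemmas.
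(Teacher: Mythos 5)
Your overall strategy (regularize, invoke the $\varepsilon$-stable bounds of Theorems \ref{holdereven} and \ref{C1alphaeven}, pass to the limit and identify the limit with $u$) is the same as the paper's, but there is a concrete gap in the step where you feed the a priori estimates: you keep the forcing term $f$ unchanged in the regularized problems and claim uniform control of $\|f\|_{L^p(B_1,\rho_\varepsilon^a\mathrm{d}z)}$ ``since $\rho_\varepsilon^a$ is bounded above by a constant for $a\geq0$''. This does not work when $a>0$: the hypothesis only gives $f\in L^p(B_1,|y|^a\mathrm{d}z)$, and near $\Sigma$ the regularized weight satisfies $\rho_\varepsilon^a\simeq\varepsilon^a$, bounded away from zero, while $|y|^a$ vanishes. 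Thus a function that is merely in the degenerate weighted space (e.g.\ with $|f|^p\simeq|y|^{-a-1+\delta}$, $0<\delta<a$) need not belong to $L^p(B_1,\rho_\varepsilon^a\mathrm{d}z)$ at all, let alone with an $\varepsilon$-uniform bound by $\|f\|_{L^p(B_1,|y|^a\mathrm{d}z)}$. Since the right-hand side of the uniform estimates of Theorems \ref{holdereven}--\ref{C1alphaeven} is exactly $\|f_\varepsilon\|_{L^p(B_1^+,\rho_\varepsilon^a\mathrm{d}z)}$, your scheme as written cannot produce the stated bound in terms of $\|f\|_{L^p(B_1,|y|^a\mathrm{d}z)}$ for $a>0$. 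The paper repairs precisely this point in Proposition \ref{particular} by adjusting the data, $f_\varepsilon=f\left(|y|^a/\rho_\varepsilon^a\right)^{1/p}$ when $a>0$ (and $f_\varepsilon=f$ when $a\leq0$), which yields $\|f_\varepsilon\|_{L^p(B_1,\rho_\varepsilon^a\mathrm{d}z)}\leq\|f\|_{L^p(B_1,|y|^a\mathrm{d}z)}$ together with $f_\varepsilon\to f$ in $L^p_{\mathrm{loc}}(B_1\setminus\Sigma)$, so that Lemma \ref{Good->Energy} applies; you must incorporate this (or an equivalent) adjustment.

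A secondary difference, worth noting but not by itself fatal: the paper does not redo the $\Gamma$-convergence argument of Lemma \ref{Energy->Good} with a forcing term, as you propose. It splits $u=\overline u+\tilde u$, where $\overline u$ is the $\mathcal L_a$-harmonic extension of the trace of $u$ (handled exactly by Lemma \ref{Energy->Good}) and $\tilde u\in H^{1,a}_0(B_1)$ solves the inhomogeneous problem with zero trace; the latter is approximated by the solutions of the regularized Dirichlet problems with the adjusted data $f_\varepsilon$, and the limit is identified with $\tilde u$ via Lemma \ref{Good->Energy} plus uniqueness of the zero-trace solution. This avoids reproving, in the delicate subcase $-1<a<0$, that the weak limit of the perturbed minimizers lies in the constraint class $Y^a$, which your outline defers without detail. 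Your minimization route can be made to work, but the identification step is cleaner if you, too, argue through Lemma \ref{Good->Energy} and uniqueness rather than through convergence of minimizers of the perturbed functional; also observe that the subcase $a\leq-1$ you describe is vacuous here, since the theorem assumes $a>-1$.
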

\begin{Theorem}\label{regularityevenodd1}
Let $a\in(-1,+\infty)$ and let $u\in H^{1,a}(B_1)$ be an even in $y$ energy solution to
\begin{equation*}
-\mathcal L_au=\mathrm{div}\left(|y|^aF\right)\qquad\mathrm{in \ }B_1,
\end{equation*}
with $F=(f_1,...,f_{n+1})$. Then
\begin{itemize}
\item[i)] If $A$ is continuous, $\alpha\in(0,1-\frac{n+1+a^+}{p}]$, $F\in L^p(B_1,|y|^a\mathrm{d}z)$ with $p>n+1+a^+$, $\beta>1$ and $r\in(0,1)$ one has: there exists a constant $c>0$ such that
$$\|u\|_{C^{0,\alpha}(B_r)}\leq c\left(\|u\|_{L^\beta(B_1,|y|^a\mathrm{d}z)}+\|F\|_{L^p(B_1,|y|^a\mathrm{d}z)}\right).$$
\item[ii)] If $A$ is $\alpha$-H\"older continuous and $F\in C^{0,\alpha}(B_1)$ with $\alpha\in(0,1)$, $f_{n+1}(x,0)=f^y(x,0)=0$, $\beta>1$ and $r\in(0,1)$ one has that there exists a constant $c>0$ such that
$$\|u\|_{C^{1,\alpha}(B_r)}\leq c\left(\|u\|_{L^\beta(B_1,|y|^a\mathrm{d}z)}+\|F\|_{C^{0,\alpha}(B_1)}\right).$$
\end{itemize}
\end{Theorem}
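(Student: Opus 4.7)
The plan is to derive Theorem \ref{regularityevenodd1} by combining three ingredients already established in the earlier sections: the $\varepsilon$-stable estimates for even solutions of the regularized equation (Theorem \ref{holdereven} with divergence-form data for part (i), and Theorem \ref{C1alphaevendiv} for part (ii)); an approximation scheme extending Lemma \ref{Energy->Good} to the non-homogeneous divergence-form setting; and the limit-passage result of Lemma \ref{Good->Energy} which identifies the limit of a family of regularized solutions as an energy solution of the singular/degenerate problem. Since $u$ is even in $y$, the remarks after Definition \ref{evenoddsolutions} let me pass from a solution on $B_1$ to a solution on $B_1^+$ satisfying the homogeneous weighted Neumann condition on $\partial^0 B_1^+$, which is exactly the setting of Theorems \ref{holdereven} and \ref{C1alphaevendiv}.

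Fix $r<\sigma<1$. The key step is to produce a family $\{u_\varepsilon\}\subset H^1(B_\sigma^+,\rho_\varepsilon^a\mathrm{d}z)$ of solutions to
\[
-\mathrm{div}(\rho_\varepsilon^a A\nabla u_\varepsilon)=\mathrm{div}(\rho_\varepsilon^a F)\quad\text{in }B_\sigma^+,\qquad \rho_\varepsilon^a\partial_y u_\varepsilon=0\;\text{on }\partial^0 B_\sigma^+,
\]
with the same Dirichlet datum as $u$ on $\partial^+ B_\sigma^+$, such that $u_\varepsilon\to u$ strongly in $H^1_{\mathrm{loc}}(B_\sigma^+\setminus\Sigma)$ and with a uniform energy bound $\|u_\varepsilon\|_{H^1(B_\sigma^+,\rho_\varepsilon^a\mathrm{d}z)}\le c$. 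I would build this sequence by the same variational device of Lemma \ref{Energy->Good}: given a smooth approximation $\bar u\in C^\infty(\overline{B_\sigma^+})$ of $u$, minimize the strictly convex functional
\[
w\mapsto\tfrac12\!\int_{B_\sigma^+}\!\rho_{\varepsilon,r}^a A\nabla w\cdot\nabla w+\int_{B_\sigma^+}\!\rho_{\varepsilon,r}^a F\cdot\nabla w
\]
over the affine class $\bar u+H^1_0(B_\sigma^+,\rho_{\varepsilon,r}^a\mathrm{d}z)$, exploiting the monotonicity of the truncated weights $\rho_{\varepsilon,r}^a$ in $\varepsilon$ and, according to the sign of $a$, either the chain of inclusions of the admissible classes (subcase $a\ge 0$) or Fatou's lemma together with the $(H=W)$ identity (subcase $-1<a<0$). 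The additional linear term does not affect the compactness argument: the Euler--Lagrange equation is tested against $u_\varepsilon-\bar u$ to obtain an $\varepsilon$-uniform $H^1$-bound, extraction yields a weak limit solving the limit equation with the correct trace, and uniqueness for the Dirichlet problem forces it to coincide with $u$. By standard interior elliptic regularity on compact subsets of $B_\sigma^+\setminus\Sigma$ (where the operator is uniformly elliptic independently of $\varepsilon$), the convergence upgrades to strong $H^1_{\mathrm{loc}}(B_\sigma^+\setminus\Sigma)$.

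Once the approximating family is in place, the conclusion follows almost immediately. For part (i) I apply Theorem \ref{holdereven} (divergence-form case) to each $u_\varepsilon$, obtaining
\[
\|u_\varepsilon\|_{C^{0,\alpha}(B_r^+)}\le c\bigl(\|u_\varepsilon\|_{L^\beta(B_\sigma^+,\rho_\varepsilon^a\mathrm{d}z)}+\|F\|_{L^p(B_\sigma^+,\rho_\varepsilon^a\mathrm{d}z)}\bigr),
\]
with $c$ independent of $\varepsilon$. Ascoli--Arzel\`a together with lower semicontinuity of the H\"older seminorm under uniform convergence pass the bound to $u$, while dominated convergence ($\rho_\varepsilon^a\to|y|^a$ a.e., bounded convergence on the $u$- and $F$-weighted norms) handles the right-hand side. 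Reflecting across $\Sigma$ by evenness yields the estimate on $B_r$. For part (ii) the argument is identical, with Theorem \ref{C1alphaevendiv} replacing Theorem \ref{holdereven}; the structural assumption $f_{n+1}(x,0)=0$ is precisely what is required there, and it is preserved by the approximation since $F$ itself is used unchanged in the regularized problem.

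The main obstacle is the first step, i.e.\ the extension of the approximation argument of Lemma \ref{Energy->Good} to equations with a divergence-form right-hand side. The chief delicate points are: (a) guaranteeing that the chosen approximating minimizers inherit a uniform-in-$\varepsilon$ energy bound in spite of the additional linear term, which is handled by Young's inequality absorbing $\int\rho_\varepsilon^a F\cdot\nabla w$ into the quadratic part together with the uniform integrability of $\rho_\varepsilon^a |F|^2$; (b) identifying the weak limit with the prescribed $u$, which requires uniqueness of the non-homogeneous Dirichlet problem; and (c) in the $C^{1,\alpha}$ regime, ensuring that the Neumann trace condition $f_{n+1}(x,0)=0$ propagates to the regularized problem, which is automatic since $F$ is unchanged. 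Everything else is a routine combination of the tools already developed.
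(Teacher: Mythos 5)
Your overall architecture is the same as the paper's (apply the $\varepsilon$-stable bounds of Theorems \ref{holdereven} and \ref{C1alphaevendiv} to a family of regularized solutions, then pass to the limit by Ascoli--Arzel\`a), but the way you build the approximating family is different, and that is where your proposal has a genuine gap. The paper does \emph{not} extend Lemma \ref{Energy->Good} to inhomogeneous right-hand sides: it writes $u=\overline u+\tilde u$, where $\overline u$ is the $\mathcal L_a$-harmonic function with the same boundary trace and $\tilde u\in H^{1,a}_0(B_1)$ solves the inhomogeneous equation with zero trace; Lemma \ref{Energy->Good} is invoked only for $\overline u$, while for $\tilde u$ one merely solves the regularized zero-Dirichlet problems with adjusted data (Proposition \ref{particular}), gets a uniform energy bound by testing with the solution itself, and identifies the limit through Lemma \ref{Good->Energy}. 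You instead propose to rerun the variational argument of Lemma \ref{Energy->Good} with the extra term $\int\rho_\varepsilon^a F\cdot\nabla w$ and with the nonzero boundary datum of $u$, and you assert precisely the two steps that constitute the real content of that lemma: (a) the comparison/convergence of the minimal values, which in the paper rests on the pointwise monotonicity in $\varepsilon$ of a \emph{nonnegative} integrand and is destroyed by the sign-indefinite linear term; and (b), in the subcase $-1<a<0$, the proof that the weak limit belongs to the affine class $\overline u+H^{1,a}_0(B_1)$ --- the page-long argument using that $\rho^a_{\varepsilon,r}=|y|^a$ on the outer annulus, strong convergence near $\partial B_1$, and the construction of the corrector $v_\delta$ --- which your appeal to ``uniqueness for the Dirichlet problem'' presupposes rather than establishes (weak limits of functions in $H^1_0(B_1,\rho_\varepsilon^a\mathrm{d}z)$ need not lie in $H^{1,a}_0(B_1)$ when $a<0$). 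These points may well be fixable, but as written the first and key step of your proof is an unproved nontrivial claim, which the paper deliberately bypasses via the harmonic-replacement decomposition.

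A second, concrete issue: in part (i) with $a>0$ you cannot keep $F$ unchanged in the regularized problems. The hypothesis $F\in L^p(B_1,|y|^a\mathrm{d}z)$ gives no uniform (indeed, possibly not even finite) bound on $\|F\|_{L^p(B_1,\rho_\varepsilon^a\mathrm{d}z)}$, since $\rho_\varepsilon^a\gg|y|^a$ near $\Sigma$; for instance $|F|^p=|y|^{-a-1/2}$ with $a\geq 1/2$ lies in $L^p(B_1,|y|^a\mathrm{d}z)$ but not in $L^p(B_1,\rho_\varepsilon^a\mathrm{d}z)$ for any $\varepsilon>0$. Hence Theorem \ref{holdereven} is not applicable to your family and your dominated-convergence treatment of the right-hand side breaks down; one must replace $F$ by adjusted data $F_\varepsilon$ with $\rho_\varepsilon^a|F_\varepsilon|^p=|y|^a|F|^p$, which is exactly the role of Proposition \ref{particular} (whose case distinction for $F$ should be read so that the adjustment acts in the degenerate range $a>0$), together with the uniform energy bound on the corresponding zero-trace solutions. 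In part (ii) this issue disappears because $F\in C^{0,\alpha}(B_1)$ is bounded, and there your outline does coincide with the paper's once the approximation step discussed above is justified.
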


\begin{proof}[Proof of Theorems \ref{regularityevenodd} and \ref{regularityevenodd1}]
As we have already remarked, the technical Lemmas in Section \ref{secs2} and the following Proposition \ref{particular} are stated when $A=\mathbb I$, but they hold true also for general uniformly elliptic matrixes with the properties stated in the introduction. In order to prove the Theorems, our strategy is to apply the uniform-in-$\varepsilon$ bounds of Sections \ref{sec:UniformHolder} and \ref{sec:UniformC1} to a family of solutions to the regularized problems. We first note that, for homogenous solutions, the proof of the Theorems can be performed by direct application of Theorems \ref{holdereven} and \ref{C1alphaeven} to the family of solutions to the approximating problems given by Lemma \ref{Energy->Good}. In order to treat the inhomogenous problems, we need at first to suitably adjust the right hand sides, as done in the following elementary Proposition.

\begin{Proposition}\label{particular}
Let $a\in\mathbb{R}$. Let us consider either the problem \eqref{La} with $f$ satisfying Assumption (Hf), or problem \eqref{Lafks} with $F$ satisfying Assumption (HF).
Then, there exist two families of functions $\{u_\varepsilon\}$ and $\{f_\varepsilon\}$ (respectively $\{F_\varepsilon\}$)
such that the assumptions of Lemma \ref{Good->Energy}  hold true.
\end{Proposition}
\proof
Without loss of generality we will consider $A=\mathbb I$. Let us consider in $B_1$ the family of functions $\{f_\varepsilon\}$ for $\varepsilon>0$ such that
$$f_\varepsilon(z):=\begin{cases}
f(z) & \mathrm{if} \ a\leq0,\\
f(z)\left(\frac{|y|^a}{\rho_\varepsilon^a(y)}\right)^{1/p} & \mathrm{if} \ a>0.
\end{cases}$$
Using the monotone convergence theorem,  is immediate to check that 
$$\|f_\varepsilon\|_{L^p(B_1,\rho_\varepsilon^a(y)\mathrm{d}z)}\leq c\qquad\mathrm{and}\qquad f_\varepsilon\to f\quad\mathrm{in \ } L^p(B_1,\mathrm{d}z).$$

For any fixed $\varepsilon>0$, let us consider the unique solution $u_\varepsilon\in H^1_0(B_1,\rho_\varepsilon^a(y)\mathrm{d}z)$ to
\begin{equation*}
\begin{cases}
-\mathrm{div}\left(\rho_\varepsilon^a\nabla u_\varepsilon\right)=\rho_\varepsilon^af_\varepsilon & \mathrm{in} \ B_1\\
u_\varepsilon=0 & \mathrm{on} \ \partial B_1.
\end{cases}
\end{equation*}
By testing equation with $u_\varepsilon$, using the Sobolev embeddings of Section \ref{sobo} we deduce 
$$\|u_\varepsilon\|_{H^1_0(B_1,\rho_\varepsilon^a(y)\mathrm{d}z)}\leq c\|f_\varepsilon\|_{L^p(B_1,\rho_\varepsilon^a(y)\mathrm{d}z)},$$
for some a positive constant, independent of $\varepsilon$. 
Similarily, let us consider in $B_1$ the family of fields $\{F_\varepsilon\}$ for $\varepsilon>0$ such that
$$F_\varepsilon(z):=\begin{cases}
F(z) & \mathrm{if} \ a\geq0,\\
F(z)\left(\frac{|y|^a}{\rho_\varepsilon^a(y)}\right)^{1/p} & \mathrm{if} \ a<0.
\end{cases}$$
It is easy to see that (obvious for $a\geq0$)
$$\|F_\varepsilon\|_{L^p(B_1,\rho_\varepsilon^a(y)\mathrm{d}z)}\leq c\qquad\mathrm{and}\qquad F_\varepsilon\to F\quad\mathrm{in \ } L^p_{\mathrm{loc}}(B_1\setminus\Sigma).$$
For any fixed $\varepsilon>0$, let us consider the unique solution $u_\varepsilon\in H^1_0(B_1,\rho_\varepsilon^a(y)\mathrm{d}z)$ to
\begin{equation*}
\begin{cases}
-\mathrm{div}\left(\rho_\varepsilon^a\nabla u_\varepsilon\right)=\mathrm{div}\left(\rho_\varepsilon^aF_\varepsilon\right) & \mathrm{in} \ B_1\\
u_\varepsilon=0 & \mathrm{on} \ \partial B_1.
\end{cases}
\end{equation*}
Testing the equation with $u_\varepsilon$ one easily obtains
$$\|u_\varepsilon\|_{H^1_0(B_1,\rho_\varepsilon^a(y)\mathrm{d}z)}\leq c\|F_\varepsilon\|_{L^p(B_1,\rho_\varepsilon^a(y)\mathrm{d}z)}.$$

\end{proof}

\begin{proof}[End of the proof of Theorems \ref{regularityevenodd} and \ref{regularityevenodd1}]
Of course, every solution $u$ to the inhomogeneous problem admits an expression as the sum
$u=\tilde u+\overline u$, where $\overline u$ is the $\mathcal L_a$-harmonic extension of $u$ on the unit ball, while $\tilde u$ solves the inhomogeneous equation with zero boundary trace:

\begin{equation*}
\begin{cases}
-\mathcal L_a\tilde u=|y|^af & \mathrm{(respectively,} =\mathrm{div}\left(|y|^aF_\varepsilon\right),  \mathrm{in}  \;B_1,\\
\tilde u\in H^{1,a}_0(B_1).
\end{cases}
\end{equation*}
Now we solve the regularized problems for the adjusted forcing terms as in Proposition \ref{particular}: 

\begin{equation*}
\begin{cases}
-\mathrm{div}\left(\rho_\varepsilon^aA\nabla \tilde u_\varepsilon\right)=\rho_\varepsilon^af_\varepsilon & \mathrm{(respectively,} =\mathrm{div}\left(\rho_\varepsilon^aF_\varepsilon\right),  \mathrm{in}  \;B_1,\\
\tilde u_\varepsilon\in H^{1}_0(B_1, \rho_\varepsilon^a\mathrm{d}z)\,,
\end{cases}
\end{equation*}
and we know from the same Proposition that Lemma \ref{Good->Energy}  is applicable. Applying once again the uniform-in-$\varepsilon$ bounds of Sections \ref{sec:UniformHolder} and \ref{sec:UniformC1} to a family of solutions to the regularized problems we easily obtain the desired result.
\end{proof}

\section{Further regularity in the limit case}\label{sec:furtherregularity}
In this section we are going to develop a Schauder theory for even energy solutions to \eqref{La}. We deal here  with the case $A=\mathbb I$,  because we need to differentiate the equation itself. This restriction can be replaced with suitable smoothness conditions on $A$.
\subsection{Some notable operators}
Let us define for $a\in\mathbb{R}$ and a suitable function $u$, the "weighted derivative"
$$\partial_y^au=|y|^a\partial_yu.$$
Moreover, for our purpose, it is useful to define the following operators:
\begin{equation*}\label{GF_a}
\mathcal Gu:=\frac{\partial_yu}{y}\qquad\mathrm{and}\qquad \mathcal F_au:=\partial_y^{-a}\partial_y^au.
\end{equation*}
Let us also remark the following formal expression for the second partial derivarive
\begin{equation}\label{secondyy}
\partial_{yy}^2u=\mathcal F_au-a\mathcal Gu.
\end{equation}

We are going to establish some useful regularity facts related with the weighted derivatives and the operators $\mathcal G$ and $\mathcal F_a$. At first, we need to extend Lemma \ref{rho-1} to the inhomogeneous problems. This can be easily achieved by the same approximation argument already introduced in the proof of Theorem \ref{regularityevenodd} of last section.
\begin{Lemma}\label{covariant}
There hold the following two points.
\begin{itemize}
\item[1)] Let $a\in(-1,+\infty)$. Let $u\in H^{1,a}(B_1)$ be an energy even solution to \eqref{La} in $B_1$ with $f\in L^p(B_1,|y|^a\mathrm{d}z)$ and $p\geq 2$. Then, fixing any $0<r<1$, the function $\partial^a_yu=|y|^a\partial_yu$ belongs to $H^{1,-a}(B_r)$ and is an odd energy solution to 
\begin{equation}\label{L-aDa}
-\mathcal L_{-a}\partial_y^au=\partial_yf=\mathrm{div}(fe_y)=|y|^{-a}\partial^a_yf\qquad\mathrm{in \ }B_r.
\end{equation}
\item[2)] Let $a\in(-\infty,1)$. Let $u\in H^{1,a}(B_1)$ be an energy odd solution to \eqref{La} in $B_1$ with
$$\begin{cases}
f\in L^p(B_1,|y|^a\mathrm{d}z), \ p\geq2 &\mathrm{if} \ a\in(-1,1),\\
|y|^{a/2}f\in L^p(B_1), \ p\geq2 &\mathrm{if} \ a\in(-\infty,-1].
\end{cases}$$
Then, fixing any $0<r<1$, the function $\partial^a_yu=|y|^a\partial_yu$ belongs to $H^{1,-a}(B_r)$ and is an even energy solution to \eqref{L-aDa} in $B_r$.
\end{itemize}
\end{Lemma}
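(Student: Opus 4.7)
The plan is to extend the duality between the weights $\rho_\varepsilon^a$ and $\rho_\varepsilon^{-a}$, already established in Lemma \ref{rho-1} for the homogeneous case, to the inhomogeneous setting by a regularization argument. First, using Proposition \ref{particular} together with the symmetry-preserving constructions of Section 5 (and the even/odd splitting of remarks following Definition \ref{evenoddsolutions}), I would approximate $u$ by a family $\{u_\varepsilon\}$ of solutions to $-\mathrm{div}(\rho_\varepsilon^a \nabla u_\varepsilon) = \rho_\varepsilon^a f_\varepsilon$ on $B_1$, with the $u_\varepsilon$'s and $f_\varepsilon$'s sharing the parity of $u$ and $f$ respectively, satisfying uniform bounds in $H^1(B_1, \rho_\varepsilon^a\mathrm{d}z)$ and $L^p(B_1, \rho_\varepsilon^a\mathrm{d}z)$, and with $u_\varepsilon \to u$ strongly in $H^1_{\mathrm{loc}}(B_1 \setminus \Sigma)$ and $f_\varepsilon\to f$ in $L^p_{\mathrm{loc}}(B_1 \setminus \Sigma)$.

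Second, for $\varepsilon>0$ the regularized operator is uniformly elliptic with smooth coefficients, so the $u_\varepsilon$'s are classically regular in the interior. A direct computation (differentiate the equation in $y$ and rearrange, following the pattern of Lemma \ref{rho-1}) shows that $v_\varepsilon := \rho_\varepsilon^a \partial_y u_\varepsilon$ satisfies
\begin{equation*}
-\mathrm{div}(\rho_\varepsilon^{-a} \nabla v_\varepsilon) = \partial_y f_\varepsilon = \mathrm{div}(\rho_\varepsilon^{-a} G_\varepsilon),\qquad G_\varepsilon:=\rho_\varepsilon^a f_\varepsilon\, e_y,
\end{equation*}
a form compatible with Lemma \ref{Good->Energy}. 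Since $\rho_\varepsilon^a$ is even in $y$ and $\partial_y$ reverses parity, the function $v_\varepsilon$ is odd in $y$ in the setting of part (1) and even in part (2).

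Third, I would check the uniform bounds needed to pass to the limit. The identity $\int_{B_1} \rho_\varepsilon^{-a} v_\varepsilon^2 = \int_{B_1} \rho_\varepsilon^a (\partial_y u_\varepsilon)^2 \leq \|u_\varepsilon\|_{H^1(B_1, \rho_\varepsilon^a\mathrm{d}z)}^2$ gives a uniform $L^2(\rho_\varepsilon^{-a})$-bound for $v_\varepsilon$, while the pointwise relation $\rho_\varepsilon^{-a}|G_\varepsilon|^2 = \rho_\varepsilon^a f_\varepsilon^2$ yields a uniform $L^2(\rho_\varepsilon^{-a})$-bound on $G_\varepsilon$ from the assumed $L^p(\rho_\varepsilon^a)$-bound on $f_\varepsilon$ (recalling $p\geq 2$). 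The Caccioppoli inequality of Lemma \ref{L2>H1} then promotes these $L^2$-bounds to a uniform local $H^1(B_r, \rho_\varepsilon^{-a}\mathrm{d}z)$-bound for any $r<1$. Applying Lemma \ref{Good->Energy} in the divergence-form version to $\{v_\varepsilon\}$ with right-hand side $\mathrm{div}(\rho_\varepsilon^{-a}G_\varepsilon)$, I extract a subsequence converging in $H^1_{\mathrm{loc}}(B_r \setminus \Sigma)$ to an energy solution $\bar v \in H^{1,-a}(B_r)$ of $-\mathcal{L}_{-a} \bar v = \partial_y f$. The strong convergence $u_\varepsilon \to u$ in $H^1_{\mathrm{loc}}(B_1 \setminus \Sigma)$ and the local uniform convergence $\rho_\varepsilon^a \to |y|^a$ away from $\Sigma$ force $v_\varepsilon \to |y|^a \partial_y u$ almost everywhere, identifying $\bar v = \partial_y^a u$ and completing the argument.

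The anticipated main obstacle is to adapt the approximation step of the first paragraph to the super singular range $a \leq -1$ of part (2), where the energy spaces are identified with $\tilde H^{1,a}$ through the isometry $T_\varepsilon^a$ of Section 2 and the density of $C_c^\infty(\overline B_1 \setminus \Sigma)$ given by Proposition \ref{prop:super}. Once the approximating sequence is constructed via this isometry so that the parity and the uniform bounds still hold, the rest of the argument proceeds unchanged.
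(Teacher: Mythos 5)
Your proposal is correct and takes essentially the same route as the paper: regularize, use the explicit $\varepsilon>0$ duality computation behind Lemma \ref{rho-1} to get $-\mathrm{div}(\rho_\varepsilon^{-a}\nabla v_\varepsilon)=\partial_y f_\varepsilon$ with $v_\varepsilon=\rho_\varepsilon^a\partial_y u_\varepsilon$, record the uniform $L^2(\rho_\varepsilon^{-a})$ bounds on $v_\varepsilon$ and on the field $\rho_\varepsilon^a f_\varepsilon e_y$ (upgraded locally to $H^1$ by the Caccioppoli estimate), pass to the limit with Lemma \ref{Good->Energy}, and identify the limit a.e. The only difference is presentational: the paper first writes $u=\tilde u+\overline u$, handling the $\mathcal L_a$-harmonic part by Lemma \ref{rho-1} (via Lemma \ref{Energy->Good}) and the zero-trace part by Proposition \ref{particular}; your step ``approximate $u$ itself by solutions of the regularized inhomogeneous problems'' is not given by Proposition \ref{particular} alone (which only produces approximations of the zero-trace part), so you implicitly need exactly this decomposition, which your vague reference to the constructions of the later sections should be made to say explicitly.
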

\proof

We proceed as in the proof of Theorem \ref{regularityevenodd}, writing $u=\tilde u+\overline u$, where $\overline u$ solves the homogenous problem and $\tilde u$ has zero boundary trace.  Lemma \ref{rho-1} is applicable to $\overline u$, so we are left with $\tilde u$. First of all, by Proposition \ref{particular} we have the existence of two sequences $\{\tilde u_\varepsilon\}$ and $\{f_\varepsilon\}$ such that Lemma \ref{Energy->Good} applies to the solutions of the homogenous problem, and
having limit $\tilde u$. Next, we consider  $\tilde v_\varepsilon=\rho_\varepsilon^a\partial_y\tilde u_\varepsilon$ which converges a.e. in $B_1$ to $\tilde v=|y|^a\partial_y\tilde u$. Moreover
$$-\mathrm{div}\left(\rho_\varepsilon^{-a}\nabla\tilde v_\varepsilon\right)=\partial_yf_\varepsilon\qquad\mathrm{in \ }B_1,$$
with
$$\int_{B_1}\rho_\varepsilon^{-a}\left|\frac{f_\varepsilon}{\rho_\varepsilon^{-a}}\right|^2=\int_{B_1}\rho_\varepsilon^{a}|f_\varepsilon|^2\leq c,$$
and
$$\int_{B_1}\rho_\varepsilon^{-a}\tilde v_\varepsilon^2=\int_{B_1}\rho_\varepsilon^{a}|\partial_y\tilde u_\varepsilon|^2\leq\int_{B_1}\rho_\varepsilon^{a}|\nabla\tilde u_\varepsilon|^2\leq c.$$
In order to complete the proof we apply again  Lemma \ref{Energy->Good} to the $\tilde v_\varepsilon$'s.
\endproof

By applying parts 1) and 2) consecutively,   we easily obtain the following Lemma.
\begin{Lemma}
Let $a\in(-1,+\infty)$ and let $u\in H^{1,a}(B_1)$ be an energy even solution to \eqref{La} in $B_1$ with $f\in L^p(B_1,|y|^a\mathrm{d}z)$ with $p\geq2$ and such that $\partial_yf\in L^q(B_1,|y|^a\mathrm{d}z)$ with $q\geq2$. Then, for any $0<r<1$, $\mathcal F_au=\partial_y^{-a}\partial_y^au\in H^{1,a}(B_r)$ is an even in $y$ energy solution to
\begin{equation}\label{LaFa}
-\mathcal L_a\mathcal F_au=\mathrm{div}\left(|y|^a\partial_yfe_y\right)=|y|^a\mathcal F_af\qquad\mathrm{in \ }B_r.
\end{equation}
\end{Lemma}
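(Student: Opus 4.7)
The strategy is to apply Lemma \ref{covariant} twice in succession, thereby chaining the two covariant differentiations $\partial_y^a$ and $\partial_y^{-a}$ into $\mathcal F_a=\partial_y^{-a}\partial_y^a$. First, fix $0<r_1<1$. Since $u$ is even and solves $-\mathcal L_a u=|y|^a f$ with $f\in L^p(B_1,|y|^a\mathrm{d}z)$, $p\geq 2$, part 1) of Lemma \ref{covariant} applies and gives that $v:=\partial_y^a u$ is odd in $y$, lies in $H^{1,-a}(B_{r_1})$, and satisfies
\[
-\mathcal L_{-a} v=\partial_y f \quad\text{in }B_{r_1}.
\]

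Next I would apply part 2) of Lemma \ref{covariant} to $v$ with parameter $-a$, which is legitimate since $-a\in(-\infty,1)$ (because $a>-1$) and $v$ is odd. To put the forcing in the required form I rewrite
\[
-\mathcal L_{-a} v=\partial_y f=|y|^{-a}\tilde f,\qquad \tilde f:=|y|^a\partial_y f,
\]
and must verify the integrability hypothesis of part 2): either $\tilde f\in L^2(B_1,|y|^{-a}\mathrm{d}z)$ when $a\in(-1,1)$, or $|y|^{-a/2}\tilde f\in L^2(B_1)$ when $a\geq 1$. In both cases the requirement reduces to $\int_{B_1}|y|^a|\partial_y f|^2<\infty$, which follows from the hypothesis $\partial_y f\in L^q(B_1,|y|^a\mathrm{d}z)$ with $q\geq 2$ by H\"older's inequality, using that the weight $|y|^a$ is locally integrable in $B_1$ for $a>-1$. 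Thus part 2) yields, for every $0<r_2<r_1$, that $\partial_y^{-a} v$ is even in $y$, belongs to $H^{1,a}(B_{r_2})$, and solves $-\mathcal L_a(\partial_y^{-a}v)=\partial_y\tilde f$ in $B_{r_2}$.

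To finish, I observe that by definition $\partial_y^{-a}v=\partial_y^{-a}\partial_y^a u=\mathcal F_a u$, and that
\[
\partial_y\tilde f=\partial_y(|y|^a\partial_y f)=\mathrm{div}(|y|^a\partial_y f\,e_y)=|y|^a\mathcal F_a f,
\]
which is exactly the right-hand side in the statement. Since $r_1<1$ is arbitrary and $r_2<r_1$ can be chosen as close as desired to $r_1$, the conclusion holds for every $0<r<1$. The only mildly delicate point in this plan is the integrability bookkeeping when feeding the output of step one into step two, but this reduces to a simple H\"older interpolation exploiting the local integrability of $|y|^a$ throughout the range $a>-1$; all the deeper analytic content (symmetry propagation, duality between the weights $|y|^a$ and $|y|^{-a}$, energy membership) has already been established in Lemma \ref{covariant}.
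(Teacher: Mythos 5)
Your proposal is correct and follows exactly the route the paper intends: the paper derives this lemma by "applying parts 1) and 2) consecutively" of Lemma \ref{covariant}, which is precisely your chaining of $\partial_y^a$ and $\partial_y^{-a}$ with the parameter $-a\in(-\infty,1)$ in the second step. Your verification that the forcing hypothesis of part 2) reduces to $\partial_y f\in L^2(B_1,|y|^a\mathrm{d}z)$, available from $q\geq 2$ and the finiteness of the measure $|y|^a\mathrm{d}z$ on $B_1$, together with the localization on $B_{r_1}$, correctly fills in the bookkeeping the paper leaves implicit.
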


Now turn to the similar property for the operator $\mathcal G$

\begin{Lemma}\label{2+a}
Let $a\in(-1,+\infty)$ and let $u\in H^{1,a}(B_1)$ be an energy even solution to \eqref{La} in $B_1$ with $f\in L^p(B_1,|y|^a\mathrm{d}z)$ with $p\geq2$ and such that $\mathcal Gf\in L^q(B_1,|y|^{2+a}\mathrm{d}z)$ with $q\geq(2^*(2+a))'$. Then, for any $0<r<1$, $\mathcal Gu=y^{-1}\partial_yu\in H^{1,2+a}(B_r)$ is an even in $y$ energy solution to
\begin{equation}\label{L2+aG}
-\mathcal L_{2+a}\mathcal Gu=|y|^{2+a}\left(y^{-1}\partial_yf\right)=|y|^{2+a}\mathcal Gf\qquad\mathrm{in \ }B_r.
\end{equation}
\end{Lemma}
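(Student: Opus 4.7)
The proof mirrors the strategy of Lemma \ref{covariant}: verify the identity on smooth approximations and then pass to the limit in the energy formulation.

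\emph{Step 1 (Approximation).} By Proposition \ref{particular} and Lemma \ref{Good->Energy}, produce a family of even-in-$y$ solutions $u_\varepsilon$ to the regularised problem $-\mathrm{div}(\rho_\varepsilon^a\nabla u_\varepsilon)=\rho_\varepsilon^a f_\varepsilon$ in $B_1$, converging to $u$ in $H^1_{\mathrm{loc}}(B_1\setminus\Sigma)$, with $f_\varepsilon$ adjusted so that both $f_\varepsilon\to f$ and $\mathcal Gf_\varepsilon\to\mathcal Gf$ in the appropriate weighted spaces. Since $-\mathrm{div}(\rho_\varepsilon^a\nabla\cdot)$ is uniformly elliptic with smooth coefficients for $\varepsilon>0$, classical elliptic regularity (up to a further mollification of $f_\varepsilon$) yields $u_\varepsilon\in C^\infty_{\mathrm{loc}}(B_1)$, and the convergence is strong in $C^k_{\mathrm{loc}}(B_1\setminus\Sigma)$ for every $k$. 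Evenness forces $\partial_y u_\varepsilon(x,0)=0$, so $v_\varepsilon:=\mathcal Gu_\varepsilon=y^{-1}\partial_y u_\varepsilon$ is a well-defined smooth even function.

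\emph{Step 2 (Pointwise PDE for $v_\varepsilon$).} Apply $\mathcal G$ to the non-divergence form
\[
-\Delta u_\varepsilon-\dfrac{ay^{2}}{\varepsilon^{2}+y^{2}}\mathcal Gu_\varepsilon=f_\varepsilon,
\]
using the commutator identity $\mathcal G\Delta w=\Delta\mathcal Gw+2\mathcal G^{2}w$ (valid for smooth $w$) together with
\[
\mathcal G\!\left(\dfrac{ay^{2}}{\varepsilon^{2}+y^{2}}v_\varepsilon\right)=\dfrac{ay^{2}}{\varepsilon^{2}+y^{2}}\mathcal Gv_\varepsilon+\dfrac{2a\varepsilon^{2}}{(\varepsilon^{2}+y^{2})^{2}}v_\varepsilon.
\]
Regrouping and multiplying by $y^{2}(\varepsilon^{2}+y^{2})^{a/2}$ yields
\[
-\mathrm{div}\!\left(y^{2}(\varepsilon^{2}+y^{2})^{a/2}\nabla v_\varepsilon\right)=y^{2}(\varepsilon^{2}+y^{2})^{a/2}\mathcal Gf_\varepsilon+E_\varepsilon,\qquad E_\varepsilon=\dfrac{2a\varepsilon^{2}y^{2}}{(\varepsilon^{2}+y^{2})^{2-a/2}}v_\varepsilon.
\]
The weight $y^{2}(\varepsilon^{2}+y^{2})^{a/2}\to|y|^{2+a}$ while $E_\varepsilon\to 0$ pointwise on $B_1\setminus\Sigma$ as $\varepsilon\to 0$.

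\emph{Step 3 (Uniform bounds).} The uniform $H^{1}(\rho_\varepsilon^a)$-bound on $u_\varepsilon$ delivers a uniform $L^{2}(|y|^{2+a})$-bound on $v_\varepsilon$ on compact subsets of $B_1\setminus\Sigma$ since there $\partial_y u_\varepsilon=yv_\varepsilon$ with $y$ bounded below; a Caccioppoli-type estimate (Lemma \ref{L2>H1}) applied to the equation in Step 2 then provides uniform $H^{1}_{\mathrm{loc}}(B_r\setminus\Sigma)$ bounds on $v_\varepsilon$, and lower semicontinuity gives $v=\mathcal Gu\in H^{1,2+a}_{\mathrm{loc}}(B_r)$.

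\emph{Step 4 (Passing to the limit).} Since $a>-1$ implies $2+a>1$, Proposition \ref{prop:super} ensures $C^\infty_c(\overline{B_r}\setminus\Sigma)$ is dense in $H^{1,2+a}(B_r)$, so it suffices to test the weak formulation against $\phi\in C^\infty_c(B_r\setminus\Sigma)$. On such a test function, $E_\varepsilon\to 0$ uniformly on $\mathrm{supp}(\phi)$, $v_\varepsilon\to v$ strongly in $H^{1}(\mathrm{supp}(\phi))$, and one passes to the limit exactly as in Lemma \ref{Good->Energy} to obtain
\[
\int_{B_r}|y|^{2+a}\nabla v\cdot\nabla\phi=\int_{B_r}|y|^{2+a}\mathcal Gf\,\phi,
\]
which extends by density to every $\phi\in H^{1,2+a}_0(B_r)$, giving the announced equation \eqref{L2+aG}. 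Evenness of $v$ is automatic from evenness of $u$.

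\emph{Main obstacle.} The crucial technical difficulty is the passage to the limit across $\Sigma$, where the coefficient $(\varepsilon^{2}+y^{2})^{a/2-2}$ in $E_\varepsilon$ can become singular. The saving is precisely the super-degeneracy $2+a>1$ of the target weight, which permits restricting to test functions that avoid $\Sigma$ and confines all $\varepsilon$-dependent estimates to sets where $y^{2}(\varepsilon^{2}+y^{2})^{a/2}$ and $\rho_\varepsilon^a$ behave like bounded multiples of $|y|^{2+a}$ and $|y|^a$ respectively.
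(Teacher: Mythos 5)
Your Steps 1--2 are sound: the commutator identity $\mathcal G\Delta w=\Delta\mathcal Gw+2\mathcal G^2w$ and the resulting equation for $v_\varepsilon=\mathcal Gu_\varepsilon$ with error $E_\varepsilon$ are correct, and in Step 4 it is indeed legitimate (since $2+a>1$, Proposition \ref{prop:super}) to test only with $\phi\in C^\infty_c(B_r\setminus\Sigma)$ \emph{once you already know} $\mathcal Gu\in H^{1,2+a}(B_r)$. The genuine gap is Step 3: you never establish the weighted bound up to $\Sigma$, which is the real content of the membership claim. Your uniform estimates are stated only on compact subsets of $B_1\setminus\Sigma$, with constants coming from ``$y$ bounded below'' and an unweighted Caccioppoli argument, hence depending on the distance to $\Sigma$; lower semicontinuity then only gives $\int_K|y|^{2+a}|\nabla\mathcal Gu|^2\leq C(K)$ with $C(K)$ possibly blowing up as $K$ exhausts $B_r$, which does not yield $\mathcal Gu\in H^{1,2+a}(B_r)$. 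If instead you try to run the Caccioppoli estimate for $v_\varepsilon$ with the weight $y^2(\varepsilon^2+y^2)^{a/2}$ \emph{up to} $\Sigma$, the obstruction is precisely $E_\varepsilon$: testing with $\eta^2v_\varepsilon$ produces $\int E_\varepsilon\eta^2v_\varepsilon=\int\frac{2a\,\varepsilon^2}{(\varepsilon^2+y^2)^{2-a/2}}(\partial_yu_\varepsilon)^2\eta^2$, a term of size $(\varepsilon^2+y^2)^{a/2-1}(\partial_yu_\varepsilon)^2$ in the layer $|y|\lesssim\varepsilon$, which is not controlled by the energy $\int\rho_\varepsilon^a|\nabla u_\varepsilon|^2$ unless one invokes an $\varepsilon$-stable weighted Hardy inequality for functions vanishing on $\Sigma$ (applied to the odd function $\partial_yu_\varepsilon$). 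This is unavoidable: since $|y|^{2+a}|\nabla\mathcal Gu|^2\lesssim |y|^{a}|\nabla\partial_yu|^2+|y|^{a-2}|\partial_yu|^2$, the statement $\mathcal Gu\in H^{1,2+a}(B_r)$ already contains a Hardy-type estimate, so it cannot follow from interior elliptic estimates away from $\Sigma$ alone; your ``main obstacle'' paragraph only explains how to avoid $\Sigma$ in the weak formulation, not how to get the a priori bound across it.

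For comparison, the paper proves the lemma in one stroke: by Lemma \ref{covariant} part 1), $\partial_y^au=|y|^a\partial_yu$ is an odd energy solution in $H^{1,-a}(B_R)$ of the conjugate equation \eqref{L-aDa}; dividing by the odd $\mathcal L_{-a}$-harmonic function $y|y|^{a}$ and invoking Proposition 2.10 of the companion paper \cite{SirTerVit2} (with $b=-a<1$) gives at once $\mathcal Gu\in H^{1,2+a}(B_r)$ and equation \eqref{L2+aG}; that proposition is exactly where the weighted Hardy inequality for odd functions is encapsulated. To repair your proof you would need to prove (uniformly in $\varepsilon$) or quote such a Hardy inequality and use it to close the weighted Caccioppoli estimate for $v_\varepsilon$ up to $\Sigma$; the rest of your scheme would then go through.
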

\proof
By Lemma \ref{covariant} part $1)$, fixed $0<R<1$ we have $\partial_y^au\in H^{1,-a}(B_R)$ odd energy solution to \eqref{L-aDa} in $B_R$. Nevertheless, also $y|y|^a\in H^{1,-a}(B_R)$ is $\mathcal L_a$-harmonic and odd. Considering $b=-a\in(-\infty,1)$, one can apply Proposition 2.10 in \cite{SirTerVit2} obtaining the thesis; that is $\mathcal Gu=y^{-1}\partial_yu\in H^{1,2+a}(B_r)$ and solves \eqref{L2+aG} in $B_r$, for $r<R$.

\endproof

\subsection{Schauder estimates}

As the operator $\mathcal L_a$ commutes with derivations with respect to the $x$ variables,  our first result promptly follows from point $ii)$ of Theorem \ref{regularityevenodd1}, via an inductive argument. Thus we prove: 

\begin{Lemma}\label{Schauderx}
Let $a\in(-1,+\infty)$, $k\in\mathbb{N}\cup\{0\}$ and $f\in C^{k,\alpha}(B_1)$ with $\alpha\in(0,1)$ and even in $y$. Let also $u\in H^{1,a}(B_1)$ be an even energy solution to \eqref{La}. Then, for any partial derivative of order $k+1$ in the only $x_i$ variables $\partial^{k+1}_{j}u\in C^{1,\alpha}_{\mathrm{loc}}(B_1)$.
\end{Lemma}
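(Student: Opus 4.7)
The plan is to argue by induction on $k \geq 0$, exploiting the crucial fact that, since $A = \mathbb{I}$ and the weight $|y|^a$ do not depend on $x$, the operator $\mathcal{L}_a$ commutes formally with each tangential derivative $\partial_{x_i}$, $i = 1, \ldots, n$.

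First I would establish a \emph{commutation lemma}: if $u \in H^{1,a}(B_1)$ is an even energy solution to $-\mathcal{L}_a u = |y|^a g$ with $g$ even in $y$ and sufficiently integrable, then for every $0 < r < 1$ and every $i \in \{1, \ldots, n\}$, the distributional derivative $v := \partial_{x_i} u$ lies in $H^{1,a}(B_r)$ and is an even energy solution of $-\mathcal{L}_a v = |y|^a \partial_{x_i} g$ on $B_r$; when $g$ is only $C^{0,\alpha}$ so that $\partial_{x_i} g$ is not a bona fide function, one instead writes the right-hand side in divergence form as $\mathrm{div}(|y|^a g\, e_i)$. This is the classical Nirenberg difference-quotient argument: set $u_h(z) = h^{-1}(u(z + h e_i) - u(z))$, test the equation for $u$ with $\phi(\cdot - h e_i)$ for $\phi \in H^{1,a}_0(B_r)$, use the $x$-translation invariance of the weight $|y|^a$ and of $A$ to obtain uniform $H^{1,a}(B_r)$-bounds on $u_h$, and pass to the limit $h \to 0$.

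With this commutation at our disposal, the base case $k = 0$ is handled as follows. Given $f \in C^{0,\alpha}(B_1)$ even in $y$ and $j \in \{1,\dots,n\}$, the field $F := f\, e_j$ belongs to $C^{0,\alpha}(B_1)$ and its $(n+1)$-th component vanishes identically, so in particular the compatibility condition $f_{n+1}(x,0) \equiv 0$ is vacuously satisfied. The commutation lemma (in its divergence-form version) yields that $v := \partial_{x_j} u$ is an even energy solution of
\[
-\mathcal{L}_a v = \mathrm{div}(|y|^a F) \qquad \text{on } B_r, \quad 0 < r < 1.
\]
Applying point $ii)$ of Theorem \ref{regularityevenodd1} (with $A = \mathbb{I}$ trivially $\alpha$-H\"older continuous), we obtain $v = \partial_{x_j} u \in C^{1,\alpha}_{\mathrm{loc}}(B_1)$, which is the $k = 0$ statement.

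For the inductive step, suppose the assertion holds for $k - 1$ and let $f \in C^{k,\alpha}(B_1)$ be even in $y$. Then $\partial_{x_i} f \in C^{k-1,\alpha}(B_1)$ is again even in $y$, and by the commutation lemma $v_i := \partial_{x_i} u$ is an even energy solution of $-\mathcal{L}_a v_i = |y|^a \partial_{x_i} f$ on each $B_r$. The inductive hypothesis applied to $v_i$ says that every partial derivative of order $k$ in $x$ of $v_i$---equivalently, every partial derivative of order $k + 1$ in $x$ of $u$---lies in $C^{1,\alpha}_{\mathrm{loc}}(B_1)$, closing the induction. The only genuinely delicate point is the commutation lemma, where one must ensure uniform Nirenberg-type bounds in the weighted setting; however, since $|y|^a$ is invariant under $x$-translations and $A = \mathbb{I}$ is constant, this is a routine adaptation of the classical argument after a standard localization by a cutoff supported away from $\partial B_r$.
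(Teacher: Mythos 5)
Your argument is correct and follows essentially the same route as the paper, which proves the lemma precisely by observing that $\mathcal L_a$ commutes with $x$-derivations and then applying point $ii)$ of Theorem \ref{regularityevenodd1} inductively, with the last derivative absorbed into a divergence-form right-hand side $\mathrm{div}(|y|^a \partial^k_x f\, e_j)$ whose $(n+1)$-th component vanishes. Your difference-quotient justification of the commutation step simply fills in details the paper leaves implicit, and it works as claimed because the weight $|y|^a$ is invariant under $x$-translations.
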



In order to gain the Schauder estimates also with respect to the $y$ variable, we need to exploit the regularity features of the operators $\mathcal G$ and $\mathcal F_a$.
\begin{Lemma}\label{yderivativeseven}
Let $a\in(-1,+\infty)$ and $f\in C^{0,\alpha}(B_1)$ with $\alpha\in(0,1)$ and even in $y$. Let also $u\in H^{1,a}(B_1)$ be an even energy solution to \eqref{La}. Then, $\mathcal Gu$, $\mathcal F_au$ and $\partial^2_{yy}u$ belong to $C^{0,\alpha}_{\mathrm{loc}}(B_1)$.
\end{Lemma}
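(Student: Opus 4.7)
The plan is to derive explicit pointwise (or integral) expressions for each of $\mathcal{F}_a u$, $\mathcal{G}u$ and $\partial^2_{yy}u$ directly from the equation, and then read off Hölder continuity from those expressions. I first apply Theorem~\ref{regularityevenodd}(ii) with $p$ as large as I please (since $f\in L^\infty$), obtaining $u\in C^{1,\beta}_{\mathrm{loc}}(B_1)$ for every $\beta\in(0,1)$; I fix $\beta$ with $\max(0,-a)<\beta<1$, which is possible because $a>-1$. Together with Lemma~\ref{Schauderx}, this yields $\Delta_x u\in C^{0,\alpha}_{\mathrm{loc}}(B_1)$, so the auxiliary function $h:=f+\Delta_x u$ is $C^{0,\alpha}_{\mathrm{loc}}$ and even in $y$.

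For $\mathcal{F}_a u$, the equation $-\mathcal{L}_a u=|y|^a f$ is strictly elliptic away from $\Sigma$ and can be divided by $|y|^a$, yielding $\Delta u+a\mathcal{G}u=-f$. Splitting $\Delta u=\Delta_x u+\partial^2_{yy}u$ and using the algebraic identity $\mathcal{F}_a u=\partial^2_{yy}u+a\mathcal{G}u$ (a rewriting of \eqref{secondyy}), I obtain the pointwise identity $\mathcal{F}_a u=-h$ on $B_1\setminus\Sigma$. Since the right-hand side is $C^{0,\alpha}_{\mathrm{loc}}$ across $\Sigma$, $\mathcal{F}_a u$ extends to a function in $C^{0,\alpha}_{\mathrm{loc}}(B_1)$.

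For $\mathcal{G}u$ I integrate the same classical equation multiplied by $y^a$: on $\{y>0\}$ this reads $(y^a\partial_y u(x,\cdot))'(y)=-y^a h(x,y)$. Integrating from $y_0>0$ to $y$ and letting $y_0\to 0^+$, the boundary term $y_0^a\partial_y u(x,y_0)$ vanishes because $\partial_y u(x,0)=0$ (evenness) and $|\partial_y u(x,y_0)|\leq C\,y_0^\beta$ with $a+\beta>0$. The substitution $s=yt$, together with the evenness of $h$ in $y$, then delivers the representation
$$\mathcal{G}u(x,y)=-\int_0^1 t^a\, h(x,|y|t)\,dt,\qquad y\neq 0,$$
continuously extended to $\Sigma$ by the value $-h(x,0)/(a+1)$. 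Direct difference-quotient estimates on this representation (in $x$ and in $y$), using that $\int_0^1 t^a\,dt$ and $\int_0^1 t^{a+\alpha}\,dt$ are both finite because $a>-1$, show that $\mathcal{G}u\in C^{0,\alpha}_{\mathrm{loc}}(B_1)$.

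The last assertion is then immediate from \eqref{secondyy}: $\partial^2_{yy}u=\mathcal{F}_a u-a\mathcal{G}u\in C^{0,\alpha}_{\mathrm{loc}}(B_1)$. The only delicate step is the vanishing of the boundary term in the integration of the ODE, which in the singular regime $-1<a<0$ is precisely what forces the preliminary bootstrap to a $C^{1,\beta}$-bound with exponent $\beta$ strictly greater than $-a$; the flexibility in choosing the Schauder exponent provided by Theorem~\ref{regularityevenodd} is exactly what allows a uniform treatment for all $a>-1$.
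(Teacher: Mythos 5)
Your proposal is correct and follows essentially the same route as the paper: both identify $\mathcal F_au=-\Delta_xu-f$ pointwise (Hölder by Lemma~\ref{Schauderx}), integrate the ODE in $y$ to get the representation $\mathcal Gu(x,y)=-\int_0^1t^a\,h(x,|y|t)\,\mathrm{d}t$ (the paper writes the same formula in unrescaled form), and conclude for $\partial^2_{yy}u$ via \eqref{secondyy}. Your two refinements are welcome but minor: the explicit justification of the vanishing boundary term through the preliminary $C^{1,\beta}$ bound with $\beta>-a$ (which the paper leaves implicit when writing $y^a\partial_yu=\int_0^yt^ag(x,t)\,\mathrm{d}t$), and the change of variables $t=s/y$ that turns the paper's two-case Hölder estimate in $y$ into a one-line difference-quotient bound using $\int_0^1t^{a+\alpha}\,\mathrm{d}t<\infty$.
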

\proof
First we remark that by Lemma \ref{Schauderx}, $\Delta_xu\in C^{0,\alpha}_{\mathrm{loc}}(B_1)$ (the Laplacian in the $x_i$ variables). Recalling \eqref{secondyy} and \eqref{La}, we can express $\mathcal F_au$ pointwisely as
\begin{equation*}
\mathcal F_au(x,y)=\partial^2_{yy}u(x,y)+a\mathcal Gu(x,y)=-\Delta_xu(x,y)-f(x,y):=g(x,y)\in C^{0,\alpha}_{\mathrm{loc}}(B_1).
\end{equation*}
Hence, since $u$ is even in $y$, we can restrict it only in $B_1^+$ and express
$$y^{a}\partial_yu(x,y)=\int_0^{y}t^ag(x,t)\mathrm{d}t,$$
that is,
\begin{equation}\label{integraG}
\mathcal Gu(x,y)=\frac{1}{y^{1+a}}\int_0^{y}t^a(g(x,t)-g(x,0))\mathrm{d}t \ + \ \frac{g(x,0)}{1+a}.
\end{equation}
H\"older continuity of $\mathcal Gu$ with respect to the $x$-variable being trivial,
we focus on the variations with respect to $y$. Let us fix $\varepsilon\in(0,1)$ and $r\in(0,1)$ and consider the two sets $\{(y_1,y_2): \ r>y_2\geq y_1\geq0, \ \mathrm{and} \ y_2-y_1\geq\varepsilon y_2\}$ and $\{(y_1,y_2): \ r>y_2\geq y_1\geq0, \ \mathrm{and} \ y_2-y_1<\varepsilon y_2\}$. Taking $(y_1,y_2)$ in the first set, then
\begin{eqnarray*}
\frac{|\mathcal G(x,y_1)-\mathcal G(x,y_2)|}{(y_2-y_1)^\alpha}&\leq&\frac{1}{\varepsilon^2}\sum_{i=1}^2\frac{1}{y_i^{1+a+\alpha}}\int_0^{y_i}t^a(g(x,t)-g(x,0))\mathrm{d}t\\
&\leq& c[g]_{C^{0,\alpha}(B_r)},
\end{eqnarray*}
Taking $(y_1,y_2)$ in the second set, we have the existence of $y_1<\xi<y_2$ such that
\begin{eqnarray*}
|\mathcal G(x,y_1)-\mathcal G(x,y_2)|&\leq&\left|\frac{1}{y_2^{1+a}}\int_{y_1}^{y_2}t^a(g(x,t)-g(x,0))\mathrm{d}t\right|\\
&&+\left|\left(\frac{1}{y_2^{1+a}}-\frac{1}{y_1^{1+a}}\right)\int_{0}^{y_1}t^a(g(x,t)-g(x,0))\mathrm{d}t\right|\\
&=&\left|\frac{(y_2-y_1)\xi^a(g(x,\xi)-g(x,0))}{y_2^{1+a}}\right|\\
&&+\left|\left[1-\left(1-\frac{y_2-y_1}{y_2}\right)^{1+a}\right]\frac{1}{y_1^{1+a}}\int_{0}^{y_1}t^a(g(x,t)-g(x,0))\mathrm{d}t\right|\\
&\leq&c\frac{(y_2-y_1)\xi^{a+\alpha}}{y_2^{1+a}}+c\frac{(y_2-y_1)y_1^\alpha}{y_2}.
\end{eqnarray*}
Eventually, using the fact that $y_2-y_1<\varepsilon y_2$, we have
$$1-\varepsilon\leq\frac{y_1}{y_2}\leq\frac{\xi}{y_2}\leq 1,$$
and so
$$\frac{|\mathcal G(x,y_1)-\mathcal G(x,y_2)|}{(y_2-y_1)^\alpha}\leq c\left(\frac{\xi}{y_2}\right)^{a+\alpha}+c\left(\frac{y_1}{y_2}\right)^\alpha\leq c.$$
Eventually, also $\partial^2_{yy}u\in C^{0,\alpha}_{\mathrm{loc}}(B_1)$ by \eqref{secondyy}.
\endproof

\begin{Lemma}\label{yderivativesodd}
Let $a\in(-1,+\infty)$ and $f\in C^{1,\alpha}(B_1)$ with $\alpha\in(0,1)$ and even in $y$. Let also $u\in H^{1,a}(B_1)$ be an even energy solution to \eqref{La}. Then, $\mathcal Gu$, $\mathcal F_au$ and $\partial^2_{yy}u$ belong to $C^{1,\alpha}_{\mathrm{loc}}(B_1)$.
\end{Lemma}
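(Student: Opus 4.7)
The plan hinges on upgrading the representation used in Lemma \ref{yderivativeseven} by exploiting one extra derivative of $f$. First, since $f\in C^{1,\alpha}(B_1)$ is even in $y$, Lemma \ref{Schauderx} applied with $k=1$ gives $\partial^2_{x_ix_j}u\in C^{1,\alpha}_{\mathrm{loc}}(B_1)$ for every $i,j\in\{1,\dots,n\}$, hence $\Delta_x u\in C^{1,\alpha}_{\mathrm{loc}}(B_1)$. Writing the equation $-\mathcal L_au=|y|^af$ pointwise through \eqref{secondyy} gives
$$\mathcal F_a u = -\Delta_x u - f,$$
so $\mathcal F_a u\in C^{1,\alpha}_{\mathrm{loc}}(B_1)$. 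This takes care of $\mathcal F_au$ entirely.

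For $\mathcal Gu$ the tangential part is immediate: for each $i=1,\dots,n$, the function $\partial_{x_i}u$ is an even energy solution to $-\mathcal L_a\partial_{x_i}u=|y|^a\partial_{x_i}f$ with $\partial_{x_i}f\in C^{0,\alpha}(B_1)$ even, so Lemma \ref{yderivativeseven} yields $\partial_{x_i}\mathcal Gu=\mathcal G(\partial_{x_i}u)\in C^{0,\alpha}_{\mathrm{loc}}(B_1)$. The delicate point is $\partial_y\mathcal Gu$. Setting $g:=\mathcal F_au$ and starting from the identity $y^a\partial_yu(x,y)=\int_0^yt^ag(x,t)\,dt$ used in the proof of Lemma \ref{yderivativeseven}, one integration by parts (legitimate since $g\in C^{1,\alpha}$ and $1+a>0$) produces
$$\mathcal Gu(x,y)=\frac{g(x,y)}{1+a}-\frac{1}{(1+a)\,y^{1+a}}\int_0^yt^{1+a}\partial_yg(x,t)\,dt,$$
and then a direct differentiation in $y$ yields the clean expression
$$\partial_y\mathcal Gu(x,y)=y^{-2-a}\int_0^yt^{1+a}\partial_yg(x,t)\,dt.$$

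The crucial observation is that $g$ is even in $y$ (since $\Delta_xu$ and $f$ are), so $\partial_yg(x,0)=0$ and $\partial_yg\in C^{0,\alpha}(B_1)$. Thus the right-hand side above has exactly the same structure as the representation \eqref{integraG} used in Lemma \ref{yderivativeseven} for $\mathcal Gu$, with the exponent $a>-1$ replaced by $1+a>0$ (which only improves integrability of the singular factor $t^{1+a}$) and $g(x,\cdot)-g(x,0)$ replaced by $\partial_yg(x,\cdot)$. The two-case $\varepsilon$-splitting argument of Lemma \ref{yderivativeseven} (distinguishing the pairs $y_2-y_1\geq\varepsilon y_2$ from $y_2-y_1<\varepsilon y_2$, and now using $|\partial_yg(x,t)|\leq[\partial_yg]_{C^{0,\alpha}}t^\alpha$) then transfers verbatim and delivers $\partial_y\mathcal Gu\in C^{0,\alpha}_{\mathrm{loc}}(B_1)$. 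Combined with the tangential estimate this yields $\mathcal Gu\in C^{1,\alpha}_{\mathrm{loc}}(B_1)$, and the identity $\partial^2_{yy}u=\mathcal F_au-a\mathcal Gu$ from \eqref{secondyy} immediately closes the proof.

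The main obstacle to anticipate is the apparent $1/y$ singularity in the naive expression $\partial_y\mathcal Gu=y^{-1}(\mathcal F_au-(1+a)\mathcal Gu)$: its control requires the precise integration by parts above, where the evenness-induced vanishing $\partial_yg(x,0)=0$ provides exactly the cancellation needed to reproduce the argument of Lemma \ref{yderivativeseven} one derivative higher.
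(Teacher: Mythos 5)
Your proof is correct, but it takes a genuinely different route from the paper's. The paper never writes an explicit formula for $\partial_y\mathcal Gu$: it first shows (Lemmas \ref{covariant} and \ref{2+a}) that $\mathcal F_au$ and $\mathcal Gu$ are themselves even energy solutions of the transferred equations \eqref{LaFa} and \eqref{L2+aG}, and then applies the a priori estimate of point $ii)$ of Theorem \ref{regularityevenodd1} twice -- once to $\mathcal F_au$ (using $\partial_yf(x,0)=0$), and once to $\mathcal Gu$ after rewriting $|y|^{2+a}\mathcal Gf=\mathrm{div}(|y|^{2+a}\Phi e_y)$ with $\Phi(x,y)=|y|^{-2-a}\int_0^yt^{1+a}\partial_yf(x,t)\,\mathrm{d}t$, a field shown to be $C^{0,\alpha}$ and vanishing on $\Sigma$ by the same elementary computation as in Lemma \ref{yderivativeseven}. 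You instead get $\mathcal F_au\in C^{1,\alpha}_{\mathrm{loc}}$ purely algebraically from $\mathcal F_au=-\Delta_xu-f$, at the cost of invoking Lemma \ref{Schauderx} one order higher ($k=1$, available since $f\in C^{1,\alpha}$), and you handle $\mathcal Gu$ by hand: tangential derivatives via Lemma \ref{yderivativeseven} applied to $\partial_{x_i}u$ (which requires the routine but unstated fact, also used implicitly by the paper in the proof of Theorem \ref{Schauder}, that $\partial_{x_i}u$ is again an even energy solution), and the normal derivative via the integration by parts giving $\partial_y\mathcal Gu=y^{-2-a}\int_0^yt^{1+a}\partial_yg(x,t)\,\mathrm{d}t$ -- which is exactly the paper's field $\Phi$ with $\partial_yf$ replaced by $\partial_yg$, $g=\mathcal F_au$ -- followed by a rerun of the two-case estimate of Lemma \ref{yderivativeseven}, which indeed transfers since $1+a>-1$ and $\partial_yg(x,0)=0$ by evenness. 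The trade-off: your argument is more elementary and self-contained (no second pass through the weighted $C^{1,\alpha}$ machinery with exponent $2+a$, no need for Lemmas \ref{covariant} and \ref{2+a}), while the paper's is more structural, stays entirely within the energy framework (so it need not check that pointwise derivatives match distributional ones across $\Sigma$, nor differentiate the equation in $x$ at the higher order), and is the pattern reused verbatim in the induction proving Theorem \ref{Schauder}. Both hinge on the same cancellation coming from evenness: the vanishing on $\Sigma$ of the odd quantity ($\partial_yf$ there, $\partial_yg$ here) that feeds the $y$-integral.
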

\proof
One can easily see that $\mathcal F_au\in H^{1,a}(B_1)$ and $\mathcal Gu\in H^{2+a}(B_1)$ are even energy solutions to \eqref{LaFa} and \eqref{L2+aG}. Hence, $\mathcal F_au$ belongs to $C^{1,\alpha}_{\mathrm{loc}}(B_1)$ using point $ii)$ in Theorem \ref{regularityevenodd1}, having
$$\partial_yf\in C^{0,\alpha}(B_1)$$
and using the fact that $\partial_yf(x,0)=0$.\\\\
Moreover, the equation \eqref{L2+aG} satisfied by $\mathcal Gu$ can be seen as
$$-\mathcal L_{2+a}\mathcal Gu=|y|^{2+a}\mathcal Gf=\mathrm{div}\left(|y|^{2+a}\Phi e_y\right),$$
where
$$\Phi(x,y)=\frac{1}{|y|^{2+a}}\int_0^yt^{1+a}\partial_yf(x,t)\mathrm{d}t.$$
The previous expression is analogous to the one in \eqref{integraG}, and following the same passages in the proof of Lemma \ref{yderivativeseven}, we obtain that the field $\Phi$ belongs to $C^{0,\alpha}(B_1)$. Moreover, since
$$\lim_{y\to0}\Phi(x,y)=\partial_yf(x,0)=0,$$
we can apply point $ii)$ in Theorem \ref{regularityevenodd1} again, obtaining that $\mathcal Gu$ belongs to $C^{1,\alpha}_{\mathrm{loc}}(B_1)$. Eventually \eqref{secondyy} gives the same regularity for the second partial derivative $\partial^2_{yy}u$.
\endproof

\begin{Lemma}\label{Forcing}
Let $a\in\mathbb{R}$, $k\in\mathbb{N}\cup\{0\}$, $f\in C^{k+2,\alpha}(B_1)$ even in $y$ with $\alpha\in[0,1]$. Then $\mathcal Gf$ and $\mathcal F_af$ belong  to  $C^{k,\alpha}(B_1)$.
\end{Lemma}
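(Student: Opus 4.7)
The key observation is that the evenness of $f$ in $y$ implies that $\partial_y f$ is odd in $y$, and in particular $\partial_y f(x,0)=0$. I would therefore start by writing, via the fundamental theorem of calculus and the substitution $s=ty$,
\[
\partial_y f(x,y)=\int_0^y\partial^2_{yy}f(x,s)\,ds=y\int_0^1\partial^2_{yy}f(x,ty)\,dt\;,
\]
so that
\[
\mathcal Gf(x,y)=\frac{\partial_yf(x,y)}{y}=\int_0^1\partial^2_{yy}f(x,ty)\,dt\;,
\]
the identity holding (by continuous extension) also on $\Sigma$, where $\mathcal Gf(x,0)=\partial^2_{yy}f(x,0)$.

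From this integral representation, regularity of $\mathcal Gf$ becomes transparent: since $f\in C^{k+2,\alpha}(B_1)$, we have $\partial^2_{yy}f\in C^{k,\alpha}(B_1)$, and the map $(x,y,t)\mapsto\partial^2_{yy}f(x,ty)$ is of class $C^{k,\alpha}$ in $(x,y)$, uniformly in $t\in[0,1]$, since the rescaling $y\mapsto ty$ with $t\in[0,1]$ is $1$-Lipschitz. Differentiating under the integral sign, for every multi-index $\beta$ with $|\beta|\leq k$,
\[
D^\beta \mathcal Gf(x,y)=\int_0^1 t^{\beta_{n+1}}\,D^\beta\partial^2_{yy}f(x,ty)\,dt\;,
\]
where $\beta_{n+1}$ denotes the $y$-component of $\beta$. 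Taking absolute values and Hölder quotients inside the integral gives at once $\|\mathcal Gf\|_{C^{k,\alpha}(B_1)}\leq \|\partial^2_{yy}f\|_{C^{k,\alpha}(B_1)}$.

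The regularity of $\mathcal F_af$ then follows from the algebraic identity \eqref{secondyy}, which applied to $f$ itself (with $y\neq 0$ and extended continuously to $\Sigma$) reads
\[
\mathcal F_af=\partial^2_{yy}f+a\,\mathcal Gf\;.
\]
Both summands on the right hand side belong to $C^{k,\alpha}(B_1)$, the first by hypothesis and the second by the step just performed, so the sum does too. The only point that could look delicate is the behaviour of the quotient $\partial_yf/y$ across $\Sigma$, but this is precisely tamed by the Taylor integral formula, which encodes the vanishing of $\partial_yf$ on $\Sigma$ coming from the evenness of $f$.
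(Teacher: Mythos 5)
Your argument is correct and follows essentially the same route as the paper: the Taylor integral representation $\mathcal Gf(x,y)=\int_0^1\partial^2_{yy}f(x,ty)\,dt$ (exploiting that oddness of $\partial_y f$ forces $\partial_y f(x,0)=0$), differentiation under the integral sign up to order $k$, the Hölder estimate passed inside the integral using that $y\mapsto ty$ is a contraction, and finally the identity $\mathcal F_af=\partial^2_{yy}f+a\,\mathcal Gf$ from \eqref{secondyy}. Your explicit tracking of the chain-rule factor $t^{\beta_{n+1}}\le 1$ is a slightly more careful bookkeeping of the same estimate the paper performs.
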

\proof
The partial derivative $\partial_yf$ belongs to $C^{k+1,\alpha}(B_1)$ and it is odd in $y$; that is, $\partial_yf(x,0)=0$ in $B_1$. By the symmetry, we can consider only what happens in $B_1^+$. Hence, by a Taylor expansion with integral remainder
\begin{eqnarray*}
\partial_yf(x,y)=\partial_yf(x,0)+\int_0^y\partial_{yy}^2f(x,t)\mathrm{d}t=y\int_0^1\partial_{yy}^2f(x,sy)\mathrm{d}s;
\end{eqnarray*}
that is,
$$\mathcal Gf(x,y)=\int_0^1\partial_{yy}^2f(x,sy)\mathrm{d}s.$$
By the Leibniz integral rule one can commute the integration and any partial derivative of $\mathcal Gf$ up to order $k$, using the regularity of the function under the integral. Hence, $\mathcal Gf\in C^{k}(B_1)$. Moreover, any partial derivative $\partial_i^k$ of order $k$ is H\"older continuous, since
\begin{eqnarray*}
|\partial_i^k\mathcal G(x_1,y_1)-\partial_i^k\mathcal G(x_2,y_2)|&\leq&\int_0^1|\partial_i^k\partial_{yy}^2f(x_1,sy_1)-\partial_i^k\partial_{yy}^2f(x_2,sy_2)|\mathrm{d}s\\
&\leq&\int_0^1|(x_1,sy_1)-(x_2,sy_2)|^\alpha\mathrm{d}s\leq |(x_1,y_1)-(x_2,y_2)|^\alpha.
\end{eqnarray*}
Finally, \eqref{secondyy} ensures the same regularity for $\mathcal F_a$.
\endproof
\begin{Lemma}\label{Schaudery}
Let $a\in(-1,+\infty)$, $k\in\mathbb{N}\cup\{0\}$ and $f\in C^{k,\alpha}(B_1)$ for $\alpha\in(0,1)$ and even in $y$. Let also $u\in H^{1,a}(B_1)$ be an even energy solution to \eqref{La}. Then, the partial derivative of order $k+2$ in the only $y$ variable $\partial^{k+2}_{y}u\in C^{0,\alpha}_{\mathrm{loc}}(B_1)$.
\end{Lemma}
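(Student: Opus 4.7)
The plan is to proceed by induction on $k$, exploiting the two covariant operators $\mathcal{F}_a$ and $\mathcal{G}$ whose behavior under the equation was established in Lemmas \ref{covariant} and \ref{2+a}. The base cases $k=0$ and $k=1$ are already covered by Lemma \ref{yderivativeseven} and Lemma \ref{yderivativesodd}, respectively: the former yields $\partial^2_{yy}u\in C^{0,\alpha}_{\mathrm{loc}}(B_1)$, and the latter yields $\partial^2_{yy}u\in C^{1,\alpha}_{\mathrm{loc}}(B_1)$, hence $\partial^3_y u\in C^{0,\alpha}_{\mathrm{loc}}(B_1)$. Since the induction will advance the regularity index by two at each step, treating both $k=0$ and $k=1$ as base cases handles every parity.

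For the inductive step, assume the statement holds up to order $k-2$ (with $k\geq 2$), for any admissible exponent $a\in(-1,+\infty)$. The identity \eqref{secondyy} gives the pointwise decomposition
\begin{equation*}
\partial^{k+2}_y u \;=\; \partial^{k}_y\bigl(\mathcal{F}_a u\bigr)\;-\;a\,\partial^{k}_y\bigl(\mathcal{G} u\bigr).
\end{equation*}
By Lemma \ref{covariant} (applied twice) and Lemma \ref{2+a}, the function $\mathcal{F}_a u$ is an even energy solution of $-\mathcal{L}_a \mathcal{F}_a u = |y|^a \mathcal{F}_a f$ on every interior ball, while $\mathcal{G} u$ is an even energy solution of $-\mathcal{L}_{2+a}\mathcal{G} u = |y|^{2+a}\mathcal{G} f$. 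Since $f\in C^{k,\alpha}(B_1)$ is even in $y$, Lemma \ref{Forcing} ensures that both $\mathcal{F}_a f$ and $\mathcal{G} f$ belong to $C^{k-2,\alpha}(B_1)$ and are even in $y$ (the evenness of $\mathcal{G} f$ and $\mathcal{F}_a f$ follows because $\partial_y f$ is odd). Hence the inductive hypothesis applies to the equation satisfied by $\mathcal{F}_a u$ (with the same weight exponent $a$) and to the equation satisfied by $\mathcal{G} u$ (with the shifted exponent $2+a$, which still lies in $(-1,+\infty)$). We thereby obtain $\partial_y^{k}\mathcal{F}_a u\in C^{0,\alpha}_{\mathrm{loc}}(B_1)$ and $\partial_y^k\mathcal{G} u\in C^{0,\alpha}_{\mathrm{loc}}(B_1)$, and the displayed identity closes the induction.

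The main delicate point is the verification that, at each step, the transformed quantities $\mathcal{F}_a u$ and $\mathcal{G} u$ are bona fide even energy solutions of the claimed equations on the subballs over which the induction is run, and that their forcing terms indeed satisfy the even-in-$y$ Hölder regularity required by the inductive hypothesis. Both features were already handled in the previous subsection, so no new analytical difficulty arises here; the only care needed is the bookkeeping of the shifted exponent $2+a$ in the $\mathcal{G}$-branch and the parity tracking of the derivatives of $f$. Once these are in place, the induction proceeds mechanically.
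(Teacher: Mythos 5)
Your proof is correct and follows essentially the same route as the paper: base cases $k=0,1$ from Lemmas \ref{yderivativeseven} and \ref{yderivativesodd}, then an induction with step two using the decomposition \eqref{secondyy}, the equations \eqref{LaFa} and \eqref{L2+aG} for $\mathcal F_a u$ and $\mathcal G u$, and Lemma \ref{Forcing} for the regularity of the transformed forcing terms. Your explicit remark that the inductive hypothesis must be taken uniformly over all weights $a\in(-1,+\infty)$ (since the $\mathcal G$-branch shifts $a$ to $2+a$) is a useful clarification that the paper leaves implicit, but it does not change the argument.
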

\proof
We argue by induction.\\\\
{\bf Step 1:} If $k=0$, the result directly follows from Lemma \ref{yderivativeseven}, whilest, if $k=1$, the result follows from Lemma \ref{yderivativesodd}.\\\\
{\bf Step 2:} Now we suppose the result true for a fixed $k\in\mathbb{N}\cup\{0\}$ and we prove it for $k+2$.\\
Since $f\in C^{k+2,\alpha}(B_1)$, then by Lemma \ref{Forcing},
$$\mathcal F_af \   \mathrm{and} \ \mathcal Gf \ \mathrm{belong \ to \ } C^{k,\alpha}(B_1).$$
Let us remark that by \eqref{LaFa} and \eqref{L2+aG}, then $\mathcal F_au$ and $\mathcal Gu$ are even energy solutions to
$$-\mathcal L_a\mathcal F_au=|y|^a\mathcal F_af,\qquad\mathrm{and}\qquad -\mathcal L_{2+a}\mathcal Gu=|y|^{2+a}\mathcal Gf;$$
that is, they satisfy the inductive hypothesis, and hence $\mathcal Gu$ and $\mathcal F_au$ own the partial derivatives of order $k+2$ in the only $y$ variable $\partial^{k+2}_y\mathcal Gu$ and $\partial^{k+2}_y\mathcal F_au$ belonging to $C^{0,\alpha}_{\mathrm{loc}}(B_1)$. Eventually, by \eqref{secondyy} we have
$$\partial_y^{k+4}u=\partial^{k+2}_y\left(\partial^2_{yy}u\right)=\partial^{k+2}_y\mathcal F_au-a\partial^{k+2}_y\mathcal Gu\in C^{0,\alpha}_{\mathrm{loc}}(B_1).$$
\endproof
Thanks to the previous Lemmas, we are now in a position to establish Schauder estimates for an even solution to \eqref{La}.
\begin{Theorem}\label{Schauder}
Let $a\in(-1,+\infty)$, $k\in\mathbb{N}\cup\{0\}$ and $f\in C^{k,\alpha}(B_1)$ for $\alpha\in(0,1)$ and even in $y$. Let also $u\in H^{1,a}(B_1)$ be an even energy solution to \eqref{La}. Then, $u\in C^{k+2,\alpha}_{\mathrm{loc}}(B_1)$. If moreover $f\in C^{\infty}(B_1)$, then, $u\in C^{\infty}(B_1)$.
\end{Theorem}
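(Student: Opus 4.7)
The plan is to assemble the two main technical results already in place, Lemma \ref{Schauderx} and Lemma \ref{Schaudery}, to control every mixed partial derivative of order $k+2$. For any multi-index $\beta=(\alpha,j)$ with $\alpha$ acting on the $x$-variables, $j$ counting $y$-derivatives, and $|\alpha|+j=k+2$, I would split according to the value of $j$.

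First, a preliminary observation: since $\mathcal L_a$ commutes with $\partial_{x_i}$, a straightforward difference-quotient/induction argument using the regularity provided by Lemma \ref{Schauderx} shows that for any $x$-multi-index $\alpha$ with $|\alpha|\leq k$, the function $\partial^\alpha_x u$ lies in $H^{1,a}(B_r)$ for every $r<1$, is still even in $y$, and is an energy solution to
\[
-\mathcal L_a (\partial^\alpha_x u)=|y|^a\,\partial^\alpha_x f\qquad\text{in }B_r,
\]
with $\partial^\alpha_x f\in C^{k-|\alpha|,\alpha}(B_1)$ even in $y$. With this in hand, I would argue case by case. For $j=0$ or $j=1$ (so $|\alpha|\geq k+1$), Lemma \ref{Schauderx} directly yields that every $x$-derivative of order $k+1$ of $u$ lies in $C^{1,\alpha}_{\mathrm{loc}}(B_1)$, which immediately gives $\partial^\beta u\in C^{0,\alpha}_{\mathrm{loc}}(B_1)$. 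For $j\geq 2$ (so $|\alpha|\leq k$), I would apply Lemma \ref{Schaudery} with $k$ replaced by $k-|\alpha|=j-2$ to the equation satisfied by $\partial^\alpha_x u$, yielding $\partial^j_y(\partial^\alpha_x u)=\partial^\beta u\in C^{0,\alpha}_{\mathrm{loc}}(B_1)$. Since every partial derivative of order $k+2$ is covered and all derivatives of order $\leq k+1$ are continuous (actually $C^{1,\alpha}$ by either \ref{Schauderx} applied with smaller $k$ or \ref{yderivativesodd}/\ref{yderivativeseven}), one concludes $u\in C^{k+2,\alpha}_{\mathrm{loc}}(B_1)$.

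The $C^\infty$ assertion then follows by applying the above for every $k\in\mathbb N$, since $f\in C^\infty(B_1)$ implies $f\in C^{k,\alpha}(B_1)$ for all $k$ and all $\alpha\in(0,1)$.

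The main (and essentially only) subtle point is the preliminary observation: justifying that $\partial^\alpha_x u$ is an even energy solution in $H^{1,a}(B_r)$ with the expected right-hand side. One would proceed by finite differences $\tau_h^i u(z)=h^{-1}(u(z+he_i)-u(z))$ for $i=1,\dots,n$: these satisfy the translated equation, have energy bounds uniform in $h$ by the Caccioppoli-type inequality of Lemma \ref{L2>H1}, pass to the weak limit $\partial_{x_i} u$, and inherit the evenness in $y$ from $u$. Iterating this up to order $k$ and invoking the regularity improvement already provided by Lemma \ref{Schauderx} to absorb the constraint on the right-hand side closes the argument. Everything else is either algebraic, in which case \eqref{secondyy} and the explicit integral formula \eqref{integraG} used in Lemma \ref{yderivativeseven} do the bookkeeping, or a direct consequence of the two key Lemmas \ref{Schauderx} and \ref{Schaudery}.
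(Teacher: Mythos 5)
Your proposal is correct and follows essentially the same route as the paper: split each order-$(k+2)$ derivative according to the number of $y$-derivatives, handle $j\le 1$ via Lemma \ref{Schauderx} and $j\ge 2$ by applying Lemma \ref{Schaudery} (with $k$ replaced by $k-|\alpha|$) to the equation solved by $\partial^\alpha_x u$, then iterate over $k$ for the $C^\infty$ statement. The ``preliminary observation'' you justify by difference quotients is exactly what the paper subsumes into Lemma \ref{Schauderx} (via the commutation of $\mathcal L_a$ with $x$-derivatives and Theorem \ref{regularityevenodd1}), so there is no substantive difference.
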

\proof
It is a direct consequence of the previous Lemmas: indeed, let us consider a general partial derivative of order $k+2$ in any variable $\partial^{k+2}u$. So, let $0\leq j\leq k+2$ the number of derivatives in the variables $x_i$. If $j=k+2$ or $j=k+1$, then $\partial^{k+2}u=\partial_l(\partial^{k+1}_{x_i}u)$ for a certain $l\in\{1,...,n+1\}$. Then by Lemma \ref{Schauderx}, the partial derivative $\partial^{k+1}_{x_i}u$ of order $k+1$ is an even energy solution to the problem (for any $r\in(0,1)$)
$$-\mathcal L_a(\partial^{k+1}_{x_i}u)=|y|^a\partial^{k+1}_{x_i}f\qquad\mathrm{in} \ B_r.$$
Hence, since $\partial^{k+1}_{x_i}u\in C^{1,\alpha}_{\mathrm{loc}}(B_1)$, the result is proved.\\\\
Let now consider the case $j\leq k$. By Lemma \ref{Schauderx}, the partial derivative $\partial^j_{x_i}u$ of order $j$ is an even energy solution to the problem (for any $r\in(0,1)$)
$$-\mathcal L_a(\partial^j_{x_i}u)=|y|^a\partial^j_{x_i}f\qquad\mathrm{in} \ B_r.$$
Since $j\leq k$, then $\partial^j_{x_i}f\in C^{k-j,\alpha}(B_1)$ and $\partial^j_{x_i}u\in C^{1,\alpha}_{\mathrm{loc}}(B_1)$. Eventually it remains to perform the $y$ derivatives applying Lemma \ref{Schaudery}, obtaining that
$$\partial^{k+2}u=\partial^{k+2-j}_y(\partial^j_{x_i}u)\in C^{0,\alpha}_{\mathrm{loc}}(B_1).$$
\endproof

As an immediate consequence of Theorem \ref{Schauder} , by a simple inductive argument, we can deal with nonlinear equations in the form:
\begin{equation}\label{Lau}
-\mathcal L_au=|y|^af(u)\qquad\mathrm{in} \ B_1.
\end{equation}

\begin{Corollary}
Let $a\in(-1,+\infty)$, $k\in\mathbb{N}\cup\{0\}$ and $f\in C^{k,\alpha}_{\mathrm{loc}}(\mathbb{R})$ and even for $\alpha\in(0,1)$. Let also $u\in H^{1,a}(B_1)\cap L^{\infty}(B_1)$ be an even energy solution to \eqref{Lau}. Then, $u\in C^{k+2,\alpha}_{\mathrm{loc}}(B_1)$. Moreover, if $f\in C^{\infty}(\mathbb{R})$, then $u\in C^{\infty}(B_1)$.
\end{Corollary}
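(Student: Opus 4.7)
\smallskip
\noindent\textbf{Proof proposal.} The plan is to reduce the nonlinear equation \eqref{Lau} to the linear inhomogeneous equation \eqref{La} by treating $g(z):=f(u(z))$ as a given forcing term, and then to invoke the linear Schauder theorem (Theorem~\ref{Schauder}) in an iterative bootstrap. Throughout, note that since $u(x,y)=u(x,-y)$, the composition $g=f\circ u$ is automatically even in $y$, so the evenness hypothesis required by Theorem~\ref{Schauder} is preserved at each step.

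\smallskip
\noindent\emph{Base case $k=0$.} From $u\in L^\infty(B_1)$ and $f\in C^{0,\alpha}_{\mathrm{loc}}(\mathbb{R})$ we immediately get $g\in L^\infty(B_1)$, hence $g\in L^p(B_1,|y|^a\mathrm{d}z)$ for every $p>1$. Apply Theorem~\ref{regularityevenodd}(ii) with $A=\mathbb{I}$ and $p$ as large as desired: this yields $u\in C^{1,\gamma}_{\mathrm{loc}}(B_1)$ for every $\gamma\in(0,1)$, and in particular $u$ is locally Lipschitz. Then the composition estimate
\[
|f(u(z))-f(u(\zeta))|\leq [f]_{C^{0,\alpha}}\,|u(z)-u(\zeta)|^\alpha\leq C\,[f]_{C^{0,\alpha}}\,|z-\zeta|^\alpha
\]
gives $g\in C^{0,\alpha}_{\mathrm{loc}}(B_1)$. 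Applying now Theorem~\ref{Schauder} with $k=0$ to the linear problem $-\mathcal L_a u=|y|^a g$ yields $u\in C^{2,\alpha}_{\mathrm{loc}}(B_1)$.

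\smallskip
\noindent\emph{Inductive step.} Suppose the corollary is established for a given $k\in\mathbb{N}\cup\{0\}$, and let $f\in C^{k+1,\alpha}_{\mathrm{loc}}(\mathbb{R})$. Since $C^{k+1,\alpha}_{\mathrm{loc}}\subset C^{k,\alpha}_{\mathrm{loc}}$, the inductive hypothesis provides $u\in C^{k+2,\alpha}_{\mathrm{loc}}(B_1)$. By the Fa\`a di Bruno formula the derivatives of order $\leq k+1$ of $g=f\circ u$ are polynomial expressions in $f^{(j)}(u)$ and in derivatives of $u$ up to order $k+1$, all of which are at least $C^{1,\alpha}_{\mathrm{loc}}$; combining this with the Lipschitz character of $u$ (which makes each $f^{(j)}(u)$ inherit the H\"older exponent of $f^{(j)}$ itself), one gets $g\in C^{k+1,\alpha}_{\mathrm{loc}}(B_1)$, still even in $y$. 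A second application of Theorem~\ref{Schauder} at the level $k+1$ then upgrades $u$ to $C^{k+3,\alpha}_{\mathrm{loc}}(B_1)$, completing the induction. For the last assertion, if $f\in C^\infty(\mathbb{R})$ the finite-$k$ conclusion is valid for every $k$, hence $u\in C^\infty(B_1)$.

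\smallskip
\noindent\emph{Main obstacle.} The only non-routine point is the base case $k=0$: here the naive composition estimate $u\in C^{0,\beta}\Rightarrow f(u)\in C^{0,\alpha\beta}$ would lose a factor of $\beta<1$ at every round and never close the bootstrap at exponent $\alpha$. The decisive move is therefore to first exploit the stronger $C^{1,\gamma}$ statement in Theorem~\ref{regularityevenodd}(ii) (legitimate because $f(u)$ is bounded and hence $L^p$ for arbitrarily large $p$) in order to secure local Lipschitz regularity of $u$, which is exactly what is needed so that $f\circ u$ inherits the full H\"older exponent $\alpha$ of $f$ and the inductive machine can be started.
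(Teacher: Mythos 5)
Your proposal is correct and follows essentially the route the paper intends: the Corollary is stated as an immediate consequence of Theorem \ref{Schauder} ``by a simple inductive argument,'' which is exactly your bootstrap of treating $f(u)$ as an even forcing term and iterating the linear Schauder result. Your explicit handling of the base case --- first using $f(u)\in L^\infty\subset L^p$ for all $p$ together with Theorem \ref{regularityevenodd}(ii) to get local Lipschitz continuity of $u$, so that $f\circ u$ inherits the full exponent $\alpha$ --- is a correct and welcome way of making the paper's implicit induction close without loss of H\"older exponent.
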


Note that the assumption $u\in L^\infty(B_1)$ may be removed for energy solutions under proper growth conditions on $f$, via suitable adaptation of Brezis-Kato arguments, for example $|f(s)|\leq C(1+|s|^{2^*(a)-1})$ do (see \cite[Appendix B]{Struwe}).

Eventually we remark that our Theorem \ref{Schauder} is providing local $C^\infty$-regularity for extensions of $s$-harmonic functions (which correspond to homogeneous Neumann boundary condition) in any variable (also the extension variable $y$) up to the characteristic manifold $\Sigma$.

\section{Inhomogenous Neumann boundary conditions}\label{sec:neumann}
Now we turn to the Neumann  boundary value problems associated with the equation
\begin{equation}\label{eq:neumannbce}
\begin{cases}
-\mathrm{div}\left(\rho_\varepsilon^a\nabla u_\varepsilon\right)=0\;, & \textrm{in}\;B_1^+\\
-\rho_\varepsilon^a\partial_yu_\varepsilon= f_\varepsilon &  \textrm{on}\; \partial^0 B_1^+\;,\\
\end{cases}
\end{equation}
looking, as usual, for uniform-in-$\varepsilon$ bounds. When $\varepsilon=0$ the equation reads

\begin{equation}\label{eq:neumannbc}
\begin{cases}
-\mathrm{div}\left(y^a\nabla u\right)=0\;, & \textrm{in}\;B_1^+\\
-\lim_{y\to 0^+}y^a\partial_yu= f &  \textrm{on}\; \partial^0 B_1^+\\
\end{cases}
\end{equation}
 and has to be intended in a variational sense $u\in H^{1,a}(B^+_1)$ and:
\[
\int_{B_1^+} y^a\nabla u\cdot\nabla \phi=\int_{\partial^0 B_1^+} f\phi\;,\qquad \forall\;\phi\in C^\infty_c(B_1^+\cup\partial^0B_1^+)\;.
\]

At first, we stress that,  whenever $a\geq 1$, by Proposition \ref{prop:super}, the characteristic hyperspace $\Sigma$ has vanishing capacity with respect to the Dirichlet energy. Thus, in order to properly deal with energy solutions we are bound to restrict our analysis to the case $a\in(-1,1)$. 
 Moreover, when $a\in(-1,0)$ the approximating problems have less regularising power than the original ones.  This affects the range of the optimal regularity estimates, when we wish tho make them stable with respect to $\varepsilon$. Let us define
\begin{equation*}\label{p*a}
p^*(t)=\frac{2n}{n-1+t}.
\end{equation*}
In the range $a\in(-1,1)$, the following trace Sobolev embeddings hold.

\begin{Theorem}\label{teo:tracesobemb}
Let $a\in(-1,1)$, $n\geq2$, $\varepsilon_0>0$. There exists a constant, independent of $\varepsilon\in[0,\varepsilon_0]$, such that  for every $u\in C^\infty_c(B_1^+\cup\partial^0B_1^+)$,there holds
\begin{equation*}\label{sobo>-1}
\left(\int_{\partial^0 B_1^+ }|u|^{p^*(a^+)}\right)^{2/p^*(a^+)}\leq c(n,a)\int_{B_1^+}\rho_\varepsilon^a|\nabla u|^2\;.
\end{equation*}
When $n=1$ and $a^+>0$ the same inequality holds. When $n=1$ and $a^+=0$ then the embedding holds in $L^p(\partial^0B_1^+)$ for every $p\geq 1$. Moreover, when $\varepsilon=0$ and $a\in(-1,0)$, we have
\[
H^{1,a}(B_1^+)	\hookrightarrow L^{p^*(a)}(\partial^0 B_1^+)\;.
\]

\end{Theorem}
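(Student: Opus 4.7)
The plan is to split the analysis according to the sign of $a$, exploiting pointwise bounds between the regularized weight $\rho_\varepsilon^a$ and either $|y|^a$ or a positive constant, and then invoke the known sharp weighted trace Sobolev inequality at $\varepsilon=0$ (which, together with the results already cited from \cite{CabRos,FabKenSer,Haj} for the Muckenhoupt range $a\in(-1,1)$, can be derived via an extension argument or directly through Riesz potentials).

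First I would treat the range $a\in[0,1)$. Here $a^+=a$, and from the very definition
\[
\rho_\varepsilon^a(y)=(\varepsilon^2+y^2)^{a/2}\geq |y|^a\qquad\forall\, y\in\R,\ \forall\,\varepsilon\geq 0,
\]
so that it suffices to establish the $\varepsilon=0$ version
\[
\left(\int_{\partial^0 B_1^+}|u|^{p^*(a)}\right)^{2/p^*(a)}\leq c(n,a)\int_{B_1^+} |y|^a|\nabla u|^2\;,
\]
which is the standard sharp trace Sobolev inequality in the $A_2$-weighted setting (with $s=(1-a)/2\in(0,1/2]$ it agrees, after a zero-extension and even reflection, with the classical trace $H^{1,a}(\R^{n+1}_+)\hookrightarrow \dot H^s(\R^n)\hookrightarrow L^{p^*(a)}(\R^n)$; see \cite{FabKenSer} and the Caffarelli–Silvestre realization of fractional powers).

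Second I would treat the range $a\in(-1,0)$. Now $a^+=0$ and $p^*(a^+)=p^*(0)=2n/(n-1)$. The key observation is that, on the bounded set $B_1^+$ and uniformly in $\varepsilon\in[0,\varepsilon_0]$, the weight is bounded below by a positive constant:
\[
\rho_\varepsilon^a(y)=(\varepsilon^2+y^2)^{a/2}\geq (\varepsilon_0^2+1)^{a/2}=:c_0>0,\qquad (x,y)\in B_1^+.
\]
Thus $\int_{B_1^+}\rho_\varepsilon^a|\nabla u|^2\geq c_0\int_{B_1^+}|\nabla u|^2$, and the conclusion follows from the classical (unweighted) trace Sobolev embedding $H^1(B_1^+)\hookrightarrow L^{2n/(n-1)}(\partial^0 B_1^+)$. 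In the borderline case $n=1$, $a^+=0$ the exponent $p^*(0)$ is formally infinite and the same lower bound on $\rho_\varepsilon^a$ reduces matters to the embedding $H^1(B_1^+)\hookrightarrow L^p(\partial^0 B_1^+)$ for every $p\geq 1$; when $n=1$, $a^+>0$ the argument of the first paragraph applies verbatim.

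Finally, the improved embedding at $\varepsilon=0$ and $a\in(-1,0)$, namely $H^{1,a}(B_1^+)\hookrightarrow L^{p^*(a)}(\partial^0 B_1^+)$ with $p^*(a)>p^*(0)$, is again the sharp weighted trace Sobolev inequality in the $A_2$ range, available via the same extension/Riesz potential argument as in the first step (now with $s=(1-a)/2\in(1/2,1)$). The main obstacle is conceptual rather than technical: this improved exponent cannot be inherited by the $\rho_\varepsilon^a$-family uniformly in $\varepsilon$, because for $\varepsilon>0$ the regularized weight is bounded near $\Sigma$ and thus strictly less singular than $|y|^a$, which kills the extra integrability on the boundary.
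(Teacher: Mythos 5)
Your argument is essentially correct, and it is worth noting that the paper itself states Theorem \ref{teo:tracesobemb} without proof, implicitly referring to the known weighted trace/Sobolev theory (the same circle of ideas as in Subsection \ref{sobo}); so your write-up actually supplies a proof rather than paralleling one. Your route --- reduce the uniform-in-$\varepsilon$ statement to the two extreme weights by pointwise comparison ($\rho_\varepsilon^a\geq |y|^a$ on $B_1$ when $a\geq 0$; $\rho_\varepsilon^a$ bounded below by a positive constant on $B_1$ when $a<0$), and then invoke the $\varepsilon=0$ trace inequality coming from the Caffarelli--Silvestre identification $\|u(\cdot,0)\|_{\dot H^{s}(\R^n)}^2\leq c\int y^a|\nabla u|^2$, $s=(1-a^+)/2$, followed by the fractional Sobolev embedding with exponent $2n/(n-2s)=p^*(a^+)$ --- is elementary, gives the right exponents in all cases, and explains transparently why only $a^+$ (not $a$) can appear in the $\varepsilon$-uniform statement; your closing remark on the failure of the improved exponent for $\varepsilon>0$ is also correct. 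Two small points deserve a line each to be complete. First, in the case $a\in(-1,0)$ you quote ``$H^1(B_1^+)\hookrightarrow L^{2n/(n-1)}(\partial^0B_1^+)$'' but the right-hand side of the claimed inequality contains only the gradient: use that $u\in C^\infty_c(B_1^+\cup\partial^0B_1^+)$ vanishes near $\partial^+B_1^+$, so either extend by zero and apply the homogeneous trace inequality on $\R^{n+1}_+$, or invoke Poincar\'e for functions vanishing on a boundary portion. (Incidentally, with the paper's normalized definition of $\rho_\varepsilon^a$ from Section \ref{secs2}, which carries the factor $\max\{\varepsilon^{-a},1\}$ for $a\leq0$, one gets $\rho_\varepsilon^a\geq 2^{a/2}$ on $B_1$ for every $\varepsilon\geq 0$, so the constant is even independent of $\varepsilon_0$, matching the $c(n,a)$ in the statement.) Second, the final embedding $H^{1,a}(B_1^+)\hookrightarrow L^{p^*(a)}(\partial^0B_1^+)$ concerns the full weighted norm and functions that do not vanish near $\partial^+B_1^+$, so ``the same extension/Riesz potential argument'' needs one more ingredient: either an extension operator for $A_2$-weighted Sobolev spaces across the curved boundary, or a cutoff-plus-covering argument on $\partial^0B_1^+$ (absorbing the lower-order term into the $H^{1,a}$-norm). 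These are standard fixes and do not affect the soundness of your approach.
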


Exploiting these embeddings, is rather standard to adapt Moser's iterative technique to obtain $L^\infty$-bounds. Next, one can easily modify the proof of Theorem \ref{holdereven} to obtain

\begin{Theorem}\label{teo:holderneumann}
Let $a\in(-1,1)$,  $p>\frac{n}{1-a^+}$, and $\alpha\in(0,1-a^+-\frac{n}{p}]$, $r\in(0,1)$, $\beta>1$. As $\varepsilon\to0$ let $\{u_\varepsilon\}$ be a family of solutions to \eqref{eq:neumannbce}. Then, there exists a positive constant depending on $a$, $n$, $\beta$, $p$, $\alpha$ and $r$ such that
$$\|u_\varepsilon\|_{C^{0,\alpha}(B_r^+)}\leq c\left(\|u_\varepsilon\|_{L^\beta(B_1^+,\rho_\varepsilon^a(y)\mathrm{d}z)}+ \|f_\varepsilon\|_{L^p(\partial^0 B_1^+)}\right).$$
In addition, when $\varepsilon=0$, $p>\frac{n}{1-a}$, and $\alpha\in(0,1-a-\frac{n}{p}]\cap(0,1)$, the same inequality holds for solutions to \eqref{eq:neumannbc}.
\end{Theorem}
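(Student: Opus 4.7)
The strategy mirrors that of Theorem \ref{holdereven}: we argue by contradiction through a blow-up procedure and invoke the Liouville-type theorems of Section \ref{sec:liouville}. The novelty here is to track how the inhomogeneous Neumann datum $f_\varepsilon$ behaves under the rescaling, and to read off the limit exponent $\alpha \leq 1-a^+-\frac{n}{p}$ from the scaling dimension of $f_\varepsilon$ on $\partial^0 B_1^+$.

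First I would establish a uniform $L^\infty$ estimate of the form $\|u_\varepsilon\|_{L^\infty(B_{(1+r)/2}^+)} \leq c(\|u_\varepsilon\|_{L^\beta(B_1^+,\rho_\varepsilon^a(y)\mathrm{d}z)} + \|f_\varepsilon\|_{L^p(\partial^0B_1^+)})$ by a Moser iteration analogous to Proposition \ref{Moser}, now using the trace Sobolev embedding of Theorem \ref{teo:tracesobemb} (which gives the effective trace exponent $p^*(a^+)$ and fixes the threshold $p>n/(1-a^+)$). Once $L^\infty$ boundedness is secured, I would  suppose by contradiction that $[\eta u_k]_{C^{0,\alpha}(B_1^+)} = L_k \to \infty$ on a sequence of solutions (with $\eta$ the standard radial Lipschitz cut-off supported in $B_{(1+r)/2}$) and pick realizing couples $z_k,\zeta_k \in B_{(1+r)/2}^+$ with $r_k=|z_k-\zeta_k|\to 0$. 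As in Theorem \ref{holdereven}, set $\Gamma_k=(\varepsilon_k,y_k,r_k)$, $\nu_k=|\Gamma_k|$, $\tilde\Gamma_k=\Gamma_k/\nu_k$, and split into Case 1 ($d(z_k,\Sigma)/r_k\to+\infty$, with $\hat z_k=z_k$) and Case 2 ($d(z_k,\Sigma)/r_k$ bounded, with $\hat z_k=(x_k,0)$). Define the blow-ups $w_k(z)=\eta(\hat z_k)(u_k(\hat z_k+r_kz)-u_k(\hat z_k))/(L_kr_k^\alpha)$ and their companions $v_k$, and extract a globally $C^{0,\alpha}$ non-constant limit $w$ as before.

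The crucial new computation is for the rescaled Neumann datum. A direct change of variables in the variational formulation shows that for every test function $\phi\in C_c^\infty(B_R\cup\partial^0 B_R^+)$ one has
\begin{equation*}
\int \tilde\rho_k \nabla w_k\cdot\nabla\phi = \nu_k^{-a}\,\frac{\eta(\hat z_k)\,r_k^{1-\alpha}}{L_k}\int_{\partial^0 B_R^+} f_{\varepsilon_k}(x_k+r_k\xi)\,\phi(\xi,0)\,\mathrm{d}\xi,
\end{equation*}
(plus, in Case 1, the vertical translation giving a gradient-vanishing correction identical to the term $(ii)$ in the proof of Theorem \ref{C1alphaeven}). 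By H\"older's inequality the modulus of the right-hand side is bounded by $c\,\nu_k^{-a}\,r_k^{1-\alpha-n/p}/L_k$. Writing $\nu_k^{-a}r_k^{1-\alpha-n/p}=(r_k/\nu_k)^{a^+}\,r_k^{1-\alpha-n/p-a^+}\,\nu_k^{-a^-}$, the factor $(r_k/\nu_k)^{a^+}\leq 1$ and $\nu_k^{-a^-}$ is bounded since $\nu_k$ is, so the hypothesis $\alpha\leq 1-a^+-n/p$ ensures boundedness of this whole expression, while $L_k\to+\infty$ drives the bound to zero. Hence the right-hand side of the equation for $w_k$ vanishes in the distributional sense on compact sets of $\R^{n+1}_+$ (or $\R^{n+1}$), and a Vitali argument as in Lemma \ref{Good->Energy} upgrades this to strong convergence of the energy identity.

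To close the argument I would invoke the Liouville dichotomy already used in Theorem \ref{holdereven}. In Case 1 the limit $w$ is a non-constant globally $C^{0,\alpha}$ solution on all of $\R^{n+1}$ to either $-\Delta w=0$ or $-\mathrm{div}(\tilde\rho\nabla w)=0$ with a smooth positive weight, and the Liouville theorem in Corollary 2.3 of \cite{NorTavTerVer} applies. In Case 2 the limit $w$ is even in $y$ and satisfies the homogeneous Neumann problem $\rho_\varepsilon^a\partial_y w=0$ (with $\varepsilon\in\{0,1\}$ after normalization), and Corollary \ref{Liouvilleevenbis} gives that $w$ is constant, contradicting $|w(z_1)-w(z_2)|=1$. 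For the second (non-uniform) assertion with $\varepsilon=0$ and the stronger range $\alpha\leq 1-a-n/p$, the same scheme applies using the weight $|y|^a$ throughout: the change of variables produces an extra factor $r_k^{-a}$ from rescaling the weight, and the boundary contribution becomes $\eta(\hat z_k)r_k^{1-a-\alpha-n/p}/L_k$, which vanishes precisely under $\alpha\leq 1-a-n/p$. The main technical obstacle, as I anticipate it, is the careful bookkeeping of the three parameters $\varepsilon_k$, $y_k$, $r_k$ in the sharp Hölder computation that turns the scaling relation into the exponent $1-a^+-n/p$, and the verification that $w$ is indeed an energy solution of the limit equation across the passages to the limit.
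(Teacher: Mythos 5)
Your proposal is correct and follows essentially the same route as the paper: one repeats the blow-up/contradiction scheme of Theorem \ref{holdereven}, notes that the rescaled Neumann datum has size $\nu_k^{-a}r_k^{1-\alpha-n/p}/L_k$, which vanishes because $\nu_k^{-a}\leq r_k^{-a}$ when $a\geq0$ while $\nu_k^{-a}$ stays bounded when $a<0$ (and, for $\varepsilon=0$, because the boundary persists in the limit only when $y_k/r_k$ is bounded, so $\nu_k\simeq r_k$ and the sharper exponent $1-a-\frac{n}{p}$ appears), and then concludes via the Liouville theorems exactly as you indicate. The only minor discrepancies are that your parenthetical correction term ``(ii)'' has no counterpart in this zeroth-order blow-up (no linear part is subtracted, so the interior equation for $w_k$ is homogeneous), and that for $\varepsilon=0$ the paper first approximates $f$ by smoother data so that the contradiction argument is run on solutions whose H\"older seminorm is a priori finite, before passing to the limit.
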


\begin{proof}
We repeat all the steps of the proof of Theorem \ref{holdereven}, up to the point when equation satisfied by the blow up sequence is considered in \eqref{tildeLakw}, which now reads as
\begin{equation}\label{tildeLakw1}
\begin{cases}
-\mbox{div}\!\left(\tilde\rho_k\nabla w_k\right)(z)= 0\;\quad & y>-y_k/r_k\\
-\tilde \rho_k\partial_yw_k=
\nu_k^{-a}\frac{\eta(z_k)}{L_k}r_k^{1-\alpha} f_{\varepsilon_k}(x_k+r_kx), & y=-y_k/r_k
\end{cases}
\end{equation}
where $z_k=(x_k,y_k)$. As usual, we notice that, when $a\geq 0$, we have $\nu_k^{-a}\leq r_k^{-a}$, while, when $a<0$ the sequence $\nu_k^{-a}$ is bounded. With this remark one easily sees that the right hand side vanishes in $L^1_{\mathrm{loc}}$ and concludes as in the proof of Theorem \ref{holdereven}. Once proved the first part of the statement, when $\varepsilon=0$ we can approximate $f$ with a sequence smoothened ones, apply the first part and the derive a priori estimates arguing again by contradiction as in the proof of Theorem \ref{holdereven}. Notice that now, as $\varepsilon_k=0$, the boundary persists in the limit only when $y_k/r_k$ stays bounded and therefore $\nu_k\simeq r_k$. The second part of the statement then follows by observing that the right hand side of the boundary condition in \eqref{tildeLakw1} vanishes in $L^1_{\mathrm{loc}}$ also in this case.
\end{proof}

When $a\in(-1,0)$, higher regularity can be expected, even though not necessarily in an $\varepsilon$-stable manner. Indeed, the following  result holds true:

\begin{Theorem}\label{teo:C1alphaneumannunstable}
Let $a\in(-1,0)$, $p>\frac{n}{-a}$, $\alpha\in(0,-a-\frac{n}{p}]$, $r\in(0,1)$ and $\beta>1$. There is a positive constant depending on $a$, $n$, $\beta$, $p$, $\alpha$ and $r$ only such that every solution to \eqref{eq:neumannbc} satisfies:
$$\|u\|_{C^{1,\alpha}(B_r^+)}\leq c\left(\|u\|_{L^\beta(B_1^+,y^a\mathrm{d}z)}+ \|f\|_{L^p(\partial^0 B_1^+)}\right).$$
\end{Theorem}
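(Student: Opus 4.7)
The strategy is to adapt the blow-up and Liouville scheme of Theorem \ref{C1alphaeven} to the inhomogeneous Neumann boundary condition. Argue by contradiction: suppose the estimate fails along a sequence of solutions $\{u_k\}$ whose $\|u_k\|_{L^\beta(B_1^+, y^a dz)}$ and $\|f_k\|_{L^p(\partial^0 B_1^+)}$ remain bounded, but for which the $C^{1,\alpha}(B_r^+)$ seminorm of $\eta u_k$ diverges. Since the hypothesis $p > n/(-a)$ implies $p > n/(1-a)$ and $\alpha \leq -a - n/p \leq 1-a-n/p$, Theorem \ref{teo:holderneumann} already provides uniform $C^{0,\alpha}$ and $L^\infty$ bounds on the $u_k$. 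Extract points $z_k, \zeta_k$ realizing the Hölder seminorm of some first partial derivative, set $r_k = |z_k - \zeta_k|$, and define
\[
w_k(z) = \frac{\eta(\hat z_k)}{L_k r_k^{1+\alpha}}\bigl(u_k(\hat z_k + r_k z) - u_k(\hat z_k)\bigr),
\]
with $\hat z_k$ chosen, exactly as in Theorem \ref{C1alphaeven}, according to whether $d(z_k,\Sigma)/r_k\to\infty$ (\textbf{Case 1}, $\hat z_k=z_k$) or stays bounded (\textbf{Case 2}, $\hat z_k=(x_k,0)$). The standard scaling estimates give $r_k \to 0$, and the adjusted profiles $\bar w_k$ converge in $C^1_{\mathrm{loc}}$ to a limit $\bar v$ which is globally $C^{1,\alpha}$ and has at least one non-constant first partial derivative.

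In \textbf{Case 1}, the weight $y^a$ trivializes in the blow-up (since $y_k/r_k\to\infty$) and the limit $\bar v$ is harmonic on $\mathbb{R}^{n+1}$, contradicting the classical Liouville result in \cite{NorTavTerVer}. In \textbf{Case 2}, direct chain-rule computation gives $-\mathrm{div}(y^a\nabla w_k)=0$ in $B(k)\cap\{y>0\}$ together with the rescaled Neumann condition
\[
-\lim_{y\to 0^+} y^a\partial_y w_k(x,y) = \frac{\eta(\hat z_k)\, r_k^{-a-\alpha}}{L_k}\, f_k(x_k + r_k x),
\]
where the crucial $r_k^{1-a}$ factor comes from the scaling of $y^a \partial_y$. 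For $\phi\in C^\infty_c(\mathbb{R}^n)$ supported in $B_R^n$, Hölder's inequality yields
\[
\Bigl|\int_{B_R^n} \frac{\eta(\hat z_k)\,r_k^{-a-\alpha}}{L_k}\, f_k(x_k+r_k x)\,\phi(x)\,dx\Bigr| \leq \frac{c\,r_k^{-a-\alpha-n/p}}{L_k}\,\|f_k\|_{L^p(\partial^0 B_1^+)}\|\phi\|_{L^{p'}(B_R^n)},
\]
which vanishes as $k\to\infty$ exactly because of the sharp hypothesis $\alpha \leq -a - n/p$ combined with $L_k\to\infty$. Hence $\bar v$ satisfies the homogeneous Neumann condition and, after even reflection, extends to an even-in-$y$, $\mathcal{L}_a$-harmonic function on $\mathbb{R}^{n+1}$ that is globally $C^{1,\alpha}$.

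To conclude Case 2, distinguish according to which partial derivative of $\bar v$ is non-constant. If some $\partial_{x_j}\bar v$ is non-constant, it is a globally $C^{0,\alpha}(\mathbb{R}^{n+1})$ even $\mathcal{L}_a$-harmonic function, which contradicts Corollary \ref{Liouvilleevenbis} with $\gamma=\alpha<1$. Otherwise the only non-constant derivative is $\partial_y \bar v$; by Lemma \ref{rho-1}, $v := y^a \partial_y \bar v$ is then a non-trivial odd-in-$y$ $\mathcal{L}_{-a}$-harmonic function on $\mathbb{R}^{n+1}$ satisfying $|v(z)| \leq C\,y^a(1+|z|^\alpha)$ with $\alpha<1$, which contradicts Theorem \ref{Liouvilleeven} applied with $-a$ in place of $a$. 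Thus $\partial_y \bar v\equiv 0$ and we fall back onto the tangential case, reaching the same contradiction.

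The main technical obstacle is the Case 2 calculation: correctly tracking the $r_k^{1-a}$ factor produced by the weighted normal derivative and identifying precisely the threshold $\alpha = -a-n/p$ that makes the rescaled boundary datum vanish as a distribution. The absence of $\varepsilon$-stability simplifies matters since we work at $\varepsilon=0$ throughout, but it also means that we cannot invoke uniform elliptic regularity of the approximating problems for free; the two-step reduction to a tangential derivative via the duality Lemma \ref{rho-1} and the growth-based Liouville Theorem \ref{Liouvilleeven} is where the restriction $a<0$ is genuinely used.
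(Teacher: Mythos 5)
Your blow-up machinery is essentially the paper's: the contradiction argument modelled on Theorem \ref{C1alphaeven}, the observation that the rescaled Neumann datum carries the factor $r_k^{-a-\alpha}$ and is killed in the limit by H\"older's inequality precisely when $\alpha\le -a-\frac{n}{p}$ (the paper phrases this as $\nu_k\simeq r_k$ and vanishing of the right hand side of \eqref{eq:wkbc} in $L^1_{\mathrm{loc}}$), and the Case~2 conclusion via the weighted derivative $y^a\partial_y\overline v$, Lemma \ref{rho-1}, Theorem \ref{Liouvilleeven} with $-a$ in place of $a$, and the Liouville theorem of \cite{NorTavTerVer}. Your use of Corollary \ref{Liouvilleevenbis} directly on a tangential derivative is a harmless variant of the paper's reduction.

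There is, however, a genuine gap at the very start: you run the contradiction argument directly on solutions with merely $L^p$ boundary data, extracting points that \emph{realize} the $C^{1,\alpha}$ seminorm $L_k$ of $\partial_i(\eta u_k)$. At $\varepsilon=0$ with $f\in L^p$ you do not know a priori that any individual solution is locally $C^{1,\alpha}$ (indeed, that qualitative regularity is part of the statement being proved), so $L_k$ may be $+\infty$ for a fixed $k$, the maximizing pair need not exist, and the blow-up profiles $w_k$, $\overline w_k$ are not defined; the same issue affects the pointwise identity $\partial_y u_k(x,0)=0$ (forced by $a<0$) that you implicitly use to control the term involving $\partial_y w_k(0)$ and to justify the even reflection. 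The paper closes exactly this hole as its first step: approximate $f$ in $L^p$ by H\"older continuous functions $f_j$, invoke Theorem \ref{teo:C1alphaneumannstable} with $\varepsilon\equiv 0$ to obtain that each approximating solution is genuinely $C^{1,\alpha}$ up to $\Sigma$, then prove that the constant is independent of $j$ by your blow-up argument, and finally pass to the limit. Your a priori estimate is correct \emph{for solutions already known to be $C^{1,\alpha}_{\mathrm{loc}}$}, but without this approximation (or an equivalent qualitative regularity step) it does not yield the theorem as stated.
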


Similarly, for H\"older continuous right hand sides, solutions are expected to be $C^{1,\alpha}$, up to $\alpha=-a$. Indeed, $y^{1-a}/(1-a)$ solves \eqref{eq:neumannbc} with $f\equiv 1$. Surprisingly enough, this property turns out to be $\varepsilon$-stable in some range of $\alpha$. Indeed, we have the following result.

\begin{Theorem}\label{teo:C1alphaneumannstable}
Let $a\in(-1,0)$, $\alpha\in(0,-a]$, $r\in(0,1)$ and $\beta>1$. As $\varepsilon\to0$ let $\{u_\varepsilon\}$ be a family of solutions to \eqref{eq:neumannbce}. Then, there is a positive constant depending on $a$, $n$, $\beta$, $p$, $\alpha$ and $r$ such that
$$\|u_\varepsilon\|_{C^{1,\alpha}(B_r^+)}\leq c\left(\|u_\varepsilon\|_{L^\beta(B_1^+,\rho_\varepsilon^a(y)\mathrm{d}z)}+ \|f_\varepsilon\|_{C^{0,\alpha}(\partial^0 B_1^+)}\right).$$
\end{Theorem}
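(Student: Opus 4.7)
The plan is to argue by contradiction via the blow-up scheme of Theorem \ref{C1alphaeven}, with a single new ingredient — a suitable one-dimensional profile subtraction — designed to neutralize the inhomogeneous Neumann datum. Assume $\{u_k\}=\{u_{\varepsilon_k}\}$ is a sequence of solutions to \eqref{eq:neumannbce} with $\|u_k\|_{L^\beta(\rho_{\varepsilon_k}^a\mathrm{d}z)}$ and $\|f_{\varepsilon_k}\|_{C^{0,\alpha}(\partial^0 B_1^+)}$ uniformly bounded but $\|\eta u_k\|_{C^{1,\alpha}}\to\infty$, where $\eta$ is the usual cut-off. By Theorem \ref{teo:holderneumann} applied with $p$ large, the $u_k$'s are already uniformly $C^{0,\sigma}$ for some $\sigma<1$. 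Then, reasoning as in Theorem \ref{C1alphaeven}, the maximal H\"older seminorm $L_k:=\max_i[\partial_i(\eta u_k)]_{C^{0,\alpha}}$ is attained at $z_k,\zeta_k$ with $r_k:=|z_k-\zeta_k|\to 0$ and, up to a subsequence, one is either in the interior case $\mathrm{dist}(z_k,\Sigma)/r_k\to\infty$ or in the boundary case $\mathrm{dist}(z_k,\Sigma)/r_k$ bounded.

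The interior case requires no new idea: with $\hat z_k=z_k$ the flat boundary $\partial^0$ escapes to infinity in the rescaled variable, so for any compactly supported test function the rescaled boundary integral is identically zero for $k$ large and the analysis collapses onto Case~1 of Theorem \ref{C1alphaeven}, producing a global limit that contradicts the Liouville theorem in \cite{NorTavTerVer} or Corollary \ref{Liouvilleevenbis}.

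All the new work is in the boundary case. I set $\hat z_k=(\hat x_k,0)$ and subtract from $u_k$ the explicit one-dimensional $\rho_{\varepsilon_k}^a$-harmonic profile
\[
\chi_k(y):=-f_{\varepsilon_k}(\hat x_k)\int_0^y(\varepsilon_k^2+t^2)^{|a|/2}\mathrm{d}t,
\]
which satisfies $-\mathrm{div}(\rho_{\varepsilon_k}^a\nabla\chi_k)=0$, $-\rho_{\varepsilon_k}^a\partial_y\chi_k|_{y=0}=f_{\varepsilon_k}(\hat x_k)$, and, crucially, $\chi_k'(0)=-f_{\varepsilon_k}(\hat x_k)\varepsilon_k^{|a|}=\partial_yu_k(\hat z_k)$, the last identity being the original Neumann condition for $u_k$ evaluated at $\hat z_k$. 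Thus $\tilde u_k:=u_k-\chi_k$ satisfies the same equation with Neumann datum $\hat f_k:=f_{\varepsilon_k}-f_{\varepsilon_k}(\hat x_k)$ obeying $\hat f_k(\hat x_k)=0$ and $|\hat f_k(x)|\leq C|x-\hat x_k|^\alpha$, together with $\partial_y\tilde u_k(\hat z_k)=0$. I then carry out the blow-up of $\tilde u_k$ at $\hat z_k$ exactly as in Theorem \ref{C1alphaeven}, Case~2: testing the rescaled equation against $\phi\in C^\infty_c$, the boundary term is controlled by $\nu_k^{|a|}L_k^{-1}r_k^{-\alpha}\cdot r_k^\alpha\int|\xi|^\alpha|\phi|=O(\nu_k^{|a|}/L_k)\to 0$, since the H\"older decay of $\hat f_k$ produces the factor $r_k^\alpha$ that exactly cancels the blow-up factor $r_k^{-\alpha}$ and $\nu_k=|(\varepsilon_k,r_k)|$ stays bounded. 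Meanwhile $\partial_y\tilde v_k(0)=0$ permits one to subtract only the horizontal affine part $\nabla_x\tilde v_k(0)\cdot x$ from the blow-up sequence, producing a corrected sequence with zero value and zero gradient at the origin and uniformly bounded H\"older seminorms of its partial derivatives; hence $C^{1,\alpha^\prime}_{\mathrm{loc}}$-compactness delivers a nontrivial limit $\bar v$ solving the limiting equation $-\mathrm{div}(\tilde\rho\nabla\bar v)=0$ on the upper half-space with homogeneous Neumann condition. The Liouville theorems of Section \ref{sec:liouville} are then invoked as in Theorem \ref{C1alphaeven}: the horizontal derivatives $\partial_{x_j}\bar v$ (globally $C^{0,\alpha}$ solutions of the same equation after even reflection) must be constant by Corollary \ref{Liouvilleevenbis}, and the weighted vertical derivative $\tilde\rho\,\partial_y\bar v$ — which solves the dual equation with Dirichlet condition by Lemma \ref{rho-1}, with growth bound $|\tilde\rho(y)\partial_y\bar v(z)|\leq C\tilde\rho(y)(1+|z|^\alpha)$ — must vanish by Theorem \ref{Liouvilleeven}. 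Hence $\nabla\bar v$ is constant, contradicting the non-triviality of the limit.

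The hard part is verifying that the subtraction does not spoil the blow-up: one needs $\chi_k$ to have uniformly bounded $C^{1,\alpha}$-norm, which in turn reduces to the elementary inequality
\[
|(\varepsilon^2+y_2^2)^{|a|/2}-(\varepsilon^2+y_1^2)^{|a|/2}|\leq|y_2-y_1|^{|a|},
\]
valid uniformly in $\varepsilon\geq 0$ and $0\leq y_1<y_2\leq 1$ by the estimate $\int_{y_1}^{y_2}|a|y(\varepsilon^2+y^2)^{|a|/2-1}\mathrm{d}y\leq\int_{y_1}^{y_2}|a|y^{|a|-1}\mathrm{d}y=y_2^{|a|}-y_1^{|a|}\leq(y_2-y_1)^{|a|}$. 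This is precisely where — and why — the restriction $\alpha\leq -a$ enters the $\varepsilon$-stable statement; the borderline exponent $\alpha=-a$ is sharp, consistent with the special solution $y^{1-a}$ recalled at the end of Section \ref{sec:neumann}.
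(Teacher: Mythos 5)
Your strategy is essentially the paper's own proof: the paper also subtracts the explicit one-dimensional $\rho_{\varepsilon_k}^a$-harmonic profile $f_{\varepsilon_k}(\hat x_k)\int_0^{\cdot}\rho_{\varepsilon_k}^{-a}$, observes that it is bounded in $C^{1,\alpha}$ uniformly in $\varepsilon$ precisely when $\alpha\le -a$ (your elementary inequality for $(\varepsilon^2+y^2)^{|a|/2}$ is exactly this point), kills the rescaled boundary datum through the H\"older increment $f_{\varepsilon_k}(\hat x_k+r_kx)-f_{\varepsilon_k}(\hat x_k)$, and concludes with the same Liouville endgame (Corollary \ref{Liouvilleevenbis} for the horizontal derivatives, Theorem \ref{Liouvilleeven} via Lemma \ref{rho-1} for $\tilde\rho\,\partial_y\overline v$). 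Your treatment of the boundary case is therefore correct and matches the paper.

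The gap is in your interior case. You claim that when $\mathrm{dist}(z_k,\Sigma)/r_k\to\infty$ no new idea is needed because the boundary escapes to infinity, so "the analysis collapses onto Case 1 of Theorem \ref{C1alphaeven}". But the delicate point of that Case 1 is not the boundary integral: it is the interior term $(ii)$ of \eqref{eq:wk}, generated by subtracting the full affine function $\nabla v_k(0)\cdot z$ (the function $y$ is not a solution of the weighted equation). There it was controlled through $|\partial_yv_k(0)|=|\partial_yv_k(0)-\partial_yv_k((0,-y_k/r_k))|\le 2(y_k/r_k)^\alpha$, which uses that $\partial_yv_k$ \emph{vanishes} on the rescaled trace of $\Sigma$ — a consequence of evenness, i.e.\ of the homogeneous Neumann condition. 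For \eqref{eq:neumannbce} with inhomogeneous datum and no subtraction, $\partial_yu_k(x,0)=-\varepsilon_k^{|a|}f_{\varepsilon_k}(x)\neq 0$, so $\partial_yv_k$ at that point is only $O\bigl(\varepsilon_k^{|a|}/(L_kr_k^{\alpha})\bigr)$, and your cited argument does not apply as stated. The paper avoids this by performing the same profile subtraction also in the interior case (its blow-up sequences are defined with a generic center $\hat z_k=(\hat x_k,\hat y_k)$, so that $\partial_yv_k$ again vanishes at the rescaled characteristic point); alternatively, you can keep your setup and add an explicit estimate of the extra contribution, which does tend to zero because $\varepsilon_k^{|a|}\le\max\{\varepsilon_k,r_k\}^{|a|}$ and $\alpha\le-a$ — but some such argument must be supplied, and it is again exactly where the restriction $\alpha\le-a$ and $a<0$ intervene.
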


\begin{proof}[Proof of Theorem \ref{teo:C1alphaneumannstable}]
At first, let us consider the special solutions of \eqref{eq:neumannbc} with $f\equiv 1$:
\[
\mu_\varepsilon^a(y)=\int_0^y \rho_\varepsilon^{-a}(s) \mathrm{d}s\;,
\]
and remark that they are uniformly bounded in the $C^{1,\alpha}$-norm, provided $\alpha\leq -a$ (note that $a\in(-1,0)$. Now we go back to the proof of Theorem \ref{C1alphaeven} and we slightly change the two blow up sequences: writing $\hat{z}_k=(\hat x_k,\hat y_k)$ and
\begin{equation*}
\begin{split}
v_k(z)=\frac{\eta(\hat{z}_k+r_kz)}{L_kr_k^{1+\alpha}}\left(u_k(\hat{z}_k+r_kz)-u_k(\hat{z}_k)-f_{\varepsilon_k}(\hat x_k)\mu_k(\hat y_k+r_ky)\right),\\
w_k(z)=\frac{\eta(\hat{z}_k)}{L_kr_k^{1+\alpha}}\left(u_k(\hat{z}_k+r_kz)-u_k(\hat{z}_k)-f_{\varepsilon_k}(\hat x_k)\mu_k(\hat y_k+r_ky)\right)\;.
\end{split}
\end{equation*}
In this way we have $\partial_y v_k(\hat x_k, -\hat y_k/r_k)=\partial_y w_k(\hat x_k, -\hat y_k/r_k)=0$, as for the  two original blow-up sequences under homogenous Neumann boundary conditions. These modified sequences  still fulfill all the properties needed to make it work the proof of Theorem \ref{C1alphaeven}, up to the point when, dealing with the boundary value problem satisfied by the $\overline w_k$s (see \eqref{eq:wk}), we find
\begin{equation}\label{eq:wkbc}
\begin{cases}
-\mbox{div}\!\left(\tilde
\rho_{k}\nabla \overline w_k\right)=\nu_k^{-a}\partial_y (\rho_{\varepsilon_k}^a\left(y_k+r_k\cdot\right))\partial_yw_k(0) \;,\qquad & y>-\hat y_k/r_k\\
-\tilde \rho_{k}\partial_y \overline w_k =\nu_k^{-a}\frac{\eta(\hat{z}_k)}{L_k}r_k^{-\alpha}\left(f_{\varepsilon_k}(\hat{x}_k+r_kx)-f_{\varepsilon_k}(\hat x_k)\right) & y=-\hat y_k/r_k\
\end{cases}
\end{equation}
One easily sees that the right hand side of the boundary condition converge uniformly to zero, since $a<0$, the $\nu_k$'s are bounded, and $f_{\varepsilon_k}$ are bounded in $C^{0,\alpha}$. As to the right hand side of the differential equation, note that $\partial_yw_k(\hat y_k/r_k)=\partial_yv_k(\hat y_k/r_k)$, and therefore we have $\partial_yw_k(\hat y_k/r_k)\equiv 0$. Thus,  the same reasoning as in the proof of Theorem \ref{C1alphaeven} apply to obtain the desired estimate.
\end{proof}

\begin{proof}[Proof of Theorem \ref{teo:C1alphaneumannunstable}] Here $\varepsilon=0$. At first, we can approximate the right hand $f$ with a sequence $f_k$ of H\"older continuous functions such that $f_k\to f$ in $L^p$ and apply Theorem \ref{teo:C1alphaneumannstable} for each fixed $k$ with $\varepsilon_k\equiv0$. Next we wish to show that the estimate holds for a constant $c$ independent of $k$. To this aim, we argue by contradiction and we follow once more the steps in the proof of Theorem \ref{C1alphaeven}, noticing that thanks to the fact that $a<0$ there holds $\partial_yw_k(\hat y_k/r_k)=\partial_yv_k(\hat y_k/r_k)=0$. In addition, as $\nu_k\simeq r_k$ whenever the boundary condition persists in the limit, the right hand side of  \eqref{eq:wkbc} vanishes in $L^1_{\mathrm{loc}}$. From this fact,  the same reasoning as in the proof of Theorem \ref{C1alphaeven} yield the desired estimate.
\end{proof}


\end{document}